\title{Modalities in homotopy type theory}
\author[Egbert Rijke]{Egbert Rijke\rsuper{a}}
\address{\lsuper{a}University of Ljubljana, Ljubljana, Slovenia}
\email{egbert.rijke@fmf.uni-lj.si}
\author[Michael Shulman]{Michael Shulman\rsuper{b}}
\address{\lsuper{b}University of San Diego, San Diego, CA}
\email{shulman@sandiego.edu}
\thanks{This material is based
    on research sponsored by The United States Air Force Research
    Laboratory under agreement number FA9550--15--1--0053.  The
    U.S. Government is authorized to reproduce and distribute re\-prints
    for Governmental purposes notwithstanding any copyright notation
    thereon.  The views and con\-clusions contained herein are those of
    the author and should not be interpreted as necessarily
    representing the official policies or endorsements, either
    expressed or implied, of the United States Air Force Research
    Laboratory, the U.S. Government, or Carnegie Mellon University.}
\author[Bas Spitters]{Bas Spitters\rsuper{c}}
\address{\lsuper{c}Aarhus University, Aarhus, Denmark}
\email{spitters@cs.au.dk}
\thanks{Partially funded by the Guarded homotopy type theory
    project (12386) of the Villum Foundation.}
\date{\today}
\definecolor{darkgreen}{rgb}{0,0.45,0}
\newcommand{\sectionNotes}{\phantomsection\section*{Notes}\addcontentsline{toc}{section}{Notes}\markright{\textsc{\@chapapp{} \thechapter{} Notes}}}
\newcommand{\sectionExercises}[1]{\phantomsection\section*{Exercises}\addcontentsline{toc}{section}{Exercises}\markright{\textsc{\@chapapp{} \thechapter{} Exercises}}}
\newcommand{\jdeq}{\equiv}      % An equality judgment
\newcommand{\defeq}{\vcentcolon\equiv}  % A judgmental equality currently being defined
\newcommand{\define}[1]{\textbf{#1}}
\def\prd#1{\@ifnextchar\bgroup{\prd@parens{#1}}{\@ifnextchar\sm{\prd@parens{#1}\@eatsm}{\prd@noparens{#1}}}}
\def\prd@parens#1{\@ifnextchar\bgroup%
  {\mathchoice{\@dprd{#1}}{\@tprd{#1}}{\@tprd{#1}}{\@tprd{#1}}\prd@parens}%
  {\@ifnextchar\sm%
    {\mathchoice{\@dprd{#1}}{\@tprd{#1}}{\@tprd{#1}}{\@tprd{#1}}\@eatsm}%
    {\mathchoice{\@dprd{#1}}{\@tprd{#1}}{\@tprd{#1}}{\@tprd{#1}}}}}
\def\@eatsm\sm{\sm@parens}
\def\prd@noparens#1{\mathchoice{\@dprd@noparens{#1}}{\@tprd{#1}}{\@tprd{#1}}{\@tprd{#1}}}
\def\lprd#1{\@ifnextchar\bgroup{\@lprd{#1}\lprd}{\@@lprd{#1}}}
\def\@lprd#1{\mathchoice{{\textstyle\prod}}{\prod}{\prod}{\prod}({\textstyle #1})\;} % chktex 36 chktex 1
\def\@@lprd#1{\mathchoice{{\textstyle\prod}}{\prod}{\prod}{\prod}({\textstyle #1}),\ } % chktex 36 chktex 1
\def\tprd#1{\@tprd{#1}\@ifnextchar\bgroup{\tprd}{}}
\def\@tprd#1{\mathchoice{{\textstyle\prod_{(#1)}}}{\prod_{(#1)}}{\prod_{(#1)}}{\prod_{(#1)}}}
\def\dprd#1{\@dprd{#1}\@ifnextchar\bgroup{\dprd}{}}
\def\@dprd#1{\prod_{(#1)}\,}
\def\@dprd@noparens#1{\prod_{#1}\,}
\def\lam#1{{\lambda}\@lamarg#1:\@endlamarg\@ifnextchar\bgroup{.\,\lam}{.\,}}
\def\@lamarg#1:#2\@endlamarg{\if\relax\detokenize{#2}\relax #1\else\@lamvar{\@lameatcolon#2},#1\@endlamvar\fi}
\def\@lamvar#1,#2\@endlamvar{(#2\,{:}\,#1)}
\def\@lameatcolon#1:{#1}
\def\lamu#1{{\lambda}\@lamuarg#1:\@endlamuarg\@ifnextchar\bgroup{.\,\lamu}{.\,}}
\def\@lamuarg#1:#2\@endlamuarg{#1}
\def\fall#1{\forall (#1)\@ifnextchar\bgroup{.\,\fall}{.\,}} % chktex 1
\def\exis#1{\exists (#1)\@ifnextchar\bgroup{.\,\exis}{.\,}} % chktex 1
\def\sm#1{\@ifnextchar\bgroup{\sm@parens{#1}}{\@ifnextchar\prd{\sm@parens{#1}\@eatprd}{\sm@noparens{#1}}}}
\def\sm@parens#1{\@ifnextchar\bgroup%
  {\mathchoice{\@dsm{#1}}{\@tsm{#1}}{\@tsm{#1}}{\@tsm{#1}}\sm@parens}%
  {\@ifnextchar\prd%
    {\mathchoice{\@dsm{#1}}{\@tsm{#1}}{\@tsm{#1}}{\@tsm{#1}}\@eatprd}%
    {\mathchoice{\@dsm{#1}}{\@tsm{#1}}{\@tsm{#1}}{\@tsm{#1}}}}}
\def\@eatprd\prd{\prd@parens}
\def\sm@noparens#1{\mathchoice{\@dsm@noparens{#1}}{\@tsm{#1}}{\@tsm{#1}}{\@tsm{#1}}}
\def\lsm#1{\@ifnextchar\bgroup{\@lsm{#1}\lsm}{\@@lsm{#1}}}
\def\@lsm#1{\mathchoice{{\textstyle\sum}}{\sum}{\sum}{\sum}({\textstyle #1})\;} % chktex 1 chktex 36
\def\@@lsm#1{\mathchoice{{\textstyle\sum}}{\sum}{\sum}{\sum}({\textstyle #1}),\ } % chktex 1 chktex 36
\def\tsm#1{\@tsm{#1}\@ifnextchar\bgroup{\tsm}{}}
\def\@tsm#1{\mathchoice{{\textstyle\sum_{(#1)}}}{\sum_{(#1)}}{\sum_{(#1)}}{\sum_{(#1)}}}
\def\dsm#1{\@dsm{#1}\@ifnextchar\bgroup{\dsm}{}}
\def\@dsm#1{\sum_{(#1)}\,}
\def\@dsm@noparens#1{\sum_{#1}\,}
\def\wtype#1{\@ifnextchar\bgroup%
  {\mathchoice{\@twtype{#1}}{\@twtype{#1}}{\@twtype{#1}}{\@twtype{#1}}\wtype}%
  {\mathchoice{\@twtype{#1}}{\@twtype{#1}}{\@twtype{#1}}{\@twtype{#1}}}}
\def\lwtype#1{\@ifnextchar\bgroup{\@lwtype{#1}\lwtype}{\@@lwtype{#1}}}
\def\@lwtype#1{\mathchoice{{\textstyle\mathsf{W}}}{\mathsf{W}}{\mathsf{W}}{\mathsf{W}}({\textstyle #1})\;} % chktex 1 chktex 36
\def\@@lwtype#1{\mathchoice{{\textstyle\mathsf{W}}}{\mathsf{W}}{\mathsf{W}}{\mathsf{W}}({\textstyle #1}),\ } % chktex 1 chktex 36
\def\twtype#1{\@twtype{#1}\@ifnextchar\bgroup{\twtype}{}}
\def\@twtype#1{\mathchoice{{\textstyle\mathsf{W}_{(#1)}}}{\mathsf{W}_{(#1)}}{\mathsf{W}_{(#1)}}{\mathsf{W}_{(#1)}}}
\def\dwtype#1{\@dwtype{#1}\@ifnextchar\bgroup{\dwtype}{}}
\def\@dwtype#1{\mathsf{W}_{(#1)}\,}
\def\wtypeh#1{\@ifnextchar\bgroup%
  {\mathchoice{\@lwtypeh{#1}}{\@twtypeh{#1}}{\@twtypeh{#1}}{\@twtypeh{#1}}\wtypeh}%
  {\mathchoice{\@@lwtypeh{#1}}{\@twtypeh{#1}}{\@twtypeh{#1}}{\@twtypeh{#1}}}}
\def\lwtypeh#1{\@ifnextchar\bgroup{\@lwtypeh{#1}\lwtypeh}{\@@lwtypeh{#1}}}
\def\@lwtypeh#1{\mathchoice{{\textstyle\mathsf{W}^h}}{\mathsf{W}^h}{\mathsf{W}^h}{\mathsf{W}^h}({\textstyle #1})\;} % chktex 1 chktex 36
\def\@@lwtypeh#1{\mathchoice{{\textstyle\mathsf{W}^h}}{\mathsf{W}^h}{\mathsf{W}^h}{\mathsf{W}^h}({\textstyle #1}),\ } % chktex 1 chktex 36
\def\twtypeh#1{\@twtypeh{#1}\@ifnextchar\bgroup{\twtypeh}{}}
\def\@twtypeh#1{\mathchoice{{\textstyle\mathsf{W}^h_{(#1)}}}{\mathsf{W}^h_{(#1)}}{\mathsf{W}^h_{(#1)}}{\mathsf{W}^h_{(#1)}}}
\def\dwtypeh#1{\@dwtypeh{#1}\@ifnextchar\bgroup{\dwtypeh}{}}
\def\@dwtypeh#1{\mathsf{W}^h_{(#1)}\,}
\let\setof\Set%    % from package 'braket', write \setof{ x:A | P(x) }.
\newcommand{\proj}[1]{\ensuremath{\mathsf{pr}_{#1}}\xspace}
\newcommand{\ind}[1]{\mathsf{ind}_{#1}}
\newcommand{\pairr}[1]{{\mathopen{}(#1)\mathclose{}}} % chktex 36
\newcommand{\im}{\ensuremath{\mathsf{im}}} % the image
\newcommand{\modal}{\ensuremath{\ocircle}}
\newcommand{\modaltype}{\ensuremath{\type_\modal}}
\newcommand{\idsym}{{=}}
\newcommand{\id}[3][]{\ensuremath{#2 =_{#1} #3}\xspace}
\newcommand{\dpath}[4]{#3 =^{#1}_{#2} #4}
\newcommand{\refl}[1]{\ensuremath{\mathsf{refl}_{#1}}\xspace}
\newcommand{\ct}{%
  \mathchoice{\mathbin{\raisebox{0.5ex}{$\displaystyle\centerdot$}}}%
             {\mathbin{\raisebox{0.5ex}{$\centerdot$}}}%
             {\mathbin{\raisebox{0.25ex}{$\scriptstyle\,\centerdot\,$}}}%
             {\mathbin{\raisebox{0.1ex}{$\scriptscriptstyle\,\centerdot\,$}}}
}
\newcommand{\opp}[1]{\mathord{{#1}^{-1}}}
\newcommand{\trans}[2]{\ensuremath{{#1}_{*}\mathopen{}\left({#2}\right)\mathclose{}}\xspace}
\newcommand{\mapfunc}[1]{\ensuremath{\mathsf{ap}_{#1}}\xspace} % Without argument
\let\apfunc\mapfunc%
\let\ap\map%
\newcommand{\idfunc}[1][]{\ensuremath{\mathsf{id}_{#1}}\xspace}
\newcommand{\htpy}{\sim}
\newcommand{\eqv}[2]{\ensuremath{#1 \simeq #2}\xspace}
\newcommand{\eqvsym}{\simeq}    % infix symbol
\newcommand{\isequiv}{\ensuremath{\mathsf{isequiv}}}
\newcommand{\hfib}[2]{{\mathsf{fib}}_{#1}(#2)} % chktex 36
\newcommand{\total}[1]{\ensuremath{\mathsf{total}(#1)}}
\newcommand{\UU}{\ensuremath{\mathcal{U}}\xspace}
\let\type\UU%
\renewcommand{\prop}{\ensuremath{\mathsf{Prop}}\xspace}
\newcommand{\pointed}[1]{\ensuremath{#1_\bullet}}
\newcommand{\iscontr}{\ensuremath{\mathsf{isContr}}}
\newcommand{\trunc}[2]{\mathopen{}\left\Vert#2\right\Vert_{#1}\mathclose{}}
\newcommand{\truncf}[1]{\Vert \blank \Vert_{#1}} % chktex 1
\newcommand{\brck}[1]{\trunc{}{#1}}
\newcommand{\Parens}[1]{\Bigl(#1\Bigr)}
\newcommand{\emptyt}{\ensuremath{\mathbf{0}}\xspace}
\newcommand{\unit}{\ensuremath{\mathbf{1}}\xspace}
\newcommand{\ttt}{\ensuremath{\star}\xspace}
\newcommand{\bool}{\ensuremath{\mathbf{2}}\xspace}
\newcommand{\btrue}{{1_{\bool}}}
\newcommand{\bfalse}{{0_{\bool}}}
\newcommand{\inlsym}{{\mathsf{inl}}}
\newcommand{\inrsym}{{\mathsf{inr}}}
\newcommand{\inl}{\ensuremath\inlsym\xspace}
\newcommand{\inr}{\ensuremath\inrsym\xspace}
\newcommand{\Sn}{\mathbb{S}}
\newcommand{\susp}{\Sigma}
\newcommand{\blank}{\mathord{\hspace{1pt}\text{--}\hspace{1pt}}}
\newcommand{\nameless}{\mathord{\hspace{1pt}\underline{\hspace{1ex}}\hspace{1pt}}}
\newcommand{\inv}[1]{{#1}^{-1}}
\renewcommand{\fact}{\mathsf{fact}}
\def\defthm#1#2#3{%
  %% Ensure all theorem types are numbered with the same counter
  \newaliascnt{#1}{thm}
  \newtheorem{#1}[#1]{#2}
  \aliascntresetthe{#1}
  %% This command tells cleveref's \cref what to call things
  \crefname{#1}{#2}{#3}}
\crefname{thm}{Theorem}{Theorems}
\theoremstyle{plain}            % lmcs.cls mucks with the theoremstyle
\theoremstyle{definition}
\theoremstyle{remark}
\crefname{part}{Part}{Parts}
\crefname{figure}{Figure}{Figures}
\let\autoref\cref%
\let\c@equation\c@thm%
\numberwithin{equation}{section}
\def\noteson{%
\gdef\note##1{\mbox{}\marginpar{\color{blue}\textasteriskcentered\ ##1}}}
\newcounter{symindex}
\renewcommand{\id}[3][]{
  \@ifnextchar\bgroup%
    {#2 \mathbin{\idsym_{#1}} \id[#1]{#3}}
    {#2 \mathbin{\idsym_{#1}} #3}
  }
\renewcommand{\eqv}[2]{
  \@ifnextchar\bgroup%
    {#1 \eqvsym \eqv{#2}} % chktex 1
    {#1 \eqvsym #2} % chktex 1
  }
\newcommand{\ctsym}{%
  \mathchoice{\mathbin{\raisebox{0.5ex}{$\displaystyle\centerdot$}}}%
             {\mathbin{\raisebox{0.5ex}{$\centerdot$}}}%
             {\mathbin{\raisebox{0.25ex}{$\scriptstyle\,\centerdot\,$}}}%
             {\mathbin{\raisebox{0.1ex}{$\scriptscriptstyle\,\centerdot\,$}}}
  }
\renewcommand{\ct}[3][]{
  \@ifnextchar\bgroup%
    {#2 \mathbin{\ctsym_{#1}} \ct[#1]{#3}}
    {#2 \mathbin{\ctsym_{#1}} #3}
  }
\renewcommand{\@dprd}{\@tprd}
\renewcommand{\@dsm}{\@tsm}
\renewcommand{\@dprd@noparens}{\@tprd}
\renewcommand{\@dsm@noparens}{\@tsm}
\renewcommand{\@tprd}[1]{\mathchoice{{\textstyle\prod_{(#1)}\,}}{\prod_{(#1)}\,}{\prod_{(#1)}\,}{\prod_{(#1)}\,}}
\renewcommand{\@tsm}[1]{\mathchoice{{\textstyle\sum_{(#1)}\,}}{\sum_{(#1)}\,}{\sum_{(#1)}\,}{\sum_{(#1)}\,}}
\newcommand{\implicitargumentson}{\boolean{true}}
\newcommand{\@ifnextchar@starorbrace}[2]
\renewcommand{\prd}{\@ifnextchar*{\@iprd}{\@prd}}
\newcommand{\@prd}[1]
  {\@ifnextchar@starorbrace%
    {\prd@parens{#1}}
    {\@ifnextchar\sm{\prd@parens{#1}\@eatsm}{\prd@noparens{#1}}}}
\newcommand{\@prd@parens}{\@ifnextchar*{\@iprd}{\prd@parens}}
\renewcommand{\prd@parens}[1]
  {\@ifnextchar@starorbrace%
    {\@theprd{#1}\@prd@parens}
    {\@ifnextchar\sm{\@theprd{#1}\@eatsm}{\@theprd{#1}}}}
\newcommand{\@theprd}[1]
  {\mathchoice{\@dprd{#1}}{\@tprd{#1}}{\@tprd{#1}}{\@tprd{#1}}}
\renewcommand{\dprd}[1]{\@dprd{#1}\@ifnextchar@starorbrace{\dprd}{}}
\renewcommand{\tprd}[1]{\@tprd{#1}\@ifnextchar@starorbrace{\tprd}{}}
\newcommand{\@theiprd}[1]{\mathchoice{\@diprd{#1}}{\@tiprd{#1}}{\@tiprd{#1}}{\@tiprd{#1}}}
\newcommand{\@iprd}[2]{\@ifnextchar@starorbrace%
  {\@theiprd{#2}\@prd@parens}%
  {\@ifnextchar\sm%
    {\@theiprd{#2}\@eatsm}%
    {\@theiprd{#2}}}}
\def\@tiprd#1{
  \ifthenelse{\implicitargumentson}
    {\@@tiprd{#1}\@ifnextchar\bgroup{\@tiprd}{}}
    {\@tprd{#1}}}
\def\@@tiprd#1{\mathchoice{{\textstyle\prod_{\{#1\}}\,}}{\prod_{\{#1\}}\,}{\prod_{\{#1\}}\,}{\prod_{\{#1\}}\,}}
\def\@diprd{
  \ifthenelse{\implicitargumentson}
    {\@tiprd}
    {\@tprd}}
\def\@eatprd\prd{\@prd@parens}
\def\tfall#1{\forall_{(#1)}\@ifnextchar\bgroup{\,\tfall}{\,}}
\renewcommand{\fall}{\tfall}
\def\texis#1{\exists_{(#1)}\@ifnextchar\bgroup{\,\texis}{\,}}
\renewcommand{\exis}{\texis}
\def\uexis#1{\exists!_{(#1)}\@ifnextchar\bgroup{\,\uexis}{\,}}
\newcommand{\mfam}[2][]{%
  \mathcal{F}_{\default@ctxext #2}^{#1}} % chktex 1
\newcommand{\@mfam@nested}[1]{\@mfam@parens}
\newcommand{\@mfam@parens}[2][]{(\mfam[#1]{#2})}
\newcommand{\mtm}[2][]{%
  \mathcal{T}_{\default@ctxext #2}^{#1}} % chktex 1
\newcommand{\@mtm@nested}[1]{\@mtm@parens}
\newcommand{\@mtm@parens}[2][]{(\mtm[#1]{#2})}
\newcommand{\tfemp}[1]{%
  \typefont{emp}_{\default@ctxext #1}} % chktex 1
\newcommand{\tft}[1]{%
  \typefont{t}_{\default@ctxext #1}} % chktex 1
\newcommand{\tfext}[1]{%
  \typefont{ext}_{\default@ctxext #1}} % chktex 1
\newcommand{\tfsubst}[1]{%
  \typefont{subst}_{\default@ctxext #1}} % chktex 1
\newcommand{\tfwk}[1]{%
  \typefont{wk}_{\default@ctxext #1}} % chktex 1
\newcommand{\tfid}[1]{%
  \typefont{idtm}_{\default@ctxext #1}} % chktex 1
\newcommand{\typefont}{\mathsf} % use 'tf' in command to indicate type font
\renewcommand{\UU}{\typefont{U}}
\renewcommand{\isequiv}{\typefont{isEquiv}}
\renewcommand{\type}{\typefont{U}}
\renewcommand{\susp}{\typefont{\Sigma}}
\newcommand{\hfibfunc}[1]{\typefont{fib}_{#1}}
\newcommand{\strunc}[2]{\Vert#2\Vert_{#1}}
\renewcommand{\modal}{{\ensuremath{\ocircle}}}
\renewcommand{\sslash}{/\!\!/}
\newcommand{\tfindf}[2][]{\typefont{ind}_{#2}^{#1}}
\newcommand{\tfind}[3][]{\tfindf[#1]{#2}(\default@ctxext #3)} % chktex 1 chktex 36
\newcommand{\pt}[1][]{*_{
  \@ifnextchar\undergraph{\@undergraph@nested}
    {\@ifnextchar\underovergraph{\@underovergraph@nested}{}}#1}}
\newcommand{\pts}[1]{{\@graphop@nested{#1}}_{0}}
\newcommand{\edg}[1]{{\@graphop@nested{#1}}_{1}}
\newcommand{\@graphop@nested}[1]
  {\@ifnextchar\ctxext{\@ctxext@nested}
      {\@ifnextchar\undergraph{\@undergraph@nested}
         {\@ifnextchar\underovergraph{\@underovergraph@nested}{}}}
    #1}
\newcommand{\@undergraphtest}[2]{\@ifnextchar({#1}{#2}} % chktex 9
\newcommand{\undergraph}[2]{\@undergraphtest{\@undergraph@parens{#1}{#2}}{\@undergraph{#1}{#2}}}
\newcommand{\@undergraph}[2]{{#2/#1}}
\newcommand{\@undergraph@nested}[3]{\@undergraph@parens{#2}{#3}}
\newcommand{\@undergraph@parens}[2]{(\@undergraph{#1}{#2})}
\newcommand{\underovergraph}[2]{\@underovergraphtest{\@underovergraph@parens{#1}{#2}}{\@underovergraph{#1}{#2}}}
\newcommand{\@underovergraph}[2]{{#2}\,{\parallel}\,{#1}}
\newcommand{\@underovergraphtest}{\@undergraphtest}
\newcommand{\@underovergraph@parens}[2]{(\@underovergraph{#1}{#2})}
\newcommand{\@underovergraph@nested}[3]{\@underovergraph@parens{#2}{#3}}
\tikzset{patharrow/.style={double,double equal sign distance,-,font=\scriptsize}}
\tikzset{description/.style={fill=white,inner sep=2pt}}
\tikzset{fib/.style={->>,font=\scriptsize}}
\tikzset{commutative diagrams/column sep/Huge/.initial=18ex}
\newlength\minalignvsep%
\def\align@preamble{%
   &\hfil
    \setboxz@h{\@lign$\m@th\displaystyle{##}$}%
    \ifnum\row@>\@ne%
    \ifdim\ht\z@>\ht\strutbox@%
    \dimen@\ht\z@%
    \advance\dimen@\minalignvsep%
    \ht\strutbox\dimen@%
    \fi\fi%
    \strut@%
    \ifmeasuring@\savefieldlength@\fi%
    \set@field%
    \tabskip\z@skip%
   &\setboxz@h{\@lign$\m@th\displaystyle{{}##}$}%
    \ifnum\row@>\@ne%
    \ifdim\ht\z@>\ht\strutbox@%
    \dimen@\ht\z@%
    \advance\dimen@\minalignvsep%
    \ht\strutbox@\dimen@%
    \fi\fi%
    \strut@%
    \ifmeasuring@\savefieldlength@\fi%
    \set@field%
    \hfil%
    \tabskip\alignsep@%
}
\newcommand{\modalunit}[1][]{{\eta_{#1}}}
\newcommand{\localization}[1]{\mathcal{L}_{#1}}
\newcommand{\localhit}[2]{\mathcal{J}_{#1}(#2)} % chktex 36
\newcommand{\factorhit}[3]{\mathcal{J}_{#1}^{#2}(#3)} % chktex 36
\newcommand{\ismodal}{\ensuremath{\mathsf{isModal}}}
\tikzset{ar/.style={->,font=\scriptsize}}
\tikzset{std/.style={matrix of math nodes, row sep=3em, column
    sep=4em, text height=1.5ex, text depth=0.25ex}}
\newcommand{\fillers}[1]{\mathsf{fill}(#1)} % chktex 36
\newcommand{\dfill}[1]{\mathsf{dfill}(#1)} % chktex 36
\newcommand{\cL}{\mathcal{L}}
\newcommand{\cR}{\mathcal{R}}
\newcommand{\open}[1]{\mathsf{Op}_{#1}}
\newcommand{\closed}[1]{\mathsf{Cl}_{#1}}
\newcommand{\truncmod}[1]{\mathcal{T}\!\mathit{r}_{#1}}
\newcommand{\shmod}[1]{\mathcal{S}\mathit{h}_{#1}}
\newcommand{\rsu}[1][\UU]{\mathsf{RSU}_{#1}}
\newcommand{\mdl}[1][\UU]{\mathsf{Mdl}_{#1}}
\newcommand{\lex}[1][\UU]{\mathsf{Lex}_{#1}}
\newcommand{\tpl}[1][\UU]{\mathsf{Top}_{#1}}
\newcommand{\accrsu}[1][\UU]{\mathsf{AccRSU}_{#1}}
\newcommand{\accmdl}[1][\UU]{\mathsf{AccMdl}_{#1}}
\newcommand{\acclex}[1][\UU]{\mathsf{AccLex}_{#1}}
\begin{document}

\maketitle

\begin{abstract}
Univalent homotopy type theory (HoTT) may be seen as a language for the category of $\infty$-groupoids.
It is being developed as a new foundation for mathematics and as an
internal language for (elementary) higher toposes.
We develop the theory of factorization systems, reflective subuniverses, and modalities in homotopy type theory, including their construction using a ``localization'' higher inductive type.
This produces in particular the ($n$-connected, $n$-truncated)
factorization system as well as internal presentations of subtoposes,
through lex modalities.
We also develop the semantics of these constructions.
\end{abstract}

\setcounter{tocdepth}{2}
\tableofcontents
%\listoffixmes

% Notes from March meeting:
% * Internal type theory for oo-quasi-toposes. (Wyler?, Elephant?)
% Examples include quasi-topological spaces, assemblies from realizability:
% https://ncatlab.org/nlab/show/quasitopos#examples
% (Cubical assemblies??)
% Can the example of concrete objects be phrased internally in terms
% of modalities?
% https://ncatlab.org/nlab/show/(infinity%2C1)-quasitopos
% Is this paper by Richard relevant?
% http://arxiv.org/abs/1106.5331

% \input{intro}

\section*{Introduction}%\phantomsection\addcontentsline{toc}{section}{Introduction}}

In traditional modal logic, a \emph{modality} is a unary operation on propositions.
The classical examples are $\Box$ (``it is necessary that'') and $\lozenge$ (``it is possible that'').
In type theory and particularly dependent type theory, such as homotopy type theory, where propositions are regarded as certain types, it is natural to extend the notion of modality to a unary operation on \emph{types}.
For emphasis we may call this a ``typal modality'', or a ``higher modality'' since it acts on the ``higher types'' available in homotopy type theory (not just ``sets'' but types containing higher homotopy).

There are many kinds of propositional modalities, but many of them are either \emph{monads} or \emph{comonads}.
Monads and comonads on a poset (such as the poset of propositions) are also automatically \emph{idempotent}, but this is no longer true for more general monads and comonads.
Thus there are many possible varieties of typal and higher modalities.

Typal modalities in non-dependent type theory have a wide range of applications in computer science.
In particular, following the pioneering work of~\cite{moggi:monads}, monadic typal modalities are commonly used to model effects in programming languages.
Non-dependent modal type theory is now a flourishing field with this and many other applications; see~\cite{dpgm:modal-tt} for an overview.

In this paper we take a first step towards the study of higher modalities in \emph{homotopy} type theory, restricting our attention to \emph{idempotent, monadic} ones.
These are especially convenient for a number of reasons.
One is that in homotopy type theory, as in higher category theory, we expect a general monad (or comonad) to require infinitely many higher coherence conditions, which we don't know how to express in the finite syntax of type theory; whereas an \emph{idempotent} one can instead be described using the universal property of a reflector into a subcategory.
(We can still use \emph{particular} non-idempotent monadic modalities, such as the ``partial elements'' monad of~\cite{Partiality,ek:partial}, without making all this coherence explicit, but it is harder to develop a general theory of them.)

Another is that in good situations, an idempotent monad can be extended to all slice categories consistently, and thereby represented ``fully internally'' in type theory as an operation $\modal : \UU\to\UU$ on a type universe.
Idempotent \emph{comonadic} modalities have also been considered in dependent type theory and homotopy type theory (see for instance~\cite{npp:ctx-modal-tt,pr:fib-modal-tt,SchreiberShulman,shulman:bfp-realcohesion}), but they generally require modifying the judgmental structure of type theory.
By contrast, our theory of modalities can be (and has been) formalized in existing proof assistants without modifying the underlying type theory.

Idempotent monadic modalities also include many very important
examples. The $(-1)$-truncation in homotopy type theory is a
higher-dimensional version of the bracket modality, which in 1-category theory
characterizes regular categories~\cite{AwodeyBauer2004}. More
generally, the $n$-truncation modalities are prominent examples of modalities; indeed almost all of the theory of truncation and connectedness in~\cite[Chapter 7]{TheBook} is just a specialization of the theory of a general modality.
More generally, we can produce idempotent monadic modalities by \emph{localization} or \emph{nullification} at small families, using a higher inductive type.
Finally, among idempotent monadic modalities we also find the \emph{left exact} ones, which correspond semantic\-ally to subtoposes.

For the rest of this paper we will say simply \emph{modality} to mean an idempotent monadic modality.
However, this should be regarded as only a local definition; in more general contexts the word ``modality'' should continue to encompass comonadic modalities and other sorts.

In fact, our use of the word ``modality'' will be a little more specific even than this.
If we express internally the most na\"{\i}ve notion of ``idempotent monad on $\UU$'', we obtain a notion that we call a \emph{reflective subuniverse}.
However, many reflective subuniverses that arise in practice, including truncation and left exact modalities (and, in fact, all concrete examples we will consider in this paper), satisfy the further property of being closed under $\Sigma$-types; it is these that we will call \emph{modalities}.
We emphasize this property not just because it holds in many examples, but because it can be equivalently expressed by giving the modal operator a \emph{dependent elimination principle} analogous to that of an inductive type.
This is a very natural thing to ask for when generalizing propositional modalities to typal operations.

The naturalness of this notion of modality is further supported by the fact that it has many equivalent characterizations.
In addition to a reflective subuniverse closed under $\Sigma$-types and a modal operator with a dependent eliminator, a modality can be defined using a ``dependent universal property'', and more interestingly as a stable orthogonal factorization system.
The right class of maps in the factorization system consists of those whose fibers belong to the subuniverse (``modal maps''), while the left class consists of those whose fibers have contractible reflection into the subuniverse (``connected maps'').
The internal nature of the definition means that a \emph{stable} factorization system is entirely determined by the fibers of its right class, which form a modality.\footnote{Non-stable factorization systems are not so determined, although they do have an underlying reflective subuniverse, and most reflective subuniverses can be extended to factorization systems.}
We prove the equivalence of all these definitions in \autoref{sec:modal-refl-subun}, developing along the way some basic theory of reflective subuniverses, connected maps, and factorization systems.

In unaugmented Martin-L\"{o}f type theory we can define a few particular modalities, such as the double-negation modality, and the ``open modality'' associated to any mere proposition.
However, most interesting modalities require higher inductive types for their construction, including the $n$-truncations and the dual ``closed modality'' associated to a proposition.
In \autoref{sec:localization} we give a general construction of modalities using a higher inductive \emph{localization} type: given a family of maps $F:\prd{a:A} B(a) \to C(a)$, a type $X$ is \emph{$F$-local} if the precomposition map $(C(a)\to X) \to (B(a)\to X)$ is an equivalence for all $a:A$, and the \emph{$F$-localization} $\localization{F} X$ is the universal $F$-local type admitting a map from $X$.
We call a modality \emph{accessible} if it can be generated by localization; this is inspired by the corresponding notion in category theory.
Accessible modalities include the $n$-truncation and open and closed modalities, as well as many examples from homotopy theory, where localization is a standard technique; thus we expect them to be a useful tool in the synthetic development of homotopy theory inside type theory.\footnote{Our notion of localization, being \emph{internal}, is a little stronger than the standard sort of localization in homotopy theory; but in many cases it is equivalent.
  The higher inductive construction of localization, when interpreted model-categorically according to the semantics of~\cite{ls:hits}, also appears to be new and may be of independent interest in homotopy theory.}

In general, localization at a family of maps produces a reflective subuniverse (and, in fact, an orthogonal factorization system), but not necessarily a modality.
However, there is a simple condition which ensures that we do get a modality, namely that $C(a)=\unit$ for all $a:A$.
In this case the local types are those for which ``every map $B(a)\to X$ is uniquely constant''; following standard terminology in homotopy theory we call them \emph{$B$-null} and the corresponding localization \emph{$B$-nullification}.
Any accessible modality can be presented as a nullification.

A very important class of modalities that \emph{excludes} the $n$-truncations are the left exact, or \emph{lex}, ones, which we study in \cref{sec:left-exact-modal}.
These have many equivalent characterizations, but the most intuitive is simply that the reflector preserves finite limits.
When homotopy type theory is regarded as an internal language for higher toposes, lex modalities correspond to subtoposes.
In the traditional internal logic of 1-toposes, subtoposes are
represented by \emph{Lawvere-Tierney operators} on the subobject
classifier, which generate a subtopos by internal
sheafification. Goldblatt~\cite{goldblatt2010cover} provides an overview
of the modal logic perspective on these operators on propositions.
Dependent type theory allows us to speak directly about the subtopos as an operation on a type universe (the lex modality), and show internally that any Lawvere-Tierney operator on the universe of propositions gives rise to a lex modality.

There is an additional subtlety here that only arises for $\infty$-toposes and homotopy type theory.
In 1-topos theory, and indeed in $n$-topos theory for any $n<\infty$, every lex modality (subtopos) arises from a Lawvere-Tierney operator; but in $\infty$-topos theory this is no longer true.
The subtoposes that are determined by their behavior on propositions are called \emph{topological} in~\cite{lurie2009higher}, and we appropriate this name for lex modalities of this sort as well.
The dual \emph{cotopological} sort of lex modalities, including the \emph{hypercompletion}, are harder to construct in type theory, but we can at least show that insofar as they exist they behave like their $\infty$-categorical analogues.

When this paper was written, we did not know any condition on a type family $B$ that ensured that $B$-nullification is lex and such that \emph{any} accessible lex modality can be presented by such a $B$.
But as we were preparing it for final publication,~\cite{abfj:lexloc} found such a condition: that $B$ is closed under taking path spaces.
In this case we may refer to $B$-nullification as a \emph{lex nullification}.

\cref{fig:venn} displays in a Venn diagram all the different structures discussed above.
Lex modalities are a subclass of modalities, which are a subclass of reflective subuniverses.
In principle all three structures can be either accessible or non-accessible, although in practice non-accessible ones are very hard to come by; with topological modalities a subclass of the accessible lex ones.
Individual examples are displayed in single boxes, while general classes of examples (obtained by localization and restricted classes thereof) are displayed in double boxes.

\begin{figure}
  \centering
  \begin{tikzpicture}
    \draw (0,0.7) ellipse (6 and 3.8);
    \node at (0,4.2) {\small reflective subuniverses};
    \draw (-1.3,.5) ellipse (4.3 and 3.2);
    \node at (-1.3,3.4) {\small accessible};
    \draw (1.2,.3) ellipse (4.2 and 3.1);
    \node at (2.8,2.7) {\small modalities};
    \draw (1.3,-.5) ellipse (3.7 and 2);
    \node at (3.5,.8) {\small lex};
    \draw (0,-.8) ellipse (2 and 1.3);
    \node at (0,0.2) {\small topological};
    \node[rectangle,draw] at (0,2.5) {\tiny $n$-truncation};
    \node[rectangle,draw,double] at (0,2) {\tiny nullifications};
    \node[rectangle,draw,double] at (1.5,1) {\tiny lex nullifications};
    \node[rectangle,draw,double] at (-4.3,1) {\tiny localizations};
    \node[rectangle,draw] at (3.8,1.7) {\tiny double neg.};
    \node[rectangle,draw] at (-.5,-.5) {\tiny o\smash{p}en};
    \node[rectangle,draw] at (.5,-.5) {\tiny c\smash{l}ose\smash{d}};
    \node[rectangle,draw,double] at (0,-1) {\tiny prop.~nullifications};
    \node[rectangle,draw] at (3,0) {\tiny hypercompletion?};
  \end{tikzpicture}
  \caption{Modalities and related structures}%
  \label{fig:venn}
\end{figure}

Viewing accessible lex modalities as subtoposes, we naturally expect that the subtopos should support its own internal language.
This is true, although we do not prove it precisely; we simply observe that the universe of modal types is closed under many type constructors and admits its own versions of all the others.
In particular, the universe of modal types for an accessible lex modality is \emph{itself} a modal type for the same modality (in fact, this characterizes lex modalities among accessible ones).
Since any $\infty$-topos arises as a subtopos of a presheaf $\infty$-topos, we can essentially reduce the problem of finding univalent internal languages for $\infty$-toposes to that of finding them for presheaf $\infty$-toposes (and of finding universes closed under accessible lex modalities; see \cref{thm:subtopos-model,sec:semantics}).
A similar argument, using judgementally strict idempotent monads, has already been used in the so-called ``cubical stack'' models of type theory~\cite{Stacks,Coquand:stack} (which do not actually in general lie in $\infty$-stack toposes) to prove independence results for homotopy type theory.

We end the main part of the paper with a general ``fracture and gluing'' theorem about modalities: if $\modal$ is any modality and $\lozenge$ is a lex modality that is ``strongly disjoint'' from $\modal$, then the join $\lozenge\lor\modal$ in the poset of modalities can be constructed using a ``pullback fracture square''.
When applied to the open and closed modalities associated to a proposition, this specializes to an internal viewpoint on Artin gluing.
We call it a ``fracture theorem'' since the pullback squares appear formally analogous to the fracture squares in the classical theory of localization and completion at primes, though we do not know of a precise relationship.

In the final part of the paper, \autoref{sec:semantics}, we sketch a semantic interpretation of our theory in terms of comprehension categories and $(\infty,1)$-toposes.
In particular, we show that well-behaved reflective subcategories of $(\infty,1)$-toposes give rise to modalities in their internal languages, while dually modalities give rise to reflective subcategories of syntactic $(\infty,1)$-categories.
In this discussion we ignore the issue of universes, which it is not known how to model semantically in general $(\infty,1)$-toposes (except in a weak sense).

We will freely use the results and the notations from~\cite{TheBook}.
In fact, parts of this work have already appeared as~\cite[sec~7.6--7]{TheBook}.
We generalize much of this section~7.6 to general modalities in our \cref{sec:modal-refl-subun}, which also sharpens the results in~\cite[sec 7.7]{TheBook}.
In particular, we will freely use function extensionality and the univalence axiom, often without comment.

Finally, we note that many of the results in this paper have been formalized in the Coq proof assistant~\cite{HoTT-CPP}.
However, the organization of results in the library is rather different than in this paper.
A rough correspondence is as follows; unless otherwise noted all files are in the \texttt{Modalities/} directory.
\begin{center}
  \begin{tabular}{ll}
    Sections & Library files \\\toprule
    \cref{sec:ssrs} & \texttt{ReflectiveSubuniverse.v} and \texttt{Modality.v}\\
    Examples (\cref{sec:higher-modalities}) & \texttt{Identity.v}, \texttt{Notnot.v}, \texttt{Open.v}, \texttt{Closed.v}, \\ & and \texttt{../HIT/Truncations.v} \\
    \cref{sec:ofs} & \texttt{../Factorization.v} \\
    \cref{sec:sofs2} & \texttt{Modality.v} \\
    \cref{sec:local-types,sec:localizing} & \texttt{Localization.v} \\
    \cref{sec:nullification} & \texttt{Nullification.v} and \texttt{Accessible.v} \\
    \cref{sec:lex-top-cotop} & \texttt{Lex.v} and \texttt{Topological.v} \\
    \cref{sec:fracture} & \texttt{Fracture.v}
  \end{tabular}
\end{center}
There are also some differences in the proof techniques used in the library and in this paper.
In the library, localizations are constructed using ``$\infty$-extendability'' as a characterization of equivalences to avoid function extensionality hypotheses, as described in~\cite{shulman:up-wo-fe}.
In addition, much attention is paid to ensuring appropriate universe polymorphism with parametrized modules; this is described in~\cite[\S5]{HoTT-CPP}.
We will not discuss these issues further here; see the cited references and the comments in the library for more information.

\section{Modalities, reflective subuniverses and factorization systems}\label{sec:modal-refl-subun}

In this section we will introduce the following four notions of modality
and prove that they are all equivalent:
\begin{enumerate}
\item Higher modalities
\item Uniquely eliminating modalities
\item $\Sigma$-closed reflective subuniverses
\item Stable orthogonal factorization systems
\end{enumerate}
After their equivalence has been established, we will call all of them simply \emph{modalities}.

The first three definitions have the following data in common: by a \define{modal operator} we mean a function $\modal:\UU\to\UU$, and by a \define{modal unit} we mean a family of functions $\modalunit^\modal:\prd*{A:\UU}A\to\modal A$.\footnote{In general we write $f:\prd*{x:A} B(x)$ instead of $f:\prd{x:A} B(x)$ to indicate that the argument $x$ of $f$ is implicit.}
Given these data, we say a type $X$ \define{is modal} if $\modalunit[X]:X\to\modal X$ is an equivalence, and we write $\UU_\modal \defeq \sm{X:\UU} \ismodal(X)$ for the \define{subuniverse of modal types}.
More generally, if $\mathcal{M}:\UU\to\prop$ is any predicate on the universe, we write $\UU_{\mathcal{M}} \defeq \sm{X:\UU} \mathcal{M}(X)$.

\begin{defn}\label{defn:highermod}
A \define{higher modality} consists of a modal operator and modal unit together with
\begin{enumerate}
\item for every $A:\UU$ and every dependent type $P:\modal A\to\UU$, a
function
\begin{equation*}
\mathsf{ind}^\modal_A:\big(\prd{a:A}\modal(P(\eta(a)))\big)\to\prd{z:\modal A}\modal(P(z)).
\end{equation*}
\item An identification
\begin{equation*}
\mathsf{comp}^\modal_A(f,x):\id{\mathsf{ind}^\modal_A(f)(\eta(x))}{f(x)}
\end{equation*}
for each $f:\prd{x:A}\modal(P(\eta(x)))$ and $x:A$.
\item For any $x,y:\modal A$ the modal unit $\modalunit[(\id{x}{y})]:\id{x}{y}\to \modal(\id{x}{y})$ is an equivalence.
\end{enumerate}
\end{defn}

\noindent
One might think of eliminating into a $P:\modal A\to \UU_\modal$ directly rather than into $\modal\circ P$ for a $P:\modal A\to \UU$, but in that case we would be unable to show that $\modal A$ is a modal type (\cref{thm:modal-is-modal}).

\begin{defn}\label{defn:modunique}
A \define{uniquely eliminating modality} consists of
a modal operator and modal unit such that the function
\begin{equation*}
\lam{f} f\circ\modalunit[A] : (\prd{z:\modal A}\modal(P(z)))\to(\prd{x:A}\modal(P(\modalunit[A](x))))
\end{equation*}
is an equivalence for any $A$ and any $P:\modal A\to\UU$.
\end{defn}

\begin{defn}\label{defn:ssrs}
A \define{reflective subuniverse} is a family $\ismodal:\UU\to\prop$, together with a
modal operator and modal unit such that $\ismodal(\modal A)$ for
every $A:\UU$, and for every $B:\UU$ satisfying
$\ismodal(B)$, the function
\begin{equation*}
\lam{f} f\circ \modalunit[A]:(\modal A\to B)\to (A\to B)
\end{equation*}
is an equivalence.
A reflective subuniverse is \define{$\Sigma$-closed} if whenever $\ismodal(X)$ and $\ismodal(P(x))$ for all $x:X$, we have $\ismodal(\sm{x:X}P(x))$.
\end{defn}

Note that unlike \cref{defn:highermod,defn:modunique}, in \cref{defn:ssrs} the notion of ``modal type'' is part of the data.
However, we will show in \cref{lem:subuniv-modal} that $\ismodal(A)$ if and only if $\modalunit[A]$ is an equivalence.

\begin{defn}\label{defn:sofs}
An \define{orthogonal factorization system} consists of
predicates $\mathcal{L},\mathcal{R}:\prd*{A,B:\UU} (A\to B)\to\prop$ such that
\begin{enumerate}
\item $\mathcal{L}$ and $\mathcal{R}$ are closed under composition and contain all identities (i.e.\ they are subcategories of the category of types that contain all the objects), and
\item the type $\fact_{\mathcal{L},\mathcal{R}}(f)$ of factorizations
\begin{equation*}
\begin{tikzcd}
A \arrow[rr,"f"] \arrow[dr,swap,"f_{\mathcal{L}}"] & & B \\
& \im_{\mathcal{L},\mathcal{R}}(f) \arrow[ur,swap,"f_{\mathcal{R}}"]
\end{tikzcd}
\end{equation*}
of $f$, with $f_{\mathcal{L}}$ in $\mathcal{L}$ and $f_{\mathcal{R}}$ in $\mathcal{R}$, is contractible.
\end{enumerate}
More precisely, the type $\fact_{\mathcal{L},\mathcal{R}}(f)$ is defined to
be the type of
tuples
\begin{equation*}
(\im_{\mathcal{L},\mathcal{R}}(f),(f_{\mathcal{L}},p),(f_{\mathcal{R}},q),h)
\end{equation*}
consisting of a type $\im_{\mathcal{L},\mathcal{R}}(f)$, a function $f_{\mathcal{L}}:A\to \im_{\mathcal{L},\mathcal{R}}(f)$ with
$p:\mathcal{L}(f_{\mathcal{L}})$, a function $f_{\mathcal{R}}:\im_{\mathcal{L},\mathcal{R}}(f)\to B$ with $q:\mathcal{R}(f_{\mathcal{R}})$, and an identification $h:\id{f}{f_{\mathcal{R}}\circ f_{\mathcal{L}}}$. The type $\im_{\mathcal{L},\mathcal{R}}(f)$ is called
the \define{$(\mathcal{L},\mathcal{R})$-image of $f$}.

A type $X$ is said to be \define{$(\mathcal{L},\mathcal{R})$-modal} if
the map $!:X\to\unit$ is in $\mathcal{R}$ (and hence $!_\mathcal{L}$
is an equivalence).

An orthogonal factorization system is said to be \define{stable} if the class
$\mathcal{L}$ is stable under pullbacks (By
\autoref{lem:ofs_rightstable}, $\mathcal{R}$ is always stable under pullbacks).
\end{defn}

\begin{rmk}
  By univalence, the fact that $\mathcal{L}$ and $\mathcal{R}$ contain all identities implies that they each contain all equivalences.
  Conversely, if $f\in \mathcal{L}\cap\mathcal{R}$, then $(\idfunc,f)$ and $(f,\idfunc)$ are both $(\mathcal{L},\mathcal{R})$-factorizations of $f$, and hence equal; which implies that $f$ is an equivalence.
  Thus, $\mathcal{L}\cap\mathcal{R}$ consists exactly of the equivalences.
\end{rmk}

We now consider a few examples.
Since we will eventually prove all the definitions to be equivalent, we can use any one of them to describe any particular example.

\begin{eg}
  The prime example is the \textbf{$n$-truncation modality} $\truncf n$ as studied in~\cite[Chapter 7]{TheBook}, which we also denote $\truncmod{n}$.
  This can be given as a higher modality, using its induction principle and the fact that $\trunc n A$ is an $n$-type and the identity types of an $n$-type are again $n$-types (indeed, $(n-1)$-types).
  The corresponding stable orthogonal factorization system, consisting of $n$-connected and $n$-truncated maps, is also constructed in~\cite[Chapter 7]{TheBook}; our construction in \cref{thm:sofs_from_ssrs} will be a generalization of this.
\end{eg}

\begin{eg}\label{eg:open}
  Let $Q$ be a mere proposition.
  The \textbf{open modality} determined by $Q$ is defined by $\open Q A = (Q\to A)$, with unit $\modalunit[A](x) = \lam{\nameless}x : A \to (Q \to A)$.
  (We call it ``open'' because semantically, it generalizes the \emph{open subtopos} associated to a subterminal object of a topos, which in turn is so named because in the case of sheaves on a topological space $X$ it specializes to the open subspaces of $X$.)
  To show that this is a higher modality, suppose we have $P: (Q\to A) \to \UU$ and $f:\prd{a:A} Q \to P(\lam{\nameless} a)$.
  Then for any $z:Q\to A$ and $q:Q$ we have $f(z(q),q) : P(\lam{\nameless} z(q))$.
  And since $Q$ is a mere proposition, we have $z(q) = z(q')$ for any $q':Q$, hence $e(z,q) : (\lam{\nameless} z(q)) = z$ by function extensionality.
  This gives
  \[ \lam{z}{q} \trans{e(z,q)}{(f(z(q),q))} : \prd{z:Q\to A} Q \to P(z). \]
  For the computation rule, we have
  \begin{align*}
    (\lam{z}{q} \trans{e(z,q)}{(f(z(q),q))})(\lam{\nameless} a) &= \lam{q} \trans{e(\lam{\nameless} a,q)}{(f(a,q))}\\
    &= \lam{q} f(a,q) = f(a)
  \end{align*}
  by function extensionality, since $e(\lam{\nameless} a,q) = \refl{}$.
  Finally, if $x,y:Q\to A$, then $(x=y) \simeq \prd{q:Q} x(q) = y(q)$, and the map
  \[ \Big(\prd{q:Q} x(q) = y(q)\Big) \to \Big( Q \to \prd{q:Q} x(q) = y(q)\Big) \]
  is (by currying) essentially precomposition with a product projection $Q\times Q\to Q$, and that is an equivalence since $Q$ is a mere proposition.
\end{eg}

\begin{eg}\label{eg:closed}
  Again, let $Q$ be a mere proposition.
  The \textbf{closed modality} determined by $Q$ is defined by $\closed Q A = Q \ast A$, the \emph{join} of $Q$ and $A$ (the pushout of $Q$ and $A$ under $Q\times A$).
  (As for open modalities, closed modalities generalize closed subtoposes, which in turn generalize closed subspaces of topological spaces.)
  We show that this is a $\Sigma$-closed reflective subuniverse.
  Define a type $B$ to be modal if $Q \to \iscontr(B)$, and note that it is indeed the case that $Q\to\iscontr(Q\ast A)$, for any type $A$.
  By the universal property of pushouts, a map $Q \ast A \to B$ consists of a map $f:A\to B$ and a map $g:Q\to B$ and for any $a:A$ and $q:Q$ an identification $p:f(a)=g(q)$.
  But if $Q \to \iscontr(B)$, then $g$ and $p$ are uniquely determined, so this is just a map $A\to B$.
  Thus $(\closed Q A \to B) \to (A\to B)$ is an equivalence, so we have a reflective subuniverse.
  It is $\Sigma$-closed since the dependent sum of a contractible family of types over a contractible base is contractible.
\end{eg}

\begin{eg}\label{eg:dneg}
  The \textbf{double negation modality} is defined by $A\mapsto \neg\neg A$, i.e.\ $(A\to \emptyt)\to \emptyt$, with $\modalunit(a) = \lam{g} g(a)$.
  We show that this is a uniquely eliminating modality.
  Since the map $\lam{f}f\circ \modalunit[A]$ that must be an equivalence has mere propositions as domain and codomain, it suffices to give a map in the other direction.
  Thus, let $P: \neg\neg A \to \UU$ and $f:\prd{a:A} \neg \neg P(\lam{g} g(a))$; given $z:\neg\neg A$ we must derive a contradiction from $g:\neg P(z)$.
  Since we are proving a contradiction, we can strip the double negation from $z$ and assume given an $a:A$.
  And since $\neg\neg A$ is a mere proposition, we have $z = \lam{g} g(a)$, so that we can transport $f(a)$ to get an element of $\neg\neg P(z)$, contradicting $g$.
\end{eg}

\begin{eg}
  The \textbf{trivial modality} is the identity function on $\UU$.
  It coincides with $\open \top$ and with $\closed\bot$.

  Dually, the \textbf{zero modality} sends all types to $\unit$.
  It is equivalently the $(-2)$-truncation, and coincides with $\open\bot$ and with $\closed \top$.
\end{eg}

\paragraph*{Summary.}
In each of \autoref{defn:highermod,defn:modunique,defn:ssrs,defn:sofs}
we have defined what it means for a type to be modal. In each case, being
modal is a family of mere propositions indexed by the universe, i.e.~a subuniverse.
We will show in \autoref{thm:subuniv-highermod,thm:subuniv-modunique,thm:subuniverse-rs,thm:subuniv-sofs} that each kind of structure is completely determined by this subuniverse.
(\autoref{thm:subuniverse-rs} is more general, not requiring $\Sigma$-closedness.)

It follows that the type of all modalities of each
kind is a subset of the set $\UU\to\prop$ of all subuniverses, and in particular is a set.
This makes it easier to establish
the equivalences of the different kinds of modalities.
It suffices
to show that any modality of one kind determines a modality of the next kind
with the same modal types, which we will do as follows:
\begin{center}
\begin{tikzcd}
  & \text{higher modality} \ar[dr,bend left,"\text{\autoref{thm:modunique_from_highermod}}"] \\
  \parbox{3cm}{\centering stable factorization system} \ar[ur,bend left,"\text{\autoref{thm:highermod_from_sofs}}"] &&
  \parbox{3cm}{\centering uniquely eliminating modality} \ar[dl,bend left,"\text{\autoref{thm:ssrs_from_modunique}}"] \\
  & \parbox{3cm}{\centering $\Sigma$-closed reflective subuniverse} \ar[ul,bend left,"\text{\autoref{thm:sofs_from_ssrs}}"]
\end{tikzcd}
\end{center}
Before \autoref{thm:sofs_from_ssrs} we take the opportunity to develop a bit more theory of reflective subuniverses, including closure under identity types (\autoref{lem:rs_idstable}) and dependent products
(\autoref{lem:modal-Pi}), along with several equivalent characterizations of $\Sigma$-closedness (\autoref{thm:ssrs-characterize}).

Of these equivalences, the most surprising is that a stable factorization system is uniquely determined by its underlying reflective subuniverse of types.
This is false for stable factorization systems on arbitrary categories. However, an analogous fact is true in classical set-based mathematics for stable factorization systems on the category of sets (although in that case there are much fewer interesting examples). It is this fact about the category of sets which is analogous to the statement we prove \emph{in type theory} about factorization systems \emph{on the category of types}.

We will also see in \cref{sec:semantics} that when type theory is interpreted in a higher category, the data of a reflective subuniverse or modality has to be interpreted ``fiberwise'', giving a richer structure than a single reflective subcategory.

\subsection{Higher modalities}%
\label{sec:higher-modalities}

We start by showing that a higher modality is determined by its modal types, and gives rise to a uniquely eliminating modality.

\begin{lemma}\label{thm:modal-is-modal}
If $\modal$ is a higher modality, then any type of the form $\modal X$ is modal.
\end{lemma}

\begin{proof}
  We want to show that the modal unit $\modalunit[\modal X]:\modal X\to\modal\modal X$
is an equivalence. By the induction principle and the computation rule for
higher modalities, we find a function $f:\modal \modal X\to\modal X$ with
the property that $f\circ \modalunit[\modal X]\htpy\idfunc[\modal X]$. We wish to
show that we also have $\modalunit[\modal X]\circ f\htpy\idfunc$. Since identity
types of types of the form $\modal Y$ are declared to be modal, it is
equivalent to find a term of type
\begin{equation*}
\prd{z:\modal \modal X}\modal(\modalunit[\modal X](f(z))=z).
\end{equation*}
Now we are in the position to use the induction principle of higher modalites
again, so it suffices to show that $\modalunit(f(\modalunit(z)))=\modalunit(z)$
for any $z:\modal X$. This follows from the fact that $f\circ\modalunit=\idfunc$.
\end{proof}

\begin{thm}\label{thm:subuniv-highermod}
The data of two higher modalites $\modal$ and $\modal'$
are identical if and only if they have the same modal types.
\end{thm}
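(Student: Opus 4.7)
The forward direction is immediate. For the converse, the plan is to show that once the predicate ``is modal'' is fixed, every remaining component of the higher modality structure is forced. The key preliminary step will be to establish a universal property for $(\modal X,\modalunit[X])$: that precomposition with $\modalunit[X]$ is an equivalence $(\modal X\to Y)\simeq (X\to Y)$ for every modal $Y$. Existence of extensions is the non-dependent instance of the induction principle (with $P$ constant at $Y$), composed with the inverse of $\modalunit[Y]$. Uniqueness says that any two $g_1,g_2:\modal X\to Y$ agreeing after precomposition with $\modalunit[X]$ are equal; I would prove this by inducting on the family $z\mapsto (g_1(z)=g_2(z))$, which is pointwise modal because $Y\simeq \modal Y$ identifies $Y$ with a type of the form $\modal W$ to which axiom~(iii) applies.

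With this universal property available for both $\modal$ and $\modal'$, and with $\modal X$ being $\modal$-modal by \autoref{thm:modal-is-modal} hence also $\modal'$-modal by the assumed coincidence of modal types (and symmetrically), I can extend $\modalunit'[X]$ along $\modalunit[X]$ to obtain a map $\modal X\to\modal' X$, and symmetrically $\modal' X\to\modal X$. The uniqueness half of the universal property forces both composites to equal the identity, so these maps assemble into an equivalence $\modal X\simeq\modal' X$ commuting with the units. Function extensionality and univalence then promote this pointwise equivalence to an equality $\modal = \modal'$ of modal operators carrying $\modalunit$ to $\modalunit'$.

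It remains to match the induction and computation data and axiom~(iii). The latter is pointwise a mere proposition, so agreement is automatic. For the former, I would argue that, given axiom~(iii), the induction-plus-computation data is uniquely determined: any two candidate sections of precomposition agree by the same extension-uniqueness argument, now performed inside $\modal$-images. Equivalently, the combined data amount to the mere proposition that
\[ \lam{f} f \circ \modalunit[A] : \Big(\prd{z:\modal A}\modal(P(z))\Big)\to\Big(\prd{x:A}\modal(P(\modalunit[A](x)))\Big) \]
is an equivalence, i.e.\ the defining condition of a uniquely eliminating modality, which \autoref{thm:modunique_from_highermod} will show to hold for any higher modality. The main obstacle I anticipate is precisely the uniqueness half of the universal property in the first paragraph: axiom~(iii) directly asserts modality only of identity types of types of the form $\modal W$, so one must first propagate this fact to arbitrary modal types via the equivalence $Y\simeq\modal Y$ before the extension-uniqueness argument can run.
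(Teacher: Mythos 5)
Your proof is correct, and while it rests on the same two underlying facts as the paper's --- the induction principle applied to identity-type families (which are modal by axiom~(iii) once that axiom is transported along $Y\simeq\modal Y$ to arbitrary modal $Y$, as you rightly flag) and the uniqueness of reflections --- it decomposes the argument differently at the hardest point. The paper proves directly that, for fixed $(Y,p,\pi)$, the eliminator-plus-computation data $(I,C)$ forms a mere proposition: it builds a homotopy $J(s,y):I(s,y)=I'(s,y)$ by induction, arranges $J(s,\pi(x))=\ct{C(s,x)}{\inv{C'(s,x)}}$, and then checks the coherence $\trans{J(s)}{C(s)}=C'(s)$ by an explicit 2-path computation. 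You instead observe that, modulo function extensionality, the pair $(\mathsf{ind},\mathsf{comp})$ is precisely an element of $\mathsf{rinv}(\lam{s}s\circ\modalunit[A])$, a type that is contractible as soon as the precomposition map is an equivalence --- which \cref{thm:modunique_from_highermod} (whose proof does not depend on the present theorem, so the forward reference is harmless) guarantees whenever the data exists; ``inhabited implies contractible'' then gives propositionality. This buys you a cleaner argument that avoids the explicit coherence check, at the cost of first establishing the non-dependent universal property; your treatment of that step is sound (a section together with the injectivity obtained by inducting on $z\mapsto(g_1(z)=g_2(z))$ yields a two-sided inverse, hence an equivalence), and your subsequent uniqueness of $(\modal X,\modalunit[X])$ is essentially \cref{lem:reflective_uniqueness} together with the second half of the paper's own proof.
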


\begin{proof}
Another way of stating this is that the function from the type of \emph{all}
modalities on $\UU$ to the type $\UU\to\prop$ of predicates on $\UU$, given
by mapping a modality to the predicate $\ismodal$, is an embedding. Thus, we
need to show that for any predicate $\mathcal{M}:\UU\to\prop$, we can find at
most one modality for which $\mathcal{M}$ is the class of modal types.

To be precise, consider for any $\mathcal{M}:\UU\to\prop$ and $X:\UU$, the type
% \begin{multline}
%   \label{eq:modal-uniq}
%   \sm{Y:\UU}{p:\mathcal{M}(Y)}{\pi:X\to Y}\\
%   \sm{I:\prd{P:Y\to\UU_{\mathcal{M}}}(\prd{x:X} P(\pi(x)))\to(\prd{y:Y} P(y))}
%   {f:\prd{y:Y} P(y)} I_P(f\circ \pi) = f.
% \end{multline}
of tuples $(Y,p,\pi,I,C)$ such that
\begin{itemize}
\item $Y$ is a type.
\item $p:\mathcal{M}(Y)$.
\item $\pi:X\to Y$.
\item $I_P:(\prd{x:X} P(\pi(x)))\to(\prd{y:Y} P(y))$ for any $P:Y\to\UU_{\mathcal{M}}$.
\item $C$ witnesses that each $I_P$ is a right inverse of precomposing with $\pi$.
\end{itemize}
We will show that this type is a mere proposition.
First, we show that the
type of pairs $(I,C)$, with $I$ and $C$ of the indicated types, is a mere
proposition for any $(Y,p,\pi)$. After that, we show that the type of triples
$(Y,p,\pi)$ is also a mere proposition. These two facts combined prove the
statement.

Consider a type $Y$ satisfying $\mathcal{M}$, and a function $\pi:X\to Y$, and
let $(I,C)$ and $(I',C')$ be two terms witnessing that $Y$ satisfies an induction
principle with a computation rule. We want to show that $(I,C)=(I',C')$, and of
course it suffices to show that $(I(s),C(s))=(I'(s),C(s))$ for any
$P:Y\to\UU_{\mathcal{M}}$ and $s:\prd{x:X}P(\pi(x))$.

To show that $I(s,y)=I'(s,y)$ for any $y:Y$, we use
the induction principle $(I,C)$. So it suffices to show that
$I(s,\pi(x))=I'(s,\pi(x))$. Both of these terms are equal to $s(x)$. Thus,
we obtain a proof $J(s,y)$ that $I(s,y)=I'(s,y)$, with the property that
$J(s,\pi(x))=\ct{C(s,x)}{\inv{C'(s,x)}}$.
Now we need to show that $\trans{J(s)}{C(s)}=C'(s)$, which is equivalent
to the property we just stated. This finishes the proof that the type of
the induction principle and computation rule is a mere proposition.

It remains to show that $(Y,\pi)=(Y',\pi')$, provided that $Y$ and $Y'$ are both
in $\mathcal{M}$, and that both sides satisfy
the induction principle and computation rule. It suffices to find an equivalence
$f:Y\to Y'$ such that $f\circ \pi=\pi'$.

From the induction principles of $Y$ resp. $Y'$, we obtain a function
$f:Y\to Y'$ with the property that $f\circ \pi=\pi'$, and a function
$f':Y'\to Y$ with the property that $f'\circ \pi'=\pi$.
To show that $f'\circ f=\idfunc$ we use the induction principle
of $Y$. Since the type $f'(f(y))=y$ is in $\mathcal{M}$, it suffices to show that
$f'(f(\pi(y)))=\pi(y)$. This readily follows from the defining properties of $f$
and $f'$. Similarly, we have $f\circ f'=\idfunc$.
\end{proof}

\begin{thm}\label{thm:modunique_from_highermod}
A higher modality is a uniquely eliminating modality, with the
same modal types.
\end{thm}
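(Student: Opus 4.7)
The plan is to exhibit the induction operator $\mathsf{ind}^\modal_A$ itself as a (quasi-)inverse to the precomposition map $\lam{f} f\circ\modalunit[A]$. Since the modal operator and unit are shared between the two definitions, the notion of ``modal type'' (i.e.\ those $X$ for which $\modalunit[X]$ is an equivalence) trivially coincides, so only the equivalence in \autoref{defn:modunique} needs verification.

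First I would check the easy direction, that $\mathsf{ind}^\modal_A$ is a section: given $f:\prd{x:A}\modal(P(\modalunit[A](x)))$, function extensionality reduces $\mathsf{ind}^\modal_A(f)\circ\modalunit[A] = f$ to a pointwise identity, which is exactly the computation rule $\mathsf{comp}^\modal_A(f,x)$.

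The main step is the retraction: for $s:\prd{z:\modal A}\modal(P(z))$, I would show $\mathsf{ind}^\modal_A(s\circ\modalunit[A]) = s$. By function extensionality, it suffices to construct a term of $\prd{z:\modal A}Q(z)$, where
\[ Q(z) \defeq \big(\mathsf{ind}^\modal_A(s\circ\modalunit[A])(z) = s(z)\big). \]
Each $Q(z)$ is an identity type in $\modal(P(z))$, and by clause (iii) of \autoref{defn:highermod} applied to the type $\modal(P(z))$, the map $\modalunit[Q(z)] : Q(z) \to \modal(Q(z))$ is an equivalence. Hence it suffices to construct $\prd{z:\modal A}\modal(Q(z))$, for which I would apply $\mathsf{ind}^\modal_A$ a second time to the family $z\mapsto \modal(Q(z))$. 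The required input of type $\prd{x:A}\modal(Q(\modalunit[A](x)))$ is obtained by applying $\modalunit$ to the equation $\mathsf{ind}^\modal_A(s\circ\modalunit[A])(\modalunit[A](x)) = s(\modalunit[A](x))$, and this equation is a direct instance of $\mathsf{comp}^\modal_A$ applied to $s\circ\modalunit[A]$ at $x$.

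The only delicate point is the shuffling between $Q(z)$ and $\modal(Q(z))$ so that the induction operator applies, which is precisely what clause (iii) is designed to provide; I do not expect any other obstacle. Putting the section and retraction together yields that $\lam{f}f\circ\modalunit[A]$ is an equivalence, as required by \autoref{defn:modunique}.
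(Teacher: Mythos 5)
Your proposal is correct and follows essentially the same route as the paper: the computation rule gives the section, and the retraction is obtained by applying the induction principle a second time to the family of identity types $\mathsf{ind}^\modal_A(s\circ\modalunit[A])(z)=s(z)$, using clause (iii) of \autoref{defn:highermod} to pass between $Q(z)$ and $\modal(Q(z))$. You merely spell out more explicitly the step the paper compresses into ``hence it suffices to find a function of type $\prd{a:A}\dots$''.
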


\begin{proof}
Let $\modal$ be a modality with modal units $\modalunit[A]$. Our goal is to show
that the pre-composition map
\begin{equation*}
\lam{s}s\circ\modalunit[A]:(\prd{z:\modal A}\modal(P(z)))\to(\prd{a:A}\modal(P(\modalunit[A](a))))
\end{equation*}
is an equivalence for each $A:\UU$ and $P:\modal A\to\UU$.
By the given induction principle and computation rule, we obtain a
right inverse $\mathsf{ind}^\modal_A$ of $\blank\circ\modalunit[A]$.

To show that it is a left inverse, consider $s:\prd{z:\modal A}\modal(P(z))$.
We need to find a homotopy
\begin{equation*}
\prd{z:\modal A}\id{s(z)}{\mathsf{ind}^\modal_A(s\circ \modalunit_A)(z)}.
\end{equation*}
By assumption we have that $P(x)$ is
modal for each $z:\modal A$ and hence it follows that $\id{s(x)}{\mathsf{ind}^\modal_A(s\circ \modalunit_A)(x)}$
is modal for each $x$. Hence it suffices to find a function of type
\begin{equation*}
\prd{a:A}\id{s(\modalunit_A(a))}{\mathsf{ind}^\modal_A(s\circ \modalunit_A)(\modalunit_A(a))}.
\end{equation*}
This follows straight from the computation rule of higher modalities.
\end{proof}

\subsection{Uniquely eliminating modalities}%
\label{sec:uniq-elim}

Next, we show that a uniquely eliminating modality is determined by its modal types, and gives rise to a $\Sigma$-closed reflective subuniverse.

\begin{lemma}
Given a uniquely eliminating modality, $\modal X$ is modal for any type $X$.
\end{lemma}

\begin{proof}
Using the elimination principle of $\modal \modal X$, we find a function
$f:\modal \modal X\to\modal X$ and an identification $f\circ\modalunit[\modal X]=\idfunc[\modal X]$.
By uniqueness, the function
\[ (\modal \modal X\to\modal \modal X) \to (\modal X\to\modal \modal X) \]
is an equivalence, and hence its fiber over $\modalunit[\modal X]$:
\begin{equation*}
\sm{g:\modal \modal X\to\modal \modal X} g\circ\modalunit[\modal X]=\modalunit[\modal X]
\end{equation*}
is contractible. Since both $\idfunc[\modal \modal X]$ and $\modalunit[\modal X]\circ f$
are in this type (with suitable identifications), we find that $f$ is also the
right inverse of $\modalunit[\modal X]$. This shows that $\modalunit[\modal X]$ is an
equivalence, so $\modal X$ is modal.
\end{proof}

\begin{thm}\label{thm:subuniv-modunique}
The data of two uniquely eliminating modalities $\modal$ and $\modal'$ are equivalent if and only if both have the same modal types.
\end{thm}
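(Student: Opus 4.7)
The plan follows the same strategy as the proof of \autoref{thm:subuniv-highermod}: we show that the forgetful map from uniquely eliminating modalities to subuniverses is an embedding, by fixing a predicate $\mathcal{M}:\UU\to\prop$ and proving that the type of uniquely eliminating modalities whose class of modal types coincides with $\mathcal{M}$ is a mere proposition. The elimination data itself is propositional because ``being an equivalence'' is a mere proposition, and $\prd{}{}$'s of mere propositions are mere propositions; so the real task is to show that the pair $(\modal,\modalunit)$ is determined up to unique identification by the class of modal types. By function extensionality and univalence it is enough, for each $X:\UU$, to produce an equivalence $f:\modal X\simeq\modal' X$ satisfying $f\circ\modalunit[X]=\modalunit'[X]$ (and then check this gives a contractible total space).

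The first step is to derive from unique elimination a reflective-subuniverse-style universal property: if $B$ is modal for $\modal'$ (so $\modalunit'[B]$ is an equivalence), then precomposition $(\modal' A\to B)\to (A\to B)$ is an equivalence. This follows because unique elimination with constant family $P=\lam{z}B$ gives that $(\modal' A\to\modal' B)\to(A\to\modal' B)$ is an equivalence, and postcomposing with $(\modalunit'[B])^{-1}$ converts this to the desired statement. By the preceding lemma $\modal X$ is modal for $\modal$, and by hypothesis the two modalities have the same modal types, so $\modal X$ is also modal for $\modal'$; symmetrically $\modal' X$ is modal for $\modal$. Applying the universal property to $\modalunit'[X]:X\to\modal' X$ produces $f:\modal X\to\modal' X$ with $f\circ\modalunit[X]=\modalunit'[X]$, and applying the symmetric version yields $g:\modal' X\to\modal X$ with $g\circ\modalunit'[X]=\modalunit[X]$.

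To show these are mutually inverse, consider $f\circ g:\modal' X\to\modal' X$. Both $f\circ g$ and $\idfunc[\modal' X]$ satisfy $h\circ\modalunit'[X]=\modalunit'[X]$: for the composite this follows by chasing $f\circ g\circ\modalunit'[X]=f\circ\modalunit[X]=\modalunit'[X]$. Since $\modal' X$ is modal for $\modal'$, the universal property derived above makes precomposition $(\modal' X\to\modal' X)\to(X\to\modal' X)$ an equivalence, so $f\circ g=\idfunc$; symmetrically $g\circ f=\idfunc$. Thus $f$ is an equivalence with the required compatibility, and univalence turns this into an identification $(\modal X,\modalunit[X])=(\modal' X,\modalunit'[X])$.

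The main obstacle is the mismatch between the formal statement of unique elimination, which only gives an equivalence into types of the form $\modal(P(z))$, and what we want, which is an equivalence into an arbitrary modal type; this is bridged by the first step above, using the fact established in the preceding lemma that modal types are fixed points of $\modal$ up to the unit. A minor subsidiary check is that the identification on pairs $(\modal X,\modalunit[X])$ assembled from these componentwise equivalences is itself unique, but this is routine: the space of compatible equivalences to a fixed modal target is contractible by the same universal property, so one obtains a contractible total space and hence a mere proposition at the outer level as desired.
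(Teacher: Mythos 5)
Your proof is correct and follows essentially the same route as the paper's: reduce to showing that the type of elimination data with a fixed class of modal types is a mere proposition, build maps in both directions from the universal property, and use uniqueness of extensions to see they are inverse. The only difference is that you make explicit the bridging step from elimination into families $\modal(P(z))$ to the non-dependent universal property for arbitrary modal targets, which the paper's proof leaves implicit when it invokes $H$ to produce $f:Y\to Y'$.
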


\begin{proof}
We need to show that the type of uniquely eliminating modalities
with a given class $\mathcal{M}:\UU\to\prop$ of modal types
is a mere proposition. Since the types of the form $\modal X$ are modal,
it suffices to show that for any class $\mathcal{M}
:\UU\to\prop$ and any type $X$, the type of tuples $(Y,p,\pi,H)$ is a mere proposition, where:
\begin{itemize}
\item $Y:\UU$.
\item $p:\mathcal{M}(Y)$.
\item $\pi:X\to Y$.
\item For each $P$, $H_P$ witnesses that the function
  \begin{equation*}
    (\lam{s}s\circ \pi):(\prd{y:Y}\modal(P(y)))\to(\prd{x:X}\modal(P(\pi(x))))
  \end{equation*}
  is an equivalence.
\end{itemize}
Let $(Y,p,\pi,H)$ and $(Y',p',\pi',H')$ be such tuples. To show that they are
equal, it suffices to show that $(Y,\pi)=(Y',\pi')$ because the other things
in the list are terms of mere propositions. Furthermore, showing that
$(Y,\pi)=(Y',\pi')$ is equivalent to finding an equivalence $f:\eqv{Y}{Y'}$ with
the property that $f\circ\pi=\pi'$. By $H$, there is such a function, and by
$H'$ there is a function $f':Y'\to Y$ such that $f'\circ\pi'=\pi$. Now the
uniqueness gives that $f'\circ f$ is the only function from $Y$ to $Y$ such
that $f'\circ f\circ\pi=\pi$ and of course $\idfunc[Y]$ is another such function.
Therefore it follows that $f'\circ f=\idfunc$, and similarly it follows that
$f\circ f'=\idfunc$.
\end{proof}

\begin{thm}\label{thm:ssrs_from_modunique}
Any uniquely eliminating modality determines a $\Sigma$-closed reflective
subuniverse with the same modal types.
\end{thm}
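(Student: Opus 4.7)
The plan is to take $\ismodal(A)\defeq \isequiv(\modalunit[A])$ as the subuniverse predicate and verify each clause of a $\Sigma$-closed reflective subuniverse; ``the same modal types'' then holds by construction. This predicate is mere-propositional, and $\ismodal(\modal A)$ for every $A:\UU$ is precisely the content of the preceding lemma. For the reflection universal property, given a modal type $B$, I apply unique elimination to the constant family $P(z)\defeq B$ on $\modal A$ to obtain that precomposition with $\modalunit[A]$ is an equivalence $(\modal A\to\modal B)\simeq (A\to\modal B)$; postcomposing with the inverse of $\modalunit[B]$, which is an equivalence by hypothesis on $B$, converts this into the required equivalence $(\modal A\to B)\simeq (A\to B)$.

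For $\Sigma$-closedness, suppose $X$ is modal and $P:X\to\UU$ is pointwise modal; I must show $\Sigma\defeq \sm{x:X}P(x)$ is modal by exhibiting a two-sided inverse to $\modalunit[\Sigma]$. First, functoriality of $\modal$ (itself a consequence of unique elimination) applied to the first projection $\pi_1:\Sigma\to X$ gives a map $\modal\pi_1:\modal\Sigma\to\modal X$; composing with the inverse of the equivalence $\modalunit[X]$ yields $r:\modal\Sigma\to X$ satisfying $r\circ\modalunit[\Sigma]\htpy \pi_1$. The family $Q(z)\defeq P(r(z))$ on $\modal\Sigma$ is again pointwise modal, so unique elimination, combined with $Q(z)\simeq \modal Q(z)$, gives an equivalence
\[ \Big(\prd{z:\modal\Sigma} P(r(z))\Big)\simeq \Big(\prd{w:\Sigma} P(\pi_1(w))\Big). \]
Transferring $\lam{w}\pi_2(w)$ backward along this equivalence produces $f:\prd{z:\modal\Sigma} P(r(z))$, and I set $g(z)\defeq (r(z),f(z))$. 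The computation rule built into the equivalence immediately gives $g\circ\modalunit[\Sigma]\htpy \idfunc[\Sigma]$.

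The main obstacle is the remaining direction $\modalunit[\Sigma]\circ g\htpy \idfunc[\modal\Sigma]$, since the uniquely-eliminating axioms do not directly give that identity types of $\modal\Sigma$ are modal. The trick is to work \emph{globally}: both $\modalunit[\Sigma]\circ g$ and $\idfunc[\modal\Sigma]$ are maps $\modal\Sigma\to\modal\Sigma$ which, after precomposition with $\modalunit[\Sigma]$, become equal to $\modalunit[\Sigma]$ (on the left by the computation rule just established, on the right trivially); since unique elimination on the constant modal family $\modal\Sigma$ makes precomposition with $\modalunit[\Sigma]$ an equivalence, the two maps coincide. This completes the proof that $\modalunit[\Sigma]$ is an equivalence, so that $\ismodal(\Sigma)$.
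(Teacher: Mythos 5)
Your proof is correct and follows essentially the same route as the paper's: the reflective-subuniverse structure comes from unique elimination at constant families, and $\Sigma$-closedness is shown by building an explicit candidate inverse to $\modalunit[\Sigma]$ whose left-inverse property holds by the computation rule and whose right-inverse property follows from the injectivity (contractibility of fibers) of precomposition with $\modalunit[\Sigma]$. The only difference is bookkeeping — the paper assembles its inverse $\varphi$ from two direct applications of the dependent eliminator on $\sm{z:\modal X}\modal(P(z))$, whereas you first extract $r$ via functoriality and modality of $X$ and then eliminate into the pulled-back family $P\circ r$ — which is an equivalent construction.
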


\begin{proof}
It is immediate from the definition of uniquely eliminating modalities
that every map $f:A\to B$ into a modal type $B$ has a homotopy unique extension to $\modal A$
along the modal unit:
\begin{equation*}
\begin{tikzcd}
A \arrow[dr,"f"] \arrow[d,swap,"\modalunit_A"] \\ \modal A \arrow[r,densely dotted,swap,"\tilde f"] & B.
\end{tikzcd}
\end{equation*}
Since the types of the form $\modal X$ are modal, we obtain a reflective subuniverse.
It remains to verify  that the type $\sm{z:\modal X}\modal(P(z))$ is modal for
any type $X$ and $P:X\to\UU$. We have the function
\begin{equation*}
\varphi\defeq\lam{m}\pairr{f(m),g(m)}:\modal(\sm{z:\modal X}\modal(P(z)))\to\sm{z:\modal X}\modal(P(z)),
\end{equation*}
where
\begin{alignat*}{2}
f & \defeq \ind{\modal}(\lam{x}{u} x) && : \modal(\sm{z:\modal X}\modal(P(z)))\to \modal X \\
g & \defeq \ind{\modal}(\lam{x}{u} u) && : \prd{w:\modal(\sm{z:\modal X}\modal(P(z)))} \modal(P(f(w))).
\end{alignat*}
Our goal is to show that $\varphi$ is an inverse to the modal unit.

Note that
\begin{equation*}
\varphi(\modalunit(x,y)) \jdeq \pairr{f(\modalunit(x,y)),g(\modalunit(x,y))} \jdeq \pairr{x,y},
\end{equation*}
so we see immediately that $\varphi$ is a left inverse of $\modalunit$.

To show that $\varphi$ is a right inverse of $\modalunit$, note that the type
of functions $h$ fitting in a commuting triangle of the fom
\begin{equation*}
\begin{tikzcd}[column sep=-3em]
\modal(\sm{z:\modal X}\modal(P(z))) \arrow[rr,densely dotted,"h"] & & \modal(\sm{z:\modal X}\modal(P(z))) \\
\phantom{\modal(\sm{z:\modal X}\modal(P(z)))} & \sm{z:\modal X}\modal(P(z)) \arrow[ul,"\modalunit"] \arrow[ur,swap,"\modalunit"] & \phantom{\modal(\sm{z:\modal X}\modal(P(z)))}
\end{tikzcd}
\end{equation*}
is a fiber over $\modalunit$ of a precomposition equivalence, and hence contractible.
Since this type also contains the identity function, it suffices
to show that $(\modalunit\circ\varphi)\circ\modalunit=\modalunit$; but this follows
from the fact that $\varphi$ is a left inverse of the modal unit.
\end{proof}

\subsection{\texorpdfstring{$\Sigma$}{Σ}-closed reflective subuniverses}%
\label{sec:ssrs}

Now we study reflective subuniverses in a bit more detail, and end by
showing that $\Sigma$-closed ones give rise to stable factorization
systems. $\Sigma$-closure is used in \autoref{thm:sofs_from_ssrs} to
show that left maps and right maps are closed under composition.

\subsubsection{Properties of reflective subuniverses}%
\label{sec:prop-rfsu}

\begin{lemma}\label{lem:reflective_uniqueness}
  For any $\mathcal{M}:\UU\to\prop$ and any type $X$, the type of triples $(Y,f,I)$ consisting of
  \begin{itemize}
  \item $Y:\UU_{\mathcal{M}}$,
  \item $f:X\to Y$, and
  \item $I:\prd{Z:\UU_{\mathcal{M}}}\isequiv(\lam{g}g\circ f:(Y\to Z)\to(X\to Z))$
  \end{itemize}
  is a mere proposition.
\end{lemma}

\begin{proof}
Consider $(Y,f,I)$ and $(Y',f',I')$ of the described type. Since $I$ and $I'$
are terms of a mere proposition, it suffices to show that $(Y,f)=(Y',f')$. In
other words, we have to find an equivalence $g:Y\to Y'$ such that $g\circ f'=f$.

By $I(Y')$, the type of
pairs $(g,h)$ consisting of a function $g:Y\to Y'$ such that $h:g\circ f=f'$ is contractible. By
$I'(Y)$, the type of pairs $(g',h')$ consisting of a function $g':Y'\to Y$
such that $h':g'\circ f'=f$ is contractible.

Now $g'\circ g$ is a function such that $g'\circ g\circ f=g'\circ f'=f$, as
is $\idfunc[Y]$. By contractibility, it follows that $g'\circ g=\idfunc[Y]$.
Similarly, $g\circ g'=\idfunc[Y']$.
\end{proof}

\begin{thm}\label{thm:subuniverse-rs}
The data of any two reflective subuniverses with the same modal types are the same.
\end{thm}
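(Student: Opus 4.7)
The plan is to show that the type of reflective subuniverse structures with a given predicate $\ismodal$ is a mere proposition, by reducing to the pointwise uniqueness already furnished by \cref{lem:reflective_uniqueness}.

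First, I would unpack the data. Once $\ismodal : \UU \to \prop$ is fixed, a reflective subuniverse consists of a modal operator $\modal : \UU \to \UU$, a modal unit $\modalunit : \prd*{A:\UU} A \to \modal A$, and two further families of proofs: that $\ismodal(\modal A)$ for every $A$, and that precomposition with $\modalunit_A$ is an equivalence $(\modal A \to B) \to (A \to B)$ for every $B$ with $\ismodal(B)$. Since $\ismodal$ is already $\prop$-valued and $\isequiv$ is always a mere proposition, both of these trailing families live in mere propositions, so the total type of reflective subuniverse structures extending $\ismodal$ is equivalent to the type of pairs $(\modal, \modalunit)$ equipped with these structural conditions. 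Hence it is enough to prove that the pair $(\modal, \modalunit)$ is determined by $\ismodal$.

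Next, I would apply \cref{lem:reflective_uniqueness} pointwise. For each $A : \UU$, set $X \defeq A$ and $\mathcal{M} \defeq \ismodal$; then the triple $(\modal A, \modalunit_A, I_A)$, where $I_A$ is the family of equivalences extracted from the reflective subuniverse axioms, lies in the mere proposition described by that lemma. Consequently $(\modal A, \modalunit_A)$ is uniquely determined for each $A$. Function extensionality then upgrades this pointwise uniqueness to a global identification: the two modal operators $\modal, \modal' : \UU \to \UU$ agree as functions, and the two modal units $\modalunit, \modalunit'$ agree as dependent functions. The only obstacle is bookkeeping — carefully matching up the data so that \cref{lem:reflective_uniqueness} applies and verifying that each auxiliary component is propositional — and no ideas beyond \cref{lem:reflective_uniqueness} together with function extensionality are required.
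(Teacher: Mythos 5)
Your proof is correct and follows essentially the same route as the paper: the paper likewise observes that, once the modal types are fixed, the remaining data assigns to each type $X$ a triple $(Y,f,I)$ of the kind shown to be a mere proposition in \cref{lem:reflective_uniqueness}, so the whole structure is a mere proposition (a dependent product of mere propositions). Your extra bookkeeping about which components are propositional and the use of function extensionality to globalize the pointwise identification is just a more explicit rendering of the same argument.
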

\begin{proof}
  Given the modal types, the rest of the data of a reflective subuniverse consists of, for each type $X$, a triple $(Y,f,I)$ as in \cref{lem:reflective_uniqueness}.
  Thus, by \cref{lem:reflective_uniqueness}, these data form a mere proposition.
\end{proof}

\begin{lemma}\label{lem:subuniv-modal}
  Given a reflective subuniverse, a type $X$ is modal if and only if $\modalunit[X]$ is an equivalence.
\end{lemma}
\begin{proof}
  Certainly if $\modalunit[X]$ is an equivalence, then $X$ is modal since it is equivalent to the modal type $\modal X$.
  Conversely, if $X$ is modal then we have a triple $(X,\idfunc[X],\nameless)$ inhabiting the type from \cref{lem:reflective_uniqueness}, which also contains $(\modal X,\modalunit[X],\nameless)$.
  Since this type is a mere proposition, these two elements are equal; hence $\modalunit[X]$ is, like $\idfunc[X]$, an equivalence.
\end{proof}

\begin{lemma}\label{thm:modalunit-retract-equiv}
  Given a reflective subuniverse, if a modal unit $\modalunit[X]$ has a left inverse (i.e.\ a retraction), then it is an equivalence, and hence $X$ is modal.
\end{lemma}
\begin{proof}
  Suppose $f$ is a left inverse of $\modalunit[X]$, i.e.\ $f\circ \modalunit[X] = \idfunc[X]$.
  Then $\modalunit[X]\circ f\circ \modalunit[X] = \modalunit[X]$, so $\modalunit[X]\circ f$ is a factorization of $\modalunit[X]$ through itself.
  By uniqueness of such factorizations, $\modalunit[X]\circ f = \idfunc[\modal X]$.
  Thus $f$ is also a right inverse of $\modalunit[X]$, hence $\modalunit[X]$ is an equivalence.
\end{proof}

In the following lemma we show that any reflective subuniverse is a `a functor up to homotopy', i.e.~that the localization operation has an action on morphisms which preserves composition and identities.

\begin{lemma}
  Given $f:A\to B$ we have an induced map $\modal f : \modal A \to \modal B$, preserving identities and composition up to homotopy.
  Moreover, for any $f$ the naturality square
  \begin{equation*}
    \begin{tikzcd}
      A \arrow[r,"f"] \arrow[d,swap,"\modalunit"] & B \arrow[d,"\modalunit"] \\
      \modal A \arrow[r,swap,"\modal f"] & \modal B
    \end{tikzcd}
  \end{equation*}
  commutes.
\end{lemma}

\begin{proof}
  Define $\modal f$ to be the unique function such that $\modal f \circ \modalunit[A] = \modalunit[B] \circ f$, using the universal property of $\modalunit[A]$.
  The rest is easy to check using further universal properties.
\end{proof}

\begin{lemma}
  Given a reflective subuniverse and any type $X$, the map $\modal \modalunit[X] : \modal X \to \modal\modal X$ is an equivalence.
\end{lemma}
\begin{proof}
  By naturality, we have $\modal \modalunit[X] \circ \modalunit[X] = \modalunit[\modal X] \circ \modalunit[X]$.
  Hence $\modal \modalunit[X] = \modalunit[\modal X]$ by the universal property of $\modalunit[X]$, but $\modalunit[\modal X]$ is an equivalence by \cref{lem:subuniv-modal}.
\end{proof}

\begin{lemma}\label{thm:rsu-galois}
  Given a reflective subuniverse, a type $X$ is modal if and only if $(\blank \circ f) : (B\to X) \to (A\to X)$ is an equivalence for any function $f:A\to B$ such that $\modal f$ is an equivalence.
\end{lemma}
\begin{proof}
  If $\modal f$ is an equivalence and $X$ is modal, then by the universal property of $\modalunit$, we have a commutative square
  \[
  \begin{tikzcd}
    (B\to X) \ar[r,"\blank\circ f"] & (A\to X) \\
    (\modal B\to X) \ar[r,"\blank\circ\modal f"'] \ar[u,"{\blank\circ \modalunit[B]}"] &
    (\modal A \to X) \ar[u,"{\blank\circ \modalunit[A]}"']
  \end{tikzcd}
  \]
  in which all but the top map are equivalences; thus so is the top map.

  Conversely, since $\modal\modalunit[X]$ is an equivalence, the hypothesis implies that
  $(\blank \circ \modalunit[X]) : (\modal X\to X) \to (X\to X)$
  is an equivalence.
  In particular, its fiber over $\idfunc[X]$ is inhabited, i.e.\ $\modalunit[X]$ has a retraction; hence $X$ is modal.
\end{proof}

\begin{lemma}\label{lem:sum_idempotent}
Consider a reflective subuniverse with modal operator $\modal$, and let $P:X\to\UU$ for some type $X:\UU$.
Then the unique map for which the triangle
\begin{equation*}
\begin{tikzcd}
\sm{x:X}P(x) \arrow[d,swap,"\modalunit"] \arrow[dr,"{\lam{\pairr{x,y}}\modalunit(x,\modalunit(y))}"] \\
\modal(\sm{x:X}P(x)) \arrow[r,densely dotted] & \modal(\sm{x:X}\modal(P(x)))
\end{tikzcd}
\end{equation*}
commutes, is an equivalence.
\end{lemma}
\begin{proof}
  Since both codomains are modal, it suffices to show that ${\lam{\pairr{x,y}}\modalunit(x,\modalunit(y))}$ has the universal property of $\modalunit[\sm{x:X}P(x)]$, i.e.\ that any map $(\sm{x:X}P(x)) \to Y$, where $Y$ is modal, extends uniquely to $\modal(\sm{x:X}\modal(P(x)))$.
  But we have
  \begin{align*}
    ((\sm{x:X}P(x)) \to Y)
    &\simeq
    \prd{x:X} P(x) \to Y\\
    &\simeq
    \prd{x:X} \modal(P(x)) \to Y\\
    &\simeq
    (\sm{x:X}\modal(P(x))) \to Y\\
    &\simeq
    \modal (\sm{x:X}\modal(P(x))) \to Y
  \end{align*}
  and it is easy to see that this is the desired precomposition map.
\end{proof}

\begin{lemma}\label{lem:rs_idstable}
  For any reflective subuniverse, if $X$ is modal, then so is the identity type $x=y$ for any $x,y:X$.
\end{lemma}

\begin{proof}
Let $X$ be a modal type, and let $x,y:X$. We have a map
$\modal(x=y)\to\unit$. The outer square in the diagram
\begin{equation*}
\begin{tikzcd}
\modal(x=y) \arrow[ddr,bend right=15] \arrow[drr,bend left=15] \\
& (x=y) \arrow[r] \arrow[d] \arrow[ul,"\modalunit"] \arrow[dr, phantom, "\lrcorner", very near start] & \unit \arrow[d,"x"] \\
& \unit \arrow[r,swap,"y"] & X
\end{tikzcd}
\end{equation*}
commutes, because both maps extend the map $(x=y)\to X$ along $\modalunit$, and
such extensions are unique because $X$ is assumed to be modal.
Hence the universal property of the pullback gives
a left inverse of $\modalunit:(x=y)\to\modal(x=y)$, so by \cref{thm:modalunit-retract-equiv} $(x=y)$ is modal.
\end{proof}

\begin{lemma}\label{lem:modal-Pi}
Given a reflective subuniverse,
if $P(x)$ is modal for all $x:X$, then so is $\prd{x:X}P(x)$.
\end{lemma}

\begin{proof}
By \cref{thm:modalunit-retract-equiv}, it suffices to define a left inverse of the modal unit
$\modalunit:(\prd{x:A}P(x))\to \modal(\prd{x:A}P(x))$. By the universal property
of dependent product, extending
\begin{equation*}
\begin{tikzcd}
\prd{x:A}P(x) \arrow[r,"{\idfunc}"] \arrow[d,"\modalunit"] & \prd{a:A}P(a) \arrow[d,"{\psi\,\defeq\,\lam{f}{a}\modalunit[P(a)](f(a))}"] \\
\modal(\prd{x:A}P(x)) \arrow[r,densely dotted] & \prd{a:A}\modal(P(a))
\end{tikzcd}
\end{equation*}
is equivalent to extending
\begin{equation*}
\begin{tikzcd}[column sep=large]
\prd{x:A}P(x) \arrow[r,"{\mathsf{ev}_a}"] \arrow[d,swap,"{\modalunit}"]
& P(a) \arrow[d,"{\modalunit}"] \\
\modal(\prd{x:A}P(x)) \arrow[r,densely dotted,swap,"{\modal(\mathsf{ev}_a)}"] & \modal(P(a))
\end{tikzcd}
\end{equation*}
for any $a:A$. Thus, we find
\begin{equation*}
f\defeq\lam{m}{a}\modal(\mathsf{ev}_a)(m):\modal(\prd{x:A}P(x))\to\prd{a:A}P(a)
\end{equation*}
as the solution to the first extension problem. In the first extension problem,
the function $\psi$ is an equivalence by the assumption that each $P(a)$ is
modal, so we obtain a retraction of the modal unit.
\end{proof}

Taking $X=\unit+\unit$, so that $P:X\to\UU$ is just a pair of types, we conclude that if $A$ and $B$ are modal then so is $A\times B$.
Moreover, we have:

\begin{lemma}\label{thm:modal-pres-prod}
Given any reflective subuniverse, the modal operator $\modal$ preserves finite cartesian products (including the unit type).
\end{lemma}

\begin{proof}
  In the nullary case, the statement is that the unit type $\unit$ is modal, which follows directly from \cref{thm:modalunit-retract-equiv}.
  In the binary case, we have to show that the modal extension
\begin{equation*}
\begin{tikzcd}
X\times Y \arrow[d,swap,"{\modalunit[X\times Y]}"] \arrow[dr,"\lam{\pairr{x,y}}\pairr{\modalunit[X](x),\modalunit[Y](y)}"] \\
\modal(X\times Y) \arrow[r,densely dotted] & \modal X\times\modal Y
\end{tikzcd}
\end{equation*}
is an equivalence.
But $(\modal(X\times Y),\modalunit[X\times Y],\nameless)$ inhabits the type from \cref{lem:reflective_uniqueness}, so if we can show that $(\modal X\times \modal Y,\lam{\pairr{x,y}}\pairr{\modalunit[X](x),\modalunit[Y](y)})$ also extends to an inhabitant of that type, then they will be equal, inducing an equivalence that by uniqueness must be the map above.
To show this, first note that $\modal X\times \modal Y$ is modal, as remarked above.
And for any modal type $Z$ we have
\begin{align*}
  (X\times Y \to Z)
  &\eqvsym X\to (Y\to Z)\\
  &\eqvsym X\to (\modal Y\to Z)\\
  &\eqvsym \modal X\to (\modal Y\to Z)\\
  &\eqvsym \modal X\times \modal Y\to Z
\end{align*}
given by precomposition as desired.
Here in the penultimate step we use the fact that the function type $\modal Y\to Z$ is modal since $Z$ is, by \cref{lem:modal-Pi}.
\end{proof}

\begin{lemma}\label{lem:modal-pres-prop}
Given any reflective subuniverse, the modal operator preserves mere propositions.
\end{lemma}
\begin{proof}
  A type $P$ is a mere proposition if and only if the diagonal $P\to P\times P$ is an equivalence.
  The result then follows from \cref{thm:modal-pres-prod}.
\end{proof}

By contrast, even modalities do not generally preserve $n$-types for any $n\ge 0$.
For instance, the ``shape'' modality of~\cite{shulman:bfp-realcohesion} takes the topological circle, which is a 0-type, to the homotopical circle, which is a 1-type, and the topological 2-sphere, which is also a 0-type, to the homotopical 2-sphere, which is (conjecturally) not an $n$-type for any finite $n$.
However, we will see in~\autoref{modaln-truncated} that lex modalities do preserve $n$-types for all $n$.

\begin{rmk}
  The basic properties of types and maps in homotopy type theory, such as being contractible, being a proposition, being an $n$-type, being an equivalence, and so on, are all constructed (perhaps inductively) out of identity types and $\Sigma$- and $\Pi$-types.
  Thus, a $\Sigma$-closed reflective subuniverse is closed under them as well.
  That is, if $A$ and $B$ are modal and $f:A\to B$, then the propositions ``$A$ is contractible'', ``$A$ is an $n$-type'', ``$f$ is an equivalence'', and so on, are all modal as well.
\end{rmk}

\subsubsection{$\Sigma$-closed reflective subuniverses}%
\label{sec:sigma-closed}

\begin{defn}\label{defn:connected}
Let $\mathcal{M}:\UU\to\prop$ be a reflective subuniverse with modal
operator $\modal$. We say
that a type $X$ is \define{$\modal$-connected} if $\modal X$ is contractible,
and we say that a function $f:X\to Y$ is \define{$\modal$-connected} if each
of its fibers is. Similarly, we say that $f$ is \define{modal} if each of its
fibers is.
\end{defn}

Note that a type $X$ is modal or $\modal$-connected just when the map $X\to\unit$ is.

\begin{eg}\label{eg:closed-connected}
  Recall from \cref{eg:open} that the open modality associated to a proposition $Q$ is defined by $\open Q(A) \defeq (Q\to A)$.
  We claim that $A$ is $\open Q$-connected if and only if $Q \to\iscontr(A)$.
  In other words, $(Q \to\iscontr(A))\eqvsym \iscontr(Q\to A)$.
  For on the one hand, if $Q\to \iscontr(A)$, then $Q\to A$; while any two $f,g:Q\to A$ can be shown equal by function extensionality, since if $Q$ then $A$ is contractible.
  But on the other hand, if $\iscontr(Q\to A)$ and $Q$, then $\eqv{(Q\to A)}{A}$, hence $\iscontr(A)$.

  Note that $Q \to\iscontr(A)$ is also the defining condition for the $\closed Q$-modal types from \cref{eg:closed}.
  That is, the $\open Q$-connected types coincide with the $\closed Q$-modal types.
  We will come back to this relationship in \cref{eg:artin}.
\end{eg}

The following theorem combines Lemma 7.5.7 and Theorem 7.7.4 of~\cite{TheBook}.

\begin{thm}\label{thm:ssrs-characterize}
Given a reflective universe with modal operator $\modal$,
the following are equivalent:
\begin{enumerate}
\item It is $\Sigma$-closed.\label{item:sc1}
\item It is uniquely eliminating.\label{item:sc2}
\item The modal units are $\modal$-connected.\label{item:sc3}
\end{enumerate}
\end{thm}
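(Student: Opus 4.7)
Since the direction (ii) $\Rightarrow$ (i) is already \cref{thm:ssrs_from_modunique}, the plan is to close the cycle by proving (i) $\Rightarrow$ (iii) and (iii) $\Rightarrow$ (ii).

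For (i) $\Rightarrow$ (iii), the plan is to show that the first projection $\fst: T \to \modal X$, where $T \defeq \sm{y: \modal X} \modal(\hfib{\modalunit[X]}{y})$, is an equivalence; this immediately forces every fiber $\modal(\hfib{\modalunit[X]}{y})$ to be contractible. Observe that $T$ is modal by $\Sigma$-closure, so the task reduces to identifying $T$ with $\modal X$ as the reflection of a common type. The key step is to apply \cref{lem:sum_idempotent} with $P(y) \defeq \hfib{\modalunit[X]}{y}$, yielding
\[ \modal\Big(\sm{y:\modal X}\hfib{\modalunit[X]}{y}\Big) \simeq \modal T. \]
The left-hand side simplifies to $\modal X$ via the standard total-space equivalence $\sm{y:\modal X}\hfib{\modalunit[X]}{y}\simeq X$, while $\modal T \simeq T$ since $T$ is modal; together these yield $\modal X \simeq T$. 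Tracing through, this equivalence is the unique modal extension of $\iota(x)\defeq (\modalunit[X](x),\modalunit(x,\refl{}))$, and composing with $\fst$ recovers $\idfunc[\modal X]$ by uniqueness of extension into $\modal X$. Hence $\fst$ is the inverse of an equivalence and so an equivalence itself.

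For (iii) $\Rightarrow$ (ii), the plan is to establish the more general orthogonality principle: whenever $f:A\to B$ is $\modal$-connected and $P: B\to \UU_\modal$ is a family of modal types, the precomposition map $(\prd{b:B} P(b)) \to (\prd{a:A} P(f(a)))$ is an equivalence. Unique elimination then follows by instantiating with $f\defeq\modalunit[A]$ (connected by hypothesis) and the family $\modal\circ P'$ for any $P':\modal A\to\UU$. To prove the principle, analyze the fiber over $s:\prd{a:A}P(f(a))$, which by the type-theoretic axiom of choice rearranges to $\prd{b:B}\sm{p:P(b)}\prd{w:\hfib{f}{b}}(p=h_s(w))$, where $h_s:\hfib{f}{b}\to P(b)$ is the compatibility function determined by $s$. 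For each $b$, the inner $\Sigma$-type is the fiber over $h_s$ of the constant-function map $P(b)\to(\hfib{f}{b}\to P(b))$, and this map is an equivalence via the chain $(\hfib{f}{b}\to P(b))\simeq(\modal(\hfib{f}{b})\to P(b))\simeq(\unit\to P(b))\simeq P(b)$, using modality of $P(b)$ (which makes precomposition with $\modalunit[\hfib{f}{b}]$ an equivalence) and contractibility of $\modal(\hfib{f}{b})$. Hence the inner type is contractible for every $b$, the full fiber is contractible, and precomposition is an equivalence.

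The main obstacle is (i) $\Rightarrow$ (iii). A more direct approach would extend $\iota:X\to T$ to a section $\tilde\iota:\modal X\to T$ of $\fst$ using modality of $T$, but proving $\tilde\iota\circ\fst=\idfunc[T]$ pointwise essentially amounts to the fiberwise contractibility one is trying to establish, producing a chicken-and-egg situation. The abstract use of \cref{lem:sum_idempotent} circumvents this by presenting both $\modal X$ and $T$ as computations of the reflection of the same total space, so that their equivalence follows without ever having to compare $\tilde\iota\circ\fst$ with $\idfunc[T]$ at points of $T$.
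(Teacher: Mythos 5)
Your proof is correct, but it closes the cycle differently from the paper, and the leg (i) $\Rightarrow$ (iii) is genuinely new content. The paper proves (i) $\Leftrightarrow$ (ii) directly in one shot, by decomposing the precomposition map $(\modal X\to\sm{y:Y}P(y))\to(X\to\sm{y:Y}P(y))$ into its base and fiber components and observing that each universally quantified statement can be recovered from the other by a judicious instantiation; it then proves (ii) $\Rightarrow$ (iii) by using the dependent eliminator to construct the candidate center of contraction $s:\prd{z:\modal X}\modal(\hfib{\modalunit}{z})$ by hand. You instead import (ii) $\Rightarrow$ (i) from \cref{thm:ssrs_from_modunique} and supply (i) $\Rightarrow$ (iii) by presenting $\modal X$ and $T\defeq\sm{y:\modal X}\modal(\hfib{\modalunit[X]}{y})$ as two computations of the reflection of the common total space $\sm{y:\modal X}\hfib{\modalunit[X]}{y}\simeq X$, via \cref{lem:sum_idempotent} and $\Sigma$-closedness of $T$; identifying the resulting equivalence $e$ as the unique extension of $\iota$ and cancelling against $\fst$ by uniqueness of extensions into the modal type $\modal X$ is exactly right, and it does neatly sidestep the chicken-and-egg problem you describe. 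What the paper's route buys is self-containment (no appeal to the earlier theorem) and an argument for (ii) $\Rightarrow$ (iii) that works directly from the eliminator; what yours buys is a conceptually cleaner (i) $\Rightarrow$ (iii) that reuses \cref{lem:sum_idempotent} instead of a bespoke computation. Your (iii) $\Rightarrow$ (ii) is essentially the paper's: both reduce to orthogonality of $\modal$-connected maps against families of modal types via the chain $(\hfib{f}{b}\to P(b))\simeq(\modal(\hfib{f}{b})\to P(b))\simeq P(b)$; you phrase it as contractibility of the fibers of precomposition where the paper uses a square of equivalences, but the content is identical. There is no circularity: \cref{thm:ssrs_from_modunique} and \cref{lem:sum_idempotent} are both established before this theorem and independently of it.
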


\begin{proof}
  To show~\ref{item:sc1}$\Leftrightarrow$\ref{item:sc2}, let $Y$ be modal and $P:Y \to UU_\modal$, and consider for any $X$ the following commuting square:
  \begin{equation*}
    \begin{tikzcd}
      \Big(\modal X \to \sm{y:Y}P(y)\Big) \arrow[r] \arrow[d] & \Big(X \to \sm{y:Y}P(y)\Big) \arrow[d] \\
      \sm{g:\modal X\to Y}\prd{z:\modal X}P(g(z)) \arrow[r] & \sm{f:X\to Y}\prd{x:X}P(f(x))
    \end{tikzcd}
  \end{equation*}
  The vertical maps are equivalences, so for any $X,Y,P$ the top map is an equivalence if and only if the bottom is.

  If~\ref{item:sc1} holds, the top map is an equivalence for all $X,Y,P$.
  But the converse is also true, since we can take $X \defeq \sm{y:Y}P(y)$ to obtain a retraction for its unit.

  The bottom map is induced by the map $(\modal X\to Y) \to (X\to Y)$, which is an equivalence since $Y$ is modal, and the family of maps
  \[\Big(\prd{z:\modal X} P(g(z))\Big) \to \Big(\prd{x:X} P(g(\modalunit[X](x)))\Big) \]
  for all $g:\modal X\to Y$; thus it is an equivalence just when each of these maps is.
  If~\ref{item:sc2} holds, then this is true for all $X,Y,P,g$.
  But the converse is also true, since we can take $Y \defeq \modal X$ and $g\defeq \idfunc[\modal X]$.
  This completes the proof of~\ref{item:sc1}$\Leftrightarrow$\ref{item:sc2}.

  To show~\ref{item:sc2}$\Rightarrow$\ref{item:sc3}, we want a term of type
  \begin{equation*}
\prd{z:\modal X}\iscontr(\modal(\hfib{\modalunit}{z})).
\end{equation*}
Using the dependent eliminators, it is easy to find a term
$s:\prd{z:\modal X}\modal(\hfib{\modalunit}{z})$ with the property that
$s\circ\modalunit(x)=\modalunit(x,\refl{\modalunit(x)})$. Now we need to show
that
\begin{equation*}
\prd{z:\modal X}{w:\modal(\hfib{\modalunit}{z})}w=s(z).
\end{equation*}
Since the type $w=s(z)$ is modal, this is equivalent to
\begin{equation*}
\prd{z:\modal X}{x:X}{p:\modalunit(x)=z} \modalunit(x,p)=s(z).
\end{equation*}
Moreover, the type $\sm{z:\modal X}\modalunit(x)=z$ is contractible, so this
is equivalent to
\begin{equation*}
\prd{x:X} \modalunit(x,\refl{\modalunit(x)})=s(\modalunit(x)),
\end{equation*}
of which we have a term by the defining property of $s$.

Finally, to show~\ref{item:sc3}$\Rightarrow$\ref{item:sc2} we show that for \emph{any} $\modal$-connected map $f:X\to Y$ and any family $P:Y \to \UU_\modal$ of modal types of $Y$, the precomposition map
\begin{equation*}
  \Big(\prd{y:Y}P(y)\Big)\to \Big(\prd{x:X}P(f(x))\Big)
\end{equation*}
is an equivalence. This is because we have a commuting square
\begin{equation*}
  \begin{tikzcd}
    \prd{y:Y}\Big(\modal(\hfib{f}{y})\to P(y)\Big) \arrow[r] \arrow[d] & \prd{y:Y}\Big(\hfib{f}{y}\to P(y)\Big) \arrow[d] \\
    \prd{y:Y}P(y) \arrow[r] & \prd{x:X}P(f(x))
  \end{tikzcd}
\end{equation*}
In this square the map on the left is an equivalence by the contractibility of $\modal(\hfib{f}{y})$; the map on the right is an equivalence by the dependent universal property of identity types; and the top map is an equivalence by the universal property of modalities. Therefore the bottom map is an equivalence.
\end{proof}

\begin{lemma}\label{thm:rsu-compose-cancel}
  Given $f:A\to B$ and $g:B\to C$ and a reflective subuniverse $\modal$, if $f$ is $\modal$-connected, then $g$ is $\modal$-connected if and only if $g\circ f$ is $\modal$-connected.
  That is, $\modal$-connected maps are closed under composition and right cancellable.
\end{lemma}
\begin{proof}
  Recall that for $f:X\to Y$ and $g:Y\to Z$, one has $\hfib{g\circ f}{z}=\sm{p:\hfib{g}{z}}\hfib{f}{\proj1(p)}$.
  Thus, for any $z:C$ we have
  \begin{align*}
    \modal(\hfib{g\circ f}{z})
    & \eqvsym
      \modal(\sm{p:\hfib{g}{z}}\hfib{f}{\proj1(p)}) \\
    & \eqvsym
      \modal(\sm{p:\hfib{g}{z}}\modal(\hfib{f}{\proj1(p)}))
    \qquad \text{(by \cref{lem:sum_idempotent})}\\
    & \eqvsym
      \modal(\sm{p:\hfib{g}{z}}\unit) \\
    & \eqvsym
      \modal\hfib{g}{z}
  \end{align*}
  using the fact that $f$ is $\modal$-connected.
  Thus, one is contractible if and only if the other is.
\end{proof}

In general it is not true that if $g$ and $g\circ f$ are $\modal$-connected then $f$ is; this is one of the equivalent characterizations of lex modalities (\cref{thm:lex-modalities}).

\begin{thm}\label{thm:sofs_from_ssrs}
A $\Sigma$-closed reflective subuniverse determines a stable orthogonal factorization system with the same
modal types.
\end{thm}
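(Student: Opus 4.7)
The plan is to define $\mathcal{R}$ to be the class of \emph{modal maps} (those with modal fibers) and $\mathcal{L}$ to be the class of \emph{$\modal$-connected maps} (those with $\modal$-connected fibers, as in \cref{defn:connected}), and verify the axioms of a stable orthogonal factorization system. With this definition the types-that-are-$(\mathcal{L},\mathcal{R})$-modal will agree with the $\modal$-modal types, since a type $X$ is modal iff the fiber $X$ of $X\to\unit$ is modal.

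First I would check the simple closure properties. Identity maps have contractible fibers, and contractible types are both modal (as $\unit$ is modal and modal types are closed under equivalences via \cref{lem:subuniv-modal}) and $\modal$-connected (as $\modal$ preserves $\unit$ by \cref{thm:modal-pres-prod}). For closure under composition, the standard equivalence $\hfib{g\circ f}{z}\eqvsym\sm{p:\hfib g z}\hfib f{\proj1 p}$ shows that $\mathcal{R}$ is closed under composition using $\Sigma$-closedness applied to the predicate $\ismodal$ (which defines the subuniverse), while $\mathcal{L}$ is closed under composition by \cref{thm:rsu-compose-cancel}.

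The central construction is the factorization itself. Given $f:A\to B$, I would factor it as
\[ A\xrightarrow{f_\mathcal{L}}\msm{b:B}{\modal(\hfib f b)}\xrightarrow{f_\mathcal{R}} B, \]
where $f_\mathcal{L}(a)\defeq(f(a),\modalunit(a,\refl{f(a)}))$ and $f_\mathcal{R}\defeq\proj1$. The fiber of $f_\mathcal{R}$ over $b:B$ is $\modal(\hfib f b)$, which is modal, so $f_\mathcal{R}\in\mathcal{R}$. For $f_\mathcal{L}$, the fiber over $(b,z)$ unfolds, by the characterization of equalities in a $\Sigma$-type, to
\[ \msm{(a,p):\hfib f b}{\modalunit(a,p)=z}\eqvsym\hfib{\modalunit[\hfib f b]}{z}, \]
and by \cref{thm:ssrs-characterize}\ref{item:sc3} these fibers of the modal unit are $\modal$-connected; thus $f_\mathcal{L}\in\mathcal{L}$.

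The main obstacle is the contractibility of $\fact_{\mathcal{L},\mathcal{R}}(f)$. I would prove this by showing that any alternative factorization $A\xrightarrow{\ell}X\xrightarrow{r}B$ with $\ell\in\mathcal{L}$ and $r\in\mathcal{R}$ is canonically identified with the one above. The map $\ell$ restricts fiberwise to $\ell_b:\hfib f b\to \hfib r b$; since $\hfib r b$ is modal and $\ell_b$ is $\modal$-connected (its fibers agree with fibers of $\ell$ up to reindexing), the universal property of the modal unit on $\hfib f b$ gives a unique equivalence $\modal(\hfib f b)\simeq \hfib r b$ commuting with the relevant maps. Assembling these fiberwise equivalences via the equivalence $X\eqvsym \sm{b:B}\hfib r b$ produces an equivalence $\im(f)\simeq X$ making both triangles commute; homotopy-uniqueness of these data (using \cref{lem:reflective_uniqueness} applied fiberwise, and that the space of witnesses that a map lies in $\mathcal{L}$ or $\mathcal{R}$ is a proposition) then yields the desired contractibility. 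Finally, stability is immediate: the pullback of a $\modal$-connected map has fibers equivalent to fibers of the original map, hence is still $\modal$-connected.
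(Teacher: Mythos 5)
Your proposal is correct and follows essentially the same route as the paper: the same classes $\mathcal{L},\mathcal{R}$, the same closure arguments via \cref{thm:rsu-compose-cancel} and $\Sigma$-closedness, the factorization through $\sm{b:B}\modal(\hfib{f}{b})$, uniqueness via a fiberwise universal-property argument, and stability from preservation of fibers under pullback. The only cosmetic differences are that the paper first replaces $f$ by a projection $\proj1$ before factoring, and that it spells out the chain of equivalences showing $\gamma_y$ has the universal property of $\modalunit[P(y)]$, where you instead invoke the fact that a $\modal$-connected map into a modal type induces an equivalence from the reflection (a fact the paper only records later, in \cref{lem:nconnected_to_leveln_to_equiv}, but whose proof does not depend on this theorem, so there is no circularity).
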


\begin{proof}
Define $\mathcal{L}$ to be the class of $\modal$-connected
maps and $\mathcal{R}$ to be the the class of modal maps.
We first show that both $\mathcal{L}$ and $\mathcal{R}$ are closed under
composition.
Since $\hfib{g\circ f}{z}=\sm{p:\hfib{g}{z}}\hfib{f}{\proj1(p)}$, by $\Sigma$-closedness if $f$ and $g$ are both in $\mathcal{R}$ then so is $g\circ f$.
Thus $\cR$ is closed under composition; while \cref{thm:rsu-compose-cancel} implies that $\cL$ is closed under composition.
And since the fibers of an identity map are contractible, and contractible types are both modal and $\modal$-connected, both $\mathcal{L}$ and $\mathcal{R}$ contain all identities.

% Next, we need to show that $\mathcal{L}(f)\land\mathcal{R}(f)\leftrightarrow
% \isequiv(f)$. Since a function is an equivalence if and only if its fibers are
% contractible, and since the contractible types are modal in any reflective
% universe, it follows that $\isequiv(f)\to\mathcal{L}(f)\land\mathcal{R}(f)$.
% Now suppose that $f$ is in $\mathcal{L}$ and in $\mathcal{R}$. Then its fibers
% are modal, and the modal operator applied to its fibers results in a contractible
% type, so it is an equivalence.

To obtain a factorization system,
it remains to show that the type of
$(\mathcal{L},\mathcal{R})$-factorizations of any function $f:X\to Y$ is contractible.
Since \[\pairr{X,f}=_{(\sm{Z:\UU} Z\to Y)} \pairr{\sm{y:Y}\hfib{f}{y},\proj1},\] it is sufficient to
show that $\fact_{\mathcal{L},\mathcal{R}}(\proj1)$ is contractible for any
$\proj1:\sm{y:Y}P(y)\to Y$. But $\proj1$ factors as
\begin{equation*}
\begin{tikzcd}
\sm{y:Y}P(y) \arrow[r,"p_\mathcal{L}"] & \sm{y:Y}\modal(P(y)) \arrow[r,"p_\mathcal{R}"] & Y
\end{tikzcd}
\end{equation*}
where $p_\mathcal{L}\defeq\total{\modalunit[P(\blank)]}$ and $p_\mathcal{R}\defeq\proj1$.
The fibers of $p_\mathcal{R}$ are $\modal(P(\blank))$, so it follows
immediately that $p_\mathcal{R}$ is in $\mathcal{R}$.
Moreover, since
$\eqv{\hfib{\total{\modalunit}}{\pairr{y,u}}}{\hfib{\modalunit[P(y)]}{u}}$ and each $\modalunit$ is $\modal$-connected, it follows that $p_\mathcal{L}$ is in
$\mathcal{L}$.

Now consider any other factorization $(I,g,h,H)$ of $\proj1$ into
an $\cL$-map $g:(\sm{y:Y}P(y))\to I$ followed by an $\cR$-map $h:I\to Y$. Since
$I=\sm{y:Y}\hfib{h}{y}$, we have a commuting square
\begin{equation*}
\begin{tikzcd}
\sm{y:Y}P(y) \arrow[r,"g"] \arrow[d,swap,"{\total{\gamma}}"]
  & I \arrow[d,"h"] \\
\sm{y:Y}\hfib{h}{y} \arrow[ur,equals] \arrow[r,swap,"\proj1"] & Y
\end{tikzcd}
\end{equation*}
in which $\gamma(y,u)\defeq \pairr{g(y,u),H(y,u)}$.
It follows that
\[(I,g,h,H)=\left(\tsm{y:Y}\hfib{h}{y},\total{\gamma},\proj1,\nameless\right).\]
Thus it suffices to show that there is a commuting triangle
\begin{equation*}
\begin{tikzcd}[column sep=0]
\phantom{\hfib{h}{y}} & P(y) \arrow[dl,swap,"\modalunit"] \arrow[dr,"{\gamma_y}"] & \phantom{\modal(P(y))} \\
\modal(P(y)) \arrow[rr,equals] & & \hfib{h}{y}
\end{tikzcd}
\end{equation*}
for all $y:Y$.
We will do this using \cref{lem:reflective_uniqueness}, by showing that $\gamma_y$ has the same universal property as $\modalunit[P(y)]$.
This follows from the following calculation:
\begin{align*}
(\hfib{h}{y}\to Z) & \eqvsym ((\sm{w:\hfib{h}{y}}\modal(\hfib{g}{\proj1(w)}))\to Z) \\
& \eqvsym ((\sm{w:\hfib{h}{y}}\hfib{g}{\proj1(w)})\to Z) \\
& \eqvsym (\hfib{h\circ g}{y}\to Z) \\
& \eqvsym (P(y)\to Z),
\end{align*}
which we can verify is given by precomposition with $\gamma_y$.

It remains to show that our orthogonal factorization system is stable. Consider a pullback diagram
\begin{equation*}
\begin{tikzcd}
A' \arrow[d,swap,"k"] \arrow[r,"f"] & A \arrow[d,"l"] \\
B' \arrow[r,swap,"g"] & B
\end{tikzcd}
\end{equation*}
in which $l$ is in $\mathcal{L}$. By the pasting lemma for pullbacks, it
follows that $\hfib{k}{b}=\hfib{l}{g(b)}$ for each $b:B'$. Thus, it follows that
$k$ is in $\mathcal{L}$.
\end{proof}

\subsubsection{Connected maps}%
\label{sec:connected-maps}

The $\modal$-connected maps introduced in \cref{defn:connected} have a number of other useful properties.
Most of these are stated in~\cite[\S7.5]{TheBook} for the special case of the $n$-truncation modality, but essentially the same proofs work for any modality.

In fact, most of these properties are true about an arbitrary reflective subuniverse, although a few of the proofs must be different.
Thus, for this subsection, let $\modal$ be a reflective subuniverse, not in general $\Sigma$-closed.

\begin{lemma}%
\label{lem:connected-map-equiv-truncation}
If $f : A \to B$ is $\modal$-connected, then it induces an equivalence
$\modal f : \eqv{\modal{A}}{\modal{B}}$.
\end{lemma}
\begin{proof}
  To define an inverse $g:\modal B \to \modal A$, by the universal property of $\modal B$, it suffices to define a map $B\to \modal A$.
  But given $b:B$, we have a map $\proj1 : \hfib{f}{b} \to A$, hence $\modal\proj1 : \modal \hfib{f}{b} \to \modal A$.
  And $\modal \hfib{f}{b}$ is contractible since $f$ is $\modal$-connected, so it has a point $c_b$, and we define $g(\modalunit[B](b)) = \modal \proj1(c_b)$.

  Now by the universal property of $\modal A$ and $\modal B$, it suffices to show that the composites $g\circ \modal f \circ \modalunit[A]$ and $\modal f\circ g \circ \modalunit[B]$ are equal to $\modalunit[A]$ and $\modalunit[B]$ respectively.
  In the first case, for $a:A$ we have
  \begin{align*}
    g(\modal f(\modalunit[A](a)))
    &= g(\modalunit[B](f(a)))\\
    &= \modal \proj1(c_{f(a)})\\
    &= \modal \proj1(\modalunit[\hfib f b](a,\refl{f(a)}))\\
    &= \modalunit[A](\proj1(a,\refl{f(a)}))\\
    &= \modalunit[A](a),
  \end{align*}
  using in the third line the fact that $\modal(\hfib f b)$ is contractible.
  And in the second case, for $b:B$ we have
  \begin{align*}
    \modal f(g(\modalunit[B](b)))
    &= \modal f(\modal \proj1(c_b))\\
    &= \modal(f\circ \proj1)(c_b)\\
    &= \modal(\lam{u:\hfib f b} b)(c_b)\\
    &= \modal(\lam{u:\unit} b)(\modalunit[\unit](\ttt))\\
    &= \modalunit[B](b)
  \end{align*}
  where in the last two lines we use the commutativity of the following diagram:
  \[
    \begin{tikzcd}
      \hfib f b \ar[d] \ar[r] \ar[rr,bend left,"{\lam{u:\hfib f b} b}"] & \unit \ar[r,"b"] \ar[d,"{\modalunit[\unit]}"] \ar[dl,"{c_b}"] & B \ar[d,"{\modalunit[B]}"] \\
      \modal(\hfib f b) \ar[r] \ar[rr,bend right,"{\modal (\lam{u:\hfib f b} b)}"'] & \modal \unit \ar[r] & \modal B
    \end{tikzcd}
  \]
  and the fact that $\modal\unit$ is contractible.
\end{proof}

The converse of \cref{lem:connected-map-equiv-truncation} is false in general, even for modalities; we will see in \cref{thm:lex-modalities} that it holds exactly when $\modal$ is lex.

Recall that $\modaltype$ denotes the universe of modal types.
Note that the projection $\proj1 : (\sm{x:A} P(x)) \to A$ is $\modal$-modal if and only if $P$ factors through $\modaltype$.
The following generalizes the unique elimination property of $\modalunit$ to arbitrary $\modal$-connected maps.

\begin{lemma}\label{prop:nconnected_tested_by_lv_n_dependent types}
For $f:A\to B$ and $P:B\to\modaltype$, consider the following function:
\begin{equation*}
\lam{s} s\circ f :\Parens{\prd{b:B} P(b)}\to\Parens{\prd{a:A}P(f(a))}.
\end{equation*}
For a fixed $f$, the following are equivalent.
\begin{enumerate}
\item $f$ is $\modal$-connected.\label{item:conntest1}
\item For every $P:B\to \modaltype$, the map $\lam{s} s\circ f$ is an equivalence.\label{item:conntest2}
\item For every $P:B\to \modaltype$, the map $\lam{s} s\circ f$ has a section.\label{item:conntest3}
\end{enumerate}
\end{lemma}

\begin{proof}
First suppose $f$ is $\modal$-connected and let $P:B\to\modaltype$. Then:
\begin{align*}
  \prd{b:B} P(b) & \eqvsym \prd{b:B} \Parens{\modal{\hfib{f}b} \to P(b)}
  \tag{since $\modal{\hfib{f}b}$ is contractible}\\
  & \eqvsym \prd{b:B} \Parens{\hfib{f}b\to P(b)}
  \tag{since $P(b)$ is modal}\\ % chktex 36
  & \eqvsym \prd{b:B}{a:A}{p:f(a)= b} P(b)\\
  & \eqvsym \prd{a:A} P(f(a))
\end{align*}
and the composite equivalence is indeed composition with $f$.
Thus, \ref{item:conntest1}$\Rightarrow$\ref{item:conntest2}, and clearly \ref{item:conntest2}$\Rightarrow$\ref{item:conntest3}. % chktex 2
To show \ref{item:conntest3}$\Rightarrow$\ref{item:conntest1}, let % chktex 2
$P(b)\defeq \modal{\hfib{f}b}$.
Then~\ref{item:conntest3} yields a map $c:\prd{b:B} \modal{\hfib{f}b}$ with
$c(f(a))=\modalunit{\pairr{a,\refl{f(a)}}}$. To show that each $\modal{\hfib{f}b}$ is contractible, we will show that $c(b)=w$ for any $b:B$ and $w:\modal{\hfib{f}b}$.
In other words, we must show that the identity function $\modal{\hfib{f}b} \to \modal{\hfib{f}b}$ is equal to the constant function at $c(b)$.
By the universal property of $\modal{\hfib{f}b}$, it suffices to show that they become equal when precomposed with $\modalunit[\hfib{f}b]$, i.e.\ we may assume that $w = \modalunit\pairr{a,p}$ for some $a:A$ and $p:f(a)=b$.
But now path induction on $p$ reduces our goal to the given $c(f(a))=\modalunit{\pairr{a,\refl{f(a)}}}$.
\end{proof}

\begin{corollary}\label{thm:nconn-to-ntype-const}\label{connectedtotruncated}
A type $A$ is $\modal$-connected if and only if the ``constant functions'' map
$
  B \to (A\to B)
$
is an equivalence for every modal type $B$.\qed%
\end{corollary}

Dually, we will prove in \cref{thm:detect-right-by-fibers} that when $\modal$ is a modality, if this holds for all $\modal$-connected $A$ then $B$ is modal.

\begin{lemma}\label{lem:nconnected_to_leveln_to_equiv}
Let $B$ be a modal type and let $f:A\to B$ be a function.
If $f$ is $\modal$-connected, then the induced function $g:\modal A\to B$ is an equivalence; the converse holds if $\modal$ is $\Sigma$-closed.
\end{lemma}

\begin{proof}
By \cref{lem:connected-map-equiv-truncation}, if $f$ is $\modal$-connected then $\modal f$ is an equivalence.
But $g$ is the composite ${\modalunit[B]}^{-1}\circ \modal f$, hence also an equivalence.

Conversely, by \cref{thm:ssrs-characterize}, $\modalunit$ is $\modal$-connected.
Thus, since $f = g\circ \modalunit[A]$, if $g$ is an equivalence then $f$ is also $\modal$-connected.
\end{proof}

\begin{lemma}\label{lem:nconnected_postcomp_variation}
Let $f:A\to B$ be a function and $P:A\to\type$ and $Q:B\to\type$ be type families. Suppose that $g:\prd{a:A} P(a)\to Q(f(a))$
is a family of $\modal$-connected functions.
If $f$ is also $\modal$-connected, then so is the function
\begin{align*}
\varphi &:\Parens{\sm{a:A} P(a)}\to\Parens{\sm{b:B} Q(b)}\\
\varphi(a,u) &\defeq \pairr{f(a),g_a(u)}.
\end{align*}
Conversely, if $\varphi$ and each $g_a$ are $\modal$-connected, and moreover $Q$ is fiberwise merely inhabited (i.e.\ we have $\brck{Q(b)}$ for all $b:B$), then $f$ is $\modal$-connected.
\end{lemma}

\begin{proof}
For any $b:B$ and $v:Q(b)$ we have
{\allowdisplaybreaks%
\begin{align*}
\modal{\hfib{\varphi}{\pairr{b,v}}} & \eqvsym \modal{\sm{a:A}{u:P(a)}{p:f(a)= b} \trans{p}{g_a(u)}= v}\\
& \eqvsym \modal{\sm{w:\hfib{f}b}{u:P(\proj1(w))} g_{\proj 1 w}(u)= \trans{\opp{\proj2(w)}}{v}}\\
& \eqvsym \modal{\sm{w:\hfib{f}b} \hfib{g(\proj1 w)}{\trans{\opp{\proj 2(w)}}{v}}}\\
& \eqvsym \modal{\sm{w:\hfib{f}b} \modal{\hfib{g(\proj1 w)}{\trans{\opp{\proj 2(w)}}{v}}}}\\
& \eqvsym \modal{\hfib{f}b}
\end{align*}}%
where the transportations along $f(p)$ and ${f(p)}^{-1}$ are with respect to $Q$, and we use \cref{lem:sum_idempotent} on the penultimate line.
Therefore, if either of $\modal{\hfib{\varphi}{\pairr{b,v}}}$ or $\modal{\hfib{f}b}$ is contractible, so is the other.

In particular, if $f$ is $\modal$-connected, then $\modal{\hfib{f}b}$ is contractible for all $b:B$, and hence so is $\modal{\hfib{\varphi}{\pairr{b,v}}}$ for all $(b,v):\sm{b:B} Q(b)$.
On the other hand, if $\varphi$ is $\modal$-connected, then $\modal{\hfib{\varphi}{\pairr{b,v}}}$ is contractible for all $(b,v)$, hence so is $\modal{\hfib{f}b}$ for any $b:B$ such that there exists some $v:Q(b)$.
Finally, since contractibility is a mere proposition, it suffices to merely have such a $v$.
\end{proof}

\begin{lemma}\label{prop:nconn_fiber_to_total}
Let $P,Q:A\to\type$ be type families and $f:\prd{a:A} \Parens{P(a)\to Q(a)}$.
Then $\total f: \sm{a:A}P(a) \to \sm{a:A} Q(a)$ is $\modal$-connected if and only if each $f(a)$ is $\modal$-connected.
\end{lemma}
\begin{proof}
We have
$\hfib{\total f}{\pairr{x,v}}\eqvsym\hfib{f(x)}v$
for each $x:A$ and $v:Q(x)$. Hence \[\modal{\hfib{\total f}{\pairr{x,v}}}\] is contractible if and only if
$\modal{\hfib{f(x)}v}$ is contractible.
\end{proof}

Of course, the ``if'' direction of \cref{prop:nconn_fiber_to_total} is a special case of \cref{lem:nconnected_postcomp_variation}.
This suggests a similar generalization of the ``only if'' direction of \cref{prop:nconn_fiber_to_total}, which would be a version of \cref{lem:nconnected_postcomp_variation} asserting that if $f$ and $\varphi$ are $\modal$-connected then so is each $g_a$.
However, this is not true in general; we will see in \cref{thm:lex-modalities} that it holds if and only if the modality is lex.

Finally, we note that the $\modal$-modal and $\modal$-connected maps are classified.
More generally, we prove the following generalization of~\cite[Thm~3.31]{RijkeSpitters:Sets}.

\begin{thm}
Let $P:\UU\to\prop$ be a predicate on the universe, let $\UU_P\defeq
\sm{X:\UU}P(x)$ and ${(\UU_P)}_\bullet\defeq\sm{X:\UU_P}X$. The projection
$\proj1:{(\UU_P)}_\bullet\to\UU_P$ \emph{classifies} the maps whose fibers satisfy $P$, in the sense that these are exactly the maps that occur as pullbacks of it.
\end{thm}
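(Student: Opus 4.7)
The plan is to derive this as an essentially immediate corollary of the fact that the first projection $\proj1 : \UU_\bullet \to \UU$ classifies \emph{all} maps, by exploiting that $P$ is a mere proposition so the fibers of $\proj1 : (\UU_P)_\bullet \to \UU_P$ agree (as types) with the fibers of $\proj1 : \UU_\bullet \to \UU$ restricted to $\UU_P$.

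The first step will be to make precise the statement. I would phrase it as an equivalence, for each type $X$, between the type of pairs $(Y, f)$ with $f : Y \to X$ satisfying $\prd{x:X} P(\hfib{f}{x})$, and the type of maps $\chi : X \to \UU_P$. Concretely, the forward map sends $\chi$ to the pullback, which unfolds to $\Parens{\sm{x:X} \proj1(\chi(x))}$ with first projection to $X$; the backward map sends $(Y,f)$ together with its witness $p : \prd{x:X} P(\hfib{f}{x})$ to the fiber-family-with-witness $\chi(x) \defeq \pairr{\hfib{f}{x}, p(x)}$.

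The second step is to verify this is an equivalence. For one composite, starting from $\chi : X \to \UU_P$, the constructed pullback has fiber over $x$ equivalent to $\proj1(\chi(x))$, and the reconstructed map lands back at $\chi(x)$ up to the univalence-induced identification; because $P$ is a mere proposition, the equality of the underlying types lifts uniquely to an equality in $\UU_P$, so function extensionality closes the loop. For the other composite, starting from $(Y,f)$, the reconstructed total space is $\sm{x:X} \hfib{f}{x}$, and the projection to $X$ is equivalent to $f$ via the standard equivalence $\eqv{Y}{\sm{x:X} \hfib{f}{x}}$ over $X$; again the proofs of $P$ come along for free because $P$ is propositional.

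The main obstacle, which is not really an obstacle but the one point requiring care, is handling the bookkeeping around $P$: making sure that equivalences of underlying types extend uniquely to equalities in $\UU_P$, and that the witness $p : \prd{x:X} P(\hfib{f}{x})$ is unique (which it is, since $P(\hfib{f}{x})$ is a mere proposition). Everything else is an instance of the classification of maps by $\proj1 : \UU_\bullet \to \UU$ combined with univalence, together with the fact that pullbacks along $\chi$ compute fibers as $\chi(x)$.
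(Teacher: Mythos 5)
Your proposal is correct and follows essentially the same route as the paper: both reduce the statement to the unrestricted object classifier equivalence $(\sm{Y:\UU}(Y\to X))\simeq(X\to\UU)$ (Theorem 4.8.3 of the HoTT book) and observe that, since $P$ is a mere proposition, this restricts to an equivalence between maps whose fibers satisfy $P$ and maps $X\to\UU_P$, with the two composites handled by the fiber computation of pullbacks and the standard equivalence $\eqv{Y}{\sm{x:X}\hfib{f}{x}}$. The paper phrases the final step as a pasting of pullback squares rather than spelling out the round-trips of the equivalence, but the content is the same.
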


\begin{proof}
  The fiber of $\proj1:{(\UU_P)}_\bullet\to\UU_P$ over $X:\UU_P$ is $X$, which satisfies $P$ by definition.
  Thus all fibers of this map satisfy $P$, hence so do all fibers of any of its pullbacks.

Conversely, let $f:Y\to X$ be any map into $X$. Then $\hfibfunc{f}:X\to\UU$ factors through
$\UU_P$ if and only if all the fibers of $f$ satisfy $P$. Let us write
$P(f)$ for $\prd{x:X}P(\hfib{f}{x})$. Then we see that the equivalence
$\chi$ of Theorem 4.8.3 of~\cite{TheBook} restricts to an
equivalence
\begin{equation*}
\chi^P:(\sm{Y:\UU}{f:Y\to X}P(f))\to(X\to\UU_P).
\end{equation*}
Now observe that the outer square and the square on the right in the diagram
\begin{equation*}
\begin{tikzcd}[column sep=6em]
Y \arrow[d,swap,"f"] \arrow[rr,"{\lam{y}\pairr{\hfib{f}{f(y)},\blank,\pairr{y,\refl{f(y)}}}}"] & & \pointed{(\UU_P)} \arrow[r] \arrow[d] & \pointed{\UU} \arrow[d] \\
X \arrow[rr,swap,"{\hfibfunc{f}}"] & & \UU_P \arrow[r] & \UU
\end{tikzcd}
\end{equation*}
are pullback squares. Hence the square on the left is a pullback square.
\end{proof}

\begin{corollary}
The $\modal$-modal maps are classified by the universe of $\modal$-modal types, and the $\modal$-connected maps are classified by the universe of $\modal$-connected types.\qed%
\end{corollary}

\subsection{Stable orthogonal factorization systems}%
\label{sec:sofs}

To complete \cref{sec:modal-refl-subun}, we will show that stable orthogonal factorization systems are also determined by their modal types, and give rise to higher modalities.

\subsubsection{Orthogonal factorization systems}%
\label{sec:ofs}

In classical category theory, orthogonal factorization systems are equivalently characterized by a unique lifting property.
We begin with the analogue of this in our context.

\begin{defn}\label{defn:fillers}
Let $(\mathcal{L},\mathcal{R})$ be an orthogonal factorization system, and
consider a commutative square
\begin{equation*}
\begin{tikzcd}
A \arrow[r,"f"] \arrow[d,swap,"l"] \ar[dr,phantom,"\scriptstyle S"] & X \arrow[d,"r"] \\
B \arrow[r,swap,"g"] & Y
\end{tikzcd}
\end{equation*}
(i.e.\ paths $S : r\circ f = g\circ l$)
for which $l$ is in $\mathcal{L}$ and $r$ is in $\mathcal{R}$. We define
$\fillers S$ to be the type of \define{diagonal fillers}
of the above diagram, i.e.~the type of tuples $(j,H_f,H_g,K)$ consisting of
$j:B\to X$, $H_f:j\circ l=f$ and $H_g:r\circ j=g$ and an equality $K : r\circ H_f = \ct S{(H_g \circ l)}$.
\end{defn}

The equality $K$ is required because of homotopy coherence: the commutativity of the given square and of the two triangles are not mere propositions but \emph{data} consisting of homotopies inhabiting those squares and triangles, so to actually have a ``filler'' in the homotopy coherent sense we need to know that the ``pasting composite'' of the two triangles is the given square.

\begin{lemma}\label{lem:diagonal_fillers}
Let $(\mathcal{L},\mathcal{R})$ be an orthogonal factorization system, and
consider a commutative square
\begin{equation*}
\begin{tikzcd}
A \arrow[r,"f"] \arrow[d,swap,"l"] \ar[dr,phantom,"\scriptstyle S"] & X \arrow[d,"r"] \\
B \arrow[r,swap,"g"] & Y
\end{tikzcd}
\end{equation*}
for which $l$ is in $\mathcal{L}$ and $r$ is in $\mathcal{R}$. Then the type
$\fillers S$ of diagonal fillers is contractible.
\end{lemma}

\begin{proof}
By the fact that every morphism factors uniquely as a left map followed by a
right map, we may factorize $f$ and $g$ in $(\mathcal{L},\mathcal{R})$ as $H_f : f = f_\cR \circ f_\cL$ and $H_g : g = g_\cR \circ g_\cL$, obtaining the diagram
\begin{equation*}
\begin{tikzcd}
A \arrow[r,"f_{\mathcal{L}}"] \arrow[d,swap,"l"] & \im(f) \arrow[r,"f_{\mathcal{R}}"] & X \arrow[d,"r"] \\
B \arrow[r,swap,"g_{\mathcal{L}}"] & \im(g) \arrow[r,swap,"g_{\mathcal{R}}"] & Y.
\end{tikzcd}
\end{equation*}
Now both $(r\circ f_{\mathcal{R}})\circ f_{\mathcal{L}}$ and
$g_{\mathcal{R}}\circ(g_{\mathcal{L}}\circ l)$ are factorizations
of the same function $r\circ f:A\to Y$.
Since $\fact_{\mathcal{L},\mathcal{R}}(r\circ f)$ is contractible, so is its identity type
\[ (\im(f), f_\cL, r\circ f_\cR, r\circ H_f) = (\im(g), g_\cL \circ l, g_\cR, \ct{S}{(H_g\circ l)}). \]
This identity type is equivalent to
\begin{multline*}
\sm{e:\im(f) \simeq \im(g)}{H_\cL : g_\cL \circ l = e\circ f_\cL}{H_\cR : r\circ f_\cR = g_\cR\circ e}\\
(\ct{(r\circ H_f)}{(H_\cR \circ f_\cL)} = \ct S{\ct{(H_g \circ l)}{(g_\cR \circ H_\cL)}})
\end{multline*}
Now since $\fact_{\cL,\cR}(f)$ and $\fact_{\cL,\cR}(g)$ are also contractible, we can sum over them to get that the following type is contractible:
\begin{multline*}
  \sm{\im(f):\UU}{f_\cL : A \to \im(f)}{f_\cR : \im(f) \to X}{H_f : f = f_\cR \circ f_\cL}\\
  \sm{\im(g):\UU}{g_\cL : B \to \im(g)}{g_\cR : \im(g) \to Y}{H_g : g = g_\cR \circ g_\cL}\\
\sm{e:\im(f) \simeq \im(g)}{H_\cL : g_\cL \circ l = e\circ f_\cL}{H_\cR : r\circ f_\cR = g_\cR\circ e}\\
(\ct{(r\circ H_f)}{(H_\cR \circ f_\cL)} = \ct S{\ct{(H_g \circ l)}{(g_\cR \circ H_\cL)}})
\end{multline*}
(omitting the hypotheses that $f_\cL,g_\cL\in\cL$ and $f_\cR,g_\cR\in\cR$).
Reassociating and removing the contractible type $\sm{\im(g):\UU}(\im(f) \simeq \im(g))$, and renaming $\im(f)$ as simply $I$, this is equivalent to
\begin{multline*}
  \sm{I:\UU}{f_\cL : A \to I}{f_\cR : I \to X}{H_f : f = f_\cR \circ f_\cL}\\
  \sm{g_\cL : B \to I}{g_\cR : I \to Y}{H_g : g = g_\cR \circ g_\cL}{H_\cL : g_\cL \circ l = f_\cL}{H_\cR : r\circ f_\cR = g_\cR}\\
(\ct{(r\circ H_f)}{(H_\cR \circ f_\cL)} = \ct S{\ct{(H_g \circ l)}{(g_\cR \circ H_\cL)}})
\end{multline*}
Removing the contractible $\sm{f_\cL : A \to I} (g_\cL \circ l = f_\cL)$ and $\sm{g_\cR : I \to Y} (r\circ f_\cR = g_\cR)$, this becomes
\begin{multline*}
  \sm{I:\UU}{f_\cR : I \to X}{g_\cL : B \to I}{H_f : f = f_\cR \circ g_\cL \circ l}{H_g : g = r\circ f_\cR \circ g_\cL}
(r\circ H_f = \ct S{(H_g \circ l)})
\end{multline*}
Inserting a contractible $\sm{j:B\to X} (f_\cR \circ g_\cL = j)$, and reassociating some more, we get
\begin{multline*}
  \sm{j:B\to X}{I:\UU}{f_\cR : I \to X}{g_\cL : B \to I}{H_j:f_\cR \circ g_\cL = j}\\
  \sm{H_f : f = f_\cR \circ g_\cL \circ l}{H_g : g = r\circ f_\cR \circ g_\cL}
  (r\circ H_f = \ct S{(H_g \circ l)})
\end{multline*}
But now $\sm{I:\UU}{f_\cR : I \to X}{g_\cL : B \to I}{H_j:f_\cR \circ g_\cL = j}$ is just $\fact_{\cL,\cR}(j)$, hence contractible.
Removing it, we get
\begin{equation*}
  \sm{j:B\to X}{H_f : f = j \circ l}{H_g : g = r\circ j}(r\circ H_f = \ct S{(H_g \circ l)})
\end{equation*}
which is just $\fillers S$.
Therefore, this is also contractible.
\end{proof}

\begin{defn}\label{defn:orthogonal}
For any class $\mathcal{C}:\prd*{A,B:\UU}(A\to B)\to\prop$ of maps, we define
\begin{enumerate}
\item $^{\bot}\mathcal{C}$ to be the class of maps with \define{(unique) left lifting
property} with respect to all maps in $\mathcal{C}$: the mere proposition
$({}^\bot\mathcal{C})(l)$ asserts that for every commutative square
\begin{equation*}
\begin{tikzcd}
A \arrow[r,"f"] \arrow[d,swap,"l"] \ar[dr,phantom,"S"] & X \arrow[d,"r"] \\
B \arrow[r,swap,"g"] & Y
\end{tikzcd}
\end{equation*}
with $r$ in $\mathcal{C}$, the type $\fillers S$ of diagonal fillers is contractible.
\item $\mathcal{C}^\bot$ to be the class of maps with the dual \define{(unique) right lifting
property} with respect to all maps in $\mathcal{C}$.
\item $l\perp r$ to mean $r\in {\{l\}}^\perp$ (equivalently, $l\in {}^{\perp}\{r\}$).
\end{enumerate}
\end{defn}

\begin{lemma}\label{lem:ofs_lifting}
In an orthogonal factorization system $(\mathcal{L},\mathcal{R})$, one has
$\mathcal{L}={^\bot\mathcal{R}}$ and $\mathcal{L}^\bot=\mathcal{R}$.
\end{lemma}

\begin{proof}
We first show that $\mathcal{L}={^\bot\mathcal{R}}$, i.e.~we show that
$\mathcal{L}(f)\leftrightarrow {^\bot\mathcal{R}}(f)$ for any map $f$. Note
that the implication $\mathcal{L}(f)\to {^\bot\mathcal{R}}(f)$ follows from
\autoref{lem:diagonal_fillers}.

Let $f:A\to B$ be a map in ${^\bot\mathcal{R}}$.
We wish to show that $\mathcal{L}(f)$. Consider the factorization
$(f_{\mathcal{L}},f_{\mathcal{R}})$ of $f$. Then the square
\begin{equation*}
\begin{tikzcd}
A \arrow[r,"f_{\mathcal{L}}"] \arrow[d,swap,"f"] & \mathsf{im}_{\mathcal{L},\mathcal{R}}(f) \arrow[d,"f_{\mathcal{R}}"] \\
B \arrow[r,swap,"\idfunc"] & B
\end{tikzcd}
\end{equation*}
commutes. Since $f$ has the left lifting property, the type of diagonal fillers
of this square is contractible. Thus we have a section $j$ of $f_{\mathcal{R}}$.
The map $j\circ f_\mathcal{R}$ is then a diagonal filler of the square
\begin{equation*}
\begin{tikzcd}
A \arrow[r,"f_{\mathcal{L}}"] \arrow[d,swap,"f_{\mathcal{L}}"] & \mathsf{im}_{\mathcal{L},\mathcal{R}}(f) \arrow[d,"f_{\mathcal{R}}"] \\
\mathsf{im}_{\mathcal{L},\mathcal{R}}(f) \arrow[r,swap,"f_{\mathcal{R}}"] & B.
\end{tikzcd}
\end{equation*}
Of course, the identity map $\idfunc[\mathsf{im}_{\mathcal{L},\mathcal{R}}(f)]$
is also a diagonal filler for this square, so the fact that the type of
such diagonal fillers is contractible implies that $j\circ f_{\mathcal{R}}=\idfunc$.
Thus, $j$ and $f_\cR$ are inverse equivalences, and so the pair $(B,f)$ is equal to the pair $(\mathsf{im}_{\mathcal{L},\mathcal{R}}(f),f_\cL)$.
Hence $f$, like $f_\cL$, is in $\cL$.

Similarly, \autoref{lem:diagonal_fillers} also implies that $\mathcal{R}(f)\to \mathcal{L}^\bot(f)$
for any map $f$, while we can prove $\mathcal{L}^\bot(f)\to\mathcal{R}(f)$ analogously to ${^\bot\mathcal{R}}(f)\to\mathcal{L}(f)$.
\end{proof}

\begin{corollary}\label{lem:sofs_req}
The data of two orthogonal factorization systems $(\mathcal{L},\mathcal{R})$ and
$(\mathcal{L}',\mathcal{R}')$ are identical if and only if
$\mathcal{R}=\mathcal{R}'$.
\end{corollary}
\begin{proof}
  ``Only if'' is obvious.
  Conversely, if $\mathcal{R}=\mathcal{R}'$, then by \cref{lem:ofs_lifting} we have $\cL = \cL'$, and the remaining data of an orthogonal factorization system is a mere proposition.
\end{proof}

\begin{lemma}\label{lem:ofs_rightstable}
Let $(\mathcal{L},\mathcal{R})$ be an orthogonal factorization system. Then
the class $\mathcal{R}$ is stable under pullbacks.
\end{lemma}

\begin{proof}
Consider a pullback diagram
\begin{equation*}
\begin{tikzcd}
A \arrow[d,swap,"k"] \arrow[r,"g"] & X \arrow[d,"h"] \\
B \arrow[r,swap,"f"] & Y
\end{tikzcd}
\end{equation*}
where $h:X\to Y$ is assumed to be in $\mathcal{R}$, and let $k=k_{\mathcal{R}}\circ k_\mathcal{L}$ be a factorization of $k$.
Then the outer rectangle in the diagram
\begin{equation*}
\begin{tikzcd}
A \arrow[r,equals] \arrow[d,swap,"k_{\mathcal{L}}"] & A \arrow[d,swap,"k"] \arrow[r,"g"] & X \arrow[d,"h"] \\
\im_{\mathcal{L},\mathcal{R}}(k) \arrow[r,swap,"k_{\mathcal{R}}"] & B \arrow[r,swap,"f"] & Y
\end{tikzcd}
\end{equation*}
commutes, so by \cref{lem:diagonal_fillers} there is a diagonal lift $j:\im_{\mathcal{L},\mathcal{R}}(k)\to X$ with $i \circ k_{\cL} = g$ and $h\circ i = f \circ k_{\cR}$.
Then by the universal property of pullbacks, we obtain a map $j:\im_{\mathcal{L},\mathcal{R}}(k)\to A$ with $g\circ j = i$ and $k\circ j=k_{\mathcal{R}}$.
And since $g\circ j \circ k_{\cL} = i\circ k_{\cL} = g$ and $k\circ j\circ k_{\cL} = k_{\cR}\circ k_{\cL} = k$ (by homotopies coherent with the pullback square), the uniqueness aspect of the pullback gives $j\circ k_{\mathcal{L}}=\idfunc$.

It suffices to show that $k_{\mathcal{L}}$ is an equivalence, and since we already have that $j\circ k_{\mathcal{L}}=\idfunc$ we only need to show that $k_{\mathcal{L}}\circ j=\idfunc$.
We do this using the contractibility of the type of diagonal fillers. Consider the square
\begin{equation*}
\begin{tikzcd}
A \arrow[r,"k_{\mathcal{L}}"] \arrow[d,swap,"k_{\mathcal{L}}"] & \im_{\mathcal{L},\mathcal{R}}(k) \arrow[d,"k_{\mathcal{R}}"] \\
\im_{\mathcal{L},\mathcal{R}}(k) \arrow[r,swap,"k_{\mathcal{R}}"] & B,
\end{tikzcd}
\end{equation*}
for which $\idfunc:\im_{\mathcal{L},\mathcal{R}}(k)\to \im_{\mathcal{L},\mathcal{R}}(k)$ (with the trivial homotopies) is a diagonal filler. However, we also have the homotopies $k_{\mathcal{L}}\circ j\circ k_{\mathcal{L}} \htpy k_{\mathcal{L}}$ and $k_{\mathcal{R}}\circ k_{\mathcal{L}}\circ j\htpy k\circ j\htpy k_{\mathcal{R}}$. This shows that we have a second diagonal filler, of which the underlying map is $k_{\mathcal{L}}\circ j$. Since the type of diagonal fillers is contractible, it follows that $k_{\mathcal{L}}\circ j=\idfunc$, as desired.
\end{proof}

\subsubsection{Stable orthogonal factorization systems}%
\label{sec:sofs2}

\begin{lemma}\label{lem:fill_compute}
Given $l,r,f,g$ and a homotopy $S : r \circ f = g  \circ l$, consider as $b:B$ varies all the diagrams of the form
\begin{equation*}
\begin{tikzcd}
\hfib{l}{b} \arrow[r,"\proj1"] \arrow[d,"!"'] & A \arrow[d,swap,"l"] \arrow[r,"f"] \ar[dr,phantom,"S"] & X \arrow[d,"r"] \\
\unit \arrow[r,swap,"b"] & B \arrow[r,swap,"g"] & Y
\end{tikzcd}
\end{equation*}
and write $S_b : r \circ (f \circ \proj1) = (g\circ b) \circ \mathord !$ for the induced commutative square. % chktex 40
Then the map
\begin{equation*}
\fillers{S} \to \prd{b:B}\fillers{S_b},
\end{equation*}
defined by precomposition with $b$, is an equivalence.
\end{lemma}

\begin{proof}
  The domain and codomain of the map in question are by definition
  \[
    \sm{j:B\to X}{H_f :j\circ l=f}{H_g:r\circ j=g} r\circ H_f = \ct{S}{(H_g\circ l)}
  \]
  and
  \begin{equation*}
  \prd{b:B}\sm{j_b:\unit\to X}{H_{f,b} : j_b\circ \mathord{!}=f\circ \proj1}{H_{g,b}: r\circ j_b=g\circ b} r\circ H_{f,b} = \ct{S_b}{(H_{g,b}\circ \mathord!)}.
  \end{equation*}
  The latter is equivalent (using function extensionality and contractibility of $\unit$) to
  \begin{multline*}
    \prd{b:B}\sm{j_b:X}{H_{f,b} : \prd{u:\hfib l b} j_b=f(\proj1(u))}{H_{g,b}: r(j_b)=g(b)}\\
    \prd{u:\hfib l b} r(H_{f,b}(u)) = \ct{S_b}{H_{g,b}}.
  \end{multline*}
  and thereby to
  \begin{multline*}
    \sm{j:B\to X}{H_{f} : \prd{b:B}\prd{u:\hfib l b} j(b)=f(\proj1(u))}{H_{g}: \prd{b:B} r(j(b))=g(b)}\\
    \prd{b:B}\prd{u:\hfib l b} r(H_{f}(b,u)) = \ct{S_b}{H_{g}(b)}.
  \end{multline*}
  Modulo these equivalences, the desired map acts as the identity on $j:B\to X$.
  Moreover, its action on the remaining parts is given by the equivalences
  \begin{align*}
    (j\circ l = f)
    &\eqvsym \prd{a:A} j(l(a)) = f(a)\\
    &\eqvsym \prd{a:A}{b:B}{p:l(a)=b} j(l(a)) = f(a)\\
    &\eqvsym \prd{b:B}{a:A}{p:l(a)=b} j(b) = f(a)\\
    &\eqvsym \prd{b:B} \prd{u:\hfib l b} j(b) = f(\proj1(u))
  \end{align*}
  and
  \begin{equation*}
    (r\circ j = g)
    \eqvsym \prd{b:B} r(j(b)) = g(b)
  \end{equation*}
  and
  \begin{align*}
    (r\circ H_f = \ct{S}{(H_g\circ l)})
    &\eqvsym \prd{a:A} r(H_f(a)) = \ct{S(a)}{H_g(l(a))}\\
    &\eqvsym \prd{a:A}{b:B}{p:l(a)=b} r(H_f(a)) = \ct{S(a)}{H_g(l(a))}\\
    &\eqvsym \prd{b:B}{a:A}{p:l(a)=b} r(H_f(a)) = \ct{S(a)}{H_g(b)}\\
    &\eqvsym \prd{b:B}{u:\hfib l b} r(H_f(b,u)) = \ct{S_b}{H_g(b)}
  \end{align*}
  hence the whole thing is an equivalence.
\end{proof}

\begin{corollary}
In any orthogonal factorization system
$(\mathcal{L},\mathcal{R})$, if
$l:A\to B$ is a map such that $\hfib{l}{b} \to \unit$ is in $\cL$ for each $b:B$, then also $l$ itself is in $\cL$.
\end{corollary}
\begin{proof}
  By \cref{lem:ofs_lifting}, $l$ is in $\cL$ iff $\fillers S$ is contractible for each $r\in\cR$ and $S$ as in \cref{lem:fill_compute}, while similarly $\hfib{l}{b} \to \unit$ is in $\cL$ iff $\fillers {S_b}$ is contractible.
  But the product of contractible types is contractible.
\end{proof}

\begin{corollary}\label{thm:detect-right-by-fibers}
  In any stable orthogonal factorization system, if $l\perp r$ for all maps $l\in\cL$ of the form $l:A\to \unit$, then $r\in\cR$.
  In particular, for any modality $\modal$, if $X\to (A\to X)$ is an equivalence for all $\modal$-connected types $A$, then $X$ is modal.
\end{corollary}
\begin{proof}
  By \cref{lem:fill_compute}, for any $l\in\cL$ and commutative square $S$ from $l$ to $r$, we have $\fillers{S} \eqvsym \prd{b:B}\fillers{S_b}$.
  Since $(\cL,\cR)$ is stable, each map $\mathord{!}_b:\hfib{l}{b}\to \unit$ is also in $\cL$, so that $\mathord{!}_b\perp r$ by assumption.
  Thus $\fillers{S_b}$ is contractible for all $b$, hence so is $\fillers{S}$.

  For the second statement, the type $A\to X$ is equivalent to the type of commutative squares
  \[
  \begin{tikzcd}
    A \ar[r,"f"] \ar[d] & X \ar[d] \\ \unit\ar[r] & \unit
  \end{tikzcd}
  \]
  and the type of fillers for such a square is equivalent to the type of $x:X$ such that $f(a) = x$ for all $a:A$, i.e.\ the fiber of $X\to (A\to X)$ over $f$.
  Thus, the assumption ensures that all such types of fillers are contractible, i.e.\ $l\perp r$ for all $\modal$-connected maps of the form $l:A\to \unit$, so the first statement applies.
\end{proof}

\begin{lemma}\label{lem:sofs_rfib}
Let $(\mathcal{L},\mathcal{R})$ be a stable orthogonal factorization system.
Then a map $r:X\to Y$ is in $\mathcal{R}$ if and only if $\hfib{r}{y}$
is $(\mathcal{L},\mathcal{R})$-modal for each $y:Y$.
\end{lemma}

\begin{proof}
The class of right maps is stable under pullbacks by \autoref{lem:ofs_rightstable},
so it suffices to show that any map with modal fibers is in $\mathcal{R}$.

Let $r:X\to Y$ be a map with modal fibers. Our goal is to show that
$r$ is in $\mathcal{R}$. By \autoref{lem:ofs_lifting} it suffices to show that
$r$ has the right lifting property with respect to the left maps.
Consider a diagram of the form
\begin{equation*}
\begin{tikzcd}
A \arrow[d,swap,"l"] \arrow[r,"f"] & X \arrow[d,"r"] \\
B \arrow[r,swap,"g"] & Y
\end{tikzcd}
\end{equation*}
in which $l$ is a map in $\mathcal{L}$.
We wish to show that the type of diagonal fillers is contractible.
By \autoref{lem:fill_compute}, the type of diagonal fillers of the above diagram
is equivalent to the dependent product of the types of fillers of
\begin{equation*}
\begin{tikzcd}
\hfib{l}{b} \arrow[d] \arrow[r,"f\circ i_b"] & X \arrow[d,"r"] \\
\unit \arrow[r,swap,"g(b)"] & Y
\end{tikzcd}
\end{equation*}
indexed by $b:B$. Thus, it suffices that the type of diagonal fillers for this
square is contractible for each $b:B$. Since any filler factors uniquely through
the pullback $\unit\times_Y X$, which is $\hfib{r}{g(b)}$, the type of diagonal
fillers of the above square is equivalent to the type of diagonal fillers of the
square
\begin{equation*}
\begin{tikzcd}
\hfib{l}{b} \arrow[d] \arrow[r,densely dotted] & \hfib{r}{g(b)} \arrow[d] \\
\unit \arrow[r,equals] & \unit
\end{tikzcd}
\end{equation*}
where the dotted map is the uniqe map into the pullback $\hfib{r}{g(b)}$. In
this square, the left map is in $\mathcal{L}$ because $\mathcal{L}$ is assumed
to be stable under pullbacks, and the right map is in $\mathcal{R}$ by assumption,
so the type of diagonal fillers is contractible.
\end{proof}

\begin{thm}\label{thm:subuniv-sofs}
Any two stable orthogonal factorization systems with the same modal types are
equal.
\end{thm}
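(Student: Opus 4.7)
The plan is to chain together two of the lemmas that have already been established. By \cref{lem:sofs_req}, the data of an orthogonal factorization system is determined by its right class $\cR$, so it suffices to show that if two stable orthogonal factorization systems $(\cL,\cR)$ and $(\cL',\cR')$ have the same $(\cL,\cR)$-modal types as $(\cL',\cR')$-modal types, then $\cR = \cR'$ as subfamilies of maps.

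The key observation is that for a \emph{stable} orthogonal factorization system, \cref{lem:sofs_rfib} gives an intrinsic characterization of the right class purely in terms of the modal types: a map $r : X \to Y$ lies in $\cR$ if and only if every fiber $\hfib{r}{y}$ is $(\cL,\cR)$-modal. So I would argue as follows. Fix a map $r : X \to Y$. By \cref{lem:sofs_rfib} applied to $(\cL,\cR)$, the proposition $\cR(r)$ is equivalent to $\prd{y:Y}(\hfib{r}{y}$ is $(\cL,\cR)$-modal$)$. By the hypothesis that the two systems have the same modal types, this in turn is equivalent to $\prd{y:Y}(\hfib{r}{y}$ is $(\cL',\cR')$-modal$)$, which by \cref{lem:sofs_rfib} applied to $(\cL',\cR')$ is equivalent to $\cR'(r)$. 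Hence $\cR = \cR'$.

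Having equality of the right classes, \cref{lem:sofs_req} then delivers equality of the two systems as a whole. There is no real obstacle here, since the two ingredients have already been done; the role of the present theorem is simply to combine them. (Note in particular that stability is used essentially in invoking \cref{lem:sofs_rfib} for \emph{both} systems; without stability one would only get $\cR \subseteq \cR'$ and vice versa via the fiber criterion if one knew it one-sidedly, but the stable case gives the biconditional immediately.)
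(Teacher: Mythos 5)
Your proof is correct and follows exactly the same route as the paper's: reduce to equality of the right classes via \cref{lem:sofs_req}, then use the fiberwise characterization of \cref{lem:sofs_rfib} to show the right class is determined by the modal types. The paper's own proof is just a two-sentence version of this same argument.
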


\begin{proof}
By \autoref{lem:sofs_req} it follows that any orthogonal factorization system
is completely determined by the class of right maps.
By \autoref{lem:sofs_rfib} it follows that in a stable orthogonal factorization
system, the class of right maps is completely determined by the modal types.
\end{proof}

\begin{thm}\label{thm:highermod_from_sofs}
Any stable orthogonal factorization system determines a higher modality with
the same modal types.
\end{thm}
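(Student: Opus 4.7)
The plan is to build the higher modality data directly from the factorization system, using the defining lifting property to supply the induction principle and appealing to the closure properties of modal types for the remaining clauses.

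First I would define $\modal A$ and $\modalunit[A]$ by factoring the canonical map $A\to\unit$ as $A\xrightarrow{\modalunit[A]}\modal A\to\unit$ with $\modalunit[A]\in\cL$ and $\modal A\to\unit\in\cR$. By construction each $\modal A$ is $(\cL,\cR)$-modal; conversely any $X$ satisfying $X\to\unit\in\cR$ admits $(X,\idfunc,\nameless)$ as a second such factorization, whence by the contractibility of factorizations $\modalunit[X]$ is an equivalence. This ensures that the two notions of ``modal type'' coincide. Next I would verify the third clause of \cref{defn:highermod}. The orthogonality result \cref{lem:diagonal_fillers} turns the modal types into a reflective subuniverse: any map $f:A\to B$ with $B$ modal extends uniquely along $\modalunit[A]$, by lifting $f$ against $B\to\unit$. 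Consequently \cref{lem:rs_idstable} applies, showing that for $x,y:\modal A$ the identity type $\id{x}{y}$ is itself modal, and then \cref{lem:subuniv-modal} gives that $\modalunit[(\id{x}{y})]$ is an equivalence.

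The substantive step is the induction principle. Given $P:\modal A\to\UU$ and $f:\prd{a:A}\modal(P(\modalunit[A](a)))$, I would consider the projection $\proj1:(\sm{z:\modal A}\modal(P(z)))\to\modal A$, whose fibers $\modal(P(z))$ are all modal, so that $\proj1\in\cR$ by \cref{lem:sofs_rfib}. The evident square
\begin{equation*}
\begin{tikzcd}
A \arrow[r,"{\lam{a}\pairr{\modalunit[A](a),f(a)}}"] \arrow[d,swap,"{\modalunit[A]}"] & \sm{z:\modal A}\modal(P(z)) \arrow[d,"\proj1"] \\
\modal A \arrow[r,swap,"{\idfunc}"] & \modal A
\end{tikzcd}
\end{equation*}
then admits a unique diagonal filler $j:\modal A\to\sm{z:\modal A}\modal(P(z))$ by \cref{lem:diagonal_fillers}. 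Writing $q_z:\proj1(j(z))=z$ for the pointwise path induced by the lower triangle, I would set $\mathsf{ind}^\modal_A(f)(z)\defeq \trans{q_z}{\proj2(j(z))}$. The computation rule is then extracted from the upper triangle of the filler together with the coherence $K$ relating it to the outer square.

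The main obstacle I expect is the book-keeping of coherence in the computation rule: one must check that the path $q_{\modalunit[A](a)}$ arising from the lower triangle, combined via $K$ with the upper triangle $j\circ\modalunit[A]=\lam{a}\pairr{\modalunit[A](a),f(a)}$, yields an identification $\mathsf{ind}^\modal_A(f)(\modalunit[A](a))=f(a)$ after transport along $q_{\modalunit[A](a)}$. This is precisely what the coherence datum $K$ of \cref{defn:fillers} is designed to supply, so the identification exists, but some careful manipulation of dependent paths over a pair type is required to extract it. Everything else in the theorem is then a formal consequence of the cited lemmas and the uniqueness of orthogonal fillers.
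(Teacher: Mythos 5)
Your proposal is correct and follows essentially the same route as the paper: define $\modal A$ by factoring $A\to\unit$, use \cref{lem:sofs_rfib} to see that $\proj1:(\sm{z:\modal A}\modal(P(z)))\to\modal A$ is a right map, obtain the eliminator as a diagonal filler of the same square, and invoke \cref{lem:rs_idstable} for the identity-type clause. The only (harmless) difference is in extracting the computation rule: the paper reorganizes the whole contractible type of fillers into the fiber of precomposition by $\modalunit$ (thereby getting the uniquely eliminating property directly), whereas you read off $\mathsf{ind}$ and $\mathsf{comp}$ from a single filler by transporting along the lower-triangle path and using the coherence $K$ to match it with the first component of the upper triangle — which is exactly the dependent-path manipulation you anticipate, and it suffices since \cref{defn:highermod} only demands existence of these data.
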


\begin{proof}
For every type $X$ we have the $(\cL,\cR)$-factorization $X\to\modal X\to\unit$ of the
unique map $X\to\unit$. This determines the modal unit
$\modalunit:X\to\modal X$ which is in $\mathcal{L}$, and the
unique map $\modal X\to\unit$ is in $\mathcal{R}$, i.e.\ $\modal X$ is $(\cL,\cR)$-modal.

To show the induction principle, let $P:\modal X\to\UU$ and $f:\prd{x:X} \modal(P(\eta(x)))$.
Then we have a (judgmentally) commutative square
\begin{equation*}
\begin{tikzcd}
X \arrow[r,"f"] \arrow[d,swap,"\modalunit"] & \sm{z:\modal X}\modal(P(z)) \arrow[d,"\proj1"] \\
\modal X \arrow[r,equals] & \modal X.
\end{tikzcd}
\end{equation*}
Note that by \autoref{lem:sofs_rfib},
the projection $\proj1:(\sm{z:\modal X}\modal(P(z)))\to\modal X$ is in $\mathcal{R}$
because its fibers are modal. Also, the modal unit
$\modalunit:X\to\modal X$ is in $\mathcal{L}$.
Thus, by \cref{defn:orthogonal}, the type of fillers of this square is contractible.
Such a filler consists of a function $s$ and homotopies filling the two triangles
\begin{equation*}
\begin{tikzcd}
X \arrow[r,"f"] \arrow[d,swap,"\modalunit"] & \sm{z:\modal X}\modal(P(z)) \arrow[d,"\proj1"] \\
\modal X \arrow[r,equals] \arrow[ur,densely dotted] & \modal X
\end{tikzcd}
\end{equation*}
whose composite is reflexivity, i.e.\ the type
\begin{multline*}
\sm{s:\modal X \to \sm{z:\modal X}\modal(P(z))}{H:\prd{z:\modal X} \proj1(s(z))=z}{K:\prd{x:X} s(\modalunit(x))=f(x)}\\
\prd{x:X} \proj1(K(x)) = H(\modalunit(x)).
\end{multline*}
If we decompose $s$, $f$, and $K$ by their components, we get
\begin{multline*}
\sm{s_1:\modal X \to \modal X}{s_2:\prd{z:\modal X} \modal(P(s_1(z)))}{H:\prd{z:\modal X} s_1(z)=z}\\
\sm{K_1:\prd{x:X} s_1(\modalunit(x))=f_1(x)}{K_2 :\prd{x:X} s_2(\modalunit(x)) =_{K_1(x)} f_2(x)}\\
\prd{x:X} K_1(x) = H(\modalunit(x)).
\end{multline*}
Now we can contract $s_1$ and $H$, and also $K_1$ with the final unnamed homotopy, to get
\begin{equation*}
\sm{s_2:\prd{z:\modal X} \modal(P(z))}  \prd{x:X} s_2(\modalunit(x)) = f_2(x).
\end{equation*}
But this is just the type of extensions of $f$ along $\modalunit$, i.e.\ the fiber of precomposition by $\modalunit$.
Thus, precomposition by $\modalunit$ is an equivalence, so in fact we have a uniquely eliminating modality.
By \cref{lem:rs_idstable}, the identity types of $\modal X$ are modal, so we have a higher modality as well.
\end{proof}

\section{Localization}\label{sec:localization}

Localization is the process of inverting a specified class of maps.
In category theory, the localization of a category $\mathcal{C}$ at a family of maps $F$ is obtained by adding formal inverses to those maps freely, obtaining a category $\mathcal{C}[F^{-1}]$ with a universal functor $\mathcal{C}\to \mathcal{C}[F^{-1}]$ sending each map in $F$ to an isomorphism.
In good situations, this universal functor is equivalent to the reflection onto a reflective subcategory of $\mathcal{C}$, which consists of the \emph{$F$-local objects}: those that ``see each map in $F$ as an isomorphism''.
We will not be concerned here with the universal property of the localized category; instead we are interested in constructing reflective subcategories of local objects.
We can do this with a higher inductive type, giving a general construction of reflective subuniverses and modalities.

\subsection{Local types and null types}%
\label{sec:local-types}

\begin{defn}\label{defn:Flocal}
Consider a family $F:\prd{a:A}B(a)\to C(a)$ of maps. We say that a type $X$
is \define{$F$-local} if the function
\begin{equation*}
\lam{g}g\circ F_a : (C(a)\to X)\to (B(a)\to X)
\end{equation*}
is an equivalence for each $a:A$. % The type $\islocal{F}(X)$ is defined to be the mere proposition that $X$ is $F$-local.
\end{defn}

In other words, $X$ is $F$-local if every $f:B(a)\to X$ extends uniquely to a map $\bar{f}:C(a)\to X$, along the map $F_a:B(a)\to C(a)$, as indicated in the diagram
\begin{equation*}
\begin{tikzcd}
B(a) \arrow[r,"f"] \arrow[d,swap,"F_a"] & X. \\
C(a) \arrow[ur,densely dotted,swap,"\bar{f}"]
\end{tikzcd}
\end{equation*}
Thus, one might say that a type $X$ is $F$-local if it is (right) orthogonal to the maps $F_a$, or that it ``thinks each map $F_a$ is an equivalence''.
In \autoref{thm:localization_rs} we will see that the $F$-local types determine a reflective subuniverse.

In most of our examples $C$ will be the constant family $\unit$, giving the following specialization.

\begin{defn}\label{defn:Bnull}
Let $B:A\to \UU$ be a type family. A type $X$ is said to be \define{$B$-null} if the map
\begin{equation*}
\lam{x}\lam{b}x : X \to (B(a) \to X)
\end{equation*}
is an equivalence for each $a:A$. %The type $\mathsf{isNull}_B(X)$ is defined to be the mere proposition that $X$ is $B$-null.
\end{defn}

In other words, $X$ is $B$-null if and only if any map $f:B(a)\to X$ has a unique extension to a map $\unit\to X$, as indicated in the diagram
\begin{equation*}
\begin{tikzcd}
B(a) \arrow[r,"f"] \arrow[d] & X. \\
\unit \arrow[ur,densely dotted]
\end{tikzcd}
\end{equation*}
Thus, a type $X$ is $B$-null if it is (right) orthogonal to the types $B(a)$, or that it ``thinks each type $B(a)$ is contractible''.
In \autoref{thm:nullification_modality} we will see that the $B$-null types determine a modality.

\begin{egs}\label{egs:locality}\
\begin{enumerate}
\item The unit type is local for any family of maps.
\item Since $\emptyt\to X$ is contractible for any type $X$, a type is $\emptyt$-null if and only if it is contractible.
\item Any type is $\unit$-null.
\item A type $X$ is $\bool$-null if and only if $X$ is a mere proposition. To see this, recall that a mere proposition is a type for which any two points can be identified. A map of type $\bool\to X$ is equivalently specified by two points in $X$. If $X$ is assumed to be $\bool$-null, and $x,y:X$ are points in $X$, then it follows that there is a (unique) point $z:X$ such that $x=z$ and $y=z$. In particular it follows that $x=y$, so we conclude that $X$ is a mere proposition.
\item More generally, a type is $\Sn^{n+1}$-null if and only if it is $n$-truncated.
  This follows from~\cite[Theorem 7.2.9 and Lemma 6.5.4]{TheBook}.
\item If $Q$ is a mere proposition, then the $Q$-null types are exactly the $\open Q$-modal types (see \cref{eg:open}).
\end{enumerate}
\end{egs}

\begin{rmk}
We choose to consider the notion of being local at a \emph{family} of maps, rather than
as a \emph{class} of maps (i.e.~a subtype of $\sm{X,Y:\UU}X\to Y$). A family of maps (indexed by a type $A$ in $\UU$) is intrinsically small with respect to $\UU$, whereas a class
is not. By localizing at a small family of maps, we obtain a small type constructor.
Nevertheless, one can show that for any family $F$ of maps, a type is $F$-local
if and only if it is local at the class $\im(F)$, when $\im(F)$ is regarded
as a subtype of $\sm{X,Y:\UU}X\to Y$. A similar relation holds for
set-quotients in~\cite{RijkeSpitters:Sets}.
\end{rmk}

\subsection{Localizing at a family of maps}%
\label{sec:localizing}

In this subsection we introduce the localization operation and show that it determines a reflective subuniverse, which is a modality in the case of nullification.
We define a modal operator $\localization{F}:\UU\to\UU$ called \define{localization at $F$}, via a construction involving higher inductive types.
The idea is that one of the point constructors will be the modal unit $\modalunit[X]$ and the other constructors build in exactly the data making each $\lam{g}g\circ F_a$ an equivalence.

For this to be homotopically well-behaved, we have to choose a ``good'' notion of equivalence such as those in~\cite[Chapter 4]{TheBook}.
Any such choice is possible, but some are easier than others.
Of those in~\cite{TheBook}, ``bi-invertibility'' is easiest because it allows us to avoid 2-path constructors.
However, the following notion of equivalence, which doesn't appear in~\cite{TheBook}, is easier still.
As we will see, this is because although it does include 2-path constructors, the four data it comprises can be broken into two pairs that can be treated ``uniformly'' despite occuring at ``different dimensions''; thus we only need to deal explicitly with one point constructor and one path constructor (and no 2-path constructors).

For $f:A\to B$ we write
\begin{equation*}
  \mathsf{rinv}(f) \defeq \sm{g:B\to A} (f\circ g = \idfunc[B])
\end{equation*}
and for $x,y:A$ we write $\apfunc{f}^{x,y} : (x=y) \to (fx=fy)$ for the action of $f$ on identities.

\begin{defn}
  We say that $f$ is \textbf{path-split} if we have an inhabitant of the following type:
  \[ \mathsf{pathsplit}(f) \defeq \mathsf{rinv}(f) \times \prd{x,y:A} \mathsf{rinv}(\apfunc{f}^{x,y}). \]
\end{defn}

\begin{thm}
  For any $f$ we have $\eqv{\mathsf{pathsplit}(f)}{\isequiv(f)}$.
\end{thm}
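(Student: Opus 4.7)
The plan is to show that both sides are mere propositions and to exhibit maps in both directions; the equivalence then follows automatically.

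For the map $\mathsf{pathsplit}(f) \to \isequiv(f)$, I would use path-split data to build a half-adjoint equivalence structure directly. Given $(g,\epsilon)$ with $\epsilon : f\circ g = \idfunc[B]$, and a family $s_{x,y} : (fx=fy) \to (x=y)$ with $\apfunc{f}^{x,y} \circ s_{x,y} = \idfunc$, define
\[ \eta(x) \defeq s_{g(f(x)),\,x}\bigl(\epsilon(f(x))\bigr) : g(f(x)) = x. \]
Then by the defining property of $s$ we obtain, for each $x:A$, an identification $\apfunc{f}(\eta(x)) = \epsilon(f(x))$, which is exactly the triangle coherence of a half-adjoint equivalence. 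Hence $(g,\eta,\epsilon)$ together with this coherence witnesses $\isequiv(f)$.

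For the reverse map $\isequiv(f) \to \mathsf{pathsplit}(f)$, I would extract a quasi-inverse $(g,\eta,\epsilon)$ from $\isequiv(f)$; the pair $(g,\epsilon)$ gives an element of $\mathsf{rinv}(f)$. It is standard that when $f$ is an equivalence, so is each $\apfunc{f}^{x,y}$ (an explicit quasi-inverse being $p \mapsto \opp{\eta(x)} \ct \apfunc{g}(p) \ct \eta(y)$), which in particular provides the required right inverse of $\apfunc{f}^{x,y}$.

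It remains to verify that both types are mere propositions. That $\isequiv(f)$ is a mere proposition is standard (for any of the good notions of equivalence). For $\mathsf{pathsplit}(f)$ it is enough, by the usual ``mere propositions which are inhabited iff each other are equivalent'' argument, to show that $\mathsf{pathsplit}(f)$ is contractible whenever it is inhabited: for then any two elements are equal, and the type is propositional. But assuming $\mathsf{pathsplit}(f)$, the first direction gives $\isequiv(f)$, so postcomposition with $f$ is an equivalence $(B\to A) \to (B\to B)$; its fiber over $\idfunc[B]$ is $\mathsf{rinv}(f)$, which is therefore contractible. Similarly, since each $\apfunc{f}^{x,y}$ is then an equivalence, each $\mathsf{rinv}(\apfunc{f}^{x,y})$ is contractible by the same argument; hence so is the dependent product over $x,y$. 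Thus $\mathsf{pathsplit}(f)$ is a product of contractible types, and the equivalence follows.

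The only mildly delicate step is verifying the coherence $\apfunc{f}(\eta(x)) = \epsilon(f(x))$ in the first direction, but this is an immediate consequence of $s_{x,y}$ being a strict right inverse of $\apfunc{f}^{x,y}$; no additional 2-dimensional data is needed. This is precisely the design feature that makes ``path-split'' a convenient substitute for bi-invertibility in the HIT construction of localization, since it packages all of the data of an equivalence as a pair of ``function plus path'' pieces that can be added to a HIT uniformly in dimension.
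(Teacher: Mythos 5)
Your proof is correct and follows essentially the same route as the paper: the key step in the forward direction is applying the right inverse of $\apfunc{f}$ to $\epsilon(f(x)) : fgfx = fx$ to obtain $gfx = x$, and the equivalence then follows because, once $f$ is known to be an equivalence, $\mathsf{rinv}(f)$ and each $\mathsf{rinv}(\apfunc{f}^{x,y})$ are contractible, so both sides are mere propositions that imply each other. Your packaging of the forward direction as an explicit half-adjoint equivalence (with the triangle coherence coming for free from the witness that $s$ is a right inverse of $\apfunc{f}$) is a slightly more refined presentation of the same idea.
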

\begin{proof}
  If $f$ is path-split, to show that it is an equivalence it suffices to show that its right inverse $g$ is also a left inverse, i.e.\ that $gfx=x$ for all $x:A$.
  But $fgfx = fx$ since $f\circ g = \idfunc[B]$, and $\apfunc{f} : (gfx=x) \to (fgfx=fx)$ has a right inverse, so $gfx=x$.

  This gives a map $\mathsf{pathsplit}(f) \to \isequiv(f)$; to show that it is an equivalence, we may assume that its codomain is inhabited.
  But if $f$ is an equivalence, then so is $\apfunc{f}^{x,y}$, and hence $\mathsf{rinv}(f)$ and $\mathsf{rinv}(\apfunc{f}^{x,y})$ are both contractible.
  So in this case $\mathsf{pathsplit}(f)$ and $\isequiv(f)$ are both contractible, hence equivalent.
\end{proof}

Now let $F:\prd{a:A} B(a) \to C(a)$ be a family of functions and $X:\UU$.
As a ``first approximation'' to the localization $\localization{F}(X)$, let $\localhit{F}{X}$ be the higher inductive type with the following constructors:
\begin{itemize}
\item $\alpha_X : X \to \localhit{F}{X}$
\item $\mathsf{ext} : \prd*{a:A} (B(a) \to \localhit{F}{X}) \to (C(a) \to \localhit{F}{X})$
\item $\mathsf{isext} : \prd*{a:A}{f:B(a)\to\localhit{F}{X}}{b:B(a)}\id{\mathsf{ext}(f)(F_a(b))}{f(b)}$.
\end{itemize}
The induction principle of $\localhit{F}{X}$ is that for any type family $P:\localhit{F}{X}\to \UU'$, if there are terms
\begin{small}
\begin{align*}
N & : \prd{x:X}P(\alpha_X(x))\\
R & : \prd*{a:A}{f:B(a)\to\localhit{F}{X}}(\prd{b:B(a)}P(f(b)))\to\prd{c:C(a)} P(\mathsf{ext}(f,c)) \\
S & : \prd*{a:A}{f:B(a)\to\localhit{F}{X}}{f':\prd{b:B(a)}P(f(b))}{b:B(a)}\dpath{P}{\mathsf{isext}(f,b)}{R(f')(F_a(b))}{f'(b)},
\end{align*}\end{small}%
then there is a section $s:\prd{x:\localhit{F}{X}}P(x)$ such that $s\circ \alpha_X= N$.
(The section $s$ also computes on $\mathsf{ext}$ and $\mathsf{isext}$, but we will not need those rules.)
Note that the family $P$ does not have to land in the same universe $\UU$ that contains our types $A,B,C,X$; this will be important in \cref{sec:nullification}.

This approximation $\localhit{F}{X}$ behaves like we expect $\localization{F}(X)$ to behave when mapping into local types:

\begin{lemma}\label{thm:appx-loc}
  If $Y$ is $F$-local (and $X$ is arbitrary), then precomposition with $\alpha_X$
  \[ (-\circ \alpha_X) : (\localhit{F}{X} \to Y) \to (X\to Y) \]
  is an equivalence.
\end{lemma}
\begin{proof}
  We will show that this map is path-split.

  First we have to construct a right inverse to it, i.e.\ given $g:X\to Y$ we must extend it to $\localhit{F}{X}$.
  We will apply the induction principle using the constant family $Y$ over $\localhit{F}{X}$ and $N\defeq g$, so that the computation rule shows that what we get is an extension of $g$.
  To construct the cases of $R$ and $S$, let $f:B(a)\to \localhit{F}{X}$, and let $f':B(a)\to Y$.
  Our goal is to construct $R(f,f'):C(a)\to Y$ together with a witness $S(f,f')$ that the triangle
  \begin{equation*}
    \begin{tikzcd}[column sep=large]
      B(a) \arrow[dr,"{f'}"] \arrow[d,swap,"F_a"] \\
      C(a) \arrow[r,swap,"{R(f,f')}"] & Y
    \end{tikzcd}
  \end{equation*}
  commutes.
  But $Y$ is $F$-local, so the map
  \[ (-\circ F_a) : (C(a) \to Y) \to (B(a)\to Y) \]
  is an equivalence, and hence in particular has a right inverse; applying this right inverse to $f'$ gives $R$ and $S$.

  Second, we must suppose given $g,h:\localhit{F}{X} \to Y$ and construct a right inverse to
  \[ \apfunc{(-\circ \alpha_X)} : (g=h) \to (g\circ \alpha_X = h\circ \alpha_X). \]
  Thus, suppose we have $K : \prd{x:X} g(\alpha_X(x)) = h(\alpha_X(x))$; we must extend $K$ to a homotopy $\tilde{K} : \prd{z:\localhit{F}{X}} g(z)=h(z)$ such that $\tilde{K}(\alpha_X(x)) = K(x)$.
  We will apply the induction principle using the family $P:\localhit{F}{X} \to \UU$ defined by $P(z) \defeq (g(z)=h(z))$, and $N\defeq K$.
  To construct the cases of $R$ and $S$, let $f:B(a)\to \localhit{F}{X}$ and $f':\prd{b:B(a)} gfb = hfb$.
  Our goal is to construct $R(f,f'):\prd{c:C(a)} g(\mathsf{ext}(f,c))=h(\mathsf{ext}(f,c))$ together with a witness $S(f,f')$ that for any $b:B(a)$ we have
  \begin{equation}
    R(f,f')(F_a(b)) = \ct{\ap{g}{\mathsf{isext}(f,b)}}{\ct{f'(b)}{\ap{h}{\mathsf{isext}(f,b)}^{-1}}}.\label{eq:locpsRS}
  \end{equation}
  However, once again, since $Y$ is $F$-local, the map
  \[ (-\circ F_a) : (C(a) \to Y) \to (B(a)\to Y) \]
  is an equivalence, and hence in particular
  \begin{equation}
    \apfunc{(-\circ F_a)} :
  (g\circ \mathsf{ext}(f) = h\circ \mathsf{ext}(f)) \to (g\circ \mathsf{ext}(f) \circ F_a = h\circ \mathsf{ext}(f) \circ F_a)\label{eq:locpsap}
  \end{equation}
  has a right inverse.
  But the right-hand side of~\eqref{eq:locpsRS} inhabits the codomain of~\eqref{eq:locpsap}, so applying this right inverse gives $R$ and $S$.
\end{proof}

In general, $\localhit{F}{X}$ is not $F$-local: its constructors only ensure that each map
\[ (-\circ F_a) : (C(a) \to \localhit{F}{X}) \to (B(a) \to \localhit{F}{X}) \]
has a right inverse, not that it is an equivalence.
(In fact, $\localhit{F}{X}$ is the ``free algebraically $F$-injective type on $X$'', cf.~\cite{bourke:alginj}.)

\begin{rmk}\label{rmk:localhit-notlocal}
  However, it does happen in many common cases that $\localhit{F}{X}$ \emph{is} already $F$-local (and hence the $F$-localization of $X$).
  Specifically, this happens whenever each $(-\circ F_a)$ \emph{already} has a left inverse, which happens whenever each $F_a : B(a) \to C(a)$ has a \emph{right} inverse.
  For instance, if $C(a)\defeq\unit$ for all $a$ (so that we are talking about $B$-nullification), then this happens whenever all the types $B(a)$ are inhabited (i.e.\ we have $\prd{a:A}B(a)$); cf.~\cite[Lemma 8.7]{shulman:bfp-realcohesion}.

  In particular, this occurs for $\Sn^{n+1}$-nullification for $n\ge -1$, which as we saw in \cref{egs:locality} coincides with $n$-truncation.
  In this case $\localhit{F}{X}$ essentially reduces to the ``hub and spoke'' construction of truncations from~\cite[\S7.3]{TheBook}.

  A concrete example where $\localhit{F}{X}$ is \emph{not} yet $F$-local is $\emptyset$-nullification, where $\localhit{F}{X} = X+\unit$, but only contractible types are $\emptyset$-null.
  Note that $\emptyset = \Sn^{-1}$, so this is equivalently $(-2)$-truncation.
\end{rmk}

To modify $\localhit{F}{X}$ to become $F$-local using bi-invertibility or half-adjoint equivalences, we would need to add two more constructors to $\localhit{F}{X}$ corresponding to the additional two pieces of data in those definitions of equivalence, and then add two more cases to the proof of \cref{thm:appx-loc} to deal with those constructors.
Moreover, these additional cases are rather more difficult than the ones we gave, since they involve homotopies ``on the other side''.

Fortunately, with path-splitness, we can instead use a simple trick.
Given any map $f:B\to C$, let $\Delta_f : B\to B\times_C B$ be its \textbf{diagonal} and $\nabla_f : C +_B C \to C$ its \textbf{codiagonal}.

\begin{lemma}\label{thm:precomp-codiag}
  For any $f:B\to C$ and any $X$, we have a commuting triangle
\begin{equation*}
\begin{tikzcd}[column sep=-2em]
\phantom{(C\to X) \times_{(B\to X)} (C\to X)} & (C\to X) \arrow[dl,swap,"(-\circ \nabla_f)"] \arrow[dr,"\Delta_{(-\circ f)}"] \\
(C +_B C \to X) \arrow[rr,"\sim"] & & (C\to X) \times_{(B\to X)} (C\to X)
\end{tikzcd}
\end{equation*}
in which the bottom map is an equivalence.
\end{lemma}
\begin{proof}
  By the universal property of the pushout.
\end{proof}

\begin{lemma}\label{thm:pathsplit-delta}
  For any $f:B\to C$, we have
  \[ \mathsf{pathsplit}(f) \eqvsym \mathsf{rinv}(f) \times \mathsf{rinv}(\Delta_f). \]
\end{lemma}
\begin{proof}
  Decomposing $B\times_C B$ and its identity types into $\Sigma$-types, we have
  \begin{align*}
    \mathsf{rinv}(\Delta_f)
    &\eqvsym \prd{x,y:B}{p:fx=fy}\sm{z:B}{q:x=z}{r:z=y} \ct{\apfunc{f}^{x,z}(q)}{\apfunc{f}^{z,y}(r)} = p\\
    &\eqvsym \prd{x,y:B}{p:fx=fy}\sm{r:x=y} \apfunc{f}^{x,y}(r) = p\\
    &\eqvsym \prd{x,y:B} \mathsf{rinv}(\apfunc{f}^{x,y}).\qedhere
  \end{align*}
\end{proof}

\begin{lemma}\label{thm:pathsplit-local}
  For $f:B\to C$, a type $X$ is $f$-local if and only if both maps
  \begin{align*}
    (-\circ f) &: (C\to X) \to (B\to X) \\
    (-\circ \nabla_f) &: (C\to X) \to (C +_B C \to X)
  \end{align*}
  have right inverses, and if and only if both of these maps are equivalences.
\end{lemma}
\begin{proof}
  By \cref{thm:pathsplit-delta}, $X$ is $f$-local if and only if $(-\circ f)$ and $\Delta_{(-\circ f)}$ have right inverses, but by \cref{thm:precomp-codiag} the latter is equivalent to $(-\circ \nabla_f)$.
  The second statement follows since the diagonal of an equivalence is an equivalence.
\end{proof}

\cref{thm:pathsplit-local} implies that for $F$-locality it suffices for precomposition with each $F_a$ and $\nabla_{F_a}$ to have right inverses.
But $\localhit{F}{X}$ is the universal way to make precomposition with each $F_a$ have right inverses, so to localize we just need to add all the morphisms $\nabla_{F_a}$ to $F$.

Specifically, for any $F:\prd{a:A} B(a) \to C(a)$, define $\hat B,\hat C : A+A \to \UU$ and a family $\hat F: \prd{a:A+A} \hat B(a) \to \hat C(a)$ by
\begin{alignat*}{3}
  \hat B(\inl(a)) &\defeq B(a) &\quad \hat C(\inl(a)) &\defeq C(a) &\quad \hat F(\inl(a)) &\defeq F_a\\
  \hat B(\inr(a)) &\defeq C(a) +_{B(a)} C(a) &\quad \hat C(\inr(a)) &\defeq C(a) &\quad \hat F(\inr(a)) &\defeq \nabla_{F_a}.
\end{alignat*}

\begin{defn}
  For any $X:\UU$, the \textbf{localization of $X$ at $F$} is $\localization{F}(X) \defeq \localhit{\hat F}{X}$, and $\modalunit[X] : X\to \localization{F}(X)$ is $\alpha^{\hat F}_X$.
\end{defn}

\begin{eg}
  As noted in \cref{rmk:localhit-notlocal}, a simple example where $\localhit{F}{X}$ is not yet $F$-local is $\emptyset$-nullification, where $F$ is the single map $\emptyset\to\unit$.
  In this case $\hat F$ consists of $\emptyset\to\unit$ and the fold map $\nabla : \unit+\unit \to \unit$.
  The constructors of $\localhit{\hat F}{X}$ corresponding to the former give it a point, and those corresponding to the latter make it a mere proposition (in fact they are the constructors of $(-1)$-truncation, i.e.\ $\Sn^{0}$-nullification).
  Thus, $\localhit{\hat F}{X}$ is contractible, i.e.\ $\emptyset$-local.
\end{eg}

\begin{lemma}\label{lem:islocal_localization}
  For any $F:\prd{a:A} B(a) \to C(a)$, the type $\localization{F}(X)$ is $F$-local.
\end{lemma}
\begin{proof}
  The constructors of $\localization{F}(X)$ as $\localhit{\hat F}{X}$ say that the precomposition maps
  \[ (-\circ \hat F_a) : (\hat C(a) \to \localhit{\hat F}{X}) \to (\hat B(a) \to \localhit{\hat F}{X}) \]
  have right inverses for all $a:A+A$.
  But by definition of $\hat F$, these maps consist of precomposition with each $F_a$ and $\nabla_{F_a}$.
  Thus, by \cref{thm:pathsplit-local}, $\localhit{\hat F}{X}$ is $F$-local.
\end{proof}

\begin{lemma}\label{thm:local-ump}
  If $Y$ is $F$-local (and $X$ is arbitrary), then precomposition with $\modalunit[X]$
  \[ (-\circ \modalunit[X]) : (\localization{F}(X) \to Y) \to (X\to Y) \]
  is an equivalence.
\end{lemma}
\begin{proof}
  By the second clause of \cref{thm:pathsplit-local}, any $F$-local type is also $\hat F$-local; so this follows from \cref{thm:appx-loc}.
\end{proof}

\begin{thm}\label{thm:localization_rs}
  The subuniverse of $F$-local types in $\UU$ is a reflective subuniverse, with modal operator $\localization{F}$.
\end{thm}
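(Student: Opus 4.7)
The plan is to verify the three pieces of data required by \cref{defn:ssrs} and check the two axioms; all the work has essentially been done in \cref{lem:islocal_localization,thm:local-ump}, so this is an assembly argument.

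First, I would define $\ismodal(X) \defeq \prd{a:A} \isequiv(\lam{g} g \circ F_a)$, which lands in $\prop$ because being an equivalence is a mere proposition and $\Pi$-types preserve mere propositions. This gives the required predicate $\ismodal : \UU \to \prop$. The modal operator is $\localization{F}$ as defined, and the modal unit is $\modalunit[X] \defeq \alpha^{\hat F}_X : X \to \localization{F}(X)$.

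Next, I would check the two axioms. The first, that $\ismodal(\localization{F}(A))$ for every $A:\UU$, is precisely \cref{lem:islocal_localization}. The second, that for every $F$-local type $B$ and every $A$ the precomposition map
\[ (\lam{f} f \circ \modalunit[A]) : (\localization{F}(A) \to B) \to (A \to B) \]
is an equivalence, is precisely \cref{thm:local-ump}.

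There is no real obstacle here, since the substantive content (constructing $\hat F$ to force $F$-locality, and using path-splitness to extend maps and homotopies uniquely into local types) has already been handled. The only thing worth flagging is that $\ismodal$ must be phrased so as to be manifestly a mere proposition, which is why I take the definition of $F$-locality as ``$\lam{g} g \circ F_a$ is an equivalence for each $a:A$'' literally (a product of mere propositions) rather than as unstructured data.
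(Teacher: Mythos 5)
Your proof is correct and matches the paper's, which also just assembles \cref{lem:islocal_localization,thm:local-ump} into the two axioms of \cref{defn:ssrs}. Your extra remark that $\ismodal$ is a mere proposition (as a $\Pi$-type over $\isequiv$) is a sensible detail the paper leaves implicit.
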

\begin{proof}
  By \cref{lem:islocal_localization,thm:local-ump}.
\end{proof}

\subsection{Nullification and accessibility}%
\label{sec:nullification}

A general localization is only a reflective subuniverse, but there is a convenient sufficient condition for it to be a modality: if each $C(a)=\unit$.
A localization modality of this sort is called \emph{nullification}.

\begin{thm}\label{thm:nullification_modality}
  If $F:\prd{a:A} B(a) \to C(a)$ is such that each $C(a)=\unit$, then localization at $F$ is a modality, called \define{nullification at $B$}.
\end{thm}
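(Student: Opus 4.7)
The approach is to use the equivalent characterizations established in \cref{sec:modal-refl-subun}. By \cref{thm:localization_rs} we already know that localization at $F$ is a reflective subuniverse whose modal types are the $F$-local types, which in the present case ($C(a)\jdeq\unit$) coincide with the $B$-null types (\cref{defn:Bnull}). By the implication $(\text{i})\Rightarrow(\text{ii})$ of \cref{thm:ssrs-characterize}, upgrading this reflective subuniverse to a modality reduces to verifying $\Sigma$-closedness: whenever $X$ is $B$-null and each $P(x)$ is $B$-null, the type $\sm{x:X}P(x)$ is also $B$-null.

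To establish this, fix $a:A$. The plan is to show directly that the constant map $z\mapsto\lam{b}z$ from $\sm{x:X}P(x)$ to $B(a)\to\sm{x:X}P(x)$ is an equivalence, by checking that each of its fibers is contractible. So fix $f:B(a)\to\sm{x:X}P(x)$; the fiber over $f$ is $\sm{z:\sm{x:X}P(x)}\prd{b:B(a)}z=f(b)$. Decomposing $z=(x,y)$, using the standard characterization of paths in a $\Sigma$-type, and commuting $\prd{}{}$ past $\sm{}{}$ via the type-theoretic axiom of choice, I would rewrite this fiber as
\[
\sm{x:X}{p:\prd{b:B(a)}x=\proj1(f(b))}\sm{y:P(x)}\prd{b:B(a)}\trans{p(b)}{y}=\proj2(f(b)).
\]
The outer $\sm{x:X}{p:\prd{b:B(a)}x=\proj1(f(b))}$ is exactly the fiber of the constant map $X\to(B(a)\to X)$ over $\proj1\circ f$, hence contractible by $B$-nullity of $X$; contract it to its center $(x_0,p_0)$. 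The remaining type $\sm{y:P(x_0)}\prd{b:B(a)}\trans{p_0(b)}{y}=\proj2(f(b))$ then identifies, after rewriting each equation as $y=\trans{\opp{p_0(b)}}{\proj2(f(b))}$, with the fiber of $P(x_0)\to(B(a)\to P(x_0))$ over the map $\lam{b}\trans{\opp{p_0(b)}}{\proj2(f(b))}$, and this is contractible by $B$-nullity of $P(x_0)$.

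The main obstacle is purely bookkeeping: executing the $\Sigma$/$\Pi$ rearrangement and the transport manipulation cleanly. Conceptually the argument is driven by the fact that $B$-nullity, unlike general $F$-locality, is characterized by fiberwise contractibility of a map whose source does not depend on $f$, which permits the two successive uses of $B$-nullity --- first for $X$ and then, transported along $p_0$, for $P(x_0)$. The hypothesis $C(a)\jdeq\unit$ is essential precisely to enable this identification, and the same strategy demonstrably fails for a general family $F$, in line with the fact that arbitrary localizations need only be reflective subuniverses.
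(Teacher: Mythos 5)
Your proof is correct and follows essentially the same route as the paper's: both reduce to showing the $B$-null types are $\Sigma$-closed and then combine the $\Sigma$/$\Pi$ commutation (axiom of choice) with $B$-nullity of $X$ followed by $B$-nullity of $P(x)$. The paper packages this as a three-line chain of equivalences $(B(a)\to\sm{x:X}P(x))\eqvsym\sm{x:X}P(x)$ whose inverse is the constant-maps map, whereas you unfold the same computation fiberwise over a given $f$; the content is identical.
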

\begin{proof}
  It suffices to show that for any $B:A\to\UU$, the $B$-null types are $\Sigma$-closed.
  Thus, let $X:\UU$ and $Y:X\to \UU$ be such that $X$ and each $Y(x)$ are $B$-null.
  Then for any $a:A$ we have
  \begin{align*}
    (B(a)\to \sm{x:X} Y(x))
    &\eqvsym \sm{g:B(a)\to X} \prd{b:B(a)} Y(g(b)) \\
    &\eqvsym \sm{x:X} B(a) \to Y(x) \\
    &\eqvsym \sm{x:X} Y(x)
  \end{align*}
  with the inverse equivalence being given by constant maps.
  Thus, $\sm{x:X} Y(x)$ is $B$-null.
\end{proof}

Of course, it might happen that $\localization{F}$ is a modality even if $F$ doesn't satisfy the condition of \cref{thm:nullification_modality}.
For instance, if $B:A\to \UU$ has a section $s:\prd{a:A} B(a)$, then localizing at the family $s' : \prd{a:A} \unit \to B(a)$ is equivalent to nullifying at $B$, since in a section-retraction pair the section is an equivalence if and only if the retraction is.
However, we can say the following.

\begin{lemma}\label{thm:acc-modal}
  If $F:\prd{a:A} B(a)\to C(a)$ is such that $\localization{F}$ is a modality, then there exists a family $E:D\to \UU$ such that $\localization{F}$ coincides with nullification at $E$.
\end{lemma}
\begin{proof}
  Write $\modal\defeq\localization{F}$ and $\modalunit$ for its modal unit.
  Define $D = \sm{a:A} (\modal (B(a)) + \modal(C(a)))$, and $E:D\to \UU$ by
  \begin{align*}
    E(a,\inl(b)) &\defeq \hfib{\modalunit[B(a)]}{b}\\
    E(a,\inr(c)) &\defeq \hfib{\modalunit[C(a)]}{c}.
  \end{align*}
  Then since $\modalunit$ is $\modal$-connected, each $E(d)$ is $\modal$-connected, and hence every $F$-local type is $E$-null.

  On the other hand, suppose $X$ is an $E$-null type.
  Each $\modalunit[B(a)]$ and $\modalunit[C(a)]$ is $\localization{E}$-connected, since their fibers are $\localization{E}$-connected (by definition); thus $X$ is also $\modalunit[B(a)]$-local and $\modalunit[C(a)]$-local.
  But we have the following commutative square:
  \[
  \begin{tikzcd}[column sep=large]
    B(a) \ar[r,"{\modalunit[B(a)]}"] \ar[d,"F_a"'] & \modal(B(a)) \ar[d,"{\modal(F_a)}"]\\
    C(a) \ar[r,"{\modalunit[C(a)]}"'] & \modal(C(a))
  \end{tikzcd}
  \]
  and ${\modal(F_a)}$ is an equivalence; thus $X$ is also $F_a$-local.
  So the $F$-local types coincide with the $E$-null types.
\end{proof}

This shows that the following pair of definitions are consistent.

\begin{defn}\label{defn:accessible}
A reflective subuniverse on $\UU$ is said to be \define{accessible} if it is the localization at a family of maps in $\UU$, indexed by a type in $\UU$.
Similarly, a modality $\modal$ on $\UU$ is said to be \define{accessible} if it is the nullification at a family of types in $\UU$, indexed by a type in $\UU$.

Explicitly, a \define{presentation} of a reflective subuniverse $\modal$ of $\UU$ consists of a family of maps $F : \prd{a:A} B(a) \to C(a)$, where $A:\UU$ and $B,C:A\to\UU$, such that $\modal = \localization{F}$.
Similarly, a \define{presentation} of a modality $\modal$ consists of a family of types $B: A\to\UU$, where $A:\UU$, such that $\modal = \localization{\lam{a} B(a)\to \unit}$.
\end{defn}

\begin{rmk}
Note that being accessible is structure; different families can present the same reflective subuniverse or modality.
As a trivial example, note that localizing at the empty
type, and localizing at the type family on $\bool$ defined by
$\bfalse\mapsto \emptyt$ and $\btrue\mapsto \unit$ both map all types to contractible types.

However, we are usually only interested in properties of presentations insofar as they determine properties of subuniverses.
For instance, by \cref{thm:acc-modal}, a reflective subuniverse is a modality exactly when it has a presentation in which each $C(a)=\unit$.
Similarly, in \cref{sec:lex-top-cotop} we will define a modality to be ``topological'' if it has a presentation in which each $C(a)=\unit$ and each $B(a)$ is a mere proposition.
\end{rmk}

\begin{eg}\label{thm:trunc-acc}
The trivial modality $\truncf{(-2)}$ is presented by $\emptyt$, while the propositional truncation modality $\truncf{(-1)}$ is presented by $\bool$.  More generally, the
$n$-truncation modality $\truncf{n}$ is presented by the $(n+1)$-sphere $\Sn^{n+1}$.
\end{eg}

\begin{eg}\label{thm:open-acc}
For every mere proposition $P$, the open modality $\open P (X) \defeq (P\to X)$ from \cref{eg:open} is
presented by the singleton type family $P$.
To see this, note that $\modalunit[X] : X \to (P\to X)$ is the same as the map in the definition of locality, so that $X$ is modal for the open modality on $P$ if and only if it is $P$-null.
(If $P$ is not a mere proposition, however, then $X\mapsto (P\to X)$ is not a modality, and in particular does not coincide with localization at $P$.)
\end{eg}

\begin{eg}\label{thm:closed-acc}
  The closed modality $\closed P$ from \cref{eg:closed} associated to a mere proposition $P$ is presented by the type family $\lam{x} \emptyt : P \to \UU$.
  For by definition, $A$ is null for this family if and only if for any $p:P$ the map $A \to (\emptyt \to A)$ is an equivalence.
  But $\emptyt \to P$ is contractible, so this says that $P\to\iscontr(A)$, which was the definition of $\closed P$-modal types from \cref{eg:closed}.
\end{eg}

One of the main uses of accessibility is when passing between universes.
Our definitions of reflective subuniverses and modalities are relative to a \emph{particular} universe $\UU$, but most examples are ``uniform'' or ``polymorphic'' and apply to types in all universes (or all sufficiently large universes) simultaneously.
Accessibility is one technical condition which ensures that this holds and that moreover these modal operators on different universes ``fit together'' in a convenient way.
For instance, we have:

\begin{lemma}\label{thm:acc-extend}
  If $\modal$ is an accessible reflective subuniverse on a universe $\UU$, and $\UU'$ is a larger universe containing $\UU$, then there is a reflective subuniverse $\modal'$ on $\UU'$ such that:
  \begin{enumerate}
  \item If $\modal$ is a modality, so is $\modal'$.\label{item:ae5}
  \item A type $X:\UU$ is $\modal'$-modal if and only if it is $\modal$-modal.\label{item:ae1}
  \item For $X:\UU$, the induced map $\modal' X \to \modal X$ is an equivalence.\label{item:ae2}
  \item A type $X:\UU'$ is $\modal'$-modal if and only if $(\blank\circ f) : (B\to X) \to (A\to X)$ is an equivalence for any map $f:A\to B$ in $\UU$ such that $\modal(f)$ is an equivalence.\label{item:ae3}
  \item $\modal'$ depends only on $\modal$, not on a choice of presentation for it.\label{item:ae4}
  \end{enumerate}
\end{lemma}
\begin{proof}
  Since $\modal$ is accessible, it is generated by some family $F:\prd{a:A} B(a) \to C(a)$.
  Define $\modal':\UU'\to\UU'$ to be the higher inductive localization at the same family $F$, which lives in $\UU'$ as well since $\UU'$ is larger than $\UU$.
  If $\modal$ is a modality, we can take each $C(a)=\unit$ so that $\modal'$ is also a modality, giving~\ref{item:ae5}.

  The notion of $F$-locality for a type $X$ is independent of what universe $X$ lives in, giving~\ref{item:ae1}.
  Moreover, because the induction principle for a higher inductive localization allows us to eliminate into any type in any universe, \cref{thm:local-ump} applies no matter what universe the target lives in.
  Thus, if $X:\UU$ then $\modal X$ and $\modal' X$ have the same universal property, hence are canonically equivalent, giving~\ref{item:ae2}.

  To prove~\ref{item:ae3}, note first that certainly each $\modal (F_a)$ is an equivalence, so any type with the stated property is $F$-local.
  Conversely, if $X$ is $F$-local, hence $\modal'$-modal, then $(B\to X) \to (A\to X)$ is certainly an equivalence for any map $f$ such that $\modal'(f)$ is an equivalence; but $\modal'$ and $\modal$ coincide on $\UU$.
  Thus~\ref{item:ae3} holds; and this implies~\ref{item:ae4} since a reflective subuniverse is determined by its modal types.
\end{proof}

We refer to the $\modal'$ constructed in \cref{thm:acc-extend} as the \textbf{canonical accessible extension} of $\modal$ to $\UU'$.

\begin{egs}
  Our characterizations of the truncation and open and closed modalities in \cref{thm:trunc-acc,thm:open-acc,thm:closed-acc} made no reference to the ambient universe.
  Thus, when these modalities are defined in the standard ways on $\UU$ and $\UU'$ respectively, their $\UU'$-version is the canonical accessible extension of their $\UU$-version.
\end{egs}

\begin{eg}
  By contrast, the double-negation modality $\neg\neg$ \emph{is} defined in a polymorphic way on all universes, but in general there seems no reason for it to be accessible on any of them.
  However, if propositional resizing holds, then it is the nullification at $\bool$ together with all propositions $P$ such that $\neg\neg P$ holds, and hence accessible.

  Whether or not any inaccessible modalities remain after imposing propositional resizing may depend on large-cardinal principles.
  It is shown in~\cite{css:large-cardinal} that this is the case for the analogous question about reflective sub-$(\infty,1)$-categories of the $(\infty,1)$-category of $\infty$-groupoids.
\end{eg}

\begin{eg}
  Suppose that all types in $\UU$ are 0-types.
  We have tacitly assumed that all universes are closed under all higher inductive types, so (assuming univalence) this is not actually possible, but to get a feeling for what else could in principle go wrong suppose we drop that assumption.
  Then if $F$ is a family such that the higher inductive type $\localization{F}$ does not preserve 0-types, we might (depending on what we assume about closure under higher inductive types) still be able to define a modality on $\UU$ by $\modal X = \trunc0{\localization{F}X}$.
  But if $\UU'$ is a larger universe containing non-0-types, then this $\modal$ would not eliminate into types in $\UU'$, and if we define $\modal'$ by localizing at $F$ in $\UU'$ then the canonical map $\modal' X \to \modal X$ would be the 0-truncation rather than an equivalence.
  So \cref{thm:acc-extend} is not as trivial as it may seem.
\end{eg}

\begin{rmk}\label{rmk:extend-oops}
  It is tempting to think that \emph{any} reflective subuniverse $\modal$ on $\UU$ could be extended to an accessible one on $\UU'$ by localizing at the family of \emph{all} functions in $\UU$ that are inverted by $\modal$ (or nullifying at the family of all $\modal$-connected types in $\UU$, in the case of modalities), which is a $\UU'$-small family though not a $\UU$-small one.
  This does produce an accessible reflective subuniverse $\modal'$ of $\UU'$ such that the $\modal'$-modal types in $\UU$ coincide with the $\modal$-modal ones, but there seems no reason why the modal \emph{operators} $\modal'$ and $\modal$ should agree on types in $\UU$.
\end{rmk}

\begin{rmk}
  Reflective subuniverses and modalities defined by localization have another convenient property: their eliminators have a strict judgmental computation rule (assuming that our higher inductive localization type has a judgmental computation rule on point-constructors, which is usually assumed).
  This will be useful in \cref{thm:subtopos-model}.
\end{rmk}

\subsection{Non-stable factorization systems}%
\label{sec:nonstable-factsys}

We have seen in \cref{sec:modal-refl-subun} that $\Sigma$-closed reflective subuniverses are equivalent to stable orthogonal factorization systems.
Without $\Sigma$-closedness and stability, this equivalence fails.
However, we can still say:

\begin{lemma}
  Any orthogonal factorization system has an underlying reflective subuniverse, consisting of those types $X$ such that $X\to\unit$ is in $\cR$.
\end{lemma}
\begin{proof}
  If $Y$ is modal in this sense, then by applying orthogonality to squares of the form
  \[
  \begin{tikzcd}
    A \ar[d,"f"'] \ar[r] & Y \ar[d] \\ B \ar[r] & \unit
  \end{tikzcd}
  \]
  we see that if $f:A\to B$ lies in $\cL$, then precomposition
  \[ (-\circ f) : (B\to Y) \to (A\to Y) \]
  is an equivalence.
  Thus, it suffices to show that for every $X$ there is an $\cL$-map $X\to \modal X$ where $\modal X\to \unit$ is in $\cR$; but this is just an $(\cL,\cR)$-factorization of the map $X\to\unit$.
\end{proof}

Conversely, in classical category theory there are various ways of extending a reflective subcategory to a factorization system.
One canonical one is considered in~\cite{chk:reflocfact}, but this is harder to reproduce homotopy-theoretically.
(It is possible in what is there called the ``simple'' case, hence also the ``semi-left-exact'' case --- which includes all modalities, as the case of ``stable units'' --- but we will not investigate that construction here.)
Instead, if we have an \emph{accessible} reflective subuniverse presented by localization at a family of maps, we can generalize the construction of localization to produce a factorization system (though in general the result will depend on the choice of presentation, not just on the reflective subuniverse we started with).

To avoid too much wrangling with witnesses of commutative squares, we will factorize dependent types rather than functions.
In this case, right orthogonality (\cref{defn:orthogonal}) can be expressed in the following way.

\begin{defn}
  Given $l:A\to B$ and $X:Y\to\UU$, and functions $g:B\to Y$ and $f:\prd{a:A} X(g(l(a)))$ forming a judgmentally commutative square
  \begin{equation}
    \begin{tikzcd}[column sep=large]
      A \ar[d,"l"'] \ar[r,"{(g\circ l,f)}"] & \sm{y:Y}X(y) \ar[d,"\proj1"] \\ B \ar[r,"g"'] & Y
    \end{tikzcd}%
    \label{eq:dfill-sq}
  \end{equation}
  a \textbf{dependent filler} in this square consists of a morphism ${j:\prd{b:B} X(g(b))}$ and a homotopy $j\circ l \sim f$.
  That is, the type of dependent fillers is
  \begin{equation}
    \dfill{l,X,g,f} \defeq \sm{j:\prd{b:B} X(g(b))} \prd{a:A} j(l(a)) = f(a).\label{eq:dep-fillers}
  \end{equation}
\end{defn}

Recall that for a map $f:B\to C$, we denote by $\Delta_f : B\to B\times_C B$ its diagonal and $\nabla_f : C +_B C \to C$ its codiagonal.
We have the following dependent generalization of \cref{thm:precomp-codiag}:

\begin{lemma}\label{thm:dep-precomp-codiag}
  Let $f:B\to C$ and $X:Y\to\UU$ and $g:C\to Y$; then we have a commuting triangle
  \begin{small}
\begin{equation*}
\begin{tikzcd}
& \prd{c:C} X(g(c)) \arrow[dl,swap,"(-\circ \nabla_f)"] \arrow[d,"\Delta_{(-\circ f)}"] \\
\prd{z:C +_B C} X(g'(z)) \arrow[r,"\sim"'] &
\Big(\prd{c:C} X(g(c))\Big) \times_{(\prd{b:B} X(g(f(b))))} \Big(\prd{c:C} X(g(c))\Big)
\end{tikzcd}%
\end{equation*}
\end{small}%
where $g':C+_{B} C \to Y$ is induced by $g$ on both copies of $C$, and the bottom map is an equivalence.
\end{lemma}
\begin{proof}
  Like the non-dependent case \cref{thm:precomp-codiag}, this follows from the universal property of the pushout.
\end{proof}

And similarly for \cref{thm:pathsplit-local}:

\begin{lemma}\label{thm:orth-dep}
  For $l:B\to C$ and $X:Y\to\UU$, the following are equivalent.
  \begin{enumerate}
  \item The map $\proj1 : (\sm{y:Y}X(y)) \to Y$ is right orthogonal to $l$.\label{item:od1}
  \item For every $g:C\to Y$ and $f:\prd{b:B} X(g(l(b)))$, the type $\dfill{l,X,g,f}$ of dependent fillers in~\eqref{eq:dfill-sq} is contractible.\label{item:od2}
  \item For every $g:C\to Y$, the precomposition map
    \begin{equation}
      (-\circ l) : \Big(\prd{c:C} X(g(c))\Big) \to \Big(\prd{b:B} X(g(l(b)))\Big)\label{eq:dfill-eqv}
    \end{equation}
    is an equivalence.\label{item:od3}
  \item For every $g:C\to Y$, the precomposition maps\label{item:od4}
    \begin{align*}
      (-\circ l) &: \Big(\prd{c:C} X(g(c))\Big) \to \Big(\prd{b:B} X(g(l(b)))\Big)\\
      (-\circ \nabla_l) &: \Big(\prd{c:C} X(g(c))\Big) \to \Big(\prd{z:C+_{B} C} X(g'(z))\Big)
    \end{align*}
    have right inverses.
  \item For every $g:C\to Y$, the maps in~\ref{item:od4} are equivalences.\label{item:od5}
  \end{enumerate}
\end{lemma}
\begin{proof}
  The equivalence of~\ref{item:od2} and~\ref{item:od3} is immediate, since $\dfill{l,X,g,f}$ is the fiber of~\eqref{eq:dfill-eqv} over $f$.
  And as in \cref{thm:pathsplit-local},~\ref{item:od3} is equivalent to~\ref{item:od4} and~\ref{item:od5} using \cref{thm:dep-precomp-codiag,thm:pathsplit-delta}.

  Finally, regarding~\ref{item:od1}, if we have any commutative square
  \[
  \begin{tikzcd}
    B \ar[d,"l"'] \ar[r,"f'"] \ar[dr,phantom,"S"] & \sm{y:Y}X(y) \ar[d,"\proj1"] \\ C \ar[r,"g"'] & Y
  \end{tikzcd}
  \]
  witnessed by $S:\proj1 \circ f'=g\circ l$, we can define $f(b) \defeq \trans{S(b)}{\proj2(f'(b))}$ to get an equivalent and judgmentally commutative square as in~\eqref{eq:dfill-sq}.
  Thus,~\ref{item:od1} is equivalent to its restriction to such squares.
  But given such a square, the type of ordinary diagonal fillers (\cref{defn:fillers}) is equivalent to
  \[ \sm{j:C\to \sm{y:Y} X(y)}{H_f : j\circ l = (g\circ l,f)}{H_g : \proj1 \circ j = g} \proj1 \circ H_f = H_g \circ l \]
  and thereby to
  \begin{multline*}
    \sm{j_1:C\to Y}{j_2 : \prd{c:C} X(j_1(c))}\\
    \sm{H_{f1} : j_1 \circ l = g\circ l}{H_{f2} : \dpath{X}{H_{f1}}{j_2\circ l}{f}}{H_g : j_1 = g} H_{f1} = H_g \circ l.
  \end{multline*}
  But now we can contract two based path spaces (combining $j_1$ with $H_g$, and $H_{f1}$ with the final unnamed equality $H_{f1} = H_g\circ l$) to get the type~\eqref{eq:dep-fillers} of dependent fillers.
\end{proof}

Let $F:\prd{a:A} B(a) \to C(a)$ and let $X:Y\to\UU$ be a type family.
We define an indexed higher inductive type $\factorhit{F}{Y}{X} : Y\to \UU$ by the following constructors:
\begin{align*}
\beta_X &: \prd{y:Y} X(y) \to \factorhit{F}{Y}{X}(y)\\
\mathsf{lift} &: \prd*{a:A}{g:C(a) \to Y}{f:\prd{b:B(a)} \factorhit{F}{Y}{X}(g(F_a(b)))}{c:C(a)} \factorhit{F}{Y}{X}(g(c))\\
\mathsf{islift} &
\!\begin{multlined}[t]
: \prd*{a:A}{g:C(a) \to Y}{f:\prd{b:B(a)} \factorhit{F}{Y}{X}(g(F_a(b)))}{b:B(a)}\\
\mathsf{lift}(g,f,F_a(b)) = f(b).
\end{multlined}
\end{align*}
Diagrammatically, $\mathsf{lift}$ and $\mathsf{islift}$ comprise a specified dependent filler for any judgmentally commutative square as follows:
\[
  \begin{tikzcd}
    B(a) \ar[d,"{F_a}"'] \ar[r,"f"] & \sm{y:Y} \factorhit{F}{Y}{X}(y) \ar[d,"\proj1"] \\
    C(a) \ar[ur,dotted] \ar[r,"g"'] & Y.
  \end{tikzcd}
\]
The induction principle of $\factorhit{F}{Y}{X}$ says that for any $P:\prd{y:Y} \factorhit{F}{Y}{X}(y) \to \UU$ with
\begin{align*}
N &: \prd{y:Y}{x:X(y)} P(y,\beta_X(y,x))\\
R &
\!\begin{multlined}[t]
  : \prd{a:A}{g:C(a) \to Y}{f:\prd{b:B(a)} \factorhit{F}{Y}{X}(g(F_a(b)))}\\
  \prd{f':\prd{b:B(a)} P(g(F_a(b)),f(b))}{c:C(a)} P(g(c),\mathsf{lift}(g,f,c))
\end{multlined}
\\
S &
\!\begin{multlined}[t]
  : \prd{a:A}{g:C(a) \to Y}{f:\prd{b:B(a)} \factorhit{F}{Y}{X}(g(F_a(b)))}\\
  \prd{f':\prd{b:B(a)} P(g(F_a(b)),f(b))}{b:B(a)} \dpath{P}{\mathsf{islift}(g,f,b)}{R(g,f,f',F_a(b))}{f'(b)}
\end{multlined}
\end{align*}
there is a section $s:\prd{y:Y}{w:\factorhit{F}{Y}{X}(y)} P(y,w)$ such that $s \circ \beta_X = N$ (plus two more computation rules we ignore).
Note that by transporting along $\mathsf{islift}$, the types of $R$ and $S$ are equivalent to
\begin{align*}
  R' &
  \!\begin{multlined}[t]
    : \prd{a:A}{g:C(a) \to Y}{f:\prd{b:B(a)} \factorhit{F}{Y}{X}(g(F_a(b)))}\\
    \prd{f':\prd{b:B(a)} P(g(F_a(b)),\mathsf{lift}(g,f,F_a(b)))}{c:C(a)} P(g(c),\mathsf{lift}(g,f,c))
  \end{multlined}
  \\
  S' &
  \!\begin{multlined}[t]
    : \prd{a:A}{g:C(a) \to Y}{f:\prd{b:B(a)} \factorhit{F}{Y}{X}(g(F_a(b)))}\\
    \prd{f':\prd{b:B(a)} P(g(F_a(b)),\mathsf{lift}(g,f,F_a(b)))}{b:B(a)} \id{R(g,f,f',F_a(b))}{f'(b)}.
  \end{multlined}
\end{align*}
With this modification, the inputs of the induction principle are a judgmentally commutative square
\begin{equation}
  \begin{tikzcd}
    \sm{y:Y} X(y) \ar[d,"{(\idfunc[Y],\beta_X)}"'] \ar[r,"N"] & \sm{y:Y}{w:\factorhit{F}{Y}{X}(y)} P(y,w) \ar[d,"\proj1"] \\
    \sm{y:Y} \factorhit{F}{Y}{X}(y) \ar[r,equals] &\sm{y:Y} \factorhit{F}{Y}{X}(y)
  \end{tikzcd}%
  \label{eq:Nsq}
\end{equation}
together with a specified dependent filler for each judgmentally commutative square of the form
\[
  \begin{tikzcd}[column sep=huge]
    B(a) \ar[rr,"{(g\circ F_a,\mathsf{lift}(g,f,F_a(-)),f')}"] \ar[d,"{F_a}"'] && \sm{y:Y}{w:\factorhit{F}{Y}{X}(y)} P(y,w) \ar[d,"\proj1"] \\
    C(a) \ar[rr,"{(g,\mathsf{lift}(g,f,-))}"'] && \sm{y:Y} \factorhit{F}{Y}{X}(y),
  \end{tikzcd}
\]
while the output of the induction principle is a dependent filler in~\eqref{eq:Nsq}.

\begin{lemma}\label{thm:appx-factsys}
  If $P:\prd{y:Y} \factorhit{F}{Y}{X}(y) \to \UU$ is such that
  \[\proj1 : (\sm{y:Y}{w:\factorhit{F}{Y}{X}} P(y,w)) \to \sm{y:Y} \factorhit{F}{Y}{X}\]
  is right orthogonal to $F$, then
  \[(-\circ \beta_X) : \Big(\prd{y:Y}{w:\factorhit{F}{Y}{X}(y)} P(y,w)\Big) \to \Big(\prd{y:Y}{x:X(y)} P(y,\beta_X(x))\Big) \]
  is an equivalence.
\end{lemma}
\begin{proof}
  As in \cref{thm:appx-loc}, we will show that it is path-split using the induction principle of $\factorhit{F}{Y}{X}$.

  First, given $h:\prd{y:Y}{x:X(y)} P(y,\beta_X(x))$, we take $P(y,w) \defeq P(y,w)$ and $N\defeq h$.
  To give the remaining data $R,S$, suppose given
  \begin{mathpar}
  a:A
  \and
  g:C(a) \to Y
  \and
  f:\prd{b:B(a)} \factorhit{F}{Y}{X}(g(F_a(b)))
  \and
  f':\prd{b:B(a)} P(g(F_a(b)),f(b))
  \end{mathpar}
  Now we can apply \cref{thm:orth-dep} with $l\defeq F_a$ and $f\defeq f'$: an inhabitant of~\eqref{eq:dep-fillers} consists exactly of the desired $R$ and $S$.

  Second, given $h,k:\prd{y:Y} (\factorhit{F}{Y}{X}(y) \to P(y))$ and $p:h\circ \beta_X = k\circ \beta_X$, we take $P(y,x) \defeq (h(y,x)=k(y,x))$ and $N\defeq p$.
  To give $R,S$, suppose given $a:A$, $g:C(a) \to Y$, $f:\prd{b:B(a)} \factorhit{F}{Y}{X}(g(F_a(b)))$, and
  \[f':\prd{b:B(a)} h(g(F_a(b)),f(b))=k(g(F_a(b)),f(b)).\]
  Define
  \begin{align*}
    j(c) &\defeq h(g(c),\mathsf{lift}(g,f,c))\\
    j'(c) &\defeq k(g(c),\mathsf{lift}(g,f,c))\\
    q(b) &\defeq k(g(F_a(b)),f(b)).
  \end{align*}
  Then we can apply \cref{thm:orth-dep} to the square
  \[
  \begin{tikzcd}
    B(a) \ar[d,"F_a"'] \ar[r,"q"] & \sm{y:Y} P(y) \ar[d,"\proj1"] \\
    C(a) \ar[r,"g"'] & Y.
  \end{tikzcd}
  \]
  We have
  \[ j'(F_a(b)) \jdeq k(g(F_a(b)),\mathsf{lift}(g,f,F_a(b))) = k(g(F_a(b)),f(b)) \jdeq q(b) \]
  and
  \begin{multline*}
    j(F_a(b)) \jdeq h(g(F_a(b)),\mathsf{lift}(g,f,F_a(b))) = h(g(F_a(b)),f(b)) \overset p= k(g(F_a(b)),f(b)) \jdeq q(b),
  \end{multline*}
  giving two inhabitants $(j,\nameless)$ and $(j',\nameless)$ of~\eqref{eq:dep-fillers}, which are therefore equal.
  This equality consists of an equality $j=j'$, which gives precisely $R$, and an equality between the above two paths, which gives precisely $S$.
\end{proof}

\begin{thm}\label{thm:ofs}
  Given $F:\prd{a:A} B(a) \to C(a)$, define $\cR = F^{\perp}$ and $\cL = {}^{\perp}\cR$, and let $\hat F$ be as in \cref{sec:localizing} and $\factorhit{\hat F}{Y}{X}$ constructed as above for $\hat F$.
  Then for any $X:Y\to\UU$, the composite
  \[ \Big(\sm{y:Y} X(y)\Big) \to \Big(\sm{y:Y} \factorhit{\hat F}{Y}{X}(y)\Big) \to Y \]
  is an $(\cL,\cR)$-factorization.
  Therefore, $(\cL,\cR)$ is an orthogonal factorization system.
\end{thm}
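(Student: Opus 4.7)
The approach I propose splits into three stages: showing the right factor $\proj1 : \sm{y:Y} \factorhit{\hat F}{Y}{X}(y) \to Y$ lies in $\cR$; showing the left factor $(y,x) \mapsto (y,\beta_X(y,x))$ lies in $\cL$; and then assembling these ingredients into the orthogonal factorization system.

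For the first stage, $\cR = F^\perp$ by definition, so it suffices to establish $F_a \perp \proj1$ for each $a:A$. By the equivalence of items (i) and (iv) in \cref{thm:orth-dep}, this reduces to exhibiting right inverses for the precomposition maps $(-\circ F_a)$ and $(-\circ \nabla_{F_a})$ on dependent functions $\prd{c:C(a)}\factorhit{\hat F}{Y}{X}(g(c))$ for arbitrary $g:C(a)\to Y$. These right inverses are precisely the data supplied by the $\mathsf{lift}$ and $\mathsf{islift}$ constructors of the HIT, instantiated at $\inl(a)$ and $\inr(a)$ in $A+A$ respectively, since by construction $\hat F_{\inl(a)} = F_a$ and $\hat F_{\inr(a)} = \nabla_{F_a}$. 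Note in passing the useful observation that, by the same \cref{thm:orth-dep} applied to each $F_a$, one has the identification $F^\perp = \hat F^\perp$.

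For the second stage, fix $r\in\cR$ and a (without loss of generality judgmentally) commutative square with the left factor on the left and $r$ on the right. Writing $r$ up to equivalence as $\proj1 : \sm{w:W} P(w) \to W$ with $P(w) \defeq \hfib{r}{w}$, the square corresponds to a map $g : \sm{y:Y} \factorhit{\hat F}{Y}{X}(y) \to W$ together with a section $s$ of $(y,x)\mapsto P(g(y,\beta_X(y,x)))$, and the type of diagonal fillers is equivalent to the type of extensions of $s$ to a section of $(y,w)\mapsto P(g(y,w))$. The projection $\sm{(y,w)} P(g(y,w)) \to \sm{y:Y} \factorhit{\hat F}{Y}{X}(y)$ is the pullback of $r$ along $g$, hence still lies in $F^\perp$ by the standard pullback-stability of orthogonal complements (proved exactly as in \cref{lem:ofs_rightstable}), and therefore in $\hat F^\perp$ by the identification noted above. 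Invoking \cref{thm:appx-factsys} with $\hat F$ in place of $F$, precomposition with $\beta_X$ is an equivalence on the relevant dependent sections, so the type of extensions of $s$ is contractible, yielding the desired lifting property.

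For the third stage, closure of $\cL$ and $\cR$ under composition and the containment of identities are immediate from the definitions $\cL = {}^\perp \cR$ and $\cR = F^\perp$. To obtain $(\cL,\cR)$-factorizations of an arbitrary $f:A\to B$, represent $f$ up to equivalence as $\proj1 : \sm{b:B}\hfib{f}{b} \to B$ and invoke stages one and two with $X \defeq \hfibfunc{f}$ and $Y \defeq B$; this gives existence. For the contractibility of $\fact_{\cL,\cR}(f)$, given any other factorization $(I',l',r')$, the lifts $l \perp r'$ and $l' \perp r$ produce mutually inverse equivalences between $I'$ and $\sm{b:B}\factorhit{\hat F}{B}{\hfibfunc{f}}(b)$ compatible with both factors, and the full factorization type then contracts onto the explicit center by a chain of based-path contractions organized as in the proof of \cref{lem:diagonal_fillers}. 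The main obstacle I expect is precisely this last bookkeeping in stage three, since stages one and two are largely automatic from the HIT constructors and \cref{thm:appx-factsys}: converting the essential uniqueness supplied by orthogonality into contractibility of the full homotopy-coherent factorization tuple requires careful rearrangement of the kind used in \cref{lem:diagonal_fillers}, now anchored at the explicit center provided by $\factorhit{\hat F}{B}{\hfibfunc f}$.
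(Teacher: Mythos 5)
Your proposal is correct and follows essentially the same route as the paper: establish $F^\perp = \hat F^\perp$, read membership of the right factor in $\cR$ off the $\mathsf{lift}$/$\mathsf{islift}$ constructors via \cref{thm:orth-dep}, get the left factor into $\cL$ from \cref{thm:appx-factsys}, and deduce contractibility of the factorization type from orthogonality. Your stage two merely makes explicit the pullback step that the paper leaves implicit when invoking \cref{thm:appx-factsys}.
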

\begin{proof}
  By \cref{thm:dep-precomp-codiag}, if $\proj1$ is right orthogonal to $F$, then it is also right orthogonal to $\hat F$.
  Since every function is equivalent to one of the form $\proj1$, we have $F^{\perp} = {\hat F}^{\perp}$.
  Thus, since applying \cref{thm:appx-factsys} to $\hat F$ shows that the first factor of this factorization is in ${}^{\perp}({\hat F}^{\perp})$, it is also in ${}^{\perp}({F}^{\perp}) = \cL$.

  On the other hand, the constructors $\mathsf{lift}$ and $\mathsf{islift}$ show that the second factor $\proj1 : \big(\sm{y:Y} \factorhit{\hat F}{Y}{X}(y)\big) \to Y$ of this factorization satisfies \cref{thm:orth-dep}\ref{item:od4} for $F$, since the fibers of these maps are the types of dependent fillers against morphisms in $\hat F$.
  Thus, this second factor is in $\cR$.

  Finally, in \cref{sec:modal-refl-subun} we defined orthogonal factorization systems by the uniqueness of factorizations and proved from this the orthogonality of the two classes of maps; but it is easy to show that, as in classical category theory, orthogonality implies the uniqueness of factorizations when they exist, since any two factorizations must lift uniquely against each other.
\end{proof}

\section{Left exact modalities}\label{sec:left-exact-modal}

We have seen that the modal operator of any reflective subuniverse preserves products, but even for a modality it does not generally preserve pullbacks.
If it does, we call the modality ``left exact'' or just ``lex''.

In higher topos theory, lex modalities coincide with reflective sub-toposes.
We can construct them by nullifying any family of propositions (\autoref{thm:prop-loc-lex}); these correspond categorically to the ``topological'' localizations (in 1-topos theory, every subtopos is topological).

\subsection{Lex, topological, and cotopological modalities}%
\label{sec:lex-top-cotop}

\begin{thm}\label{thm:lex-modalities}
  For a modality $\modal$, the following are equivalent.
  \begin{enumerate}
  \item If $A$ is $\modal$-connected, then so is $(x=y)$ for any $x,y:A$.\label{item:mu0}
  \item Whenever $A$ and $\sm{x:A}B(x)$ are $\modal$-connected, then so is $B(x)$ for all $x:A$.\label{item:mu1}
  \item Any map between $\modal$-connected types is $\modal$-connected.\label{item:mu1a}
  \item Any $\modal$-modal function between $\modal$-connected types is an equivalence.\label{item:mu1b}
  \item If $f:A\to B$ is $\modal$-connected, and $g:\prd{a:A} P(a) \to Q(f(a))$ is such that $\total g:(\sm{x:A} P(x)) \to (\sm{y:B} Q(y))$ is $\modal$-connected, then $g_a:P(a)\to Q(fa)$ is also $\modal$-connected for each $a:A$.\label{item:mu3b}
  \item Given a commutative square
    \begin{equation}
      \begin{tikzcd}
        B \ar[r,"h"] \ar[d,"g"'] & A \ar[d,"f"] \\
        D \ar[r,"k"'] & C
      \end{tikzcd}%
      \label{eq:lex-commsq}
    \end{equation}
    in which $f$ and $g$ are $\modal$-connected, then for any $a:A$ the induced map $\hfib{h}{a} \to \hfib{k}{f(a)}$ is $\modal$-connected.\label{item:mu3c}
  \item Any commutative square~\eqref{eq:lex-commsq} in which $f$ and $g$ are $\modal$-connected and $h$ and $k$ are $\modal$-modal is a pullback.\label{item:mu3d}
  \item For any $f:A\to B$ and $b:B$, the evident map $\hfib{f}{b} \to \hfib{\modal f}{\modalunit b}$ is $\modal$-connected.\label{item:mu3a}
  \item For any $A$ and $x,y:A$, the induced map $\modal(x=y) \to (\modalunit[A](x) = \modalunit[A](y))$ is an equivalence.\label{item:mu6}
  \item The functor $\modal$ preserves pullbacks.\label{item:mu3}
  \item $\modal$-connected maps satisfy the 2-out-of-3 property.\label{item:mu4}
  \item If $\modal f: \modal A\to \modal B$ is an equivalence, then $f$ is $\modal$-connected.\label{item:mu5}
  \item For any $\modal$-connected type $A$ and any $P:A\to \modaltype$, there is a $Q:\modaltype$ such that $P(a)\eqvsym Q$ for all $a:A$.\label{item:mu2}
  \end{enumerate}
  When they hold, we say that $\modal$ is \define{lex}.
\end{thm}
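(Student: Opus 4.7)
The plan is to establish the thirteen conditions equivalent by a web of implications organized around two natural clusters: the "connectedness" conditions (i)--(v), (xi), (xii), and the "preservation" conditions (vi)--(x), with (viii) serving as the main bridge and (xiii) falling out at the end.

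Within the connectedness cluster I would start with (i) $\Leftrightarrow$ (ii), obtained by viewing $A \simeq \sum_{(x,y):A\times A}(x=y)$ via the diagonal and noting that $A\times A$ is connected whenever $A$ is, by \cref{thm:modal-pres-prod}. Then (ii) $\Leftrightarrow$ (iii) by rewriting $f : A \to B$ as $\proj1 : \sum_{b:B}\hfib{f}{b} \to B$. For (iii) $\Leftrightarrow$ (iv), I use the $(\cL,\cR)$-factorization of \cref{thm:sofs_from_ssrs}: for $f$ between connected types, factor $f = f_{\cR} \circ f_{\cL}$; by right cancellability of connected maps (\cref{thm:rsu-compose-cancel}) applied to $A \to \im(f) \to \unit$ the image is connected, so $f_{\cR}$ is modal between connected types and hence an equivalence by (iv), making $f$ connected. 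The equivalence (iii) $\Leftrightarrow$ (v) follows from the fiber decomposition in \cref{prop:nconn_fiber_to_total}, while (xi) follows from (iii) combined with \cref{thm:rsu-compose-cancel} by checking the only new case (``$g$ and $g\circ f$ connected imply $f$ connected'') via the induced map between fibers, which is a map between connected types.

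Within the preservation cluster, (vi) $\Leftrightarrow$ (vii) is the standard observation that a square is a pullback iff the induced map to the pullback is an equivalence: with connected verticals this map is connected by (vi), and with modal horizontals its fibers are pullbacks of modal maps hence modal, so connected $+$ modal produces an equivalence. Next, (vi) $\Leftrightarrow$ (viii) comes from applying (vi) to the naturality square of $\modalunit$ at $f$, whose cospan legs are the connected modal units (\cref{thm:ssrs-characterize}). Then (viii) $\Rightarrow$ (ix) by specializing to the diagonal: since $\modal$ preserves products, $\hfib{\modal\delta_A}{(\modalunit x, \modalunit y)} \simeq (\modalunit x = \modalunit y)$, and (viii) together with \cref{lem:connected-map-equiv-truncation} turns the canonical map $\modal(x=y) \to (\modalunit x = \modalunit y)$ into an equivalence. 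For (ix) $\Rightarrow$ (x), decompose an arbitrary pullback as $A\times_C B \simeq \sum_{(a,b):A\times B}(f(a) = g(b))$ and distribute $\modal$ using (ix), \cref{thm:modal-pres-prod}, and $\Sigma$-closure of modal types.

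The two clusters are bridged by (vii) $\Leftrightarrow$ (iv): a square with connected verticals and modal horizontals is a pullback iff the induced map between corresponding fibers is an equivalence, and such a map is modal between connected types, so (iv) applies directly. This also gives (xii), since if $\modal f$ is an equivalence then $\hfib{\modal f}{\modalunit b}$ is contractible and (viii) supplies a connected map from $\hfib{f}{b}$ to it, forcing $\hfib{f}{b}$ to be connected by \cref{lem:connected-map-equiv-truncation}. For (xiii), work under (i) and (iii): given connected $A$ and $P : A \to \modaltype$, for any $a, a' : A$ the path type $(a = a')$ is connected, and transport gives a map $(a = a') \to (P(a) \simeq P(a'))$ whose codomain is modal (equivalences between modal types being modal), so it factors through $\modal(a=a') \simeq \unit$, yielding uniform equivalences $P(a) \simeq P(a')$. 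The main obstacle will be bookkeeping coherences in the preservation cluster, particularly in (ix) $\Rightarrow$ (x) where arbitrary pullbacks must be recognized as iterated $\Sigma$-types and identity types in a way compatible with $\Sigma$-closure; (xiii)'s existential statement also requires care to produce a canonical $Q$, which is cleanest done by identifying (xiii) with the statement that $\modaltype$ is itself $\modal$-modal.
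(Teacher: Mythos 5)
Your overall architecture---a connectedness cluster, a preservation cluster, and a bridge through the pullback condition---matches the paper's, and most of the implications you sketch in the directions you sketch them are correct. The genuine problem is that the implication graph does not close into a single cycle: conditions (ix), (x), (xii) and (xiii) receive only in-edges, so your plan shows they are \emph{consequences} of the others but never that they imply anything. The missing return edges are precisely where the paper does real work: (ix)\;$\Rightarrow$\;(i) (if $A$ is connected then $(\modalunit(x)=\modalunit(y))$ is contractible, hence $(x=y)$ is connected); (x)\;$\Rightarrow$\;(ix) ($\modal$ preserves the pullback square defining $x=y$); (xii)\;$\Rightarrow$\;(xi) (if $g$ and $g\circ f$ are connected they are both inverted by $\modal$, hence so is $f$); and (xiii)\;$\Rightarrow$\;(i), which needs a genuine argument: from the uniform equivalences $\modal(x=y)\eqvsym Q_x$ one shows transport in $\lam{y}\modal(x=y)$ is constant, deduces $\modalunit(p)=\modalunit(q)$ for all $p,q:x=y$, and concludes that $\modal(x=y)$ is an inhabited mere proposition. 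Your closing remark about identifying (xiii) with modality of $\modaltype$ does not supply this direction (and that identification requires accessibility and univalence, cf.\ \cref{thm:acc-lex}). A related structural issue: your only entry point into (vi) is the unjustified halves of ``(vi)\;$\Leftrightarrow$\;(vii)'' and ``(vi)\;$\Leftrightarrow$\;(viii)''; you sketch only the directions \emph{out of} (vi), whereas the paper enters (vi) from (v) by replacing $h$ and $k$ with dependent projections.

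Two of your stated justifications are also off. The decomposition $A\eqvsym\sm{x,y:A}(x=y)$ gives only (ii)\;$\Rightarrow$\;(i); for (i)\;$\Rightarrow$\;(ii)/(iii) you need instead $\hfib{f}{b}\eqvsym\sm{a:A}(f(a)=b)$ together with closure of connected types under $\Sigma$. More seriously, (iii)\;$\Rightarrow$\;(v) does \emph{not} follow from \cref{prop:nconn_fiber_to_total}: that lemma treats only fiberwise maps over the identity, and the paper explicitly warns, right after stating it, that the naive generalization to an arbitrary base map is equivalent to lexness rather than automatic. The correct argument identifies $\hfib{g_a}{q}$, via the $3\times 3$ lemma for fiber sequences, with a fiber of the induced map $\hfib{\total{g}}{(f(a),q)}\to\hfib{f}{f(a)}$, which is a map between connected types, and only then applies (iii).
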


\begin{proof}
  The equivalence~\ref{item:mu1}$\Leftrightarrow$\ref{item:mu1a} is easy, using the definition of $\modal$-connected maps and the fact that any function is equivalent to a fibration.
  And~\ref{item:mu0}$\Rightarrow$\ref{item:mu1a} since
  \[
    \hfib f b \jdeq \sm{a:A} (f(a)=b)
\]
and $\modal$-connected types are closed under $\Sigma$ (since $\modal$-connected maps are closed under composition, being the left class of a factorization system).

  Condition~\ref{item:mu1b} is a special case of~\ref{item:mu1a}, since a function that is both modal and connected is an equivalence.
  But assuming~\ref{item:mu1b}, if $f:A\to B$ is any function between $\modal$-connected types, then in its $(\cL,\cR)$-factorization $A\xrightarrow{e} I\xrightarrow{m} B$ the type $I$ is also connected by right cancellation.
  Thus~\ref{item:mu1b} implies that $m$ is an equivalence; thus $f$, like $e$, is $\modal$-connected, giving~\ref{item:mu1a}.

  Assuming~\ref{item:mu1a}, in the situation of~\ref{item:mu3b} the $3\times 3$ lemma for fiber sequences allows us to identify the fiber of $g_a$ over $q:Q(f(a))$ with the fiber over $(a,\refl{f(a)})$ of the induced map $\hfib{\total{g}}{(f(a),q)} \to \hfib{f}{f(a)}$:
  \begin{equation}
  \begin{tikzcd}[column sep=large]
  \bullet \arrow[r] \arrow[d]
    & P(a) \arrow[r] \arrow[d]
    & Q(f(a)) \arrow[d] \\
  \hfib{\total{g}}{(f(a),q)} \arrow[r] \arrow[d]
    & \sm{x:A}P(x) \arrow[r,"{\total{g}}"] \arrow[d,swap,"{\proj1}"]
    & \sm{y:B}Q(y) \arrow[d,"{\proj1}"] \\
  \hfib{f}{f(a)} \arrow[r]
    & A \arrow[r,swap,"f"]
    & B.
  \end{tikzcd}%
  \end{equation}
  Since $f$ and $\total g$ are $\modal$-connected by assumption, their fibers are $\modal$-connected, and hence by~\ref{item:mu1a} so is this fiber; thus~\ref{item:mu3b} holds.

  Now assuming~\ref{item:mu3b}, we can deduce~\ref{item:mu3c} by replacing the maps $h$ and $k$ by equivalent dependent projections.
  If in addition $h$ and $k$ are $\modal$-modal, then $\hfib{h}{a} \to \hfib{k}{fa}$ is a function between $\modal$-modal types, hence itself $\modal$-modal as well as $\modal$-connected and thus an equivalence; thus~\ref{item:mu3c}$\Rightarrow$\ref{item:mu3d}.
  On the other hand, the special case of~\ref{item:mu3d} in which $f$ and $g$ have codomain $\unit$ reduces to~\ref{item:mu1b}.

  Applying~\ref{item:mu3c} instead to the commutative square
  \begin{equation}
  \begin{tikzcd}
  A \arrow[r,"{\modalunit[A]}"] \arrow[d,swap,"f"]
    & \modal(A) \arrow[d,"\modal(f)"] \\
  B \arrow[r,swap,"{\modalunit[B]}"]
    & \modal(B)
  \end{tikzcd}
  \end{equation}
  for any $f:A\to B$ yields~\ref{item:mu3a}.
  % \begin{equation}
  % \vcenter{\xymatrix{
  %     \bullet\ar[r]\ar[d] &
  %     \hfib{f}{b}\ar[r]\ar[d] &
  %     \hfib{\modal f}{\modalunit b}\ar[d]\\
  %     \hfib{\modalunit[A]}{u}\ar[r]\ar[d] &
  %     A\ar[r]^{\modalunit[A]}\ar[d]_f &
  %     \modal A\ar[d]^{\modal f}\\
  %     \hfib{\modalunit[B]}{(\modal f)(u)}\ar[r] &
  %     B\ar[r]_{\modalunit[B]} &
  %     \modal B
  %     }}
  % \end{equation}
  And as a special case of~\ref{item:mu3a}, if $A\defeq \unit$ and $B$ is $\modal$-connected, we find that $\apfunc{\modalunit}$ is $\modal$-connected.
  Since $\modal$-connected maps are inverted by $\modal$, this implies~\ref{item:mu6}.
  Conversely, if~\ref{item:mu6} holds, if $A$ is $\modal$-connected then $(\modalunit(x)=\modalunit(y))$ is contractible, hence $(x=y)$ is $\modal$-connected, giving~\ref{item:mu0}.
  Thus~\ref{item:mu0} through~\ref{item:mu6} are equivalent.

  Assuming these equivalent conditions, for a cospan $A\xrightarrow{f}C \xleftarrow{g} B$ the map of pullbacks
  \begin{equation}
    \sm{a:A}{b:B} (fa=gb) \longrightarrow \sm{x:\modal A}{y:\modal B} ((\modal f)(x) = (\modal g)(y))\label{eq:pbpres}
  \end{equation}
  is equivalent to the map on total spaces induced by $\modalunit[A]:A\to\modal A$ and the fiberwise transformation
  \[ h : \prd{a:A} \left(\hfib{g}{fa} \to \hfib{\modal g}{(\modal f)(\modalunit a)}\right). \]
  But since $(\modal f)(\modalunit a) = \modalunit(fa)$, by~\ref{item:mu3a} each $h_a$ is $\modal$-connected.
  Since $\modalunit[A]$ is also $\modal$-connected, by \cref{lem:nconnected_postcomp_variation} so is~\eqref{eq:pbpres}.
  Hence the induced map
  \[ \modal\left(\sm{a:A}{b:B} (fa=gb)\right) \longrightarrow \sm{x:\modal A}{y:\modal B} ((\modal f)(x) = (\modal g)(y))\]
  (which exists since the codomain is $\modal$-modal) is an equivalence, yielding~\ref{item:mu3}.

  On the other hand, if~\ref{item:mu3}, then $\modal$ preserves any pullback
  \begin{equation}
  \begin{tikzcd}
  (x=y) \arrow[r] \arrow[d]
    & \unit \arrow[d,"x"] \\
  \unit \arrow[r,"y"']
    & A
  \end{tikzcd}
  \end{equation}
  yielding~\ref{item:mu6}.

  For~\ref{item:mu4}, two-thirds of the 2-out-of-3 property holds for any modality, so it remains to show that for $f:A\to B$ and $g:B\to C$, if $g\circ f$ and $g$ are $\modal$-connected, so is $f$.
  However, the unstable octahedral axiom~(\cite[ex4.4]{TheBook}) implies that for any $b:B$, the fiber $\hfib f b$ is equivalent to the fiber of the induced map $\hfib{g\circ f}{gb} \to \hfib{g}{gb}$.
  These two types are $\modal$-connected since $g\circ f$ and $g$ are; thus~\ref{item:mu1a}$\Rightarrow$\ref{item:mu4}.
  Conversely,~\ref{item:mu1a} is clearly a special case of~\ref{item:mu4}.

  Since $\modalunit[A] : A\to \modal A$ is $\modal$-connected, easily~\ref{item:mu4}$\Rightarrow$\ref{item:mu5}.
  On the other hand, if $g\circ f$ and $g$ are $\modal$-connected, then they are both inverted by $\modal$, and hence so is $f$; thus~\ref{item:mu5}$\Rightarrow$\ref{item:mu4}.

  Next we assume~\ref{item:mu4} and show~\ref{item:mu2}.
  Suppose $A$ is $\modal$-connected and $P:A\to\modaltype$, and define
  \[ Q \defeq \modal\left(\sm{a:A} P(a)\right),\]
  and $g:\prd{a:A} P(a) \to Q$ by $g(a,u) \defeq \modalunit(a,u)$.
  We will show $g$ to be a family of equivalences.

  Since $P(a)$ and $Q$ are both $\modal$-modal, for $g_a$ to be an equivalence, it suffices for it to be $\modal$-connected.
  We will prove this by showing that the induced map $\total g:(\sm{a:A} P(a)) \to (\sm{a:A} Q)$ is $\modal$-connected.
  By the assumed 2-out-of-3 property, for this it suffices to show that the other two maps in the following commutative triangle are $\modal$-connected:
  \begin{equation}
  \begin{tikzcd}[column sep=large]
  \sm{a:A} P(a) \arrow[r,"{\total g}"] \arrow[dr,swap,"{\modalunit}"]
    & \sm{a:A} Q \mathrlap{\;\jdeq A\times Q} \arrow[d,"\proj2"] \\
  & Q
  \end{tikzcd}
  \end{equation}
  But the right-hand vertical map is $\modal$-connected since its fiber is the $\modal$-connected type $A$,
  and the diagonal map is $\modal$-connected since it is simply $\modalunit$.
  This completes the proof of~\ref{item:mu4}$\Rightarrow$\ref{item:mu2}.

  Finally, we prove~\ref{item:mu2}$\Rightarrow$\ref{item:mu0}.
  Suppose $A$ is $\modal$-connected and $x:A$.
  Then $\lam{y} \modal(x=y) : A \to \modaltype$ so there is a $Q_x:\modaltype$ such that $\modal(x=y)\eqvsym Q_x$ for all $y:A$.
  It follows that transport in the type family $\lam{y} \modal(x=y)$ is constant, i.e.\ if $p,q:y=z$ and $u:\modal(x=y)$ then $\trans p u = \trans q u$.
  Now for any $p:x=y$, we have $\trans p {\modalunit(\refl x)} = \modalunit(p)$; hence for any $p,q:x=y$ we have $\modalunit(p)=\modalunit(q)$.
  By $\mathsf{ind}^\modal$, it follows that for any $u,v:\modal(x=y)$ we have $u=v$, i.e.\ $\modal(x=y)$ is a mere proposition.
  But $\modal(x=x)$ is inhabited by $\modalunit(\refl x)$, hence $Q_x$ is also inhabited, and thus so is $\modal(x=y)$ for all $y$; thus it is contractible.
\end{proof}

Note that~\ref{item:mu3a} and~\ref{item:mu3} both imply that a lex modality preserves fibers: given $f:A\to B$ and $b:B$, the map $\modal(\hfib{f}{b}) \to \hfib{\modal f}{\modalunit b}$ is an equivalence.
In fact, this property (and hence also~\ref{item:mu3}) characterizes lex modalities even among reflective subuniverses.

\begin{thm}\label{thm:rsu-lex}
  If $\modal$ is a reflective subuniverse such that for any $f:A\to B$ and $b:B$, the map $\modal(\hfib{f}{b}) \to \hfib{\modal f}{\modalunit b}$ is an equivalence, then $\modal$ is $\Sigma$-closed (and hence a lex modality).
\end{thm}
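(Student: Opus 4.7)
The plan is to deduce $\Sigma$-closedness via \cref{thm:ssrs-characterize}, by showing that under the given hypothesis the modal units are $\modal$-connected. Indeed, fix a type $A$ and $z:\modal A$, and apply the hypothesis to the function $\modalunit[A]:A\to\modal A$ at the point $z$. This yields an equivalence
\[ \modal(\hfib{\modalunit[A]}{z}) \;\eqvsym\; \hfib{\modal\modalunit[A]}{\modalunit[\modal A](z)}. \]
But by an earlier lemma in the excerpt (naturality gives $\modal\modalunit[A] = \modalunit[\modal A]$, which is an equivalence since $\modal A$ is modal), the map $\modal\modalunit[A]$ is an equivalence, so its fibers are contractible. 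Hence $\modal(\hfib{\modalunit[A]}{z})$ is contractible for every $z$, i.e.\ $\modalunit[A]$ is $\modal$-connected. By the equivalence~\ref{item:sc3}$\Leftrightarrow$\ref{item:sc1} of \cref{thm:ssrs-characterize}, $\modal$ is $\Sigma$-closed, hence a modality.

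To upgrade to lexness, I would verify condition~\ref{item:mu3a} of \cref{thm:lex-modalities}: for every $f:A\to B$ and $b:B$, the ``evident'' map $e_{f,b}:\hfib{f}{b}\to\hfib{\modal f}{\modalunit[B](b)}$ is $\modal$-connected. Since the codomain is modal (its identity types are modal by \cref{lem:rs_idstable}, and it is a $\Sigma$ of modal types over the now-$\Sigma$-closed subuniverse), $e_{f,b}$ factors uniquely through the modal unit as
\[ \hfib{f}{b} \xrightarrow{\ \modalunit\ } \modal(\hfib{f}{b}) \xrightarrow{\ \tilde e\ } \hfib{\modal f}{\modalunit[B](b)}, \]
and the induced map $\tilde e$ is (homotopic to) the equivalence supplied by the hypothesis. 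The first factor is $\modal$-connected by the previous paragraph, and the second is $\modal$-connected because it is an equivalence; hence $e_{f,b}$ is $\modal$-connected, by closure of $\modal$-connected maps under composition (\cref{thm:rsu-compose-cancel}).

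The only mildly delicate step is checking that $\tilde e$ really coincides with the equivalence provided by the hypothesis---this is a coherence check and follows from the universal property of $\modalunit[\hfib{f}{b}]$, since both maps into $\hfib{\modal f}{\modalunit[B](b)}$ restrict along the unit to the same evident map $(a,p)\mapsto(\modalunit[A](a),\ldots)$. No genuine obstacle arises beyond this bookkeeping.
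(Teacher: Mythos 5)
Your proof is correct, and the $\Sigma$-closedness part takes a genuinely different route from the paper's. You specialize the fiber-preservation hypothesis to the unit $\modalunit[A]$ itself: since $\modal\modalunit[A]$ is an equivalence (being equal to $\modalunit[\modal A]$), its fibers are contractible, so the hypothesis forces each $\modal(\hfib{\modalunit[A]}{z})$ to be contractible; the units are therefore $\modal$-connected, and the implication \ref{item:sc3}$\Rightarrow$\ref{item:sc1} of \cref{thm:ssrs-characterize} gives $\Sigma$-closedness. The paper instead specializes the hypothesis to a projection $\proj1:(\sm{a:A}B(a))\to A$ with $A$ and each $B(a)$ modal, and argues on the naturality square of $\modalunit$ at $\proj1$: the bottom map $A\to\modal A$ is an equivalence, and the hypothesis identifies the induced map on fibers $B(a)\to\hfib{\modal\proj1}{\modalunit a}$ with the composite of the equivalences $B(a)\eqvsym\modal(B(a))\eqvsym\hfib{\modal\proj1}{\modalunit a}$, so the unit of $\sm{a:A}B(a)$ is an equivalence. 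The paper's version is more self-contained (one application of the hypothesis, no appeal to the dependent-elimination machinery behind \cref{thm:ssrs-characterize}), while yours isolates the pleasant intermediate fact that the hypothesis already forces connectedness of all units. Your second paragraph, deducing lexness via \cref{thm:lex-modalities}\ref{item:mu3a} from the factorization $e_{f,b}=\tilde e\circ\modalunit$ with $\tilde e$ the hypothesized equivalence, is exactly the argument the paper's parenthetical relies on (it appears in the remark preceding the theorem rather than in the proof itself), so nothing is missing there.
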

\begin{proof}
  Suppose $A$ and each $B(a)$ are $\modal$-modal.
  We have a commutative square
  \[
  \begin{tikzcd}
    \sm{a:A}B(a) \ar[r,"\modalunit"] \ar[d,"\proj1"'] &
    \modal(\sm{a:A}B(a)) \ar[d,"\modal \proj1"] \\
    A \ar[r,"\modalunit"',"\sim"] & \modal A
  \end{tikzcd}
  \]
  in which the bottom map is an equivalence.
  Thus, to show that the top map is an equivalence it suffices to show that the induced map on each fiber $B(a) \to \hfib{\modal \proj1}{\modalunit a}$ is an equivalence.
  But this map factors through the equivalence $B(a) \eqvsym \modal B(a)$ by the map $\modal B(a) \to \hfib{\modal \proj1}{\modalunit a}$, which is an equivalence by assumption.
\end{proof}

A particularly useful corollary of \cref{thm:lex-modalities} is the following.

\begin{corollary}\label{modaln-truncated}
  A lex modality preserves $n$-truncated maps for all $n$.
\end{corollary}
\begin{proof}
  We first argue by induction on $n$ that a lex modality $\modal$ preserves $n$-types for all $n$.
  The base case is \cref{lem:modal-pres-prop}.
  For the inductive step, suppose $\modal$ is lex and preserves $n$-types, and $A$ is an $(n+1)$-type.
  Then for $u,v:\modal A$ the proposition that $u=v$ is an $n$-type is $\modal$-modal, since it is constructed inductively using $\Sigma$, $\Pi$, and identity types.
  Thus, we can prove it by $\modal$-induction on $u,v$.
  But for $x,y:A$ the type $\modalunit(x)=\modalunit(y)$ is equivalent to $\modal(x=y)$ by \cref{thm:lex-modalities}\ref{item:mu6}, hence is an $n$-type by the inductive hypothesis.

  Now if $f:A\to B$ is $n$-truncated, to show that $\modal f$ is $n$-truncated we must show that $\hfib{\modal f}{y}$ is an $n$-type for all $y:\modal B$.
  Again, by $\modal$-induction we can reduce to the case $y\defeq \modalunit(b)$ for some $b:B$, in which case \cref{thm:lex-modalities}\ref{item:mu3a} implies that $\hfib{\modal f}{\modalunit(b)} \eqvsym \modal(\hfib f b)$, which is an $n$-type since $f$ is $n$-truncated and $\modal$ preserves $n$-types.
\end{proof}

Not every modality satisfying \cref{modaln-truncated} is lex.
For instance, the $m$-truncation modality preserves $n$-types for all $n$, but is not lex for $m\ge -1$.
(To see that it is not lex, consider an Eilenberg--MacLane space $K(G,m+1)$~\cite{lf:emspaces}; this is $m$-connected, but its loop space is $K(G,m)$ which is not $m$-connected. % chktex 8
Alternatively, we can use \cref{thm:acc-lex} below together with the fact that the universe of $m$-types in the $m^{\mathrm{th}}$ universe is not an $m$-type~\cite{ks:u-not-ntype}.)

We do know at least one example of a lex modality.

\begin{eg}
  For any mere proposition $P$, the open modality $\open P \defeq \lam{X} (P\to X)$ is lex.
  This is easy to see since mapping out of $P$ is a right adjoint, hence preserves all limits, including pullbacks.
\end{eg}

However, constructing lex modalities in general, such as by localization, is somewhat tricky.
Unlike the characterization of modalities as $\Sigma$-closed reflective subuniverses, which refers only to the \emph{modal types} and hence was easy to prove in \cref{thm:nullification_modality}, all the characterizations of lex-ness refer explicitly or implicitly to the \emph{modal operator} $\modal$, and not just by way of its ``mapping out'' universal property but saying something about its identity types.
In general, saying anything about the identity types of a higher inductive type (such as localization) requires some amount of univalence, and the present case is no exception (although we do not need a full ``encode-decode'' type argument).

\begin{thm}\label{thm:acc-lex}
  Let $\modal$ be an accessible modality; the following are equivalent.
  \begin{enumerate}
  \item $\modal$ is lex.\label{item:al1}
  \item $\modal$ has a presentation $B:A\to \UU$ such that for any $a:A$ and any $P:B(a)\to \modaltype$, there is a $Q:\modaltype$ such that $P(b)\eqvsym Q$ for all $b:B(a)$.\label{item:al2}
  \item The universe $\modaltype \defeq \setof{A:\type | A \text{ is $\modal$-modal}}$ of modal types is $\modal'$-modal, where $\modal'$ is the canonical accessible extension of $\modal$ to a universe $\UU'$ containing $\UU$, as in \cref{thm:acc-extend}.\label{item:al3}
  \end{enumerate}
\end{thm}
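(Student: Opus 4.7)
I plan to prove the cyclic implications $(1)\Rightarrow(2)\Rightarrow(3)\Rightarrow(1)$, with the main content lying in $(2)\Rightarrow(3)$.

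For $(1)\Rightarrow(2)$, take any presentation $B:A\to\UU$ of $\modal$. I first observe that each $B(a)$ is $\modal$-connected: the reflection $\modal B(a)$ represents the same functor $Y\mapsto Y$ on $B$-null types as $\unit$ does (since any $B$-null $Y$ satisfies $Y\simeq(B(a)\to Y)$), so by Yoneda $\modal B(a)\simeq\unit$. Then \cref{thm:lex-modalities}\ref{item:mu2}, applied with the connected type taken to be $B(a)$, immediately yields~(2).

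For $(3)\Rightarrow(1)$, I verify \cref{thm:lex-modalities}\ref{item:mu2}. Given a $\modal$-connected $A:\UU$ and $P:A\to\modaltype$, since $\modaltype$ is $\modal'$-modal, the universal property of $\modalunit'$ extends $P$ uniquely along $\modalunit'_A:A\to\modal'A$ to some $\tilde{P}:\modal'A\to\modaltype$. By \cref{thm:acc-extend}\ref{item:ae2} we have $\modal'A\simeq\modal A$, which is contractible, so $\tilde{P}$ is constant at a single value $Q:\modaltype$. Thus $P(a)=\tilde{P}(\modalunit'_A(a))=Q$ for every $a:A$, whence $P(a)\eqvsym Q$ by univalence.

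For $(2)\Rightarrow(3)$ I must show that $\modaltype$ is $B$-null, i.e., that each constant-family map $c_a:\modaltype\to(B(a)\to\modaltype)$ is an equivalence. I use the path-split characterization. The first condition, that $c_a$ has a right inverse, is precisely (2) after repackaging: given $P$ together with witness $(Q,h):\sm{Q:\modaltype}\prd{b:B(a)}P(b)\eqvsym Q$, univalence and function extensionality convert $h$ into a path $c_a(Q)=P$. The second condition is that, for all $Q_1,Q_2:\modaltype$, the map
\[
\apfunc{c_a}:(Q_1=Q_2)\longrightarrow (c_a(Q_1)=c_a(Q_2))\simeq (B(a)\to(Q_1=Q_2))
\]
has a right inverse; under the funext equivalence this map sends $q$ to $\lam{b}q$. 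The key observation is that $(Q_1=Q_2)$ is itself modal: by univalence it is equivalent to $(Q_1\eqvsym Q_2)$, which is built from the modal types $Q_1,Q_2$ using $\Sigma$, $\Pi$, and identity types, all of which preserve modalness in a $\Sigma$-closed reflective subuniverse (\cref{lem:modal-Pi}, \cref{lem:rs_idstable}, and $\Sigma$-closure). Since each $B(a)$ is $\modal$-connected, as in $(1)\Rightarrow(2)$, and $(Q_1=Q_2)$ is modal, \cref{connectedtotruncated} shows the constant-family map $(Q_1=Q_2)\to(B(a)\to(Q_1=Q_2))$ is an equivalence, which supplies the required right inverse. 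The trivial $(3)\Rightarrow(2)$ closes the cycle. The main obstacle here is the observation that identity types of $\modaltype$ are themselves modal — a nontrivial consequence of $\Sigma$-closure together with univalence — since it is only via this that the $\modal$-connectedness of $B(a)$ can force the path-space half of the path-split condition.
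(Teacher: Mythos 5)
Your proof is correct and follows essentially the same route as the paper's: the same cycle $(1)\Rightarrow(2)\Rightarrow(3)\Rightarrow(1)$, with $(1)\Rightarrow(2)$ a special case of \cref{thm:lex-modalities}\ref{item:mu2}, $(3)\Rightarrow(1)$ read off from the universal property of $\modalunit'$, and $(2)\Rightarrow(3)$ resting on the key observation that $(Q_1=Q_2)\eqvsym(Q_1\eqvsym Q_2)$ is modal so that \cref{connectedtotruncated} handles the path-space condition. The only (cosmetic) difference is that you decompose the equivalence via path-splitness where the paper uses ``has a section, hence surjective, plus embedding''; the verification is identical in substance.
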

\begin{proof}
  Assuming~\ref{item:al1}, condition~\ref{item:al2} holds for \emph{any} presentation: it is just a special case of \cref{thm:lex-modalities}\ref{item:mu2}, since each $B(a)$ is $\modal$-connected.

  Now assume~\ref{item:al2} for some presentation $B:A\to\UU$.
  By definition of $\modal'$, it suffices to show that $\modaltype$ is $B(a)$-null for all $a:A$, i.e.\ that the ``constant functions'' map
  \[ \modaltype \to (B(a) \to \modaltype) \]
  is an equivalence for all $a:A$.
  The assumption~\ref{item:al2} says that this map has a section, and hence in particular is surjective.
  Thus, it suffices to show it is an embedding, i.e.\ that for any $X,Y:\modaltype$ the map
  \[ (X=Y) \to ((\lam{b} X)= (\lam{b} Y)) \]
  is an equivalence.
  But by univalence and function extensionality, this map is equivalent to
  \[ (X\eqvsym Y) \to (B(a) \to (X\eqvsym Y)), \]
  which is an equivalence by \cref{connectedtotruncated} since $X\eqvsym Y$ is $\modal$-modal and $B(a)$ is $\modal$-connected.

  Finally, if we assume~\ref{item:al3}, then for any $\modal$-connected type $A:\UU$ the map
  \[ \modaltype \to (A\to\modaltype) \]
  is an equivalence.
  In particular, it has a section, proving \cref{thm:lex-modalities}\ref{item:mu2}.
\end{proof}

In particular, we have the following general result.

\begin{corollary}\label{thm:prop-loc-lex}
  Let $B:A\to\prop$ be a family of mere propositions.
  Then nullification at $B$ is a lex modality.
\end{corollary}
\begin{proof}
  We prove condition~\ref{item:al2} of \cref{thm:acc-lex}.
  Given $P:B(a) \to \modaltype$, define $Q \defeq \prd{b:B(a)} P(b)$.
  This lies in $\modaltype$ since modal types are always closed under dependent function types.
  And if we have any $b:B(a)$, then $B(a)$ is an inhabited proposition and hence contractible,
  and a product over a contractible type is equivalent to any of the fibers.
\end{proof}

\begin{defn}
  A (necessarily lex) modality that can be presented by nullification at a family of mere propositions is called \textbf{topological}.
\end{defn}

The term ``topological'' is from~\cite{lurie2009higher}.
Presumably it comes from the fact that by~\cite[Proposition 6.2.2.17]{lurie2009higher}, topological localizations of a presheaf $\infty$-topos correspond to ``Grothendieck topologies'' on the domain, as defined there.

\begin{eg}
  For any mere proposition $Q$, the closed modality $\closed Q \defeq \lam{X} Q\ast X$ is topological, since it is presented by the family $\lam{x:P} \emptyt$.
  Thus, by \cref{thm:prop-loc-lex}, it is lex.
\end{eg}

Topological modalities may seem very special, since very few types are mere propositions.
But in fact, if we allow ourselves to assume rather than conclude lex-ness, then it doesn't matter what truncation level we take the generating family at, as long as it is finite:

\begin{thm}\label{thm:acc-ntypes-tpl}
  If $\modal$ is an accessible lex modality with a presentation $B:A\to\UU$ for which each $B(a)$ is an $n$-type (for some fixed $n$ independent of $a$), then $\modal$ is topological.
\end{thm}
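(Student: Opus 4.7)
We proceed by induction on the truncation level $n$. The base case $n=-1$ is immediate, since each $B(a)$ is already a mere proposition and so $B$ itself exhibits $\modal$ as topological; the case $n=-2$ is even more trivial, as every $B(a)$ is contractible, so $\modal$ is the identity modality and is presented by the empty family.

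For the inductive step with $n\ge 0$, the plan is to construct from the given $n$-type presentation $B:A\to\UU$ a new presentation $B':A'\to\UU$ in which every $B'(a')$ is an $(n-1)$-type, after which the inductive hypothesis applies to finish the argument. We set
\[
  A' \defeq A + \sm{a:A} B(a)\times B(a),
\]
with $B'(\inl a)\defeq \|B(a)\|_{-1}$ (a mere proposition, hence in particular an $(n-1)$-type) and $B'(\inr(a,x,y)) \defeq (x=_{B(a)}y)$ (an $(n-1)$-type since $B(a)$ is $n$-truncated). It remains to prove $\modal$ and $\modal_{B'}$ have the same modal types.

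The forward inclusion is straightforward. Each $B'(a')$ is $\modal$-connected: path spaces of $\modal$-connected types are $\modal$-connected by \cref{thm:lex-modalities}\ref{item:mu0} applied to the $\modal$-connected type $B(a)$, while $\|B(a)\|_{-1}$ is $\modal$-connected because $\modal\|B(a)\|_{-1}$ is a modal mere proposition (by \cref{lem:modal-pres-prop}) admitting a point from $\modal B(a)\eqvsym\unit$, and hence is contractible. Then \cref{connectedtotruncated} shows that every $\modal$-modal type is null at each $B'(a')$, so $\modal$-modal implies $B'$-null.

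The reverse inclusion is the main obstacle. Given $X$ that is $B'$-null, we must show $X$ is $B(a)$-null for each $a:A$, i.e., that $X\to(B(a)\to X)$ is an equivalence. Factoring this as $X\to(\|B(a)\|_{-1}\to X)\to(B(a)\to X)$, the first map is an equivalence by $\|B(a)\|_{-1}$-nullness, so the problem reduces to showing every $f:B(a)\to X$ factors uniquely through $\|B(a)\|_{-1}$. Because the identity types of $X$ are again $B'$-null (by \cref{lem:rs_idstable}), the induced map $\apfunc f:(x=_{B(a)}y)\to(f(x)=_X f(y))$ is constant, giving the first-order coherence needed for a Kraus-style factorization through the propositional truncation. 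The delicate point is obtaining all the higher coherences: one expects that $B'$-nullness of iterated identity types of $X$, combined with iterated path spaces of $B(a)$, propagates constancy up the tower, terminating after at most $n+2$ levels since $B(a)$ is $n$-truncated. The cleanest way to make this rigorous is probably to enlarge $B'$ to also include the iterated propositional truncations $\|\cdot\|_{-1}$ of all iterated path spaces of $B(a)$ up to depth $n+1$; these are still $(n-1)$-types (in fact propositions) and still $\modal$-connected by iterated applications of \cref{thm:lex-modalities}\ref{item:mu0} plus the $\|\cdot\|_{-1}$ argument above, which gives the forward direction for the enlarged family; meanwhile the additional generators supply exactly the nullness needed to discharge the tower of coherences in a Kraus-style factorization, finishing the reverse direction.
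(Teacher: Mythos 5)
Your setup matches the paper's: the same induction on $n$, the same auxiliary family (with $\strunc{-1}{B(a)}$ and the identity types $x=_{B(a)}y$ as new generators), and your forward inclusion is correct --- the identity types are $\modal$-connected by lexness, and $\modal\strunc{-1}{B(a)}$ is an inhabited modal proposition, hence contractible. The gap is in the reverse inclusion, which is the heart of the proof and which you have only sketched. Your plan is to factor each $f:B(a)\to X$ through $\strunc{-1}{B(a)}$ by a Kraus-style coherently-constant-map argument, claiming the coherence tower ``terminates after at most $n+2$ levels since $B(a)$ is $n$-truncated.'' That claim is not justified: the finite-coherence versions of factorization through propositional truncation depend on the truncation level of the \emph{codomain} $X$, which here is completely unconstrained, not on that of the domain. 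Truncatedness of $B(a)$ bounds the complexity of the iterated path spaces you map \emph{out of}, but the coherences live in iterated path spaces of $X$, so there is no reason for the tower to stabilize. Enlarging the generating family by truncations of iterated path spaces of $B(a)$ does not obviously repair this, and you have not exhibited the argument that it does --- ``one expects that \dots propagates constancy up the tower'' is exactly the step that needs a proof.

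The paper avoids the coherence tower entirely with two observations you do not use. First, the goal --- contractibility of $\hfib{c}{f}$ for the constant-functions map $c:X\to(B(a)\to X)$ --- is a mere proposition that is itself null for the new family (being built from null types by fibers and identity types), so by nullness at $\strunc{-1}{B(a)}$ one may assume $\brck{B(a)}$ and hence, the goal being a proposition, an actual point $b:B(a)$. Second, once such a $b$ is available, the map $b:\unit\to B(a)$ has fibers $(b=u)$, which are \emph{generators} of the new family and hence connected for it; so $b:\unit\to B(a)$ is a connected map, and \cref{prop:nconnected_tested_by_lv_n_dependent types} lets you build the section $\prd{u:B(a)}f(u)=f(b)$ (and prove its uniqueness) by checking only at $u\defeq b$. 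No higher coherences ever arise. I would recommend replacing your Kraus-style step with this argument; everything else in your proposal then goes through.
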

\begin{proof}
  We will prove that under the given hypotheses, if $n\ge 0$ then $\modal$ also has a presentation $D:C\to \UU$ for which each $D(c)$ is an $(n-1)$-type.
  By induction, this will prove the theorem.
  The argument is a modification of~\cite[Lemma 7.5.11]{TheBook}.

  Let $C\defeq A + \sm{a:A} B(a)\times B(a)$, and define
  \begin{align*}
    D(\inl(a)) &\defeq \strunc{-1}{B(a)}\\
    D(\inr(a,x,y)) &\defeq (x=_{B(a)} y).
  \end{align*}
  Clearly each $D(c)$ is an $(n-1)$-type (here is where we use the assumption $n\ge 0$).
  Since $\modal$ is lex and each $B(a)$ is $\modal$-connected, each $D(\inr(a,x,y))$ is also $\modal$-connected.
  To show that $D(\inl(a))$ is also $\modal$-connected, since $\modal$ preserves mere propositions, the proof of \cref{prop:nconnected_tested_by_lv_n_dependent types} implies that it suffices to show that $Z\to (\brck{B(a)} \to Z)$ is an equivalence for any $\modal$-modal mere proposition $Z$.
  But in this case $(\brck{B(a)} \to Z) \eqvsym (B(a) \to Z)$, and the latter is equivalent to $Z$ since $B(a)$ is $\modal$-connected and $Z$ is $\modal$-modal.

  Thus each type $D(c)$ is $\modal$-connected, so every $\modal$-modal type is $D$-null.
  For the converse, suppose $X$ is $D$-null and let $a:A$.
  We want to show that $X$ is $B(a)$-null, i.e.\ that the ``constant functions'' map $c : X\to (B(a) \to X)$ is an equivalence.
  Let $f:B(a) \to X$; we will show that $\hfib{c}{f}$ is contractible.

  Now $X$ and $B(a)\to X$ are both $D$-null, hence so is $\hfib{c}{f}$, and hence so is the proposition ``$\hfib{c}{f}$ is contractible''.
  Thus, we may assume in proving it that we have $\brck{B(a)}$.
  But it is also a proposition, so we may furthermore assume that we have some $b:B(a)$.

  If we also write $b$ for the induced map $\unit \to B(a)$, then for any $u:B(a)$ we have $\hfib{b}{u} \simeq (b=u)$, which belongs to $D$.
  Thus $b:\unit\to B(a)$ is $D$-connected.

  We construct a point in $\hfib{c}{f}$ by taking $f(b)$ and constructing a path \[p:\prd{u:B(a)} f(u)=f(b).\]
  To give $p$, note that since $X$ is $D$-modal, so is the type $f(u)=f(b)$.
  Thus, by \cref{prop:nconnected_tested_by_lv_n_dependent types}, since $b:\unit\to B(a)$ is $D$-connected, it suffices to prove $f(b)=f(b)$, which is of course trivial.

  Finally, suppose we have some other point $(x,q) : \hfib{c}{f}$, i.e.\ an $x:X$ with $q:\prd{u:B(a)} f(u)=x$.
  Then $q_b : f(b) = x$, so it remains to show that for any $u:B(a)$ we have $q_b = \ct{p_u^{-1}}{q_u}$.
  But since this is an iterated equality type in $X$, it is $D$-modal, so using again the fact that $b:\unit\to B(a)$ is $D$-connected it suffices to prove it when $u=b$.
  But $p_b = \refl{f(b)}$ by definition, so in this case the goal reduces to $q_b = q_b$, which is trivial.
\end{proof}

Thus, a topological modality could equivalently be defined as a lex modality that admits a generating family of bounded homotopy type.
Moreover, \emph{every} lex modality is ``almost topological'' in the following sense.

\begin{thm}\label{thm:lex-ntypes-prop}
  If $\modal$ is a lex modality and $A$ is an $n$-type for $n<\infty$, then $A$ is $\modal$-modal if and only if it is $P$-null for any $\modal$-connected mere proposition $P$.
\end{thm}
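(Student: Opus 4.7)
The forward direction is immediate: any $\modal$-modal type is $P$-null for every $\modal$-connected type $P$ by \cref{connectedtotruncated}. For the converse, the plan is to proceed by induction on $n$, the key idea being that for a lex modality the property of having $\modal$-modal identity types makes $\modalunit[A]$ an embedding, reducing the general case to the base case of mere propositions.

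Write $\mathcal{P}$ for the subuniverse of types that are $P$-null for every $\modal$-connected mere proposition $P$. As an intersection of nullifications, $\mathcal{P}$ is a $\Sigma$-closed reflective subuniverse; in particular it is closed under identity types (\cref{lem:rs_idstable}) and under $\Sigma$-types (cf.\ the proof of \cref{thm:nullification_modality}).

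The case $n=-2$ is trivial since contractible types are $\modal$-modal. For $n=-1$, suppose $A$ is a $\mathcal{P}$-modal mere proposition. Then $\modal A$ is also a mere proposition by \cref{lem:modal-pres-prop}, so $\modalunit[A]$ is an equivalence just when inhabitedness of $\modal A$ implies inhabitedness of $A$. Assuming $z:\modal A$, the type $\modal A$ is contractible, which makes $A$ itself a $\modal$-connected mere proposition. Applying the hypothesis with $P\defeq A$, the constant map $A\to(A\to A)$ is an equivalence; but $(A\to A)\simeq \unit$ for any mere proposition $A$ (every self-map is equal to the identity by propositionality), so $A\simeq\unit$ and in particular $A$ is inhabited.

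For the inductive step, suppose the theorem holds for $n$-types with $n\geq -1$ and let $A$ be a $\mathcal{P}$-modal $(n+1)$-type. Its identity types are $n$-types, and are $\mathcal{P}$-modal by closure, so by the inductive hypothesis they are $\modal$-modal. Lex-ness together with \cref{thm:lex-modalities}\ref{item:mu6} then imply that $\modalunit[A]$ is an embedding, so each fiber $P_z\defeq\hfib{\modalunit[A]}{z}$ is a $\modal$-connected mere proposition (using also that $\modalunit[A]$ is $\modal$-connected by \cref{thm:ssrs-characterize}). Since $P_z \simeq \sm{a:A}(\modalunit[A](a)=z)$, where $A$ is $\mathcal{P}$-modal by hypothesis and the identity types of $\modal A$ are $\modal$-modal (hence $\mathcal{P}$-modal), $\Sigma$-closure of $\mathcal{P}$ gives that $P_z$ is $\mathcal{P}$-modal. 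Applying the $n=-1$ base case to $P_z$ shows it is $\modal$-modal, and being both $\modal$-connected and $\modal$-modal forces it to be contractible. Therefore $\modalunit[A]$ has contractible fibers and is an equivalence.

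The main technical points are the closure of $\mathcal{P}$ under $\Sigma$ and identity types and the identification of fibers of an embedding with sums over path spaces; both are routine, and I do not anticipate further obstacles.
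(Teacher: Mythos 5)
Your proof is correct, and while it shares the paper's skeleton (induct on $n$; show the identity types of $A$ are modal; conclude via \cref{thm:lex-modalities}\ref{item:mu6} that $\modalunit[A]$ is an embedding; then contract the fibers), it finishes differently. The paper proves surjectivity of $\modalunit[A]$ directly: given $z:\modal A$, it applies the nullness of $A$ at the $\modal$-connected proposition $\hfib{\modalunit[A]}{z}$ to extract a point $x:A$ and then shows $\modalunit(x)=z$ by another connectedness argument. You instead prove that the fiber $\hfib{\modalunit[A]}{z}$ is itself in $\mathcal{P}$ (via $\Sigma$-closure, using that $A$ is in $\mathcal{P}$ and the identity types of $\modal A$ are modal), and then invoke a separately established $n=-1$ case — itself a nice self-application of the hypothesis with $P\defeq A$ — to conclude the fiber is modal, hence contractible. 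Your route buys a cleaner inductive step at the cost of an extra base case and the bookkeeping about closure properties of $\mathcal{P}$; the paper's route avoids introducing $\mathcal{P}$ altogether. You also obtain closure of $P$-nullness under identity types by citing \cref{lem:rs_idstable} for each single-proposition nullification, where the paper gives a direct triangle argument; both are fine.

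One caveat: your assertion that $\mathcal{P}$ is a reflective subuniverse is an overstatement you should not rely on. The family of all $\modal$-connected mere propositions need not be small, so the existence of a reflector for the intersection is not automatic (compare the care taken in \cref{thm:meets} and \cref{thm:hypercompletion}). Fortunately your argument never uses a reflector for $\mathcal{P}$ — only closure under identity types and $\Sigma$-types, both of which hold pointwise for each individual nullification and hence pass to the intersection — so this does not create a gap, but the claim should be weakened to exactly those closure properties.
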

\begin{proof}
  ``Only if'' is trivial, so we prove the converse.
  By induction on $n$.
  The base case $n=-2$ is trivial.
  Thus, suppose $A$ is an $(n+1)$-type that is $P$-null for every $\modal$-connected proposition $P$.
  Then for any $x,y:A$, we have a commutative triangle
  \[
  \begin{tikzcd}
    & x=y \ar[dl] \ar[dr] \\
    (P\to (x=y)) \ar[rr] && (\lam{\nameless}x =_{P\to A} \lam{\nameless}y)
  \end{tikzcd}
  \]
  in which the bottom map is an equivalence by function extensionality, and the right-hand diagonal map is an equivalence since it is the action on equalities of the equivalence $A\eqvsym (P\to A)$.
  Thus, the left-hand diagonal map is also an equivalence, so $(x=y)$ is also $P$-null.
  By the inductive hypothesis, therefore, $(x=y)$ is $\modal$-modal.
  Hence by \cref{thm:lex-modalities}\ref{item:mu6}, the map $\modalunit[A]:A\to \modal A$ is an embedding; thus it suffices to show that it is surjective.

  Now given $z:\modal A$, since $\modalunit[A]$ is an embedding, its fiber $\hfib{\modalunit[A]}{z}$ is a mere proposition; and it is $\modal$-connected since $\modalunit[A]$ is connected.
  Thus, by assumption $A \to (\hfib{\modalunit[A]}{z} \to A)$ is an equivalence.
  But we have $\proj1 : \hfib{\modalunit[A]}{z} \to A$, so there exists an $x:A$ such that $\proj1 = \lam{\nameless}x$, i.e.\ for any $y:A$ with $\modalunit (y)=z$ we have $y = x$.

  We claim that $\modalunit(x) = z$.
  This is a modal type, since it is an equality in $\modal A$.
  Thus, since $\hfib{\modalunit[A]}{z}$ is $\modal$-connected, when proving $\modalunit(x) = z$ we may assume that $\hfib{\modalunit[A]}{z}$, i.e.\ we have $y:A$ with $\modalunit (y) = z$.
  But then $y=x$ as shown above, so that $\modalunit (x) = z$ as well.
\end{proof}

Thus, if an accessible lex modality is not topological, it must be generated by a family including $n$-types for arbitrarily high $n$ (or else at least one type that is not an $n$-type for any finite $n$), and moreover its failure to be topological will only be visible to types that are not $n$-types for any finite $n$.
This means that it is rather hard to give examples of lex modalities that are not topological.

Semantically, it is known that not all subtoposes of $(\infty,1)$-toposes are topological, so by the results of \cref{sec:semantics} non-topological lex modalities do exist in some models.
The basic example is the \emph{hypercompletion}.
We do not know how to construct hypercompletion inside type theory, but we can show that if it exists then it is lex, and not topological unless it is trivial.
We begin with definitions.

\begin{defn}
  A type $A$ or a function $f:A\to B$ is \define{$\infty$-connected} if it is $n$-connected for all $n$.
\end{defn}

Recall that if $f$ is $n$-connected for fixed $n$, then $\trunc n f$ is an equivalence, but the converse may not hold.
However, $\trunc {n+1} f$ being an equivalence is sufficient for $f$ to be $n$-connected, and so $f$ is $\infty$-connected if and only if $\trunc n f$ is an equivalence for all $n$.
Similarly, a type $A$ is $\infty$-connected if and only if $\trunc n A$ is contractible for all $n$.
Note that since a map is $n$-connected if and only if all its fibers are, a map is likewise $\infty$-connected if and only if all its fibers are.

\begin{defn}
  A type $Z$ is \define{$\UU$-$\infty$-truncated} or \define{$\UU$-hypercomplete} if it is local with respect to all $\infty$-connected maps in $\UU$, i.e.\ if $(\blank\circ f):(C\to Z) \to (B\to Z)$ is an equivalence whenever $f:B\to C$ is $\infty$-connected with $B,C:\UU$.
\end{defn}

In general, it is not clear to what extent the notion of $\UU$-$\infty$-truncatedness depends on $\UU$.
However, if $Z$ is an $n$-type for some $n<\infty$, then $(\blank\circ f)$ is equivalent to $(\blank\circ \trunc nf)$, which is an equivalence if $f$ is $\infty$-connected.
Thus, any $n$-type is $\infty$-truncated independent of universe level.
In particular, this implies:

\begin{lemma}\label{thm:infconn}
  Given $B,C:\UU$ and $f:B\to C$, the following are equivalent.
  \begin{enumerate}
  \item $f$ is $\infty$-connected.\label{item:ic1}
  \item $(-\circ f):(C\to Z) \to (B\to Z)$ is an equivalence for all $\UU$-$\infty$-truncated $Z:\UU$.\label{item:ic2}
  \item $(-\circ f):(C\to Z) \to (B\to Z)$ is an equivalence for all $n$-types $Z:\UU$.\label{item:ic3}
  \end{enumerate}
\end{lemma}
\begin{proof}
  We have~\ref{item:ic1}$\Rightarrow$\ref{item:ic2} by definition of ``$\UU$-$\infty$-truncated'', and~\ref{item:ic2}$\Rightarrow$\ref{item:ic3} by the above remarks.
  Now assuming~\ref{item:ic3}, the universal property of $n$-truncation tells us that
  \[ (\blank\circ \trunc nf):(\trunc n C\to Z) \to (\trunc nB\to Z) \]
  is an equivalence for any $n$-type $Z$.
  By the Yoneda lemma, this implies that $\trunc n f$ is an equivalence for all $n$; hence $f$ is $\infty$-connected.
\end{proof}

The closure of $\infty$-connectedness under fibers also implies:

\begin{lemma}\label{thm:trunc-null}
  A type $Z$ is $\UU$-$\infty$-truncated if and only if it is null with respect to all $\infty$-connected types in $\UU$, i.e.\ if $Z \to (B\to Z)$ is an equivalence whenever $B:\UU$ is $\infty$-connected.
\end{lemma}
\begin{proof}
  ``Only if'' is clear, so suppose the given condition holds and let $f:A\to B$ be $\infty$-connected with $A,B:\UU$.
  Then we have
  \begin{align*}
    (A \to Z)
    &\simeq (\sm{b:B}\hfib{f}{b}) \to Z\\
    &\simeq \prd{b:B} (\hfib{f}{b} \to Z)\\
    &\simeq \prd{b:B} Z\\
    &\jdeq (B\to Z).\qedhere
  \end{align*}
\end{proof}

Now, we can certainly localize at all the $\infty$-connected maps in $\UU$ to obtain a reflective subuniverse of any \emph{larger} universe $\UU'$ whose modal types are the $\UU$-$\infty$-truncated ones.
However, hypercompletion should really be a modality on $\UU$ \emph{itself} whose modal types are the $\UU$-$\infty$-truncated ones.
A local presentability argument in~\cite[Prop.~6.5.2.8]{lurie2009higher} shows that in any Grothendieck $\infty$-topos there exists a small family that generates such a modality by localization.
But in type theory, the best we can do at present is show that \emph{if} such a modality exists, then it behaves as expected.

\begin{thm}\label{thm:hypercompletion}
  Suppose $\modal$ is a reflective subuniverse on $\UU$ whose modal types are precisely the $\UU$-$\infty$-truncated ones.
  Then:
  \begin{enumerate}
  \item $\modal$ is a lex modality.\label{item:hc1}
  \item The $\modal$-connected maps are precisely the $\infty$-connected ones.\label{item:hc2}
  \item $\modal$ is topological if and only if every type is $\modal$-modal, i.e.\ every type is $\UU$-$\infty$-truncated, i.e.\ ``Whitehead's principle''~\cite[\S8.6]{TheBook} holds for $\UU$.\label{item:hc3}
  \end{enumerate}
  If such a modality exists, we call it \textbf{hypercompletion}.
\end{thm}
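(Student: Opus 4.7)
The plan is to prove the three parts in a convenient order, starting with part (2), which will be the key tool for both (1) and (3). Throughout, recall that a map is $\infty$-connected (resp.\ $\modal$-connected) iff its fibers are, so for (2) it suffices to prove the type-level version. For the direction $\infty$-connected $\Rightarrow \modal$-connected: if $X$ is $\infty$-connected, then by \cref{thm:infconn} the unique map $X \to \unit$ induces, for every $\modal$-modal ($\jdeq$ $\UU$-$\infty$-truncated) $Y$, an equivalence $Y \eqvsym (\unit \to Y) \eqvsym (X \to Y) \eqvsym (\modal X \to Y)$ (the last step by the universal property of $\modal$). Specializing to $Y \defeq \modal X$, the identity of $\modal X$ corresponds to some $p:\modal X$, as does $\const_p$; by injectivity they are equal, so $\modal X$ is contractible. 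For the converse, if $\modal X$ is contractible and $n:\N$, then $\trunc n X$ is an $n$-type, hence $\UU$-$\infty$-truncated, hence modal, so $X \to \trunc n X$ factors through $\modal X \simeq \unit$, giving a point $p:\trunc n X$ with $\prd{x:X}\tproj n x = p$; now $n$-truncation induction (using the fact that $q=p$ is an $n$-type) extends this to $\prd{q:\trunc nX} q = p$, so $\trunc nX$ is contractible.

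For part (1), I first show $\Sigma$-closedness directly using the null-type characterization. Given modal $A$ and $B:A\to\UU_\modal$, by \cref{thm:trunc-null} it suffices to show that $Z \defeq \sm{a:A} B(a)$ is null for every $\infty$-connected $C:\UU$. Unfolding via the type-theoretic axiom of choice gives $(C\to Z) \eqvsym \sm{f:C\to A}\prd{c:C}B(f(c))$; \cref{thm:infconn} then identifies $(C\to A)\eqvsym A$ via constants, and $\prd{c:C} B(a) \eqvsym B(a)$ similarly, yielding $(C\to Z) \eqvsym Z$ via the constant-map. Thus $\modal$ is a modality. To upgrade to lex, I use \cref{thm:lex-modalities}\ref{item:mu0}: it suffices that whenever $A$ is $\modal$-connected, each $(x=y)$ is $\modal$-connected. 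By part (2), $A$ is $\infty$-connected, and then $\trunc n(x=y) \eqvsym (\tproj{n+1}x =_{\trunc{n+1}A} \tproj{n+1}y)$ is contractible because $\trunc{n+1}A$ is; thus $(x=y)$ is $\infty$-connected and hence $\modal$-connected by part (2).

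For part (3), the easy direction $(\Leftarrow)$: if every type is $\modal$-modal, then $\modalunit$ is always an equivalence and $\modal$ is the identity modality, which is topological (e.g.\ presented by the empty family of propositions, or by nullification at $\unit$). For $(\Rightarrow)$: suppose $\modal$ is presented as nullification at a family $P:A\to \prop$. A standard consequence of the nullification construction is that each generator $P(a)$ is $\modal$-connected, since $\modal P(a) \to \modal \unit$ is forced to be an equivalence. By part (2), each $P(a)$ is $\infty$-connected, so $\trunc{-1}P(a)$ is contractible; but $P(a)$ is already a $(-1)$-type, so $P(a) \eqvsym \trunc{-1}P(a)$ is itself contractible. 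Nullification at a family of contractible types is the identity modality (every type $Z$ trivially satisfies $Z \eqvsym (P(a)\to Z)$ via the constant map when $P(a)\eqvsym\unit$), so every type is $\modal$-modal. The main obstacle I anticipate is simply keeping careful track of which equivalences are the constant-map equivalences in part (2)$\Rightarrow$(1), but this is routine; the conceptual work is entirely in establishing part (2).
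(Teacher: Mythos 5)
Your proposal is correct and takes essentially the same approach as the paper: identify the $\modal$-connected maps with the $\infty$-connected ones, obtain $\Sigma$-closedness from nullity at $\infty$-connected types (\cref{thm:trunc-null} plus the computation of \cref{thm:nullification_modality}), and deduce (3) from the fact that the generating propositions must be $\modal$-connected, hence $\infty$-connected, hence contractible. The only divergence is the lexness step, where you verify \cref{thm:lex-modalities}\ref{item:mu0} via $\trunc{n}{(x=y)}\eqvsym(\tproj{n+1}{x}=\tproj{n+1}{y})$ while the paper instead uses the 2-out-of-3 characterization \ref{item:mu4}; both are valid.
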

\begin{proof}
  The proof of \cref{thm:nullification_modality} shows that the $B$-null types for any type family $B$ are $\Sigma$-closed, regardless of whether or not $B$ is small.
  Thus, \cref{thm:trunc-null} shows that the $\UU$-$\infty$-truncated types are $\Sigma$-closed, hence $\modal$ is a modality.

  Next we prove~\ref{item:hc2}.
  By \cref{thm:infconn}, any $\modal$-connected map is $\infty$-connected.
  Conversely, if $f:A\to B$ is $\infty$-connected, then any fiber $\hfib{f}{b}$ is also $\infty$-connected.
  Thus for any $\modal$-modal type $Z$ we have $Z \eqvsym (\hfib{f}{b}\to Z)$; hence $\hfib{f}{b}$ is $\modal$-connected, and thus so is $f$.

  This shows~\ref{item:hc2}.
  Now the lex-ness of $\modal$ follows from the fact that $\infty$-connected maps satisfy the 2-out-of-3 property, since $f$ is $\infty$-connected if and only if each $\trunc n f$ is an equivalence, and equivalences satisfy the 2-out-of-3 property.

  Finally, if $\modal$ is topological, then there is a family $B:A\to \prop$ of mere propositions that generates it.
  In particular, each $B(a)$ must then be $\modal$-connected, and hence $\infty$-connected.
  But a mere proposition is a $(-1)$-type, hence also $\infty$-truncated.
  Thus each $B(a)$ is contractible, so that every type is $\modal$-modal.
\end{proof}

More generally, we have the following analogue of~\cite[Proposition 6.5.2.16]{lurie2009higher}:

\begin{thm}\label{thm:cotop}
  For a lex modality $\modal$, the following are equivalent:
  \begin{enumerate}
  \item Every $\modal$-connected mere proposition is contractible.\label{item:ct1}
  \item Every $\modal$-connected map is $\infty$-connected.\label{item:ct2}
  \item Every $\UU$-$\infty$-truncated type is $\modal$-modal.\label{item:ct3}
  \end{enumerate}
  In this case we say $\modal$ is \textbf{cotopological}.
\end{thm}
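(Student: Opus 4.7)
The plan is to prove $(1)\Rightarrow(2)\Rightarrow(3)\Rightarrow(1)$, with the first implication being the bulk of the work.

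For $(3)\Rightarrow(1)$, let $P$ be a $\modal$-connected mere proposition. Since $P$ is a $(-1)$-type, it is an $n$-type, and as noted in the remarks preceding \cref{thm:infconn}, every $n$-type is $\UU$-$\infty$-truncated regardless of universe level. Then (3) makes $P$ both $\modal$-modal (so $P \eqvsym \modal P$) and $\modal$-connected (so $\modal P$ is contractible), hence $P$ is contractible.

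For $(2)\Rightarrow(3)$, let $Z$ be $\UU$-$\infty$-truncated. The modal unit $\modalunit[Z]:Z\to\modal Z$ is always $\modal$-connected, so by (2) it is $\infty$-connected. By \cref{thm:infconn}, precomposition with $\modalunit[Z]$ is an equivalence $(\modal Z\to Z)\eqvsym (Z\to Z)$, so the fiber over $\idfunc[Z]$ yields a retraction of $\modalunit[Z]$, and by \cref{thm:modalunit-retract-equiv} this makes $Z$ modal.

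The main work is $(1)\Rightarrow(2)$, for which it suffices to show every $\modal$-connected type $A$ is $\infty$-connected, since a map is (connected/$\infty$-connected) iff its fibers are. I would prove by induction on $n\ge -2$ that $\trunc n A$ is contractible for every $\modal$-connected $A$. The base $n=-2$ is trivial. For $n=-1$, applying $\modal$ to $A \to \brck A$ and using contractibility of $\modal A$ gives an element of $\modal\brck A$, which is a mere proposition by \cref{lem:modal-pres-prop}, hence $\brck A$ is $\modal$-connected; it is then contractible by assumption (1). The key observation for the inductive step is that, by \cref{thm:lex-modalities}(iii), the diagonal $A\to A\times A$ is a map between $\modal$-connected types and thus itself $\modal$-connected, so each identity type $(x=_A y)$, being a fiber of the diagonal, is $\modal$-connected. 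The main obstacle is then just assembling this: to show $\trunc{n+1}A$ is contractible, pick $x:A$ (using the $n=-1$ case), and for $y:\trunc{n+1}A$ prove $|x|=y$. Since ``contractibility of $(|x|=y)$'' is an $n$-type in $y$, truncation induction reduces this to $y\defeq |y'|$ for $y':A$, where $(|x|=|y'|)\eqvsym \trunc n(x=y')$ is contractible by the inductive hypothesis applied to the $\modal$-connected type $x=y'$.
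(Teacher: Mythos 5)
Your proposal is correct, but the hard direction is argued quite differently from the paper. The paper proves \ref{item:ct2}$\Leftrightarrow$\ref{item:ct3} in one stroke from \cref{thm:infconn}, gets \ref{item:ct2}$\Rightarrow$\ref{item:ct1} from the observation that an $\infty$-connected proposition is contractible, and for \ref{item:ct1}$\Rightarrow$\ref{item:ct2} invokes \cref{thm:lex-ntypes-prop}: under \ref{item:ct1} the nullity hypothesis there is vacuous, so every $n$-type is $\modal$-modal, and \cref{thm:infconn} then upgrades $\modal$-connectedness to $\infty$-connectedness. You instead run a cycle \ref{item:ct1}$\Rightarrow$\ref{item:ct2}$\Rightarrow$\ref{item:ct3}$\Rightarrow$\ref{item:ct1} and prove \ref{item:ct1}$\Rightarrow$\ref{item:ct2} by a direct induction on the truncation level, showing $\trunc{n}{A}$ is contractible for every $\modal$-connected $A$; this works "on the connected side" where the paper works "on the modal side." Both arguments pivot on the same use of lexness (identity types of $\modal$-connected types are $\modal$-connected, \cref{thm:lex-modalities}\ref{item:mu0}, which you re-derive from \ref{item:mu1a} via the diagonal), and your induction is essentially the mirror image of the one inside \cref{thm:lex-ntypes-prop}. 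What the paper's route buys is brevity, since \cref{thm:lex-ntypes-prop} is already on the books; what yours buys is self-containedness, needing only basic facts about truncations (the path-space characterization $(\tproj{n+1}{x}=\tproj{n+1}{y})\eqvsym\trunc{n}{x=y}$ and elimination into $n$-types). Two cosmetic points: your $(2)\Rightarrow(3)$ needs only the definition of $\UU$-$\infty$-truncatedness, not \cref{thm:infconn}; and in the inductive step the relevant fact is simply that $\tproj{n+1}{x}=y$ is an $n$-type in $y$, so that $\trunc{n+1}{}$-induction applies.
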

\begin{proof}
  Using \cref{thm:infconn}, we have~\ref{item:ct2}$\Leftrightarrow$\ref{item:ct3}.
  And an $\infty$-connected mere proposition is contractible, so~\ref{item:ct2}$\Rightarrow$\ref{item:ct1}.
  Conversely, assuming~\ref{item:ct1}, by \cref{thm:lex-ntypes-prop} every $n$-type is $\modal$-modal; hence \cref{thm:infconn} yields~\ref{item:ct2}.
\end{proof}

\begin{rmk}\label{thm:nontop-lex}
  At the time this paper was written, we did not know any ``small'' condition on a family $B:A\to\UU$ ensuring that the modality it generates is lex and such that every lex modality can be generated by such a family.
  (\cref{thm:acc-lex}\ref{item:al2} is not ``small'' because it refers to arbitrary families of modal types.)
  However, as we were preparing it for final publication,~\cite{abfj:lexloc} found two such conditions:
  \begin{enumerate}[label={(\alph*)}]
  \item For all $a:A$ and $x,y:B(a)$ the type $x=y$ is $\modal$-connected (a relative version of \cref{thm:lex-modalities}\ref{item:mu0}).\label{item:abfj1}
  \item For all $a:A$ and $x,y:B(a)$ there is an $a':A$ with $B(a') \eqvsym (x=y)$.\label{item:abfj2}
  \end{enumerate}
  Clearly~\ref{item:abfj2} implies~\ref{item:abfj1}, while any $B$ satisfying~\ref{item:abfj1} can be enhanced to one satisfying~\ref{item:abfj2} by closing it up under path spaces.
  The nontrivial part is showing that~\ref{item:abfj1} implies \cref{thm:acc-lex}\ref{item:al2}.

  In particular, this characterization implies that if $\modal$ is an accessible lex modality on $\UU$, then its canonical accessible extension $\modal'$ to a larger universe $\UU'$ from \cref{thm:acc-extend} is again lex, since whether a generating family satisfies~\ref{item:abfj1}--\ref{item:abfj2} is independent of universe level.
  Without such a characterization, we could only conclude this when $\modal$ is topological.
\end{rmk}

\begin{rmk}\label{thm:subtopos-model}
  The modal types for an accessible lex modality are closed under identity types (by \cref{lem:rs_idstable}), $\Pi$-types (by \cref{lem:modal-Pi}), $\Sigma$-types (since it is a modality), and universes (by \cref{thm:acc-lex}).
  Thus, they are in their own right a model of the fragment of homotopy type theory containing only these type operations (the internal language of a subtopos).

  The modal types are not closed under other type formers like $\emptyt$, $A+B$, the natural numbers, and more general inductive and higher inductive types.
  However, if $F$ is a presentation of $\modal$, then we can construct a version of any higher inductive type $\mathsf{H}$ that is $\modal$-modal and satisfies the induction principle with respect to other modal types, by adding the second two constructors of $\localization{F}$ to the given constructors of $\mathsf{H}$, yielding a new higher inductive type that is ``$F$-local by definition''.
  (This is a sort of internal version of the algebraic fibrant replacement used semantically in~\cite{ls:hits}.)
  The fact that localization modalities have judgmental computation rules ensures that these ``local higher inductive types'' do too.
  Thus, the subtopos model inherits higher inductive types as well.

  In principle, this sort of construction could reduce the problem of modeling homotopy type theory with strict univalent universes in all $(\infty,1)$-toposes to the problem of modeling it in presheaf $(\infty,1)$-toposes, since every $(\infty,1)$-topos is (by one definition) an accessible left exact localization of a presheaf $(\infty,1)$-topos.
  However, in order for this to work we need strict univalent universes that are strictly closed under the modality, and in general we do not know how to ensure this semantically; see \cref{rmk:universes}.
  A similar construction of a subtopos model, but using a strict monad, can be found in~\cite{Stacks,Coquand:stack}.
\end{rmk}

\subsection{Meets and joins of modalities}%
\label{sec:poset-modalities}

Let $\rsu[\UU]$ denote the type of reflective subuniverses of a universe $\UU$, and similarly $\mdl[\UU]$, $\lex[\UU]$, and $\tpl[\UU]$ the types of modalities, lex modalities, and topological modalities, while $\accrsu$, $\accmdl$, and $\acclex$ consist of accessible ones.
Each of these is partially ordered by inclusion, i.e.\ $\modal \le \lozenge$ means that every $\modal$-modal type is $\lozenge$-modal, and we have full inclusions
\[
\begin{tikzcd}
  \tpl \ar[r] \ar[dr] & \acclex \ar[r] \ar[d] & \accmdl \ar[r] \ar[d] & \accrsu \ar[d] \\
  & \lex \ar[r] & \mdl \ar[r] & \rsu.
\end{tikzcd}
\]
The poset $\rsu$ has both a bottom element (the zero modality, for which only $\unit$ is modal) and a top element (the trivial modality, for which all types are modal), which both happen to lie in $\tpl$ and hence all of these other posets.
It is natural to wonder whether these posets have other lattice structure.
We do not have a complete answer, but there are some things we can say.

\begin{thm}\label{thm:meet-join}
  Suppose given any family $\modal_i$ of reflective subuniverses.
  \begin{enumerate}
  \item If there is a reflective subuniverse $\lozenge$ such that a type is $\lozenge$-modal if and only if it is $\modal_i$-modal for all $i$, then $\lozenge$ is the meet $\bigwedge_i \modal_i$ in $\rsu$.
    Moreover, if each $\modal_i$ is a modality, then so is $\lozenge$, and it is also the meet in $\mdl$.\label{item:mj1}
  \item If each $\modal_i$ is a modality, and there is a modality $\lozenge$ such that a type is $\lozenge$-connected if and only if it is $\modal_i$-connected for all $i$, then $\lozenge$ is the join $\bigvee_i \modal_i$ in $\mdl$.\label{item:mj2}
  \item If there is a reflective subuniverse $\lozenge$ such that for any function $f:A\to B$, we have that $\lozenge(f)$ is an equivalence if and only if $\modal_i(f)$ is an equivalence for all $i$, then $\lozenge$ is the join $\bigvee_i \modal_i$ in $\rsu$.\label{item:mj3}
  \end{enumerate}
\end{thm}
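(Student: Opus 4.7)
The plan is to invoke the principle that a reflective subuniverse is determined by its subtype of modal types (\cref{thm:subuniverse-rs}, and \cref{thm:subuniv-highermod} for modalities), so that each claim reduces to inclusions of classes of modal types. For each of the three parts I will first verify that the candidate $\lozenge$ stands in the required order with every $\modal_i$, and then verify the universal property against an arbitrary other reflective subuniverse (or modality) $\modal'$.

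For~\ref{item:mj1}, the hypothesis already says that the $\lozenge$-modal types are the intersection of the $\modal_i$-modal types. Hence $\lozenge \le \modal_i$ for each $i$, and any $\modal'$ with $\modal' \le \modal_i$ for all $i$ has its $\modal'$-modal types inside this intersection, giving $\modal' \le \lozenge$. The modality clause follows because $\Sigma$-closedness is preserved under intersection: if each $\modal_i$ is a modality and $X$ and each $P(x)$ are $\lozenge$-modal, then they are $\modal_i$-modal for every $i$, so $\sm{x:X}P(x)$ is $\modal_i$-modal for every $i$ as well, hence $\lozenge$-modal.

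For~\ref{item:mj2}, the key tools are the ``Galois-style'' characterizations from \cref{connectedtotruncated,thm:detect-right-by-fibers}: for a modality $\modal$, a type $X$ is $\modal$-modal iff the constant-functions map $X\to (A\to X)$ is an equivalence for every $\modal$-connected $A$, and dually a type $A$ is $\modal$-connected iff this map is an equivalence for every $\modal$-modal $B$. To show $\modal_i \le \lozenge$, suppose $X$ is $\modal_i$-modal; for any $\lozenge$-connected $A$ the hypothesis on $\lozenge$ gives that $A$ is $\modal_i$-connected, so $X\to (A\to X)$ is an equivalence, and thus $X$ is $\lozenge$-modal by \cref{thm:detect-right-by-fibers}. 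For minimality, given a modality $\modal'$ with $\modal' \ge \modal_i$ for every $i$, one shows first that every $\modal'$-connected $A$ is $\modal_i$-connected for each $i$ (apply \cref{connectedtotruncated} to $\modal'$ and restrict to $\modal_i$-modal $B$, which are $\modal'$-modal by hypothesis), hence $\lozenge$-connected; reversing the argument then shows that every $\lozenge$-modal $X$ is $\modal'$-modal, so $\lozenge\le \modal'$.

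Part~\ref{item:mj3} proceeds in an entirely parallel fashion, using \cref{thm:rsu-galois} in place of the connected-detection lemmas. The analogous auxiliary observation is that the relation $\modal_i \le \modal'$ entails that every map $f$ with $\modal' f$ an equivalence also has $\modal_i f$ an equivalence, because by \cref{thm:rsu-galois} the former is equivalent to $(\blank\circ f)$ being an equivalence on all $\modal'$-modal types, which includes all $\modal_i$-modal types. This plugs into the hypothesis on $\lozenge$ in the same pattern as above. The main obstacle across all three parts is not conceptual but bookkeeping: one must carefully track the direction of the partial order and the contravariance between the ``modal/local'' and ``connected/inverted-by-$\modal$'' viewpoints; once that is set up correctly, each part reduces to a short application of the characterizations already in hand.
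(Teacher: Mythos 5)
Your proof is correct and uses exactly the same key ingredients as the paper's: part~\ref{item:mj1} via determination of a (modality or) reflective subuniverse by its modal types, part~\ref{item:mj2} via \cref{connectedtotruncated} and \cref{thm:detect-right-by-fibers}, and part~\ref{item:mj3} via \cref{thm:rsu-galois}. The only difference is presentational — the paper packages parts~\ref{item:mj2} and~\ref{item:mj3} as instances of an abstract fact about Galois connections turning meets into joins on fixed points, whereas you unfold that into a direct verification of the universal property — so this is essentially the same argument.
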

\begin{proof}
  The first part of statement~\ref{item:mj1} follows from the fact that the ordering on reflective subuniverses is determined by inclusion of the universes of modal types.
  The second follows since $\Sigma$-closure of such universes is inherited by intersections.

  The other two statements are instances of a general fact about Galois connections.
  Suppose $G: \mathcal{B}^{\mathrm{op}} \leftrightarrows \mathcal{A} : H$ is a contravariant adjunction between posets, i.e.\ $G$ and $H$ are contravariant functors and $b \le G a \iff a \le H b$.
  Then $(G,H)$ restricts to a contravariant isomorphism between the posets of fixed points $\mathcal{A}^{GH}$ and $\mathcal{B}^{HG}$ for the monads $G H$ and $H G$.
  Moreover, any meets in $\mathcal{B}$ are inherited by $\mathcal{B}^{HG}$, hence also by ${(\mathcal{A}^{GH})}^{\mathrm{op}}$, i.e.\ are joins in $\mathcal{A}^{GH}$.

  In the simpler case of~\ref{item:mj2}, let $\mathcal{A}$ and $\mathcal{B}$ both be the set $\UU \to \prop$ of subtypes of the universe, let $G(\mathcal{E})$ be the set of types $A$ such that $A \to (B\to A)$ is an equivalence for all $B\in \mathcal{E}$, and likewise let $H(\mathcal{M})$ be the set of types $B$ such that $A \to (B\to A)$ is an equivalence for all $A\in \mathcal{M}$.
  Then by \cref{connectedtotruncated,thm:detect-right-by-fibers}, the $\modal$-modal types for any modality are a fixed point of $GH$, and the $\modal$-connected types are the corresponding fixed point of $HG$.
  Not every such fixed point is a modality, but it does follow that if a meet in $\mathcal{A}^{HG}$, i.e.\ an intersection of the universes of $\modal_i$-connected types, is the $\lozenge$-connected types for some modality $\lozenge$, then it is a join in the dual poset of modalities.

  Case~\ref{item:mj3} is similar, using the same $\mathcal{A}$ but taking $\mathcal{B}$ to be the set $\prd{X,Y:\UU} (X\to Y) \to \prop$ of subtypes of the type of all functions in the universe, letting $G(\mathcal{E})$ be the set of types $X$ such that $(\blank\circ f) : (B\to X) \to (A\to X)$ is an equivalence for all $f:A\to B$ in $\mathcal{E}$, and dually $H(\mathcal{M})$ the set of functions $f:A\to B$ such that $(\blank\circ f) : (B\to X) \to (A\to X)$ is an equivalence for all $X\in \mathcal{M}$.
  Then the $\modal$-modal types for any reflective subuniverse are a fixed point of $GH$, since the universal property of $\modal$ tells us that $(\blank\circ f) : (B\to X) \to (A\to X)$ is an equivalence for all modal $X$ if and only if $\modal f$ is an equivalence, and \cref{thm:rsu-galois} tells us that we can detect modal types by mapping out of such functions.
  The same argument then applies to the dual classes of $\modal$-inverted functions.
\end{proof}

When the conditions of \cref{thm:meet-join}\ref{item:mj1} hold, we say that $\lozenge$ is the \textbf{canonical meet} of the $\modal_i$'s, and dually in cases~\ref{item:mj2} and~\ref{item:mj3} we say that $\lozenge$ is their \textbf{canonical join}.
We have no reason to believe that all meets and joins in $\rsu$ and $\mdl$ are canonical, but we do not know of any that are not.

\begin{eg}\label{eg:prop-meet}
  If $P$ and $Q$ are two propositions, we claim that $\open{P\times Q}$ is the canonical meet of $\open P$ and $\open Q$.
  To prove this, note that $(P\times Q \to X) \eqvsym (P\to (Q\to X))$, and we have a commutative square
  \[
  \begin{tikzcd}
    X \ar[r] \ar[d] & P\to X \ar[d] \\ Q\to X \ar[r] & P\to (Q\to X).
  \end{tikzcd}
  \]
  If $X$ is $\open P$-modal, then the top function is an equivalence, and if $X$ is $\open Q$-modal, then the left-hand function is an equivalence, hence so is the right-hand one.
  Thus, in this case the diagonal is also an equivalence, so $X$ is $\open{P\times Q}$-modal.
  Conversely, since the unit $X\to (P\times Q \to X)$ factors through $P\to X$ and $Q\to X$, if it has a retraction then so do they; thus if $X$ is $\open{P\times Q}$-modal it is both $\open P$-modal and $\open Q$-modal.
  In other words, the operation $\open{} : \prop_\UU \to \lex$ preserves finite meets (it obviously preserves the top element).
\end{eg}

\begin{eg}\label{eg:prop-join}
  Suppose $P:A\to\prop_\UU$ is a family of propositions indexed by a type $A:\UU$, and let $Q \defeq \brck{\sm{a:A} P(a)}$.
  Then $Q$ is the join (i.e.\ disjunction) of all the $P(a)$'s in $\prop_\UU$.
  Now recall from \cref{eg:closed} that a type $X$ is $\closed Q$-modal if and only if $Q\to\iscontr(X)$, and note that
  \[ (Q \to \iscontr(X)) \eqvsym \prd{a:A} (P(a) \to \iscontr(X)). \]
  Thus, $X$ is $\closed Q$-modal if and only if it is $\closed{P(a)}$-modal for all $a:A$, and hence $\closed Q$ is the canonical meet of the $\closed{P(a)}$'s.

  We saw in \cref{eg:closed-connected} that the same condition $Q\to\iscontr(X)$ also characterizes the $\open Q$-connected types; thus $\open Q$ is the canonical join of the $\open{P(a)}$'s.
  In other words, the operation $\open{} : \prop_\UU \to \lex$ preserves joins (indexed by types in $\UU$).
\end{eg}

\begin{eg}
  The hypercompletion modality from \cref{thm:hypercompletion}, if it exists, is the canonical join $\bigvee_n \truncmod{n}$ of all the $n$-truncation modalities $\truncmod{n}$.
\end{eg}

We can construct meets in a fair amount of generality:

\begin{thm}\label{thm:meets}
  Any family ${(\modal_i)}_{i:I}$ of accessible reflective subuniverses (indexed by a type $I$ in $\UU$) has a canonical meet, which is again accessible, and is a modality or topological if each $\modal_i$ is.
\end{thm}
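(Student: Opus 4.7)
The plan is to construct the canonical meet simply by combining the presentations of the $\modal_i$'s into one big family and localizing at it. Concretely, for each $i:I$ accessibility gives us a presentation $F^i : \prd{a:A_i} B_i(a) \to C_i(a)$ (with $A_i:\UU$ and $B_i,C_i:A_i\to\UU$). Set $A \defeq \sm{i:I} A_i$, which lies in $\UU$ since $I:\UU$ and each $A_i:\UU$, and define $B,C:A\to\UU$ by $B(i,a)\defeq B_i(a)$ and $C(i,a)\defeq C_i(a)$, with $F(i,a) \defeq F^i_a$. Let $\lozenge \defeq \localization{F}$; this is accessible by construction via \cref{thm:localization_rs}.

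Next I would verify that $\lozenge$ is the canonical meet. Unfolding \cref{defn:Flocal}, a type $X$ is $F$-local iff precomposition with $F(i,a)$ is an equivalence for every $(i,a):\sm{i:I}A_i$, iff $X$ is $F^i$-local for every $i:I$, iff $X$ is $\modal_i$-modal for every $i:I$ (using that accessibility means $\modal_i$-modal types coincide with $F^i$-local ones). By \cref{thm:meet-join}\ref{item:mj1}, $\lozenge$ is therefore the canonical meet of the family $(\modal_i)_{i:I}$ in $\rsu$.

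For the modal and topological cases, I would invoke closure of the relevant presentation conditions under $\Sigma$-indexing. If each $\modal_i$ is a modality, then by \cref{thm:acc-modal} we may choose each presentation to be a nullification, i.e.\ with $C_i(a)=\unit$; then $C(i,a)=\unit$ as well, so by \cref{thm:nullification_modality} the combined localization $\lozenge$ is a nullification and hence a modality. (This also matches the second half of \cref{thm:meet-join}\ref{item:mj1}.) Similarly, if each $\modal_i$ is topological, then we may choose each presentation so that $C_i(a)=\unit$ and each $B_i(a)$ is a mere proposition; the combined family $B$ is then also a family of mere propositions indexed by a type in $\UU$, so $\lozenge$ is topological by definition.

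The main potential obstacle is the ability to \emph{choose} the presentations $F^i$ uniformly in $i$, and to choose them with the auxiliary property ($C_i(a)=\unit$, or $B_i(a)$ propositional) when available. This is not an issue if accessibility is understood as coming equipped with a presentation as structure, which is how \cref{defn:accessible} is formulated (``a presentation of $\modal$ consists of\dots''), so I would take $(\modal_i)_{i:I}$ to mean a family equipped with presentations. Otherwise one would either need a choice principle, or to observe that the higher inductive type $\localization{F}$ depends only on the merely-specified data up to equivalence and the assertion being proved is a mere proposition so propositional truncation suffices.
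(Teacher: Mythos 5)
Your proof is correct and follows essentially the same route as the paper: uncurry the given family of presentations to a single family indexed by $\sm{i:I}A_i$, localize, and observe that $F$-locality is the conjunction of the $F^i$-localities, with the modality and topological cases handled by choosing presentations with $C_i(a)=\unit$ (and $B_i(a)$ a proposition). Your closing remark about needing the presentations as given structure matches the paper's own reading of ``family of accessible reflective subuniverses.''
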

\begin{proof}
  By a ``family of accessible reflective subuniverses'' we mean that we have a family of generating families $F : \prd{i:I}\prd{a:A_i} B_i(a) \to C_i(a)$.
  Uncurrying $F$, we obtain a family $F : \prd{(i,a):\sm{i:I} A_i} B_i(a) \to C_i(a)$ indexed by $A \defeq \sm{i:I} A_i$, such that a type is $F$-local if and only if it is $F_i$-local for all $i$.
  Thus, $\localization{F}$ is the canonical meet.
  In the topological case we can take the $F_i$ to be topological generators with each $C_i(a)=\unit$ and each $B_i(a)$ a proposition, so that $F$ is also a topological generator.
\end{proof}

Thus, the posets $\tpl$, $\accmdl$, and $\accrsu$ have meets indexed by any type in $\UU$.
Using the result of~\cite{abfj:lexloc}, as in \cref{thm:nontop-lex}, we can show that $\acclex$ likewise has canonical meets.

However, these posets are not ``complete lattices'' as usually understood, since in general they are themselves large (i.e.\ not types in $\UU$), so we cannot use the usual argument to construct arbitrary joins from arbitrary meets.

There are also some cases in which we can identify the modal operator of a meet more explicitly:

\begin{thm}\label{thm:meets2}
  Let $\modal$ and $\lozenge$ be reflective subuniverses, and assume that $\modal$ preserves $\lozenge$-modal types.
  Then $\modal$ and $\lozenge$ have a canonical meet in $\rsu$, which is a modality, accessible, lex, or topological if $\modal$ and $\lozenge$ are.
\end{thm}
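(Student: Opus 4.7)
The plan is to define the modal operator of the meet as the composite $\triangle X \defeq \modal(\lozenge X)$ with unit $\modalunit^\triangle_X \defeq \modalunit^\modal_{\lozenge X} \circ \modalunit^\lozenge_X$, and then verify that this has the properties required by \cref{thm:meet-join}\ref{item:mj1}, after which each of the additional claims follows either from a result proved earlier or from a small manipulation of presentations.

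First I would show that $\triangle X$ is both $\modal$-modal and $\lozenge$-modal. It is $\modal$-modal because it lies in the image of $\modal$, and it is $\lozenge$-modal because $\lozenge X$ is $\lozenge$-modal and by hypothesis $\modal$ preserves $\lozenge$-modal types. Next I would verify the universal property: given a map $f: X \to Y$ with $Y$ both $\modal$- and $\lozenge$-modal, one first extends $f$ uniquely along $\modalunit^{\lozenge}_X$ to $\bar{f}:\lozenge X \to Y$ using the $\lozenge$-modality of $Y$, and then extends $\bar{f}$ uniquely along $\modalunit^{\modal}_{\lozenge X}$ to $\tilde{f}:\modal(\lozenge X)\to Y$ using the $\modal$-modality of $Y$. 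Composing these two universal properties shows that $(-\circ \modalunit^\triangle_X):(\triangle X \to Y)\to (X\to Y)$ is an equivalence whenever $Y$ is meet-modal. Together with the previous step this exhibits $\triangle$ as a reflective subuniverse whose modal types are exactly the intersection; so by \cref{thm:meet-join}\ref{item:mj1} it is the canonical meet in $\rsu$, and if both $\modal$ and $\lozenge$ are modalities then so is $\triangle$ (intersections of $\Sigma$-closed subuniverses are $\Sigma$-closed, again by \cref{thm:meet-join}\ref{item:mj1}).

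For accessibility, suppose $\modal$ is presented by $F:\prd{a:A}B(a)\to C(a)$ and $\lozenge$ by $G:\prd{a':A'}B'(a')\to C'(a')$. I would assemble the coproduct family $H:\prd{c:A+A'}\hat B(c)\to\hat C(c)$, defined on $\inl(a)$ by $F_a$ and on $\inr(a')$ by $G_{a'}$. A type is $H$-local iff it is both $F$-local and $G$-local, hence iff it is meet-modal, so $H$ presents $\triangle$. In the topological case each $\hat C(c)$ is $\unit$ and each $\hat B(c)$ is a mere proposition, so $H$ is a topological presentation of $\triangle$. For lex, recall from \cref{thm:lex-modalities}\ref{item:mu3} that a modality is lex iff its modal operator preserves pullbacks. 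Since $\triangle X \eqvsym \modal(\lozenge X)$ naturally in $X$ and both $\modal$ and $\lozenge$ preserve pullbacks, so does their composite:
\begin{equation*}
\triangle(A\times_C B) \eqvsym \modal\bigl(\lozenge(A\times_C B)\bigr) \eqvsym \modal\bigl(\lozenge A \times_{\lozenge C} \lozenge B\bigr) \eqvsym \triangle A \times_{\triangle C} \triangle B.
\end{equation*}
Hence $\triangle$ is lex whenever $\modal$ and $\lozenge$ are.

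There is no single hard step; the theorem really is an organized corollary of the hypothesis together with \cref{thm:meet-join} and the basic theory. The only subtle point is making explicit the equivalence $\triangle X \eqvsym \modal(\lozenge X)$ (which follows because $\modal(\lozenge X)$ inhabits the meet-subuniverse and has the required universal property), since this is what lets the lex case be reduced to preservation of pullbacks under composition rather than having to argue about $\triangle$ abstractly.
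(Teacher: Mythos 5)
Your proposal is correct and follows essentially the same route as the paper: the composite $\modal\circ\lozenge$ with the composite unit is shown to be the reflector onto the intersection of the two subuniverses, the modality case is handled via \cref{thm:meet-join}, accessibility and topologicality via the coproduct-of-presentations construction (which is exactly \cref{thm:meets}, cited by the paper at this point), and lexness via closure of pullback-preserving functors under composition. The only difference is that you spell out details the paper leaves implicit.
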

\begin{proof}
  If $Y$ is both $\modal$-modal and $\lozenge$-modal, for any $X$ we have
  \[ (X\to Y) \eqvsym (\lozenge X \to Y) \eqvsym (\modal \lozenge X \to Y) \]
  and $\modal \lozenge X$ is both $\modal$-modal and $\lozenge$-modal.
  Thus, the composite $\modal\circ\lozenge : \UU\to\UU$ is the modal operator for a canonical meet of $\modal$ and $\lozenge$.
  Preservation of modalities follows from \cref{thm:meet-join}, preservation of accessibility and topologicality follows from \cref{thm:meets} (using the different construction given there), while if $\modal$ and $\lozenge$ are both lex then so is their composite $\modal\circ \lozenge$.
\end{proof}

\begin{eg}
  By \cref{modaln-truncated}, if $\modal$ is lex then it preserves $n$-types.
  Thus the composite $\modal\circ\truncmod{n}$ is the meet $\modal\land\truncmod{n}$ of $\modal$ and the $n$-truncation modality $\truncmod{n}$.
\end{eg}

\begin{eg}\label{eg:strongly-disjoint}
  If every $\lozenge$-modal type is $\modal$-connected, then $\modal$ preserves $\lozenge$-modal types since it takes them all to $\unit$.
  Thus, the composite $\modal\lozenge$, which is the bottom element of $\rsu$, is also the meet $\modal\land\lozenge$.
  In this case we say that \textbf{$\modal$ is strongly disjoint from $\lozenge$} (note that this is an asymmetric relation).
  We will study this case further in \cref{sec:fracture}.
\end{eg}

\begin{eg}
  One special case in which \cref{thm:meets2} applies is if $\modal \lozenge \simeq \lozenge\modal$, since in that case for $\lozenge$-modal $X$ we have $\modal X \simeq \modal \lozenge X \simeq \lozenge \modal X$, so that $\modal X$ is also $\lozenge$-modal.
  For instance, \cref{eg:prop-meet} is an instance of this, since $(P\to (Q\to X)) \simeq (Q\to (P\to X))$.
  So is the binary case of \cref{eg:prop-join}, since join is associative and commutative: $P \ast (Q\ast X) \simeq (P \ast Q) \ast X \simeq (Q \ast P) \ast X \simeq Q \ast (P \ast X)$.
\end{eg}

\subsection{Lawvere-Tierney operators}%
\label{sec:ltop}

For any modality $\lozenge$, the slice poset $\rsu/\lozenge$ consists of the reflective subuniverses contained in $\UU_{\lozenge}$.
In other words, we have
\[ \rsu/\lozenge \eqvsym \rsu[\UU_{\lozenge}]. \]
Composing this with the universal property of meets, we obtain a partial adjunction
\[
\begin{tikzcd}
  \rsu \ar[r,phantom,"\scriptstyle\top"] \ar[r,dashed,bend left,"\blank\land\lozenge"] & \rsu/\lozenge \ar[l,bend left] \ar[r,equals] & \rsu[\UU_{\lozenge}]
\end{tikzcd}
\]
in which the right adjoint $\blank\land\lozenge$ is only known to be defined under the restrictions in \cref{thm:meets}.

One situation in which this is automatic is when $\lozenge$ is $\truncmod{-1}$, since every reflective subuniverse preserves mere propositions.
Thus we have a totally defined adjunction
\begin{equation}
\begin{tikzcd}
  \rsu \ar[r,phantom,"\scriptstyle\top"] \ar[r,bend left,"\blank\land\truncmod{-1}"] & \rsu/\truncmod{-1} \ar[l,bend left] \ar[r,equals] & \rsu[\prop].
\end{tikzcd}%
\label{eq:meet-brck}
\end{equation}
A reflective subuniverse of $\prop$, or more generally any universe $\Omega$ of mere propositions, is known as a \define{Lawvere-Tierney operator} or \define{local operator}.
It can equivalently be defined as a map $j:\Omega\to\Omega$ which is idempotent and preserves finite meets (including the top element):
\[ j(\top)=\top \qquad j(j(P)) = j(P) \qquad j(P\land Q) = j(P) \land j(Q). \]
This is equivalent to $j$ being order-preserving, inflationary, and idempotent:
\[ (P\to Q) \Rightarrow (j(P) \to j(Q)) \qquad P\to j(P) \qquad j(j(P)) = j(P) \]
and also to its being a monad on the poset $\Omega$.

In particular, such a monad automatically preserves meets, for the same reason that any modality preserves products; but since $\Omega$ is a poset, this makes it automatically left exact.
Moreover, we have:

\begin{lemma}\label{thm:rsu-prop}
  Every reflective subuniverse of a universe $\Omega$ of mere propositions is a lex modality.
\end{lemma}
\begin{proof}
  If $P=j(P)$ and $Q:P\to \Omega$ is such that $Q(x)=j(Q(x))$ for any $x:P$, then the projection $\proj1:(\sm{x:P} Q(x))\to P$ induces a map $j(\sm{x:P} Q(x))\to j(P) = P$.
  But as soon as we have $p:P$ then $(\sm{x:P} Q(x))\simeq Q(p)$ and so $j(\sm{x:P} Q(x)) \to j(Q(p)) = Q(p)$, hence $j(\sm{x:P} Q(x))\to (\sm{x:P} Q(x))$.
  Thus it is $\Sigma$-closed, hence a modality, and hence (as observed above) a lex modality.
\end{proof}

In other words, when $\Omega$ is a universe of mere propositions, we have
\[ \rsu[\Omega] = \mdl[\Omega] = \lex[\Omega]. \]

In general, the equivalence $\rsu/\lozenge \eqvsym \rsu[\UU_{\lozenge}]$ preserves $\Sigma$-closedness, since it preserves the modal types.
Thus the reflective subuniverse on $\UU$ corresponding to a Lawvere-Tierney operator $j$, which is defined by $A\mapsto j\brck{A}$, is always a modality.
However, it is not lex; in particular, $\truncmod{-1}$ itself is not lex.

A somewhat similar situation is when we have two universes $\UU:\UU'$.
Let $\rsu[\UU'/\UU]$ be the poset of pairs of reflective subuniverses $\modal'$ and $\modal$ on the universes $\UU'$ and $\UU$, respectively, such that a type in $\UU$ is $\modal$-modal if and only if it is $\modal'$-modal, and moreover for any $X:\UU$ the induced map $\modal' X \to \modal X$ is an equivalence.
There is an evident restriction functor $\rsu[\UU'/\UU] \to \rsu[\UU]$, and similarly for the other posets.

\begin{thm}\label{thm:acc-extend-adjt}
  The following functors have fully faithful right adjoints:
  \begin{mathpar}
    % \rsu[\UU'/\UU] \to \rsu[\UU] \and
    \accrsu[\UU'/\UU] \to \accrsu[\UU]\and
    % \mdl[\UU'/\UU] \to \mdl[\UU] \and
    \accmdl[\UU'/\UU] \to \accmdl[\UU]\and
    \tpl[\UU'/\UU] \to \tpl[\UU]
  \end{mathpar}
\end{thm}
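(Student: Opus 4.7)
The plan is to take the right adjoint $R$ to be the canonical accessible extension functor $\modal \mapsto (\modal',\modal)$ of \cref{thm:acc-extend}, where $\modal'$ is obtained by localizing $\UU'$ at a presentation of $\modal$. Items (ii) and (iii) of that lemma (the modal types on $\UU$ coincide, and $\modal' X \to \modal X$ is an equivalence for $X:\UU$) say precisely that $R$ lands in $\accrsu[\UU'/\UU]$. Fully faithfulness is then immediate from the fact that $L \circ R = \id$ on objects, since in the poset setting a right adjoint $R$ is fully faithful precisely when $LR$ is the identity on the codomain.

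The main work is to verify the adjunction property: for any $(\lozenge',\lozenge)$ in $\accrsu[\UU'/\UU]$ and $\modal$ in $\accrsu[\UU]$, we must show $\lozenge \le \modal$ iff $\lozenge' \le \modal'$. The backward implication is easy, since restriction preserves order and the modal types on $\UU$ coincide with those of the extensions. The forward direction is the main content, and I would prove it by two applications of \cref{thm:acc-extend}(iv). Assuming $\lozenge \le \modal$, let $X:\UU'$ be $\lozenge'$-modal; I want $X$ to be $\modal'$-modal. By (iv) it suffices to show that $(\blank\circ f)$ is an equivalence on $X$ for every $f:A\to B$ in $\UU$ such that $\modal f$ is an equivalence. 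But any such $f$ is also inverted by $\lozenge$: precomposition with $f$ is an equivalence on every $\modal$-modal type (this is the universal property of $\modal$), hence also on every $\lozenge$-modal type since $\UU_\lozenge \subseteq \UU_\modal$, and this locality condition characterizes $\lozenge$-inversion. Now (iv) applied to $\lozenge'$, together with $\lozenge'$-modality of $X$, yields the required equivalence.

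For the modality and topological versions, the same argument works once we verify that $R$ restricts appropriately. Preservation of modalities is part (i) of \cref{thm:acc-extend}. For topologicality, if $\modal$ is topological then it admits a presentation by a family of propositions $B:A\to \prop_\UU$, and the canonical extension $\modal'$ computed from the same family in $\UU'$ is again topological; independence of the choice of presentation is part (v) of \cref{thm:acc-extend}. The hard part of the whole argument is really the forward direction of the adjunction, where having the ``$\UU$-small'' criterion in \cref{thm:acc-extend}(iv) for $\modal'$-modality is essential, since without such an intrinsic characterization there would be no direct way to compare locality conditions in the two universes.
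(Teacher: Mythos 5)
Your proposal is correct and follows essentially the same route as the paper: the right adjoint is the canonical accessible extension of \cref{thm:acc-extend}, full faithfulness comes from $LR=\mathrm{id}$, and the substantive adjunction check reduces to the characterization of $\modal'$-modal types via maps in $\UU$ inverted by $\modal$ (\cref{thm:acc-extend}(iv)); the paper merely packages this as functoriality of $R$ plus the unit inequality $\lozenge'\le\lozenge''$ rather than verifying the biconditional directly. One small caveat: when you ``apply (iv) to $\lozenge'$,'' note that (iv) is stated only for the canonical extension, but the direction you actually need ($X$ is $\lozenge'$-modal and $\lozenge' f$ is an equivalence, hence $(\blank\circ f)$ is an equivalence on $X$) holds for any reflective subuniverse by \cref{thm:rsu-galois}, so the argument stands.
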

\begin{proof}
  Given an accessible reflective subuniverse $\modal$ on $\UU$, we define $\modal'$ to be its canonical accessible extension to $\UU'$.
  As shown in \cref{thm:acc-extend}, this is a modality or topological if $\modal$ is, and it restricts to $\modal$ on $\UU$, so that $(\modal',\modal) : \accrsu[\UU'/\UU]$.

  We also need to show that this operation is functorial on $\rsu[\UU]$.
  If $\modal_1 \le \modal_2$, so that every $\modal_1$-modal type is $\modal_2$-modal, then the functor $\modal_1$ factors through the functor $\modal_2$, so that if $\modal_2 f$ is an equivalence then so is $\modal_1 f$.
  Therefore, by \cref{thm:acc-extend}\ref{item:ae3} every $\modal_1'$-modal type is $\modal_2'$-modal.

  The restriction of $(\modal',\modal)$ to $\UU$ is certainly $\modal$, so to have an adjunction it remains to show that for any $(\modal',\modal):\rsu[\UU'/\UU]$, the reflective subuniverse $\modal'$ is contained in the canonical accessible extension of $\modal$ to $\UU'$.
  But since $\modal'$ restricts to $\modal$ on $\UU$, it also inverts every map in $\UU$ inverted by $\modal$, so this follows from \cref{thm:acc-extend}\ref{item:ae3}.
\end{proof}

Using the result of~\cite{abfj:lexloc}, we can construct a similar adjoint to $\acclex[\UU'/\UU] \to \acclex[\UU]$ as in \cref{thm:nontop-lex}.

In general, we also do not know how to do without accessibility; the obvious thing to do is localize $\UU'$ at the class of \emph{all} maps in $\UU$ inverted by $\modal$, but as noted in \cref{rmk:extend-oops} there seems no reason why the resulting $\modal'$ would agree with $\modal$ on $\UU$.
However, there is one case in which this does work.

\begin{thm}\label{thm:tpl-extend}
  If propositional resizing holds for $\UU$, so that there is a universe $\Omega$ of mere propositions such that $\Omega:\UU$ and every mere proposition in $\UU$ is equivalent to one in $\Omega$, then the restriction functor
  \begin{equation}
    \rsu[\UU] \to \rsu[\Omega]\label{eq:restr}
  \end{equation}
  has a right adjoint $\shmod{}$, which lands inside $\tpl[\UU/\Omega]$ and induces an equivalence
  \[ \tpl[\UU] \eqvsym \rsu[\Omega]. \]
\end{thm}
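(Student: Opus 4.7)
The plan is to construct $\shmod{j}$ for $j:\rsu[\Omega]$ as the nullification of $\UU$ at the family of $j$-connected propositions, then deduce the adjunction and the equivalence from a single key lemma identifying $\shmod{j}$-modal propositions in $\Omega$ with $j$-modal ones.

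First I would define the family $B_j \defeq \proj1 : \big(\sm{P:\Omega} j(P)\big) \to \Omega \to \UU$, which is indexed by a type in $\UU$ since $\Omega:\UU$, and which enumerates the $j$-connected propositions (those with $j(P)$ inhabited, and hence contractible since $j(P)\in\Omega$ is a proposition). By \cref{thm:rsu-prop}, $j$ is a lex modality on $\Omega$. Let $\shmod{j}$ be the $B_j$-nullification on $\UU$; this is topological by construction and hence lex by \cref{thm:prop-loc-lex}.

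The key technical step is the following lemma: for $P:\Omega$, the proposition $P$ is $\shmod{j}$-modal iff $P$ is $j$-modal. The ($\Leftarrow$) direction is the universal property of $j$: if $P$ is $j$-modal and $Q$ is $j$-connected, then $(Q\to P) \eqvsym (j(Q)\to P) \eqvsym (\unit\to P) \eqvsym P$, so $P$ is null at $Q$. For ($\Rightarrow$), assume $P$ is null at every $j$-connected proposition; to derive $P$ from a given $q:j(P)$, note that by \cref{thm:ssrs-characterize} the unit $P\to j(P)$ is $j$-connected, and since $j(P)$ is a mere proposition its fiber over $q$ is equivalent to $P$ itself, so $P$ is $j$-connected. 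The nullity hypothesis applied to $P$ itself then yields $(P\to P)\eqvsym P$, and $\idfunc[P]$ on the left produces the required element of $P$.

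Given the lemma, the adjunction $\modal\le\shmod{j} \iff \modal|_\Omega \le j$ is straightforward. For ($\Rightarrow$), any $\modal$-modal $P:\Omega$ is $\shmod j$-modal hence $j$-modal. For ($\Leftarrow$), assuming $\modal|_\Omega\le j$, I first show every $j$-connected $Q$ is $\modal$-connected: $\modal Q$ is a proposition (\cref{lem:modal-pres-prop}) and $\modal$-modal, hence $j$-modal by hypothesis, so the composite $j(Q)\Rightarrow j(\modal Q)\Rightarrow \modal Q$ gives $\modal Q$. Then for any $\modal$-modal $X$, one has $(Q\to X)\eqvsym (\modal Q\to X)\eqvsym X$, so $X$ is $B_j$-null and thus $\shmod{j}$-modal.

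Finally, for the equivalence $\tpl[\UU]\eqvsym\rsu[\Omega]$, the lemma combined with \cref{thm:subuniverse-rs} yields $(\shmod{j})|_\Omega = j$ on the nose. Conversely, given $\modal\in\tpl[\UU]$, the previous paragraph shows that every $\modal$-modal type is null at every $\modal$-connected proposition in $\UU$, so $\modal$ coincides with nullification at its class of $\modal$-connected propositions; by propositional resizing these are, up to equivalence, the $\modal|_\Omega$-connected propositions in $\Omega$, so $\shmod{\modal|_\Omega}=\modal$. The two assignments are manifestly order-preserving, so they exhibit the required equivalence of posets. The main obstacle will be the key lemma, and in particular its reliance on \cref{thm:ssrs-characterize} to ensure that fibers of the $j$-unit realize $P$ as $j$-connected; the remainder is bookkeeping about how propositional resizing mediates between propositions in $\UU$ and those in $\Omega$.
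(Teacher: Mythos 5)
Your proposal is correct and follows essentially the same route as the paper: define $\shmod{j}$ as nullification at the $j$-dense propositions, check that its restriction to $\Omega$ recovers $j$ (your key lemma is the paper's observation that $j(P)\eqvsym\shmod{j}(P)$), and recover a topological $\modal$ from $j_\modal$ because its generating propositions resize into the $j_\modal$-dense family. The only difference is cosmetic — you verify the adjunction inequalities directly where the paper defers to the argument of \cref{thm:acc-extend-adjt}, and your detour through $\modal$-connectedness of the unit in the lemma's forward direction is unnecessary since $q:j(P)$ already makes $P$ an index of the nullifying family.
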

\begin{proof}
  The restriction functor is defined on all of $\rsu[\UU]$ since any modal operator preserves mere propositions.
  Now given a reflective subuniverse of $\Omega$, i.e.\ a Lawvere-Tierney operator $j:\Omega\to\Omega$, we define $\shmod{j}$ to be the nullification of $\UU$ at all $j$-connected propositions (which are also called \textbf{$j$-dense}).
  Because any modality preserves mere propositions, if $P:\Omega$ then $\shmod{j}(P)$ is again a mere proposition, hence equivalent to some type in $\Omega$.
  Thus the universal properties of $j$ and $\shmod{j}$ do coincide for mapping into types in $\Omega$, so that $j(P) \eqvsym \shmod{j}(P)$.
  The rest of \cref{thm:acc-extend,thm:acc-extend-adjt} goes through without difficulty.

  Of course $\shmod{j}$ is topological by definition.
  Moreover, if $\modal$ is any topological modality on $\UU$, its generating family is equivalent to one lying in $\Omega$, hence contained in the family of all $j_\modal$-dense propositions (where $j_\modal$ is the restriction of $\modal$ to $\Omega$).
  Thus $\modal = \shmod{j_\modal}$, giving the stated equivalence.
\end{proof}

Note that the \emph{left} adjoint~\eqref{eq:restr} coincides with the \emph{right} adjoint in~\eqref{eq:meet-brck}.
That is, assuming propositional resizing, the forgetful operation $\rsu[\UU] \to \rsu[\Omega]$ has both adjoints: its left adjoint sends $j$ to $j\circ \truncf{-1}$, while its right adjoint is $\shmod{j}$.
The $\shmod j$-modal types are also called \textbf{$j$-sheaves}, with $\shmod j$ being \textbf{$j$-sheafification}.
(We remarked above that the $j$-connected propositions are called \textbf{$j$-dense}; the $j$-modal propositions are called \textbf{$j$-closed}.)

\begin{eg}
  For a proposition $P$, the \textbf{open Lawvere-Tierney operator} is defined by $o_P(Q) = P\Rightarrow Q$.
  This is the restriction to $\Omega$ of the open modality $\open P$, which is topological; hence $\shmod{o_P} = \open P$.
\end{eg}

\begin{eg}
  For a proposition $P$, the \textbf{closed Lawvere-Tierney operator} is defined by $c_P(Q) = P\lor Q$.
  Since $P \lor Q$ is equivalently the join $P \ast Q$ (see~\cite[Lemma 2.4]{joinconstruction}), this is the restriction to $\Omega$ of the closed modality $\closed P$, which is topological; hence $\shmod{c_P} = \closed P$.
\end{eg}

\begin{eg}\label{eg:dnsheaves}
  If $j = \neg\neg$ is the double negation operator, then by the usual arguments, the lattice of $\neg\neg$-closed elements of $\Omega$ is a Boolean algebra.
  Thus, the logic of the subtopos determined by $\shmod{\neg\neg}$ is Boolean.
  The $\shmod{\neg\neg}$-modal types are called \textbf{double-negation sheaves}.
\end{eg}

For a general reflective subuniverse $\modal$, the sheafification modality $\shmod{j_\modal}$ is far from equivalent to $\modal$.
We showed in \cref{thm:tpl-extend} that this is the case if $\modal$ is topological.
In classical 1-topos theory every lex modality is topological; in higher topos theory this is not the case, and $\modal$ can disagree with $\shmod{j_\modal}$ even when $\modal$ is lex, but at least we can say the following.

\begin{thm}\label{thm:lex-tpl}
  Assuming propositional resizing, the map $\shmod{j_\modal} A \to \modal A$ is an equivalence whenever $A$ is an $n$-type with $n<\infty$.
\end{thm}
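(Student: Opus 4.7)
The plan is to identify $\shmod{j_\modal}$ explicitly and then apply \cref{thm:lex-ntypes-prop} to its image. By \cref{thm:tpl-extend}, $\shmod{j_\modal}$ is nullification at the $j_\modal$-dense propositions, that is, at those mere propositions $P$ for which $\modal P$ is contractible, equivalently the $\modal$-connected propositions. I will write $\modal' \defeq \shmod{j_\modal}$ throughout.

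First I would observe that $\modal \le \modal'$, so the stated map $\modal' A \to \modal A$ exists. Indeed, if $X$ is $\modal$-modal and $P$ is a $\modal$-connected proposition, then the map $P\to\unit$ is $\modal$-connected, so by \cref{prop:nconnected_tested_by_lv_n_dependent types} (or directly by orthogonality) the precomposition $(\unit\to X)\to (P\to X)$ is an equivalence, i.e.\ $X$ is $P$-null; hence every $\modal$-modal type is $\modal'$-modal. In particular $\modal A$ is $\modal'$-modal, and the unit $A \to \modal A$ factors uniquely through $\modal' A$ to give the canonical comparison $\varphi : \modal' A \to \modal A$.

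The main step is to show that, conversely, $\modal' A$ is $\modal$-modal whenever $A$ is an $n$-type with $n<\infty$. Since $\modal'$ is topological by construction, it is lex by \cref{thm:prop-loc-lex}, and therefore preserves $n$-types by \cref{modaln-truncated}. Hence $\modal' A$ is an $n$-type. Now I apply \cref{thm:lex-ntypes-prop} to the lex modality $\modal$ and the $n$-type $\modal' A$: to conclude that $\modal' A$ is $\modal$-modal it suffices to verify that $\modal' A$ is $P$-null for every $\modal$-connected mere proposition $P$, which is exactly the defining property of $\modal' A$.

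Once $\modal' A$ is known to be $\modal$-modal, the $\modal$-universal property of $\modal A$ yields an extension $\psi : \modal A \to \modal' A$ of the unit $A \to \modal' A$. Both composites $\psi\circ\varphi$ and $\varphi\circ\psi$ are then extensions of the respective units $A\to\modal' A$ and $A\to\modal A$ into modal types, so by uniqueness of extensions (reflectivity) they agree with the identity; thus $\varphi$ is an equivalence. The only nontrivial input is the application of \cref{thm:lex-ntypes-prop}, which is where the hypothesis $n<\infty$ is essential; everything else is a clean chase through universal properties.
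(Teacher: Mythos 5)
Your proof is correct and follows essentially the same route as the paper's (much terser) argument: the key step in both is \cref{thm:lex-ntypes-prop}, which identifies the $\modal$-modal $n$-types with the types null at all $\modal$-connected propositions, i.e.\ with the $\shmod{j_\modal}$-modal $n$-types. Your version usefully makes explicit the details the paper leaves implicit, namely that $\shmod{j_\modal}$ preserves $n$-types (being topological, hence lex) and the final universal-property chase identifying the two reflections.
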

\begin{proof}
  By \cref{thm:lex-ntypes-prop}, $A$ is $\modal$-modal if and only if it is $P$-null for any $\modal$-connected mere proposition $P$.
  But the latter condition exactly characterizes the $\shmod{j_\modal}$-modal types.
\end{proof}

At the other extreme, if $\modal$ is cotopological, then $\shmod{j_\modal}$ is the trivial modality.
For a general lex $\modal$, the restriction of $\modal$ to $\shmod{j_\modal}$ is cotopological, in the sense that any $\modal$-connected $\shmod{j_\modal}$-modal mere proposition is contractible.
That is, any lex modality ``decomposes'' into a topological part and a cotopological part, as in~\cite[Proposition 6.5.2.19]{lurie2009higher}.

\cref{thm:tpl-extend} also supplies additional structure on $\tpl$; the following proof is that of~\cite{wilson:frames}, as reproduced in~\cite[C1.1.15]{johnstone:elephant}.

\begin{corollary}
  Assuming propositional resizing, $\tpl$ is a coframe, i.e.\ a complete lattice in which finite joins distribute over arbitrary meets.
\end{corollary}
\begin{proof}
  Since $\tpl$ has canonical meets, the corresponding meets in $\rsu[\Omega]$ are also canonical, i.e.\ given by taking intersections of the sets of $j$-closed propositions.
  On the other hand, the ordering on modalities in $\Omega$ is the reverse of the pointwise ordering on Lawvere-Tierney operators $j:\Omega\to\Omega$, and any pointwise meet of Lawvere-Tierney operators is again a Lawvere-Tierney operator.

  Now suppose $j$ and ${(k_i)}_{i:I}$ are Lawvere-Tierney operators, and suppose $P$ is a $\bigwedge_i (j\lor k_i)$-closed proposition
  This means that $P$ is $(j\lor k_i)$-closed for each $i$, so that we have $P = j(P) \land k_i(P)$.
  Now
  \[ (j(P) \to P) =
  (j(P) \to j(P) \land k_i(P)) =
  (j(P) \to k_i(P)).
  \]
  Hence $j(P)\to P$ is $k_i$-closed for every $i$, so it is $\bigwedge_i k_i$-closed.
  Taking $Q \defeq j(P)$ and $R\defeq (j(P) \to P)$, and writing $k\defeq \bigwedge_i k_i$, we have
  \begin{multline*}
    (j\lor k)(Q\land R) = j(Q\land R) \land k(Q\land R) = j(Q)\land j(R) \land k(Q)\land k(R)\\
    = Q \land k(Q) \land R \land j(R) = Q\land R
  \end{multline*}
  so that $Q\land R$ is $(j\lor \bigwedge_i k_i)$-closed.
  But $Q\land R = (j(P) \land (j(P)\to P)) = P$.
\end{proof}

However, there seems no particular reason for the inclusions $\tpl \to \lex$ or $\tpl \to \mdl$ to preserve joins, and joins in $\lex$ and $\mdl$ in general seem difficult to construct.
In the next section we will consider one situation in which such joins can be constructed explicitly.

\subsection{A fracture and gluing theorem}%
\label{sec:fracture}

We end the paper by proving a general ``fracture and gluing'' theorem for a pair of modalities (\cref{thm:fracture-gluing}), which has as a special case the ``Artin gluing'' of a complementary closed and open subtopos.

\begin{defn}
  Let $\modal$ and $\lozenge$ be two modalities on a universe $\UU$.
  A $(\lozenge,\modal)$-\textbf{fracture square} consists of the following.
  \begin{itemize}
  \item An arbitrary type $A:\UU$.
  \item A $\modal$-modal type $B:\UU_\modal$.
  \item A $\lozenge$-modal type $C:\UU_\lozenge$.
  \item Functions $f:A\to B$ and $l:A\to C$ and $g:C\to \lozenge B$.
  \item A commutative square
    \[
    \begin{tikzcd}
      A \ar[r,"f"] \ar[d,"l"'] & B \ar[d,"{\modalunit^\lozenge_B}"] \\
      C \ar[r,"g"'] & \lozenge B.
    \end{tikzcd}
    \]
  \end{itemize}
  For any type $A$, the \textbf{canonical fracture square} associated to $A$ is the naturality square for $\modalunit^\lozenge$ at $\modalunit^\modal_A$:
  \begin{equation}
    \begin{tikzcd}
      A \ar[r,"\modalunit^\modal_A"] \ar[d,"\modalunit^\lozenge_A"'] & \modal A \ar[d,"{\modalunit^\lozenge_{\modal A}}"] \\
      \lozenge A \ar[r,"{\lozenge \modalunit^\modal_A}"'] & \lozenge \modal A.
    \end{tikzcd}%
    \label{eq:canonical-fracture}
  \end{equation}
  Given an arbitrary fracture square, we say it is \textbf{canonical} if it is equal to a canonical one in the type of fracture squares.
\end{defn}

\begin{lemma}\label{thm:canonical-fracture}
  A fracture square is canonical if and only if $f$ is $\modal$-connected and $l$ is $\lozenge$-connected.
\end{lemma}
\begin{proof}
  ``Only if'' is clear, so suppose $f$ is $\modal$-connected and $l$ is $\lozenge$-connected.
  Then by \cref{lem:reflective_uniqueness}, we have $(B,f) = (\modal A,\modalunit^\modal_A)$ and $(C,l) = (\lozenge A,\modalunit^\lozenge_A)$.
  And modulo these equivalences, $g$ and the commutative square are a factorization of $\modalunit^\lozenge_{\modal A} \circ \modalunit^\modal_A$ through $\modalunit^\lozenge_A$, hence inhabit a contractible type of which~\eqref{eq:canonical-fracture} is another element.
\end{proof}

\begin{thm}\label{thm:fracture}
  If $\lozenge$ is lex, then the canonical fracture square associated to $A$ is a pullback square if and only if $\modalunit^\modal_A$ is $\lozenge$-modal.
\end{thm}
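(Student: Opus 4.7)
The plan is to reduce the theorem to the characterization of lex modalities in \cref{thm:lex-modalities}\ref{item:mu3d}, which states that for a lex modality $\lozenge$, any commutative square with $\lozenge$-connected vertical maps and $\lozenge$-modal horizontal maps is a pullback. First I will observe that in the canonical fracture square \eqref{eq:canonical-fracture} the two vertical maps $\modalunit^\lozenge_A$ and $\modalunit^\lozenge_{\modal A}$ are $\lozenge$-connected by \cref{thm:ssrs-characterize}\ref{item:sc3}. I will also check that the bottom horizontal map $\lozenge \modalunit^\modal_A$ is always $\lozenge$-modal, as a function between the two $\lozenge$-modal types $\lozenge A$ and $\lozenge \modal A$: its fibers are $\Sigma$-types of identity types in $\lozenge \modal A$ indexed over $\lozenge A$, and the $\lozenge$-modal types are closed under both $\Sigma$-types (since $\lozenge$ is a modality) and identity types (\cref{lem:rs_idstable}).

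With these preparations, the forward direction is immediate. If $\modalunit^\modal_A$ is assumed $\lozenge$-modal, then both horizontals are $\lozenge$-modal and both verticals are $\lozenge$-connected, so \cref{thm:lex-modalities}\ref{item:mu3d} applied to this square (using that $\lozenge$ is lex) gives that it is a pullback.

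For the converse, suppose the canonical fracture square is a pullback. Then by the standard pullback-pasting identification of fibers, for each $x:\modal A$ there is an equivalence $\hfib{\modalunit^\modal_A}{x} \simeq \hfib{\lozenge \modalunit^\modal_A}{\modalunit^\lozenge_{\modal A}(x)}$. Since $\lozenge \modalunit^\modal_A$ is a $\lozenge$-modal map (as noted above), the right-hand fiber is $\lozenge$-modal, hence so is the left, which is precisely the assertion that $\modalunit^\modal_A$ is $\lozenge$-modal. Note that this direction uses only that $\lozenge$ is a modality, not that it is lex.

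There is essentially no obstacle: the content of the theorem is packaged entirely in \cref{thm:lex-modalities}\ref{item:mu3d} for the nontrivial forward direction, and in the elementary pullback lemma for the converse; the only thing to verify carefully is which two sides of the square are $\lozenge$-connected and which are $\lozenge$-modal, together with the closure of modal types under $\Sigma$ and identity required to see that maps between $\lozenge$-modal types are $\lozenge$-modal.
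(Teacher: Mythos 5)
Your proof is correct and follows essentially the same route as the paper's: both directions reduce to the observation that the verticals of the canonical fracture square are $\lozenge$-connected and the bottom map $\lozenge\modalunit^\modal_A$ is $\lozenge$-modal, with the forward direction given by \cref{thm:lex-modalities}\ref{item:mu3d} and the converse by pullback-stability of $\lozenge$-modal maps (which you unwind as the fiber identification). Your added remark that the converse does not use lexness is accurate but not in the paper's proof.
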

\begin{proof}
  The maps $\modalunit^\lozenge_A$ and $\modalunit^\lozenge_{\modal A}$ are always $\lozenge$-connected, while $\lozenge \modalunit^\modal_A$ is a map between $\lozenge$-modal types and hence $\lozenge$-modal.
  Thus, if $\modalunit^\modal_A$ is $\lozenge$-modal then the square is a pullback by \cref{thm:lex-modalities}\ref{item:mu3d}.
  Conversely, if the square is a pullback then $\modalunit^\modal_A$ is a pullback of the $\lozenge$-modal map $\lozenge \modalunit^\modal_A$ and hence $\lozenge$-modal.
\end{proof}

\begin{corollary}
  If $\lozenge$ is lex and every $\modal$-connected type is $\lozenge$-modal, then every canonical fracture square is a pullback.
\end{corollary}
\begin{proof}
  The map $\modalunit^\modal_A$ is always $\modal$-connected, so the hypothesis ensures it is $\lozenge$-modal.
\end{proof}

Recall from \cref{eg:strongly-disjoint} that we say \textbf{$\modal$ is strongly disjoint from $\lozenge$} if every $\lozenge$-modal type is $\modal$-connected.

\begin{thm}\label{thm:cofracture}
  If $\modal$ is strongly disjoint from $\lozenge$, then every fracture square that is a pullback is canonical.
\end{thm}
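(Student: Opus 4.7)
The plan is to reduce to \cref{thm:canonical-fracture} by showing that in any pullback fracture square, the map $f$ is $\modal$-connected and the map $l$ is $\lozenge$-connected. Once these two facts are established, \cref{thm:canonical-fracture} immediately identifies the square with the canonical one. Since the square is assumed to be a pullback, we can model $A$ explicitly as
\[ A \;\simeq\; \sm{b:B}{c:C}\bigl(\modalunit^\lozenge_B(b) = g(c)\bigr), \]
with $f$ and $l$ corresponding to the first and second projections respectively, which makes both fibers easy to compute.

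First I would handle $l$. Its fiber over $c_0:C$ is equivalent to $\hfib{\modalunit^\lozenge_B}{g(c_0)}$, and $\modalunit^\lozenge_B$ is $\lozenge$-connected because modal units are always connected for their modality (\cref{thm:ssrs-characterize}\ref{item:sc3}). Hence each fiber of $l$ is $\lozenge$-connected, so $l$ itself is $\lozenge$-connected. (Equivalently, $l$ is a pullback of $\modalunit^\lozenge_B$, and $\lozenge$-connected maps are the left class of the stable orthogonal factorization system produced in \cref{thm:sofs_from_ssrs}, hence closed under pullback.)

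Second I would handle $f$, which is where strong disjointness enters. Its fiber over $b_0:B$ is equivalent to
\[ \hfib{f}{b_0} \;\simeq\; \sm{c:C}\bigl(\modalunit^\lozenge_B(b_0) = g(c)\bigr). \]
Here $C$ is $\lozenge$-modal by hypothesis, and the identity type $\modalunit^\lozenge_B(b_0) = g(c)$ lives in the $\lozenge$-modal type $\lozenge B$, hence is itself $\lozenge$-modal by \cref{lem:rs_idstable}. Since $\lozenge$-modal types are closed under $\Sigma$ (as $\lozenge$ is a modality), the whole fiber $\hfib{f}{b_0}$ is $\lozenge$-modal. Strong disjointness of $\modal$ from $\lozenge$ then says every $\lozenge$-modal type is $\modal$-connected, so $\hfib{f}{b_0}$ is $\modal$-connected, which means $f$ is $\modal$-connected.

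With both connectedness facts in hand, \cref{thm:canonical-fracture} finishes the proof. The only step that requires any thought is the fiber computation for $f$; everything else is assembly. There is no real obstacle, since the argument for $l$ is purely formal (pullback stability of the left class) and the argument for $f$ is the one line where strong disjointness is precisely the hypothesis needed to jump from ``$\lozenge$-modal fiber'' to ``$\modal$-connected fiber''.
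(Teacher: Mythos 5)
Your proof is correct and follows essentially the same route as the paper's: identify $l$ as a pullback of the $\lozenge$-connected unit $\modalunit^\lozenge_B$, show the fibers of $f$ are $\lozenge$-modal (the paper phrases this as $f$ being a pullback of the $\lozenge$-modal map $g$, which is your fiber computation in disguise), apply strong disjointness to conclude $f$ is $\modal$-connected, and finish with \cref{thm:canonical-fracture}. The only difference is that you spell out the fiber computations that the paper leaves implicit.
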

\begin{proof}
  If a fracture square is a pullback, then $l$ must be $\lozenge$-connected since it is a pullback of $\modalunit^\lozenge_{\modal A}$, and similarly $f$ must be $\lozenge$-modal since it is a pullback of $g$.
  The assumption therefore ensures that $f$ is $\modal$-connected, so that \cref{thm:canonical-fracture} applies.
\end{proof}

Putting together \cref{thm:fracture,thm:cofracture} we can construct certain joins of modalities.

\begin{thm}\label{thm:join}
  If $\lozenge$ is a lex modality and $\modal$ is a modality is strongly disjoint from $\lozenge$, then the canonical join $\modal\lor\lozenge$ exists in $\rsu$.
  Moreover, the following are equivalent:
  \begin{enumerate}
  \item $A$ is $(\modal\lor\lozenge)$-modal.\label{item:j1}
  \item $\modalunit^\modal_A : A \to \modal A$ is $\lozenge$-modal.\label{item:j2}
  \item The canonical fracture square of $A$ is a pullback.\label{item:j3}
  \end{enumerate}
  And we have an equivalence of universes
  \begin{equation}
    \UU_{\modal\lor\lozenge} \simeq \sm{B:\UU_\modal}{C:\UU_\lozenge} (C \to \lozenge B).\label{eq:gluing0}
  \end{equation}
  Finally, if $\modal$ is also lex, then $\modal\lor\lozenge$ is a lex modality, and hence is the join in $\lex$.
\end{thm}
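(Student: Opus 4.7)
The plan is to define the reflector directly by the pullback formula
\[ R(A) \defeq \lozenge A \times_{\lozenge \modal A} \modal A, \]
with the unit $\eta_A : A \to R(A)$ induced by the canonical fracture square~\eqref{eq:canonical-fracture}. The equivalence (ii)$\Leftrightarrow$(iii) is immediate from \cref{thm:fracture}, so it suffices to show the subuniverse of types satisfying~(ii) is reflective with reflector $R$. To that end, I would first compute $\lozenge R(A)$ and $\modal R(A)$. Lex-ness of $\lozenge$ yields $\lozenge R(A) \simeq \lozenge A \times_{\lozenge\modal A} \lozenge\modal A \simeq \lozenge A$. For $\modal R(A)$, the second projection $R(A) \to \modal A$ has fibers that are fibers of the map $\lozenge\modalunit^\modal_A$ between two $\lozenge$-modal types, hence are themselves $\lozenge$-modal, and therefore (by strong disjointness) $\modal$-connected; so this projection is a $\modal$-connected map and \cref{lem:connected-map-equiv-truncation} gives $\modal R(A) \simeq \modal A$. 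Naturality then shows the canonical fracture square of $R(A)$ is itself a pullback, so $R(A)$ satisfies~(iii). The universal property $(A\to T) \simeq (R(A) \to T)$ for $T$ satisfying~(iii) follows by expressing both sides as limits over the fracture pullback of $T$ and reducing each of the three factors by the universal properties of $\modal$ and $\lozenge$ together with $\modal R(A)\simeq\modal A$ and $\lozenge R(A)\simeq\lozenge A$. Finally,~(i)$\Leftrightarrow$(ii) because $A$ is $(\modal\lor\lozenge)$-modal iff $\eta_A$ is an equivalence iff the canonical fracture square of $A$ is a pullback.

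To identify this reflective subuniverse with the canonical join $\modal\lor\lozenge$ in $\rsu$, I would apply \cref{thm:meet-join}\ref{item:mj3}: the reflector $R$ inverts $f$ if and only if both $\modal f$ and $\lozenge f$ are equivalences. One direction is immediate since $Rf$ is induced on pullbacks by $\modal f$, $\lozenge f$, and $\lozenge\modal f$; the converse follows by applying $\modal$ and $\lozenge$ to $Rf$ using the identifications $\modal R(\blank) \simeq \modal(\blank)$ and $\lozenge R(\blank) \simeq \lozenge(\blank)$. For the equivalence~\eqref{eq:gluing0}, the forward map sends $A$ to $(\modal A,\lozenge A,\lozenge\modalunit^\modal_A)$, and the inverse sends $(B,C,g)$ to the pullback $B \times_{\lozenge B} C$ of $\modalunit^\lozenge_B$ and $g$; this pullback is modal because its projection to $B$ is a pullback of $g$ (hence $\lozenge$-modal), while its fibers over $B$ are $\lozenge$-modal hence $\modal$-connected (so the projection is $\modal$-connected and realizes $\modalunit^\modal$ up to equivalence, verifying~(ii)). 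The two composites agree by the computations of $\modal R(A)$ and $\lozenge R(A)$ above and by naturality.

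Finally, if $\modal$ is also lex, then $R$ preserves pullbacks: it is built by a finite-limit construction from the lex operators $\modal$ and $\lozenge$, and pullbacks commute with pullbacks, so $R$ inherits pullback preservation, verifying \cref{thm:lex-modalities}\ref{item:mu3}. The main obstacle is the identification $\modal R(A) \simeq \modal A$: this is the one place where the strong disjointness hypothesis is genuinely essential, as it is what converts the $\lozenge$-modal fibers of $\lozenge\modalunit^\modal_A$ into $\modal$-connected fibers of the second projection $R(A) \to \modal A$ so that \cref{lem:connected-map-equiv-truncation} applies. All the other steps are routine diagram-chases with universal properties and lex-ness of $\lozenge$, but this one identification is the crux on which the whole pullback formula depends.
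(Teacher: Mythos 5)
Your proposal is correct and follows essentially the same route as the paper: the reflector is the pullback of the canonical fracture square, strong disjointness converts the $\lozenge$-modal fibers of (pullbacks of) $\lozenge\modalunit^\modal_A$ into $\modal$-connected ones so that the projections realize $\modalunit^\modal$ and $\modalunit^\lozenge$, the universal property is checked factorwise over the fracture cospan, and lexness follows from commuting pullbacks. The only differences are cosmetic --- you compute $\lozenge R(A)$ via lexness rather than via $\lozenge$-connectedness of the projection, invoke \cref{thm:meet-join}\ref{item:mj3} in place of \ref{item:mj2} for the canonical join (which in fact matches the ``join in $\rsu$'' phrasing of the statement more directly), and should cite \cref{thm:rsu-lex} rather than \cref{thm:lex-modalities}\ref{item:mu3} for the final step, since the latter presupposes a modality.
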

\begin{proof}
  The equivalence~\ref{item:j2}$\Leftrightarrow$\ref{item:j3} is by \cref{thm:fracture}, so we must show that such types form a reflective subuniverse.
  Given $A:\UU$, we define $(\modal\lor\lozenge)(A)$ to be the pullback of its canonical fracture square:
  \[
  \begin{tikzcd}
    A \ar[dr,"\modalunit^{\modal\lor\lozenge}_A" description] \ar[drr,bend left,"\modalunit^\modal_A"] \ar[ddr,bend right,"\modalunit^\lozenge_A"'] \\
    & (\modal\lor\lozenge)(A) \ar[r] \ar[d] \ar[dr,phantom,"\lrcorner" near start] & \modal A \ar[d,"\modalunit^\lozenge_{\modal A}"] \\
    & \lozenge A \ar[r,"\lozenge \modalunit^\modal_A"'] & \lozenge\modal A.
  \end{tikzcd}
  \]
  By \cref{thm:cofracture} this pullback square is a canonical fracture square, and thus $(\modal\lor\lozenge)(A)$ satisfies~\ref{item:j2} and~\ref{item:j3}.
  Now suppose we have some other $B$ satisfying~\ref{item:j2} and~\ref{item:j3}, hence a canonical fracture square that is a pullback:
  \[
  \begin{tikzcd}
    B \ar[r] \ar[d] \ar[dr,phantom,"\lrcorner" near start] & \modal B \ar[d,"\modalunit^\lozenge_{\modal B}"] \\
    \lozenge B \ar[r,"\lozenge \modalunit^\modal_B"'] & \lozenge\modal B.
  \end{tikzcd}
  \]
  Then we have equivalences
  \begin{align*}
    (A\to B)
    &\eqvsym (A\to \modal B) \times_{(A\to \lozenge \modal B)} (A\to\lozenge B)\\
    &\eqvsym (\modal A\to \modal B) \times_{(\lozenge A\to \lozenge \modal B)} (\lozenge A\to\lozenge B)
  \end{align*}
  in which the final pullback is of the two maps
  \begin{align*}
    (\lozenge \modalunit^\modal_B \circ \blank) &: (\lozenge A\to\lozenge B)\to (\lozenge A\to \lozenge \modal B)\\
    (\lam{h} \lozenge h \circ \lozenge \modalunit^\modal_A)&: (\modal A\to \modal B) \to (\lozenge A\to \lozenge \modal B).
  \end{align*}
  However, since the canonical fracture square of $A$ is also the canonical fracture square of $(\modal\lor\lozenge)(A)$, we also have
  \[ ((\modal\lor\lozenge)(A) \to B) \eqvsym (\modal A\to \modal B) \times_{(\lozenge A\to \lozenge \modal B)} (\lozenge A\to\lozenge B) \]
  and hence
  \[ (A\to B) \eqvsym ((\modal\lor\lozenge)(A) \to B)\]
  giving the desired universal property.

  To see that $\modal\lor\lozenge$ is the canonical join of $\modal$ and $\lozenge$, first note that if $A$ is $\modal$-modal, then $\modalunit^\modal_A$ and hence $\lozenge\modalunit^\modal_A$ are equivalences, so that its canonical fracture square is a pullback and so $A$ is $(\modal\lor\lozenge)$-modal.
  On the other hand, if $A$ is $\lozenge$-modal, then $\modalunit^\lozenge_A$ is an equivalence, while (since $\lozenge$ is strongly disjoint from $\modal$) $\modal A$ and hence $\lozenge \modal A$ are contractible; thus the canonical fracture square is again a pullback and so $A$ is $(\modal\lor\lozenge)$-modal.
  That is, any $\modal$-modal or $\lozenge$-modal type is $(\modal\lor\lozenge)$-modal, and hence any $(\modal\lor\lozenge)$-connected type is $\modal$-connected and $\lozenge$-connected.
  On the other hand, if $A$ is both $\modal$-connected and $\lozenge$-connected, then $(\modal\lor\lozenge)(A)$ is a pullback of a square of contractible types, hence contractible, so $A$ is also $(\modal\lor\lozenge)$-connected.

  As for~\eqref{eq:gluing0}, the left-to-right map sends $A$ to the bottom morphism in its canonical fracture square; while the right-to-left map sends $(B,C,g)$ to the pullback of $g$ and $\modalunit^\lozenge_B$, i.e.\ the vertex of the pullback fracture square with $g$ on the bottom.
  The two round-trip composites are the identity because a fracture square with $(\modal\lor\lozenge)$-modal vertex is a pullback if and only if it is canonical.

  Finally, suppose $\modal$ is also lex.
  To show that $\modal\lor\lozenge$ is a lex modality, by \cref{thm:rsu-lex} it suffices to show that $\modal\lor\lozenge$ preserves pullbacks.
  However, this follows from its construction as the pullback of the canonical fracture square, since $\lozenge$ and $\modal$ preserve pullbacks, and pullbacks commute with pullbacks.
  In somewhat more detail, given a cospan $B \to C \leftarrow D$, we have a $3\times 3$-diagram
  \[
  \begin{tikzcd}
    \modal B \ar[r] \ar[d] & \modal C \ar[d] & \modal D \ar[d] \ar[l] & \modal (B\times_C D) \ar[d] \\
    \lozenge \modal B \ar[r] & \lozenge \modal C & \lozenge \modal D \ar[l] & \lozenge\modal (B\times_C D) \\
    \lozenge B \ar[r] \ar[u] & \lozenge C \ar[u] & \lozenge D \ar[u] \ar[l] & \lozenge (B\times_C D) \ar[u]\\
    (\modal\lor\lozenge)(B) \ar[r] & (\modal\lor\lozenge)(C) & (\modal\lor\lozenge)(D) \ar[l] &
  \end{tikzcd}
  \]
  in which the limit of the rows gives the canonical fracture cospan for $B\times_C D$, whose pullback is $(\modal\lor\lozenge)(B\times_C D)$, whereas the limit of the columns gives $\modal\lor\lozenge$ of the given cospan.
  Thus, these two pullbacks agree, so $\modal\lor\lozenge$ preserves pullbacks, and hence is a lex modality.
\end{proof}

\begin{corollary}\label{thm:fracture-gluing}
  If $\lozenge$ is a lex modality, and $\modal$ a modality such that the $\lozenge$-modal types coincide with the $\modal$-connected types, then $\lozenge \lor \modal$ is the top element of $\lex$ (the trivial modality), and every canonical fracture square is a pullback.
  Moreover, we have an induced equivalence
  \begin{equation}
    \UU \simeq \sm{B:\UU_\modal}{C:\UU_\lozenge} (C \to \lozenge B).\label{eq:gluing}
  \end{equation}
\end{corollary}
\begin{proof}
  The additional assumption that $\modal$-connected types are $\lozenge$-modal means that a $(\modal\lor\lozenge)$-connected type must be both $\lozenge$-modal and $\lozenge$-connected, hence contractible.
  Thus, every type is $(\modal\lor\lozenge)$-modal, i.e.\ $(\modal\lor\lozenge)$ is the maximal modality.
  The equivalence~\eqref{eq:gluing} is just a specialization of~\eqref{eq:gluing0}.
\end{proof}

\begin{rmk}
  We call \cref{thm:fracture-gluing} a ``fracture theorem'' because it appears formally analogous to the fracture theorems for localization and completion at primes in classical homotopy theory~\cite{mp:more-concise}, or more generally for localization at complementary generalized homology theories~\cite{bauer:loc-hasse}.
  However, we do not know a precise relationship, because the classical fracture theorems either apply only to spectra (which do not form an $\infty$-topos) or to spaces with restrictions (such as nilpotence), and moreover the localizations appearing therein are not generally left exact (though they do often have some limit-preservation properties).
\end{rmk}

The equivalence~\eqref{eq:gluing} says informally that the universe of all types is equivalent to the ``comma category'' or ``gluing'' of the $\modal$-modal types with the $\lozenge$-modal types along the functor $\lozenge : \UU_\modal \to \UU_\lozenge$, as in the ``Artin gluing'' construction for toposes.
The paradigmatic example is the following.

\begin{eg}\label{eg:artin}
  Let $Q$ be a mere proposition.
  We have seen that both open and closed modalities $\open Q$ and $\closed Q$ are lex, and in \cref{eg:closed-connected} we noted that the $\open Q$-connected types coincide with the $\closed Q$-modal ones.
  Thus, these modalities satisfy the hypotheses of \cref{thm:fracture-gluing}.
  In particular, for any type $A$ we have a pullback square
  \begin{equation}
  \begin{tikzcd}
    A \ar[r] \ar[d] & Q\to A \ar[d] \\
    Q\ast A \ar[r] & Q\ast(Q\to A).
  \end{tikzcd}%
  \label{eq:propositional-fracture}
  \end{equation}
  To understand this better internally, suppose $Q$ is decidable, i.e.\ we have $Q+\neg Q$.
  Then we claim that $\eqv{Q\ast A}{\neg Q \to A}$.
  For if $Q$, then both are contractible, while if $\neg Q$, then both are equivalent to $A$.
  In particular, when $Q$ is decidable, $\eqv{(Q\ast (Q\to A))}{(\neg Q \land Q \to A)}$ and hence is contractible; so our above pullback square becomes
  \[
  \begin{tikzcd}
    A \ar[r] \ar[d] & Q\to A \ar[d] \\
    \neg Q\to A \ar[r] & \unit.
  \end{tikzcd}
  \]
  This is just the equivalence
  \[A \eqvsym ((Q+\neg Q) \to A) \eqvsym (Q\to A) \times (\neg Q \to A) \]
  that allows us to do case analysis on $Q$ to construct an element of any type $A$.

  Thus, the fracture square~\eqref{eq:propositional-fracture} can be viewed as a sort of ``constructive case analysis'': even if $Q$ is not decidable, we can construct an element of any type $A$ by constructing an element of $A$ assuming $Q$, then constructing an element of $Q\ast A$ (a sort of ``positive replacement'' for $\neg Q \to A$), then checking that they agree in $Q\ast (Q\to A)$.
  If $A$ is also a mere proposition, then $Q\ast A = Q\lor A$, so this reduces to the intuitionistic tautology
  \[ A \leftrightarrow (Q\lor A) \land (Q\to A). \]
  It is unclear to us whether the more general version has any applications.
\end{eg}

\section{Conclusion and outlook}%
\label{sec:conclusion}

The theory of lex and topological modalities can be viewed as a contribution to the program of giving an elementary (first order) definition of an $\infty$-topos as a purported model of homotopy type theory.
Specifically, lex and topological modalities are a higher-categorical analogue of the standard theory of Lawvere-Tierney operators in 1-topos theory, which are the usual way to internalize the notion of subtopos.
We thus expect that lex and topological modalities on universe objects will play a similar role in the theory of elementary $\infty$-toposes.

As mentioned in \cref{sec:fracture}, our fracture theorem can be viewed as an internal perspective on the gluing of higher toposes; an external perspective on gluing can be found in~\cite{shulman:invdia}.
We hope and expect that other topos-theoretic constructions, such as realizability, can also be extended to homotopy type theory.

The analogues of non-lex modalities and reflective subuniverses are not well-studied in 1-topos theory, perhaps because in the absence of a universe they cannot be internalized: as we have seen, any modality on a subobject classifier $\Omega$ is automatically lex.
However, the reflector into the quasitopos of \emph{separated} objects for a Lawvere-Tierney topology is an external analogue of a non-lex modality in our sense.
Notions of ``$\infty$-quasitopos'' relative to a factorization system are studied in~\cite{GepnerKock}; we expect there to be an internal analogue of this theory using modalities.

Localizations are, however, much better-studied in classical homotopy theory.
Modern calculational homotopy theory very often works in subuniverses that are localized at a prime number or a cohomology theory.
We therefore expect the theory of modalities and reflective subuniverses to be useful in extending such results to the synthetic setting of homotopy type theory, and thereby internalizing them in higher toposes.
Moreover, the homotopy-type-theoretic notion of modality has already proven fruitful in higher topos theory: in addition to the theory of $(\infty,1)$-quasitoposes in~\cite{GepnerKock}, a reworking of the synthetic Blakers--Massey theorem~\cite{ffll:blakers-massey} using general modalities in place of $n$-truncations has led to a new topos-theoretic generalization in~\cite{abfj:gen-blakers-massey}, with applications to Goodwillie calculus. % chktex 8

\paragraph*{Acknowledgments}
This work was initiated when we were at the Institute for Advanced Study for the special year on Univalent Foundations.
We are thankful for the pleasant and active atmosphere during that year.
%  to the participants of this year, especially
% to Thierry Coquand, Steve Awodey, Peter Lumsdaine, and also to Urs Schreiber.

% \input{semantics}

\appendix
\section{Semantics}%
\label{sec:semantics}

We now sketch something of how our syntactic description of modalities corresponds to semantic structures in higher category theory.
In the rest of the paper we used type universes, and some of our results require a universe; however strict universes are difficult to produce in categorical semantics, so for maximum generality here we consider modalities without universes.
Also, we will not concern ourselves with initiality theorems for syntax, instead working at the level of comprehension categories and their corresponding model categories.
Finally, in the interests of conciseness we will be sketchy about coherence theorems, although we expect that the methods of~\cite{lw:localuniv} will apply.

\subsection{Judgmental modalities}%
\label{sec:judgm-modal}

To avoid universes, in this appendix we will work with ``judgmentally specified'' modalities.
A judgmental modality acts on all types, not just those belonging to some universe, and makes sense even if there are no universes.
If our type theory does have universes, then to obtain a judgmental modality in this sense we need a consistent ``polymorphic'' family of modalities, one on each universe (or at least on all large enough universes).
But we have seen that practically any modality can be defined polymorphically, particularly those obtained by localization and nullification, so there is little loss of generality.

\cref{fig:jd-rsu} shows the judgmental rules for a reflective subuniverse, \cref{fig:jd-mod} augments it to a modality (a $\Sigma$-closed reflective subuniverse), and \cref{fig:jd-lex} to a lex modality (using \cref{thm:lex-modalities}\ref{item:mu1}), while \cref{fig:jd-gen} asserts that it is generated by a given family of maps.

Note that we have included an explicit rule that the predicate $\ismodal$ is invariant under equivalence; this is automatic if we have a univalent universe, but in general we should assert it explicitly.
Similar ``univalence-reductions'' must be made in various other places in the paper to work with judgmental modalities, manually replacing equalities between types by equivalences.
The only truly unavoidable use of univalence is in \cref{thm:acc-lex,thm:prop-loc-lex}.

\begin{figure}
  \centering
  \begin{mathpar}
    \inferrule{\Gamma\vdash A\;\mathsf{type}}{\Gamma\vdash \ismodal(A)\;\mathsf{type}}\and
    \inferrule{\Gamma\vdash A\;\mathsf{type}}{\Gamma\vdash \nameless:\mathsf{isprop}(\ismodal(A))}\and
    \inferrule{\Gamma\vdash A\;\mathsf{type}}{\Gamma\vdash \modal A\;\mathsf{type}}\and
    \inferrule{\Gamma\vdash A\;\mathsf{type}}{\Gamma\vdash \nameless:\ismodal(\modal A)}\and
    \inferrule{\Gamma\vdash A\;\mathsf{type} \\ \Gamma\vdash a:A}{\Gamma\vdash \modalunit[A](a):\modal A}\and
    \inferrule{\Gamma\vdash A\;\mathsf{type} \\ \Gamma\vdash B\;\mathsf{type} \\ \Gamma\vdash \nameless:\ismodal(B)}{\Gamma\vdash \nameless:\mathsf{isequiv}(\lam{f:\modal A \to B} f\circ \modalunit[A])}\and
    \inferrule{\Gamma\vdash A\;\mathsf{type} \\ \Gamma\vdash B\;\mathsf{type} \\ \Gamma\vdash f:\eqv A B \\ \Gamma\vdash \nameless:\ismodal(A)}{\Gamma\vdash \nameless:\ismodal(B)}\and
  \end{mathpar}
  \caption{A judgmental reflective subuniverse}%
  \label{fig:jd-rsu}
\end{figure}

\begin{figure}
  \centering
  \begin{mathpar}
    \inferrule{\Gamma\vdash A\;\mathsf{type} \\ \Gamma,a:A\vdash B\;\mathsf{type} \\\\ \Gamma\vdash \nameless:\ismodal(A) \\ \Gamma,a:A \vdash \nameless:\ismodal(B)}{\Gamma \vdash \nameless:\ismodal(\sm{a:A}B)}
  \end{mathpar}
  \caption{Judgmental $\Sigma$-closedness}%
  \label{fig:jd-mod}
\end{figure}

\begin{figure}[t]
  \centering
  \begin{mathpar}
    \inferrule{\Gamma\vdash A\;\mathsf{type} \\ \Gamma,x:A \vdash B\;\mathsf{type} \\
      \Gamma\vdash \nameless:\mathsf{iscontr}(\modal A) \\
      \Gamma\vdash \nameless:\mathsf{iscontr}(\modal (\sm{x:A} B)) \\
      \Gamma \vdash a:A
    }{\Gamma \vdash \nameless:\mathsf{iscontr}(\modal B)}
  \end{mathpar}
  \caption{Judgmental lexness}%
  \label{fig:jd-lex}
\end{figure}

\begin{figure}
  \centering
  \begin{mathpar}
    \text{(For some fixed $a:A\vdash F(a):B\to C$)}\\
    \inferrule{\Gamma\vdash X\;\mathsf{type}}{\Gamma\vdash \nameless: \ismodal(X) \simeq \prd{a:A}\mathsf{isequiv}(\lam{g:C(a)\to X} g\circ F(a))}
  \end{mathpar}
  \caption{Judgmental generation}%
  \label{fig:jd-gen}
\end{figure}

The corresponding definitions for comprehension categories are the following.
We state them in the ``weak stability'' style of~\cite{lw:localuniv} so that the ``local universes'' coherence method can be applied, although we will not write out the details of the coherence theorems.
We also simplify the definitions somewhat because we don't care about the identity of inhabitants of mere propositions, so we assume that any such inhabitant is ``good'', hence automatically weakly stable.

\begingroup
\def\C{\mathcal{C}}
\def\D{\mathcal{D}}
\def\T{\mathcal{T}}

\begin{defn}
  A comprehension category $(\C,\T)$ with weakly stable $\Pi$-types and identity types is equipped with a \textbf{weakly stable reflective subuniverse} if it has:
  \begin{itemize}
  \item For any $A\in \T(\Gamma)$ a family of ``good'' mere propositions $\ismodal(A)\in\T(\Gamma)$, weakly stable in that the reindexing of any such is another such.
  \item For any $A\in \T(\Gamma)$ a family of ``good'' reflections consisting of a type $\modal A \in \T(\Gamma)$ and a unit $\modalunit[A]:\Gamma.A\to \Gamma.\modal A$ over $\Gamma$, weakly stable under reindexing, such that every good $\ismodal(\modal A)$ over $\Gamma$ has a section.
  \item For any $A,B\in \T(\Gamma)$, any section of a good $\ismodal(\modal B)$ over $\Gamma$, any good reflection $(\modal A,\modalunit[A],\nameless)$ of $A$, and any good (non-dependent) $\Pi$-types $\prd{A}B$ and $\prd{\modal A} B$, the map $\Gamma.(\prd{\modal A}B) \to \Gamma.(\prd{A}B)$ induced by $\modalunit[A]$ is an equivalence over $\Gamma$.
  \end{itemize}
  This is a \textbf{weakly stable modality} if additionally:
  \begin{itemize}
  \item For any $A\in\T(\Gamma)$ and $B\in \T(\Gamma.A)$, every good $\ismodal(A)$ with a section over $\Gamma$, and every good $\ismodal(B)$ with a section over $\Gamma.A$, every good $\ismodal(\sum{A}B)$ has a section over $\Gamma$.
  \end{itemize}
  It is a \textbf{weakly stable lex modality} if furthermore:
  \begin{itemize}
  \item For any $A\in\T(\Gamma)$ and $B\in \T(\Gamma.A)$, and good reflections $\modal A$, $\modal B$, and $\modal(\sm{A}B)$ such that $\modal A$ and $\modal(\sm{A} B)$ are contractible over $\Gamma$, we have that $\modal B$ is contractible over $\Gamma.A$.
  \end{itemize}
  Finally, if we have $A\in\T(\diamond)$ and $B,C\in \T(\diamond.A)$, where $\diamond$ is the terminal object of $\C$, and $F:\diamond.A.B \to \diamond.A.C$ over $\diamond.A$, we say that a weakly stable reflective subuniverse is \textbf{generated by $F$} if
  \begin{itemize}
  \item for any $X\in\T(\Gamma)$, every good $\ismodal(X)\in\T(\Gamma)$ is equivalent over $\Gamma$ to
    \[\prd{a:A}\mathsf{isequiv}(\lam{g:C(a)\to X} g\circ F(a)).\]
\end{itemize}
\end{defn}

\noindent
It is straightforward to see that the syntactic category of a type theory satisfying some or all of the rules in \cref{fig:jd-lex,fig:jd-gen,fig:jd-mod,fig:jd-lex} has the corresponding weakly stable (indeed, strictly stable) structure.
A suitable initiality theorem would imply (together with the local universes coherence theorem to strictify weakly stable structure) that such a type theory can then be interpreted into any model with weakly stable structure.

In the remaining sections of this appendix we will show that the above weakly stable structures correspond to well-known $\infty$-category-theoretic notions.
However, it is worth noting first of all one important example that doesn't require any $\infty$-categorical machinery.

\begin{eg}\label{thm:ttfc-gluing}
  A ``type-theoretic fibration category'' in the sense of~\cite{shulman:invdia} is a particular sort of presentation of a comprehension category.
  It is shown in~\cite{shulman:invdia} (see also~\cite{shulman:eiuniv}) that if $F:\C\to\D$ is a ``strong fibration functor'' between type-theoretic fibration categories, then the ``fibrant gluing category'' ${(\D\downarrow F)}_{\mathbf{f}}$ is again a type-theoretic fibration category.
  The objects of ${(\D\downarrow F)}_{\mathbf{f}}$ consist of an object $\Gamma_0\in\C$ and a display map $F(\Gamma_0).\Gamma_1 \to F(\Gamma_0)$ in $\D$, while its types in such a context are pairs of a type $A_0\in \T_\C(\Gamma_0)$ and a type $A_1 \in\T_\D(F(\Gamma_0).\Gamma_1.F(A_0))$.

  We claim there are two canonical lex modalities on ${(\D\downarrow F)}_{\mathbf{f}}$, representing the canonical open and closed subtoposes of a glued topos.
  The ``gluing proposition'' $Q$ in the empty context $(\diamond,\diamond)$ has $Q_0 = \unit$ and $Q_1 = \emptyt$.
  Since ${(\D\downarrow F)}_{\mathbf{f}}$ inherits type-theoretic strurcture, including $\Pi$-types, we can use the internal construction of the open modality $\open Q$ as $\open Q(A) = (Q\to A)$.
  Note that this automatically restricts to any universe.

  The second lex modality is supposed to be the closed one $\closed Q$.
  If we knew that ${(\D\downarrow F)}_{\mathbf{f}}$ had higher inductive pushouts, then we could use the internal definition of $\closed Q$ as $\closed Q(A) = Q\ast A$.
  However, with the concrete construction of ${(\D\downarrow F)}_{\mathbf{f}}$ we also have a concrete construction of $\closed Q$, which takes a type $(A_0,A_1)$ to $(\unit,\sm{F A_0} A_1)$.
  This has the advantage that it manifestly restricts to any universe in ${(\D\downarrow F)}_{\mathbf{f}}$ determined by universes $\UU_0$ in $\C$ and $\UU_1$ in $\D$ such that $F$ takes $\UU_0$-small types to $\UU_1$-small ones.
\end{eg}
\endgroup

\subsection{Modalities in model categories}%
\label{sec:model-cats}

\begingroup
\def\M{\mathcal{M}}
\def\fibmf{(\mathcal{M}_{\mathbf{f}},\mathcal{F}_{\mathbf{f}})}
\def\F{\mathcal{F}}
\def\C{\mathcal{C}}
\def\Cf/#1{\mathcal{C}_{\sslash#1}}
\def\D{\mathcal{D}}
Let $\M$ be a type-theoretic model category in the sense of~\cite{shulman:invdia}: right proper, with cofibrations closed under limits, and with right adjoints $f_*$ to pullback along fibrations.
Then there is a comprehension category $\fibmf$, in which $\M_{\mathbf{f}}$ is the category of fibrant objects in $\M$ and  $\F_{\mathbf{f}}$ is the category of fibrations with fibrant codomain, and which has sufficient (weakly stable) structure to model type theory with $\Sigma$, $\Pi$, and identity types.
(If $\M$ is sufficiently nice, then it also models various higher inductive types, including localizations; see~\cite{ls:hits} and \cref{sec:localiz-model}.)

Let $h\M$ be the homotopy $(\infty,1)$-category of $\M$.
Then the slice model categories $\M_{/\Gamma}$ present the slice $(\infty,1)$-categories $h\M_{/\Gamma}$.
Moreover, for any fibration $f$, the adjunction $f^* \dashv f_*$ is a Quillen adjunction, hence descends to homotopy categories; this shows that $h\M$ is locally cartesian closed.

\begin{defn}\label{defn:reflective-subfibration}
  A \textbf{reflective subfibration} of an $(\infty,1)$-category $\C$ with finite limits consists of:
  \begin{enumerate}
  \item For every object $x\in\C$, a reflective full sub-$(\infty,1)$-category $\D_x$ of the slice $\C_{/x}$, with reflector $\modal_x$.\label{item:rsf1}
  \item Each pullback functor $f^* :\C_{/y}\to \C_{/x}$ restricts to a functor $\D_y\to\D_x$.\label{item:rsf2}
  \item For any $z\in \C_{/y}$, the induced map $\modal_x(f^*z) \to f^*(\modal_y z)$ is an equivalence.\label{item:rsf3}
  \end{enumerate}
\end{defn}

\begin{thm}\label{thm:ttmc-rsu}
  If $\M$ is a type-theoretic model category equipped with a reflective subfibration of $h\M$, then $\fibmf$ has a weakly stable reflective subuniverse.
\end{thm}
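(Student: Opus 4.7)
The plan is to lift the reflective subfibration structure from $h\M$ to strict data on $\fibmf$ by making pointwise choices of representatives, and then using the weakly stable framework to avoid the need for canonical or coherent such choices. Given a type $A \in \T(\Gamma)$, i.e.\ a fibration $p_A : \Gamma.A \to \Gamma$ with fibrant $\Gamma$, we view $p_A$ as an object of $h\M_{/\Gamma}$ and apply the reflector $\modal_\Gamma$ from \cref{defn:reflective-subfibration}\ref{item:rsf1}. A good reflection of $A$ is then, by definition, any fibration $p_{\modal A}:\Gamma.\modal A \to \Gamma$ lying in $\D_\Gamma$ together with a map $\modalunit[A]:\Gamma.A\to \Gamma.\modal A$ over $\Gamma$ that represents the abstract unit in $h\M_{/\Gamma}$; such data exist, since any fibrant replacement in $\M_{/\Gamma}$ of the abstract reflection presents one. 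Good $\ismodal(A)$ propositions are taken to be any good fibration representing $\mathsf{isequiv}(\modalunit[A])$; mere-propositionality and homotopy invariance are standard, and by idempotency of the reflector every good $\modal A$ admits a section of such an $\ismodal(\modal A)$.

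Next I would verify the universal property. Given a good $\ismodal(B)$ with a section over $\Gamma$, so that $B$ represents an object of $\D_\Gamma$, and any good non-dependent $\Pi$-types $\prd{A}B$ and $\prd{\modal A} B$, the map $\Gamma.\prd{\modal A}B \to \Gamma.\prd{A}B$ induced by $\modalunit[A]$ presents the comparison map $\Map_{h\M_{/\Gamma}}(\modal A,B) \to \Map_{h\M_{/\Gamma}}(A,B)$ in the slice homotopy category, which is an equivalence by the defining universal property of $\modal_\Gamma$. Since good $\Pi$-types in $\fibmf$ are constructed from $(-)_*$ applied to fibrations, and this right-adjoint presents the internal hom in $h\M_{/\Gamma}$, the induced map is a fibration whose action on homotopy classes is an equivalence, hence a weak equivalence, hence presents an equivalence in the comprehension category.

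The main obstacle will be weak stability, i.e.\ checking that reindexing of any good reflection or good $\ismodal$ along any morphism $f:\Delta\to\Gamma$ in $\M_{\mathbf{f}}$ remains good. For $\ismodal$ this is automatic, since equivalences are preserved by pullback along fibrations; the nontrivial point is showing that $f^*(\Gamma.\modal A) \to \Delta$ together with the pulled-back unit represents the reflection of $f^*p_A$ in $h\M_{/\Delta}$. This is exactly the content of condition~\ref{item:rsf3} of \cref{defn:reflective-subfibration}, which asserts $\modal_\Delta f^* \simeq f^*\modal_\Gamma$; combined with \ref{item:rsf2} to ensure $f^*(\Gamma.\modal A)$ still lies in $\D_\Delta$, this gives the desired stability. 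The homotopy-invariance rule in \cref{fig:jd-rsu} then absorbs the fact that reindexed reflections may only be isomorphic, not equal, to any canonical choice, so no strict coherence between the pointwise choices is required.

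Finally, the local-universes coherence theorem of~\cite{lw:localuniv} applies to turn this weakly stable structure into a strictly stable interpretation of the rules in \cref{fig:jd-rsu}, completing the correspondence.
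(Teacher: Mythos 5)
Your setup—good reflections as maps of fibrations over $\Gamma$ presenting the abstract reflection into $\mathcal{D}_\Gamma$, good $\ismodal(A)$ as representatives of $\isequiv(\modalunit[A])$, and weak stability via \cref{defn:reflective-subfibration}~\ref{item:rsf2} and~\ref{item:rsf3}—is the same as the paper's. The gap is in your verification of the universal property. The defining universal property of the reflector $\modal_\Gamma$ gives an equivalence of \emph{external} mapping spaces $h\mathcal{M}_{/\Gamma}(\modal A,B)\to h\mathcal{M}_{/\Gamma}(A,B)$, whereas the rule in \cref{fig:jd-rsu} requires that the map of \emph{internal} local exponentials $B^{\modal A}\to B^{A}$ (the objects of $\mathcal{M}_{/\Gamma}$ presenting the non-dependent $\Pi$-types) be a weak equivalence over $\Gamma$. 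Your inference that this map is ``a fibration whose action on homotopy classes is an equivalence, hence a weak equivalence'' is not valid: a map of fibrant objects that induces an equivalence on (derived) sections, or even on the full mapping space out of the terminal object of the slice, need not be a weak equivalence.

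To close the gap you must test against all objects of the slice: for every fibration $p:\Gamma.C\twoheadrightarrow\Gamma$ you need $h\mathcal{M}_{/\Gamma}(C\times_\Gamma\modal A,B)\simeq h\mathcal{M}_{/\Gamma}(C\times_\Gamma A,B)$, which via the local exponential adjunction gives $h\mathcal{M}_{/\Gamma}(C,B^{\modal A})\simeq h\mathcal{M}_{/\Gamma}(C,B^{A})$ and hence $B^{\modal A}\simeq B^{A}$ by the $(\infty,1)$-categorical Yoneda lemma. The equivalence over $\Gamma.C$ is precisely the statement that $p^*\modalunit[A]$ is again a reflection unit, i.e.\ \cref{defn:reflective-subfibration}~\ref{item:rsf3}. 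So pullback-stability of the reflector is needed not only for weak stability of the interpretation (where you do invoke it) but inside the universal-property argument itself; supplying this Yoneda step is the main content of the paper's proof.
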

\begin{proof}
  Given a fibration $\Gamma.A \twoheadrightarrow \Gamma$, we define a map $\modalunit[A]:\Gamma.A \to \Gamma.\modal A$ of fibrations over $\Gamma$ to be a good reflection if the map it presents in $h\M_{/\Gamma}$ has the universal property of a reflection into $\D_\Gamma$.
  That is, if the fibration $\Gamma.\modal A \twoheadrightarrow \Gamma$ presents an object of $\D_\Gamma$, and precomposition with $\modalunit[A]$ induces an equivalence of mapping spaces into any object of $\D_\Gamma$.

  We define the good fibrations $\Gamma.\ismodal(A) \twoheadrightarrow \Gamma$ to be those that are equivalent to $\isequiv(\modalunit[A])$.
  This implies that $\ismodal(A)$ is a mere proposition over $\Gamma$, since $\isequiv(\modalunit[A])$ is, and is independent of the chosen $\modalunit[A]$ since they are all equivalent.
  Since $\isequiv(\modalunit[A])$ has a section if and only if $\modalunit[A]$ is actually an equivalence over $\Gamma$, it follows that any $\ismodal(A)$ has a section if and only if $\Gamma.A$ actually lies in $\D_\Gamma$.
  In particular, $\ismodal(\modal A)$ always has a section.

  These definitions are weakly stable because the subcategories $\D_x$ and reflectors $\modal_x$ are assumed to be stable under $(\infty,1)$-categorical pullback in $h\M$, and pullback of fibrations in $\M$ presents the latter.
  It remains to show that for any fibrations $\Gamma.A \twoheadrightarrow\Gamma$ and $\Gamma.B \twoheadrightarrow\Gamma$, with the latter in $\D_\Gamma$, precomposition with any $\modalunit[A]$ induces an equivalence $B^{\modal A} \to B^A$ of local exponentials in $\M_{/\Gamma}$ (which represent the non-dependent $\Pi$-types in $\fibmf$).
  Now these local exponentials do present the $(\infty,1)$-categorical local exponentials in $h\M$, and by assumption we have an induced equivalence of hom-\emph{spaces} (i.e.\ hom-$\infty$-groupoids)
  \[ h\M_{/\Gamma}(\modal A,B) \xrightarrow{\sim} h\M_{/\Gamma}(A,B). \]
  By assumed pullback-stability, for any fibration $p:\Gamma.C\twoheadrightarrow\Gamma$ we also have
  \[ h\M_{/\Gamma.C}(p^*(\modal A),p^*B) \xrightarrow{\sim} h\M_{/\Gamma.C}(p^*A,p^*B) \]
  or equivalently
  \[ h\M_{/\Gamma}(C\times_\Gamma \modal A,B) \xrightarrow{\sim} h\M_{/\Gamma}(C\times_\Gamma  A,B). \]
  Now the local exponential adjunction gives
  \[ h\M_{/\Gamma}(C,B^{\modal A}) \xrightarrow{\sim} h\M_{/\Gamma}(C,B^A) \]
  and hence $B^{\modal A} \xrightarrow{\sim} B^A$ by the $(\infty,1)$-categorical Yoneda lemma.
\end{proof}

Admittedly, reflective subfibrations are not an especially familiar object in category theory.
However, in \cref{sec:localiz-model} we will see that often they can be constructed from ordinary reflective subcategories.
For now, we move on to consider the analogous structure for modalities.

\begin{defn}
  A reflective subfibration $\D$ of an $(\infty,1)$-category $\C$ is \textbf{composing} if whenever $f:y\to x$ lies in $\D_x$ and $g:z\to y$ lies in $\D_y$, the composite $fg:z\to x$ lies in $\D_x$.
\end{defn}

\begin{thm}\label{thm:ttmc-mod}
  If $\M$ is a type-theoretic model category equipped with a composing reflective subfibration of $h\M$, then $\fibmf$ has a weakly stable modality.
\end{thm}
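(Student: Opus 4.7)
The plan is to piggyback on Theorem~\ref{thm:ttmc-rsu}: the underlying weakly stable reflective subuniverse structure is already produced there, so what remains is only to verify the extra clause in the definition of a weakly stable modality, namely $\Sigma$-closedness. I would begin by recalling from the proof of Theorem~\ref{thm:ttmc-rsu} the characterization of ``good $\ismodal(A)$ has a section'' in the model-categorical language: a good $\ismodal(A)\in\T_{\mathbf{f}}(\Gamma)$ is (up to weak equivalence) $\isequiv(\modalunit[A])$, and it admits a section precisely when $\modalunit[A]:\Gamma.A\to\Gamma.\modal A$ is an equivalence over $\Gamma$, equivalently when the fibration $\Gamma.A\twoheadrightarrow\Gamma$ represents an object of $\mathcal{D}_{\Gamma}$.

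Next I would translate the clause to be verified. Suppose $A\in\T_{\mathbf{f}}(\Gamma)$ and $B\in\T_{\mathbf{f}}(\Gamma.A)$, and that some good $\ismodal(A)$ has a section over $\Gamma$ while some good $\ismodal(B)$ has a section over $\Gamma.A$. By the preceding paragraph, this means the display fibration $f:\Gamma.A\twoheadrightarrow\Gamma$ represents an object of $\mathcal{D}_{\Gamma}$ and the display fibration $g:\Gamma.A.B\twoheadrightarrow\Gamma.A$ represents an object of $\mathcal{D}_{\Gamma.A}$. In the comprehension category $\fibmf$, the type $\sm{A}B\in\T_{\mathbf{f}}(\Gamma)$ is presented, by construction, by the composite $f\circ g:\Gamma.A.B\twoheadrightarrow\Gamma$. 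Therefore the required conclusion ``every good $\ismodal(\sm{A}B)$ has a section over $\Gamma$'' amounts to saying that $f\circ g$ represents an object of $\mathcal{D}_{\Gamma}$.

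The key step is then immediate: this is exactly the hypothesis that the reflective subfibration is composing, applied to $f\in\mathcal{D}_{\Gamma}$ and $g\in\mathcal{D}_{\Gamma.A}$. I would close by noting that weak stability is automatic, since the condition ``$f\circ g$ lies in $\mathcal{D}_{\Gamma}$'' is invariant under pullback along any morphism $\Delta\to\Gamma$ in $\M_{\mathbf{f}}$ (using stability of $\mathcal{D}$ under $(\infty,1)$-categorical pullback and the fact that pullbacks of fibrations along morphisms of fibrant objects compute the $(\infty,1)$-categorical pullback in $h\M$), which is precisely clause~\ref{item:rsf2} of Definition~\ref{defn:reflective-subfibration}.

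There is not really any hard step here; the main care required is bookkeeping, ensuring that ``$\sm{A}B$ is represented by the composite of the two display maps'' and that ``good $\ismodal$ having a section'' translates to ``membership in $\mathcal{D}$'' in a way compatible with reindexing. Once those bookkeeping points are in place, the $\Sigma$-closedness clause reduces word-for-word to the composing condition of Definition~\ref{defn:reflective-subfibration}, and the theorem follows from Theorem~\ref{thm:ttmc-rsu}.
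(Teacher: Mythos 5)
Your proof is correct and follows exactly the paper's approach: the paper's own proof is the single sentence ``Type-theoretic $\Sigma$-types are presented by composites of fibrations,'' and your argument is precisely the careful elaboration of that sentence, including the correct translation of ``good $\ismodal$ has a section'' into membership in $\mathcal{D}_\Gamma$ taken from the proof of \cref{thm:ttmc-rsu}.
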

\begin{proof}
  Type-theoretic $\Sigma$-types are presented by composites of fibrations.
\end{proof}

Inside type theory, we proved in \cref{sec:modal-refl-subun} that modalities, i.e.\ $\Sigma$-closed reflective subuniverses, are equivalent to stable orthogonal factorization systems.
The analogous ``external'' categorical fact is:

\begin{thm}
  A composing reflective subfibration of an $(\infty,1)$-category $\C$ is the same as a stable orthogonal factorization system $(\cL,\cR)$ on $\C$, where $\cR$ is the class of maps $f:y\to x$ that lie in $\D_x$.
\end{thm}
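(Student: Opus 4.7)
The plan is to adapt the internal arguments of \cref{sec:modal-refl-subun} (specifically \cref{thm:sofs_from_ssrs} in one direction and \cref{thm:subuniv-sofs}/\cref{lem:sofs_rfib} in the other) to the external $(\infty,1)$-categorical setting, using the slice categories $\C_{/x}$ in place of type families.

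Going from a composing reflective subfibration $\D$ to a stable orthogonal factorization system, I would define $\cR$ as stated in the theorem, and define $\cL$ to be the class of maps $f:y\to x$ whose reflection into $\D_x$ is an equivalence (the external analog of $\modal$-connected maps). For the factorization, any map $f:y\to x$ reflects into $\D_x$ as some $\modal_x f:\modal_x y\to x$ together with a unit $\modalunit^x_f:y\to\modal_x y$ over $x$; this gives a factorization $f = \modal_x f\circ\modalunit^x_f$ whose second factor lies in $\cR$ by construction. To see that the first factor lies in $\cL$, one applies property~\ref{item:rsf3} of \cref{defn:reflective-subfibration} (pullback-stability of the reflector) pulled back along $\modal_x f:\modal_x y\to x$ to identify the reflection of $\modalunit^x_f$ in $\D_{\modal_x y}$ with an identity, exactly as in the internal argument. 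Closure of $\cR$ under composition is the composing hypothesis; closure of $\cL$ under composition is the external translation of \cref{thm:rsu-compose-cancel}. Orthogonality and uniqueness of factorizations follow from the universal property of the reflector together with \cref{defn:reflective-subfibration}\ref{item:rsf1}. Stability of $\cL$ under pullback is precisely the content of \cref{defn:reflective-subfibration}\ref{item:rsf3}: pulling back a factorization $y\to\modal_x y\to x$ along $g:x'\to x$, property~\ref{item:rsf3} ensures that $g^*(\modal_x y)\simeq\modal_{x'}(g^*y)$, so the pulled-back first factor still lies in $\cL$.

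For the reverse direction, given a stable orthogonal factorization system $(\cL,\cR)$, I would define $\D_x\subseteq\C_{/x}$ to be the full sub-$(\infty,1)$-category on those maps that lie in $\cR$. This is reflective: any $f:y\to x$ has a reflection given by its $(\cL,\cR)$-factorization $y\xrightarrow{\cL}\modal_x y\xrightarrow{\cR}x$, and the universal property of this reflection follows from orthogonality ($\cL\perp\cR$ gives a contractible space of fillers for any square of a map in $\cL$ against a map in $\cR$). Pullback-stability of $\cR$ (the external analog of \cref{lem:ofs_rightstable}) gives property~\ref{item:rsf2}. Property~\ref{item:rsf3} --- that the reflector commutes with pullback --- is where the stability of $\cL$ is needed: pulling back the factorization $y\to\modal_x y\to x$ along $g:x'\to x$ produces a factorization $g^*y\to g^*\modal_x y\to x'$ whose first map lies in $\cL$ by stability and whose second map lies in $\cR$ by property~\ref{item:rsf2}, hence is the $(\cL,\cR)$-factorization of $g^*y\to x'$ by uniqueness. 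The composing condition is exactly closure of $\cR$ under composition, which holds in any orthogonal factorization system.

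Finally, I would verify that these two constructions are mutually inverse. Starting from $\D$, the $\cR$-maps obtained are exactly those lying in some $\D_x$, and reconstructing $\D_x$ as $\cR_{/x}$ recovers the original $\D_x$ by the characterization of modal objects via their slice. Starting from $(\cL,\cR)$, the reconstructed $\cR$ is its original self by definition, and the reconstructed $\cL$ coincides with the original by \cref{lem:ofs_lifting} (external version) since both are characterized by the left lifting property against $\cR$.

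I expect the main obstacle to be not the strategic outline, which mirrors the internal proof closely, but the bookkeeping required to verify that all the constructions and equivalences are homotopy-coherent in the $(\infty,1)$-categorical sense. In particular, the externalization of \cref{thm:rsu-compose-cancel} requires the pullback-decomposition of fibers of a composite, which in the $(\infty,1)$-setting is a statement about iterated pullbacks that must be combined with property~\ref{item:rsf3} via \cref{lem:sum_idempotent}'s external analog; making this precise is where the only real work lies.
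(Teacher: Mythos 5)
Your proposal takes a genuinely different route from the paper's: the paper offers no proof of this theorem at all, instead citing the $1$-categorical argument of~\cite[2.12]{cjkp:locstab-fact} and asserting that it carries over to $(\infty,1)$-categories, whereas you externalize the internal type-theoretic proofs (\cref{thm:sofs_from_ssrs} and its converse), replacing type families by slice categories. This is more self-contained, at the cost of the homotopy-coherence bookkeeping you acknowledge (contractibility of the space of factorizations, the external analogue of \cref{lem:sum_idempotent}, and so on). The reverse direction and the mutual-inverse check are fine as sketched.

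There is, however, one concretely wrong justification in the forward direction, and it sits at exactly the step that separates the composing case from the general one. To show that the unit $\eta\colon y\to\modal_x y$ lies in $\cL$ you appeal to \cref{defn:reflective-subfibration}\ref{item:rsf3} pulled back along $\modal_x f$. This cannot work: the pullback of $y\to x$ along $\modal_x f$ is $y\times_x\modal_x y$, which is not the object $(y,\eta)$ of $\mathcal{C}_{/\modal_x y}$ whose reflection you must compute; and, decisively, condition~\ref{item:rsf3} holds for arbitrary reflective subfibrations, while the units of a non-composing one are \emph{not} fiberwise connected --- internally this is precisely \cref{thm:ssrs-characterize}, which makes connectedness of the units equivalent to $\Sigma$-closedness, i.e.\ to the composing condition. (The paper's remark that composing subfibrations satisfying only \ref{item:rsf1} and~\ref{item:rsf2} already yield non-stable factorization systems says the same thing: the existence and uniqueness of factorizations, including membership of the unit in $\cL$, must come from composing alone, and \ref{item:rsf3} accounts only for pullback-stability of $\cL$, where you do use it correctly.) The repair is the external version of the argument for \cref{thm:ssrs-characterize}: writing $z\defeq\modal_x y$, the composing hypothesis puts the composite $\modal_z(y)\to z\to x$ in $\mathcal{D}_x$, so the unit $y\to\modal_z(y)$ factors through $\eta$ via some $s\colon z\to\modal_z(y)$; the two universal properties (of $\eta$ in $\mathcal{C}_{/x}$ and of $y\to\modal_z(y)$ in $\mathcal{C}_{/z}$) then show that $s$ and the structure map $\modal_z(y)\to z$ are inverse equivalences, so the reflection of $\eta$ in $\mathcal{D}_z$ is terminal as required.
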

\begin{proof}
  A proof for 1-categories can be found in~\cite[2.12]{cjkp:locstab-fact}, and essentially the same proof works as well for $(\infty,1)$-categories.
  More generally, it is shown there that a composing fiberwise-reflective subfibration (satisfying \cref{defn:reflective-subfibration}~\ref{item:rsf1} and~\ref{item:rsf2} but not necessarily~\ref{item:rsf3}) is the same as a not-necessarily-stable orthogonal factorization system.
\end{proof}

Stable orthogonal factorization systems on $(\infty,1)$-categories are also studied in~\cite{GepnerKock}.
In particular, their Theorem 4.10 that the right class of such a factorization system is ``local'' is an $(\infty,1)$-categorical analogue of our \cref{thm:subuniv-sofs} that a stable factorization system is determined by its modal types.

Factorization systems are more familiar categorically than reflective subfibrations.
But in \cref{sec:localiz-model} we will see that they can also be often constructed from reflective subcategories, which are even more familiar.

However, in the lex case, we can go right to the reflective subcategories:

\begin{defn}
  A \textbf{lex reflective subcategory} of an $(\infty,1)$-category $\C$ with finite limits is a full reflective subcategory $\D$ whose reflector $\modal$ preserves finite limits (equivalently, pullbacks, since any reflector preserves the terminal object).
\end{defn}

\begin{thm}
  A lex reflective subcategory of $\C$ induces a stable factorization system $(\cL,\cR)$ of $\C$, where $\cR$ consists of the morphisms $f:y\to x$ such that the naturality square of the reflector is a pullback:
  \[
  \begin{tikzcd}
    y \ar[r,"{\modalunit[y]}"] \ar[d,"f"'] & \modal y \ar[d,"\modal f"]\\
    x \ar[r,"{\modalunit[x]}"'] & \modal x
  \end{tikzcd}
  \]
  and $\cL$ consists of the morphisms $f$ such that $\modal f$ is an equivalence.
  Moreover, this sets up a bijection between lex reflective subcategories of $\C$ and stable factorization systems $(\cL,\cR)$ such that $\cL$ satisfies the 2-out-of-3 property (the nontrivial part of this being left cancellation: if $gf\in\cL$ and $g\in\cL$ then $f\in\cL$).
\end{thm}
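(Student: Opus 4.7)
The plan is to establish each direction of the bijection and then check that the two constructions are mutually inverse. For the first direction, given a lex reflective subcategory $\D$ with reflector $\modal$, I would define $\cL$ and $\cR$ as in the statement and construct the factorization of an arbitrary $f:y\to x$ as $y \to \modal y \times_{\modal x} x \to x$, using the naturality square of $\modalunit$ at $f$. The second factor lies in $\cR$ because $\modal$ preserves the defining pullback (using lexness) and the resulting naturality square is tautologically a pullback; the first factor lies in $\cL$ because applying $\modal$ yields a map $\modal y \to \modal(\modal y \times_{\modal x} x) \simeq \modal y \times_{\modal x} \modal x \simeq \modal y$ that is an equivalence. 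Orthogonality $\cL \perp \cR$ is then a direct consequence of the universal property of $\modalunit$, and stability of $\cL$ under pullback follows from lexness of $\modal$ combined with the fact that equivalences are stable under pullback. The 2-out-of-3 property of $\cL$ is automatic since equivalences satisfy 2-out-of-3.

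For the converse, given a stable factorization system $(\cL,\cR)$ with $\cL$ satisfying 2-out-of-3, I would define $\D$ as the full subcategory on objects $x$ whose terminal map $x\to 1$ is in $\cR$, and take the reflector to be $x \mapsto \modal x$ from the $(\cL,\cR)$-factorization $x \to \modal x \to 1$. The universal property follows from orthogonality and from the fact (provable from 2-out-of-3) that any map into $\D$ lies in $\cR$. The substantive step is to show $\modal$ preserves pullbacks. I would first prove the key lemma that $\cL$ coincides with the class of maps inverted by $\modal$: one inclusion is immediate from the universal property, while the reverse uses the $(\cL,\cR)$-factorization and the observation that an $\cR$-map between $\D$-objects which is $\modal$-inverted must already be an equivalence. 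Given this characterization, for a cospan $Y\to X\leftarrow Z$ with pullback $P$, the canonical comparison $\modal P \to \modal Y \times_{\modal X}\modal Z$ can be analyzed by a diagram chase: stability of $\cL$ under pullback together with 2-out-of-3 forces this comparison to lie simultaneously in $\cL$ and in $\cR$, hence be an equivalence.

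Finally, checking that the two constructions are mutually inverse is routine. Starting from $\D$ and returning, the $\cR$-modal objects recovered are precisely those with invertible unit, which is $\D$; starting from $(\cL,\cR)$ and returning, the recovered $\cR$-class is characterized by the naturality square being a pullback, which by the uniqueness of $(\cL,\cR)$-factorizations and the key lemma coincides with the original $\cR$.

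The hard part will be the key lemma identifying $\cL$ with the $\modal$-inverted maps, and its consequence that $\modal$ is lex; this is the exact place where the 2-out-of-3 hypothesis on $\cL$ is essential. Without it one only obtains the semi-left-exact situation of Cassidy--H\'ebert--Kelly, in which the reflector preserves finite products but not all finite limits. A minor additional subtlety, specific to the $(\infty,1)$-categorical setting, is that the factorization $y \to \modal y \times_{\modal x} x \to x$ must be taken in the homotopy-coherent sense, but this is handled automatically by working in $h\M$ rather than syntactically, and the uniqueness of $(\infty,1)$-categorical pullbacks makes the comparison maps used in the argument well-defined up to contractible choice.
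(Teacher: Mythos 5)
Your outline is correct and is essentially the argument the paper has in mind: the paper's own ``proof'' of this theorem is just a citation to Cassidy--H\'ebert--Kelly (their Theorems 2.3 and 4.7) with the remark that the same proofs carry over to $(\infty,1)$-categories, and what you have written is that argument spelled out. One sub-step is misstated, although the hypotheses you already have in hand repair it. In proving the key lemma that every $\modal$-inverted map lies in $\cL$, you factor $f = r\circ l$ with $l\in\cL$ and $r\in\cR$ and then appeal to ``an $\cR$-map between $\mathcal{D}$-objects which is $\modal$-inverted is an equivalence''; but the map $r\colon I\to b$ produced by the factorization need not have domain or codomain in $\mathcal{D}$, so that observation does not apply as stated. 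The correct repair is precisely where 2-out-of-3 enters: naturality gives $\modalunit[b]\circ r = \modal r\circ\modalunit[I]$, which lies in $\cL$ since $\modalunit[I]\in\cL$ and $\modal r$ is an equivalence; since also $\modalunit[b]\in\cL$, left cancellation yields $r\in\cL\cap\cR$, so $r$ is an equivalence and $f$, like $l$, is in $\cL$. With that correction the remainder of your plan --- in particular the pullback-preservation argument via stability of $\cL$ under pullback together with 2-out-of-3, and the verification that the two constructions are mutually inverse --- goes through as you describe.
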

\begin{proof}
  Proofs for 1-categories can be found in~\cite[Theorems 4.7 and 2.3]{chk:reflocfact}, and essentially the same proofs work as well for $(\infty,1)$-categories.
\end{proof}

\begin{thm}\label{thm:ttmc-lex}
  If $\M$ is a type-theoretic model category equipped with a lex reflective subcategory of $h\M$, then $\fibmf$ has a weakly stable lex modality.
\end{thm}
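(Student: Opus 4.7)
The plan is to reduce to \cref{thm:ttmc-mod} by first producing a composing reflective subfibration of $h\M$ from the lex reflective subcategory, then separately verifying the additional lex condition from \cref{fig:jd-lex} using the 2-out-of-3 property on the left class of the associated factorization system.

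First, the theorem immediately before \cref{thm:ttmc-lex} shows that a lex reflective subcategory of $h\M$ corresponds to a stable orthogonal factorization system $(\cL,\cR)$ on $h\M$ in which $\cL$ satisfies the 2-out-of-3 property. For each object $x\in h\M$, let $\D_x\subset (h\M)_{/x}$ be the full sub-$(\infty,1)$-category spanned by the $\cR$-maps into $x$, with reflector sending $f:y\to x$ to the $\cR$-part $\modal_x f:\mathrm{im}(f)\to x$ of the $(\cL,\cR)$-factorization of $f$. Pullback stability of $\cR$ (\cref{lem:ofs_rightstable}) ensures that each $f^*:(h\M)_{/y}\to(h\M)_{/x}$ restricts to a functor $\D_y\to\D_x$, while pullback stability of $\cL$ ensures that pullback preserves $(\cL,\cR)$-factorizations and so gives condition~(iii) of \cref{defn:reflective-subfibration}. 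Composing follows from closure of $\cR$ under composition. Applying \cref{thm:ttmc-mod} yields a weakly stable modality on $\fibmf$.

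It remains to verify the lex clause: given $A\in\T(\Gamma)$ and $B\in\T(\Gamma.A)$ with good reflections such that $\modal A$ and $\modal(\sum_A B)$ are contractible over $\Gamma$, we must show $\modal B$ is contractible over $\Gamma.A$. Under the correspondence above, ``$\modal A$ contractible over $\Gamma$'' translates to the display map $\Gamma.A\twoheadrightarrow\Gamma$ representing a morphism in $\cL$ in $h\M$, since its $(\cL,\cR)$-factorization has second factor an equivalence; similarly $\Gamma.A.B\twoheadrightarrow\Gamma$ lies in $\cL$. These compose as $\Gamma.A.B\twoheadrightarrow\Gamma.A\twoheadrightarrow\Gamma$ through the display of $B$ over $\Gamma.A$, so the 2-out-of-3 property for $\cL$ forces $\Gamma.A.B\twoheadrightarrow\Gamma.A$ to be in $\cL$, which is exactly the statement that $\modal B$ is contractible over $\Gamma.A$.

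The main obstacle is bookkeeping around weak stability and the dictionary between the type-theoretic notion of contractibility of $\modal A$ and the condition that the display map lies in $\cL$ in $h\M$; this rests on the fact that the good reflections produced by the reflective subfibration are determined up to equivalence by their $(\infty,1)$-categorical universal property, and that the fibrations and pullbacks in $\M$ model the corresponding limits in $h\M$ (already used implicitly in \cref{thm:ttmc-rsu,thm:ttmc-mod}). Once these are in hand, no further $(\infty,1)$-categorical input is required beyond the 2-out-of-3 property of $\cL$ that is built into the notion of lex reflective subcategory.
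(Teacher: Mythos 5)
Your proof is correct, and its overall shape---pass from the lex reflective subcategory to the associated stable factorization system $(\cL,\cR)$ with left-cancellable $\cL$, reduce to \cref{thm:ttmc-mod} for the weakly stable modality, and then check the lex clause separately---matches the paper's. The one genuine divergence is in how the lex clause is verified. You translate ``$\modal A$ is contractible over $\Gamma$'' into ``$\Gamma.A\twoheadrightarrow\Gamma$ lies in $\cL$'' (correctly: a map lies in $\cL$ exactly when the $\cR$-part of its factorization is an equivalence), and then invoke left cancellation for $\cL$ as a black box, which the theorem immediately preceding \cref{thm:ttmc-lex} supplies as part of the characterization of lex reflective subcategories. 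The paper instead unwinds what that cancellation amounts to in this instance: it exhibits the fiberwise reflections $\Gamma.\modal A$, $\Gamma.\modal(\sm{A}B)$, and $\Gamma.A.\modal B$ as homotopy pullbacks built from the absolute reflector $r$ applied to $\Gamma.A.B\to\Gamma.A\to\Gamma$, and concludes using 2-out-of-3 for weak equivalences together with pullback-stability of acyclic fibrations. Your route is slightly more economical and stays at the level of the abstract factorization system; the paper's makes explicit where the pullback description of the fiberwise reflector (i.e.\ lexness of $r$) actually enters. Both arguments are complete, and your bookkeeping of the dictionary between type-theoretic contractibility of reflections and membership in $\cL$ is exactly the point that needs care and is handled correctly.
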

\begin{proof}
  It suffices to show that if a stable factorization system satisfies 2-out-of-3, then the weakly stable modality constructed in \cref{thm:ttmc-mod} is lex.
  Now a fibration $\Gamma.A\twoheadrightarrow\Gamma$ is contractible over $\Gamma$ just when it is an acyclic fibration, and $\Sigma$-types are given by composition of fibrations.
  Now, given fibrations $f:\Gamma.A.B\twoheadrightarrow\Gamma.A$ and $g:\Gamma.A\twoheadrightarrow\Gamma$, by construction of the factorization system we have the following homotopy pullback squares, where for clarity we write $r$ for the $(\infty,1)$-categorical reflector:
  \[
  \begin{tikzcd}
    \Gamma.A.\modal B \ar[r] \ar[d] & \Gamma.\modal(\sm{A}B) \ar[r] \ar[d] & r(\Gamma.A.B)\ar[d]\\
    \Gamma.A \ar[r] & \Gamma.\modal A \ar[r] \ar[d] & r(\Gamma.A) \ar[d]\\
    & \Gamma \ar[r] & r\Gamma.
  \end{tikzcd}
  \]
  In particular, therefore, if $\Gamma.\modal A \to \Gamma$ and $\Gamma.\modal(\sm{A}B) \to \Gamma$ are acyclic, then by 2-out-of-3 and pullback so is $\Gamma.A.\modal B \to \Gamma.A$, as desired.
\end{proof}

\begin{eg}
  In \cref{thm:ttfc-gluing} we saw that when a gluing construction is performed at the fibration-category level, the corresponding open and closed modalities  can be constructed at the same level.
  However, these modalities are still present even if the gluing happens only at the $\infty$-categorical level.
  Suppose $F:\C\to\D$ is a finite-limit-preserving functor of $(\infty,1)$-categories with finite limits and colimits.
  Then the comma $(\infty,1)$-category $(\D\downarrow F)$, whose objects are triples $(x\in \C, y\in \D, f:y\to Fx)$, includes $\C$ and $\D$ as full subcategories: $x\in\C$ is identified with $(x,Fx,1_{Fx})$ while $y\in\D$ is identified with $(1,y,!)$.
  The forgetful functors $(\D\downarrow F) \to \C$ and $(\D\downarrow F) \to \D$ are left exact left adjoints to these inclusions, so we have two lex reflective subcategories.
  Hence, if $(\D\downarrow F) = h\M$, we have two induced lex modalities on $\fibmf$, which we can identify internally with the open and closed modalities associated to $Q = (1,0,!)$.
\end{eg}

\subsection{Modalities in syntactic categories}%
\label{sec:syntactic}

We now ask the opposite question: does our syntactic kind of modality induce a higher-categorical structure?
For this the basic tool is~\cite{kapulkin:lccqcat-tt}, which shows that given any contextual category $\C$ with $\Sigma$-types, $\Pi$-types, and identity types, the $(\infty,1)$-category $h\C$ obtained by localizing $\C$ at its type-theoretically defined equivalences is locally cartesian closed.
Moreover, the fibrant slice $\Cf/X$ (consisting of the fibrations, i.e.\ composites of dependent projections, with codomain $X$) similarly localizes to the slice $(\infty,1)$-category $h\C_{/X}$, and the $\Pi$-types in $\C$ present the dependent exponentials in $h\C$.

A contextual category is not quite the same as a comprehension category.
However, every contextual category has a canonical split comprehension category structure, while every split comprehension category has a ``contextual core'' obtained by repeatedly extending the terminal object by type comprehensions.
Moreover, in the case of the syntactic category, and also in the case of $\fibmf$, these two operations preserve the base category up to equivalence (though not isomorphism) of 1-categories.

In particular, once we apply the coherence theorem of~\cite{lw:localuniv} to $\fibmf$ to get a split comprehension category and hence a contextual category, the equivalences therein coincide with the model-categorical right homotopy equivalences between fibrant objects in $\M$.
If all objects of $\M$ are cofibrant, as is usually the case in type-theoretic model categories, then these are also the model-categorical weak equivalences, and so the contextual-category localization of~\cite{kapulkin:lccqcat-tt} coincides with the model-categorical localization of $\M$.

We now proceed to consider modalities and their kin.

\begin{thm}\label{thm:cc-rsf}
  If $\C$ is a contextual category with a (strictly stable) reflective subuniverse, then $h\C$ has a reflective subfibration.
\end{thm}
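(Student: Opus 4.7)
For each object $\Gamma \in \C$, I will define $\D_\Gamma$ to be the full sub-$(\infty,1)$-category of $h\C_{/\Gamma}$ spanned by those fibrations (that is, types $A$ in context $\Gamma$) such that the mere proposition $\ismodal(A)$ has a section. The universal closure rule for $\ismodal$ under equivalences (see \cref{fig:jd-rsu}) together with the identification of type-theoretic equivalences with equivalences in $h\C$ from~\cite{kapulkin:lccqcat-tt} ensures that this is well-defined at the level of $h\C_{/\Gamma}$. The candidate reflector $\modal_\Gamma$ sends $A$ to $\modal A$ (which lies in $\D_\Gamma$ since $\ismodal(\modal A)$ always has a section), with candidate unit given by $\modalunit[A]:A\to\modal A$.

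\textbf{Reflectivity.} To show that $\modal_\Gamma$ is a left adjoint to the inclusion $\D_\Gamma\hookrightarrow h\C_{/\Gamma}$, I must exhibit for each $A\in h\C_{/\Gamma}$ and each $B\in\D_\Gamma$ a natural equivalence of mapping spaces
\[ \mathrm{Map}_{h\C_{/\Gamma}}(\modal A, B) \simeq \mathrm{Map}_{h\C_{/\Gamma}}(A, B), \]
induced by precomposition with $\modalunit[A]$. By~\cite{kapulkin:lccqcat-tt}, these mapping spaces are represented by the local exponentials $B^{\modal A}$ and $B^A$ in $h\C_{/\Gamma}$, which in turn are presented by the non-dependent $\Pi$-types in $\C_{/\Gamma}$. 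The rule for the reflective subuniverse, interpreted in the slice over $\Gamma$, asserts precisely that the precomposition map $B^{\modal A}\to B^A$ is a type-theoretic equivalence whenever $\ismodal(B)$ has a section. This is an equivalence in $h\C_{/\Gamma}$, and taking $\mathrm{Map}_{h\C}(\Gamma,-)$ preserves it. Functoriality of $\modal_\Gamma$ on morphisms and triangle identities then follow from essential uniqueness of extensions, exactly as for any reflective subcategory.

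\textbf{Pullback stability.} Conditions (ii) and (iii) of \cref{defn:reflective-subfibration} follow almost for free from strict stability. For a morphism $f:\Delta\to\Gamma$ in $\C$ and a type $A$ in context $\Gamma$, strict stability of $\ismodal$ gives a judgmental identity $f^*(\ismodal(A))=\ismodal(f^*A)$, so $f^*$ sends $\D_\Gamma$ into $\D_\Delta$. Similarly, strict stability of $\modal$ and $\modalunit$ gives $f^*(\modal A)=\modal(f^*A)$ together with $f^*(\modalunit[A])=\modalunit[f^*A]$ on the nose. In particular, the canonical comparison map $\modal(f^*A)\to f^*(\modal A)$ produced by the universal property is a judgmental identity, hence an equivalence in $h\C_{/\Delta}$.

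\textbf{Main obstacle.} The content of the argument is not in the reflective-subuniverse axioms themselves but in translating them to the $(\infty,1)$-categorical language of $h\C$. Concretely, the proof rests on three facts about the localization $\C\to h\C$: that type-theoretic equivalences in $\C$ coincide with $\infty$-equivalences in $h\C$ (so $\D_\Gamma$ is closed under equivalence); that mapping spaces in $h\C_{/\Gamma}$ are modeled by $\Pi$-types and local exponentials; and that pullback functors $f^*$ in $h\C$ are presented by substitution in $\C$. All three are furnished by~\cite{kapulkin:lccqcat-tt}. Once these are in hand, the remainder of the argument is a strict-stability analogue of (the inverse direction of) \cref{thm:ttmc-rsu}, and the construction could equally be formulated directly at the level of split comprehension categories via the contextual core correspondence.
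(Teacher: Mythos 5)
Your proposal is correct and follows essentially the same route as the paper's proof: define $\D_\Gamma$ as the types whose $\ismodal$ has a section, obtain the external universal property of $\modalunit$ from the internal one via (non-dependent) $\Pi$-types presenting mapping spaces in $h\C_{/\Gamma}$, and derive conditions (ii) and (iii) of \cref{defn:reflective-subfibration} from strict stability of the type operations under reindexing. Your write-up just spells out a few steps (closure of $\D_\Gamma$ under equivalence, the judgmental comparison map) that the paper leaves implicit.
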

\begin{proof}
  For $\Gamma\in\C$, define $\D_\Gamma$ to be the full subcategory of $h\C_{/\Gamma}$ on the morphisms presented (up to equivalence) by the comprehensions $\Gamma.A\to \Gamma$ such that $\ismodal(A)$ has a section over $\Gamma$.
  The reflectors are given by the type operations $\modal$ and $\modalunit$, which are pullback-stable in $h\C$ since the type operations on $\C$ are reindexing-stable.
  For the universal property, if $A$ is arbitrary and $B$ is modal over $\Gamma$, then we have
  \[ h\C_{/\Gamma}(A,B) \simeq h\C_{/\Gamma}(1,\prd{A}B) \simeq h\C_{/\Gamma}(1,\prd{\modal A}B) \simeq h\C_{/\Gamma}(\modal A,B)
  \]
  since by assumption, precomposition with $\modalunit$ induces an internal equivalence $\prd{\modal A} B \simeq \prd{A} B$, which is therefore also an external equivalence of local exponentials in $h\C$.
\end{proof}

\begin{thm}\label{thm:cc-ofs}
  If $\C$ is a contextual category with a (strictly stable) modality, then $h\C$ has a stable orthogonal factorization system.
\end{thm}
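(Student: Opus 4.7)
The plan is to deduce the theorem directly from the preceding \cref{thm:cc-rsf} together with the equivalence, recorded earlier in this appendix, between composing reflective subfibrations and stable orthogonal factorization systems on an $(\infty,1)$-category. Since a modality is in particular a $\Sigma$-closed reflective subuniverse, \cref{thm:cc-rsf} already supplies a reflective subfibration $\D$ on $h\C$ whose fiber $\D_\Gamma \subseteq h\C_{/\Gamma}$ consists of those morphisms presented, up to equivalence, by a comprehension $\Gamma.A \to \Gamma$ admitting a section of $\ismodal(A)$. It will then suffice to verify that $\D$ is composing; the cited correspondence automatically produces the desired stable orthogonal factorization system.

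First I would reduce the composingness check to the case of composable comprehensions. Given a composable pair $g:z\to y$, $f:y\to x$ in $h\C$ with $f\in \D_x$ and $g\in\D_y$, the standard manipulations used already in the construction of $h\C$ (cf.~\cite{kapulkin:lccqcat-tt}) and in the proof of \cref{thm:cc-rsf} allow one to choose a comprehension $\Gamma.A \to \Gamma$ presenting $f$ and, working over $\Gamma.A$, a further comprehension $\Gamma.A.B \to \Gamma.A$ presenting $g$. The invariance rule of \cref{fig:jd-rsu} then lets us transport the witnesses of $\ismodal$ across the chosen equivalences, so that we obtain sections of $\ismodal(A)$ over $\Gamma$ and of $\ismodal(B)$ over $\Gamma.A$.

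Next I would invoke the $\Sigma$-closedness rule of \cref{fig:jd-mod} in the context $\Gamma$, applied to $A$ and the family $B$ over $A$, to produce a section of $\ismodal(\sm{a:A}B)$ over $\Gamma$. Since, in a contextual category, the composite $\Gamma.A.B \to \Gamma.A \to \Gamma$ is presented up to canonical isomorphism by the comprehension $\Gamma.\sm{a:A}B \to \Gamma$ of the $\Sigma$-type, we conclude that $fg$ lies in $\D_\Gamma$. Together with the trivial observation that identities lie in each $\D_\Gamma$ (since $\ismodal(\unit)$ has a section, as $\unit$ is always modal), this shows that $\D$ is composing, completing the argument.

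The main obstacle is not conceptual but a matter of coherence bookkeeping: one must be precise that membership in $\D_\Gamma$ of an arbitrary morphism in $h\C_{/\Gamma}$ can always be witnessed by a strict comprehension representative, and that composition of such representatives in $\C$ correctly models composition in $h\C_{/\Gamma}$. Both facts are already implicit in the setup of \cref{thm:cc-rsf} and in the localization construction of~\cite{kapulkin:lccqcat-tt}, so once they are extracted the proof is just a direct application of the $\Sigma$-closedness rule.
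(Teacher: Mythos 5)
Your proposal is correct and follows essentially the same route as the paper's proof: reduce to showing that the reflective subfibration from \cref{thm:cc-rsf} is composing, represent the two maps by comprehensions $\Gamma.A\to\Gamma$ and $\Gamma.A.B\to\Gamma.A$ (using \cite[Theorem 3.1]{kapulkin:lccqcat-tt}), and apply $\Sigma$-closedness to identify the composite with the modal comprehension $\Gamma.\sm{A}B\to\Gamma$. The coherence points you flag are exactly the ones the paper also relies on implicitly.
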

\begin{proof}
  It suffices to show that the reflective subfibration of \cref{thm:cc-rsf} is composing.
  Suppose $Y\to \Gamma$ and $Z\to Y$ are in $\D_\Gamma$ and $\D_Y$ respectively.
  As a special case of~\cite[Theorem 3.1]{kapulkin:lccqcat-tt} (along with the existence of $\Sigma$-types), every object of $h\C_{/\Gamma}$ is equivalent to one of the form $\Gamma.A \to \Gamma$ for a type $A$ in context $\Gamma$.
  Thus we have $Y\simeq \Gamma.A$ over $\Gamma$, and $A$ is modal.
  Similarly, pulling back $Z\to Y$ to an object of $h\C_{/\Gamma.A}$ and applying the same fact, we get that it is equivalent to a modal type $B$ in context $\Gamma.A$.
  Now the composite $\Gamma.A.B\to \Gamma.A\to \Gamma$ is isomorphic to $\Gamma.\sm{A}B \to \Gamma$, which is modal by assumption.
  Thus, the equivalent composite $Z\to Y\to \Gamma$ is also modal.
\end{proof}

\begin{thm}\label{thm:cc-lex}
  If $\C$ is a contextual category with a (strictly stable) lex modality, then $h\C$ has a lex reflective subcategory.
\end{thm}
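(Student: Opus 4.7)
The plan is to take $\D \subseteq h\C$ to be the full subcategory on objects $X$ for which the terminal projection $X \to 1$ lies in $\cR$, where $(\cL,\cR)$ is the stable orthogonal factorization system on $h\C$ produced by \cref{thm:cc-ofs}; equivalently, $X$ lies in $\D$ iff the closed type $\ismodal(X)$ admits a section in $\C$. Reflectivity is then immediate: factor $X \to 1$ as $X \to \modal X \to 1$ with the first map in $\cL$ and the second in $\cR$, so that $\modal X \in \D$. The universal property of mapping from $\modal X$ into any $D \in \D$ follows from \cref{lem:diagonal_fillers} applied to squares whose right edge is $D \to 1$ and whose left edge is $X \to \modal X$.

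It remains to show that the reflector $\modal : h\C \to \D$ is left exact, equivalently that it preserves pullbacks. This is where the hypothesis that our modality is lex (rather than merely a modality) enters, and the appropriate tool is \cref{thm:lex-modalities}\ref{item:mu3}: internally, a lex modality's modal operator preserves pullbacks. Given a cospan $X \to Z \leftarrow Y$ in $h\C$, I would appeal to~\cite[Theorem~3.1]{kapulkin:lccqcat-tt} to represent it, up to equivalence, by an honest cospan of display maps in the contextual category $\C$; the pullback in $h\C$ is then presented by the internal $\Sigma$/identity-type construction. Since $\modal$ on $\C$ satisfies the strictly stable rules of a lex modality, the internal \cref{thm:lex-modalities}\ref{item:mu3} gives an internal equivalence between $\modal$ applied to the syntactic pullback and the syntactic pullback of the three $\modal$'s. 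Internal equivalences in $\C$ are by construction inverted in $h\C$, so this descends to an external equivalence in $h\C$, witnessing preservation of the pullback by the reflector.

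The main obstacle is the careful verification that the internal presentation of pullbacks and of the modal operator in $\C$ is coherent with the corresponding external constructions in $h\C$. This uses three translations: that $\Sigma$- and identity-types in $\C$ present homotopy pullbacks in $h\C$ (via the results of~\cite{kapulkin:lccqcat-tt} together with the computation of a pullback as a $\Sigma$-of-identity-type); that the type-theoretic equivalences in $\C$ are precisely the maps inverted in the localization $h\C$; and that the syntactic modal operator $\modal$ on $\C$ directly presents the reflector into $\D$ up to equivalence, by the universal property established in the proof of \cref{thm:cc-rsf}. Modulo these translations, left exactness of the reflector is inherited directly from the internal \cref{thm:lex-modalities}.
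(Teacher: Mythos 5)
Your proposal is correct in outline, but it takes a genuinely different and more laborious route than the paper. The paper never verifies pullback-preservation of the reflector directly. Instead it uses the dictionary recorded earlier in the appendix (from~\cite[Theorems 4.7 and 2.3]{chk:reflocfact}): lex reflective subcategories of an $(\infty,1)$-category correspond exactly to stable orthogonal factorization systems whose left class satisfies 2-out-of-3, the only nontrivial part being left cancellation. Since \cref{thm:cc-ofs} already equips $h\mathcal{C}$ with a stable factorization system, the whole proof reduces to: if $\Gamma.A\to\Gamma$ and the composite $\Gamma.A.B\to\Gamma.A\to\Gamma$ are in $\cL$, so is $\Gamma.A.B\to\Gamma.A$. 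Unwinding the description of $\cL$ (fiberwise reflection contractible), this is \emph{verbatim} the judgmental lexness rule of \cref{fig:jd-lex}, i.e.\ \cref{thm:lex-modalities}\ref{item:mu1} — a two-line argument. Your route instead invokes \cref{thm:lex-modalities}\ref{item:mu3} internally and transfers it externally. This can be made to work, but it costs more: you must first run the internal implication from \ref{item:mu1} to \ref{item:mu3} in the universe-free judgmental setting (possible, since that portion of the equivalence chain avoids \ref{item:mu2} and needs only the ``univalence-reductions'' the appendix describes, but it is not free), and you must then discharge the coherence obligations you list — that $\Sigma$-of-identity-type pullbacks of rechosen display-map representatives present external pullbacks, and that the internally defined $\modal f$ presents the external reflector's action on morphisms. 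None of these is a fatal gap, but each is exactly the kind of bookkeeping the factorization-system characterization is designed to avoid; the paper's choice of \ref{item:mu1} as the judgmental form of lexness is what makes its short proof possible. Also note that your first paragraph (constructing $\mathcal{D}$ and its reflector) re-derives what \cref{thm:cc-rsf,thm:cc-ofs} already provide, so in the paper's organization only the lexness step is at issue.
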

\begin{proof}
  It suffices to show that the $\cL$-maps in $h\C$ are left cancellable.
  As in \cref{thm:cc-ofs}, we may assume our maps are of the form $\Gamma.A\to \Gamma$ and $\Gamma.A.B\to \Gamma.A$, with $\Gamma.A\to \Gamma$ and the composite $\Gamma.A.B \to \Gamma.A \to \Gamma$ being $\cL$-maps.
  Now the $\cL$-maps are those whose fiberwise reflection is terminal, which is to say that $\modal A$ and $\modal (\sm{A} B)$ are contractible in context $\Gamma$.
  By assumption, therefore $\modal(B)$ is contractible in context $\Gamma.A$, which is to say that $\Gamma.A.B\to\Gamma.A$ is an $\cL$-map.
\end{proof}

It is straightforward to check that if we start from a reflective subfibration of a type-theoretic model category $\M$, construct a reflective subuniverse on $\fibmf$, and then pass back to $h\fibmf$, which is equivalent to $\M$, we get the same subfibration we started with.

\subsection{Localizations in model categories}%
\label{sec:localiz-model}

Now suppose $\M$ is an \emph{excellent model category} in the sense of~\cite{ls:hits}, i.e.\ in addition to being type-theoretic, it is locally presentable,  cofibrantly generated, simplicial, simplicially locally cartesian closed, and every monomorphism is a cofibration (and in particular, every object is cofibrant).
This implies that $h\M$ is a locally presentable locally cartesian closed $(\infty,1)$-category; and conversely every locally presentable locally cartesian closed $(\infty,1)$-category arises as $h\M$ for some excellent model category $\M$.
Moreover, by the methods of~\cite{ls:hits}, $\fibmf$ also models a localization higher inductive type as in \cref{sec:localization}.\footnote{Specifically~\cite[Example 12.21]{ls:hits} discusses localization using bi-invertibility instead of path-splitness; our $\mathcal{J}_F X$ can be obtained by simply dropping the extra two constructors.
  Pushouts are constructed in~\cite[\S4--6]{ls:hits}, which we can then combine with $\mathcal{J}_F X$ to give our $\mathcal{L}_F X$.}
Thus we have:

\begin{thm}
  For any excellent model category $\M$, and any map $F$ between fibrations over a fibrant object, the comprehension category $\fibmf$ has a weakly stable reflective subuniverse that is generated by $F$.
\end{thm}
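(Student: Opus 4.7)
The plan is to leverage the fact, noted just before the theorem, that excellent model categories model the higher inductive type $\mathcal{L}_F$ of \cref{sec:localization} via the methods of~\cite{ls:hits}. Writing $A$ for the fibrant base object of $F$ and $B,C$ for its domain and codomain fibrations over $A$ (so that $F$ presents a family of maps $B(a)\to C(a)$ indexed by $a:A$), the first step is to define, for each fibration $\Gamma.X\twoheadrightarrow\Gamma$, a good modal predicate $\Gamma.\ismodal(X)\twoheadrightarrow\Gamma$ presenting the mere proposition
\[
\textstyle\prd{a:A}\isequiv\bigl((\blank\circ F(a)):(C(a)\to X)\to (B(a)\to X)\bigr),
\]
where $A$, $B$, $C$, and $F$ are implicitly reindexed along the canonical map $\Gamma\to\diamond$. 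This type is built uniformly from $\Pi$-, identity-, and $\Sigma$-types already available in $\fibmf$, is strictly stable under substitution, and agrees on the nose with the formula of \cref{fig:jd-gen}, giving generation by $F$ for free.

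Next I would construct the reflection. For each $\Gamma.X\twoheadrightarrow\Gamma$, the fibration $\Gamma.\modal X\twoheadrightarrow\Gamma$ and its unit $\modalunit[X]:\Gamma.X\to\Gamma.\modal X$ are to be obtained by applying the localization HIT $\mathcal{L}_F$ fiberwise over $\Gamma$, which is modeled in $\M$ by the Lumsdaine--Shulman construction of~\cite{ls:hits} (pushouts combined with a direct analogue of their construction for the simpler $\mathcal{J}_F$). The internal proofs of \cref{lem:islocal_localization} and \cref{thm:local-ump} use only the induction principle of this HIT together with $\Pi$-, $\Sigma$-, and identity-types, so interpreting them in $\fibmf$ shows that $\ismodal(\modal X)$ has a section and that precomposition with $\modalunit[X]$ is an equivalence $(\modal X\to Y)\to(X\to Y)$ whenever $Y$ has a good $\ismodal$; restricting to sections over $\Gamma$ yields the required equivalence of non-dependent $\Pi$-types. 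The equivalence-invariance of $\ismodal$ is immediate from its defining formula.

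The main obstacle will be weak stability of the reflection under reindexing: the HIT construction of $\mathcal{L}_F$ commutes with substitution only up to canonical equivalence, not strictly. Fortunately, the notion of ``good reflection'' is designed precisely to tolerate this. Any pair $(\Gamma.\modal X,\modalunit[X])$ with the universal mapping property above qualifies as good, and the pullback of a good reflection along any morphism $\Delta\to\Gamma$ of fibrant objects is again good, because in an excellent model category pullback along fibrations preserves weak equivalences (by right properness) and the homotopy colimits used to assemble the HIT (by simplicial local cartesian closure), hence preserves the universal property characterizing the reflection. Combined with the strict stability of $\ismodal$, this yields a weakly stable reflective subuniverse generated by $F$ in the sense of \cref{fig:jd-gen}.
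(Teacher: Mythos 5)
Your proposal is correct and takes essentially the same route as the paper, whose proof is a single sentence: define $\ismodal(X)$ so that the rule of \cref{fig:jd-gen} holds, and build the reflection from the higher inductive localization of \cref{sec:localizing} as modeled via~\cite{ls:hits}. You have simply filled in the details (interpretation of \cref{lem:islocal_localization,thm:local-ump}, and the weak-stability discussion) that the paper leaves implicit, consistent with its stated policy of being sketchy about coherence.
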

\begin{proof}
  We define $\ismodal(X)$ to make \cref{fig:jd-gen} true, and construct the rest of the structure from the higher inductive localization as in \cref{sec:localizing}.
\end{proof}

On the other hand, a full subcategory $\D$ of a (usually locally presentable) $(\infty,1)$-category $\C$ is said to be \textbf{accessible} if there is a family  $F$ of maps in $\C$ such that $X\in \D$ if and only if the map on hom-spaces $\C(C,X) \xrightarrow{\C(f,X)} \C(B,X)$ is an equivalence for each $(f:B\to C)\in F$.
Local presentability of $\C$ implies that any accessible subcategory is reflective.

Now, if we have an accessible subcategory $\D$ of $h\M$, generated by a family of maps ${\{ f_i : B_i \to C_i \}}_{\in I}$, we can form their coproduct in $h\M$:
\[
\begin{tikzcd}
  \coprod_i B_i \ar[rr,"\coprod_i f_i"] \ar[dr] && \coprod_i C_i \ar[dl] \\ & \coprod_i 1
\end{tikzcd}
\]
and represent this by a map between fibrations over fibrant objects in $\M$.
Type-theoretically this represents a family of maps $a:A \vdash F_a : B(a) \to C(a)$, so we can localize at it to obtain a reflective subuniverse, corresponding to a reflective subfibration of $h\M$.

Of course, the fiber over $1$ of a reflective subfibration is a reflective subcategory; but in general this fiber will \emph{not} be the same as the accessible subcategory $\D$ generated by localizing at $F$ in the usual $(\infty,1)$-categorical sense.
The issue is that our higher inductive localization is an \emph{internal} localization: the local types are those for which precomposition induces equivalences on internal exponential objects, not just external hom-spaces.
To see that this makes a difference, note that we showed in \cref{sec:ssrs} that the reflector of any reflective subuniverse preserves products; but there are certainly accessible reflective subcategories whose reflectors do not preserve products.

However, if our original accessible subcategory \emph{does} have this property, and moreover $h\M$ is extensive (so that maps over coproducts faithfully represent external families of maps), then we can show that the two subcategories do coincide.
First we need some lemmas.

\begin{lemma}\label{thm:exp-ideal}
  For a reflective subcategory $\D$ of a cartesian closed $(\infty,1)$-category $\C$, the following are equivalent:
  \begin{enumerate}
  \item The reflector $\C\to\D$ preserves finite (equivalently, binary) products.
  \item $\D$ is an \emph{exponential ideal}, i.e.\ if $X\in\D$ and $Y\in\C$ then $X^Y\in\D$.
  \end{enumerate}
\end{lemma}
\begin{proof}
  This is a standard result for 1-categories (see~\cite[A4.3.1]{johnstone:elephant}), and the same proof applies to $(\infty,1)$-categories.
\end{proof}

We say that an $(\infty,1)$-category is \textbf{(infinitary) extensive}~\cite{clw:ext-dist} if it has small coproducts that are disjoint and pullback-stable.
This is equivalent to saying that the functor
\[ \C_{/\coprod_i A_i} \to \prod_i \C_{/A_i}, \]
induced by pullback along the coproduct injections, is an equivalence of $(\infty,1)$-categories.
In particular, for any family of maps $f_i : A_i \to B_i$ we have pullback squares
\[
\begin{tikzcd}
  A_i \ar[r] \ar[d,"f_i"'] & \coprod_i A_i \ar[d,"\coprod_i f_i"]\\
  B_i \ar[r] & \coprod_i B_i.
\end{tikzcd}
\]
Any $\infty$-topos in the sense of~\cite{lurie2009higher} is extensive (this is a special case of descent for colimits).

\begin{lemma}\label{thm:ext-exp}
  Let $X$ be an object and $F = {\{ f_i : B_i \to C_i \}}_{i\in I}$ a family of maps in an extensive and locally cartesian closed $(\infty,1)$-category $\C$.
  Then the following are equivalent:
  \begin{enumerate}
  \item For any $i\in I$, the map $X^{C_i} \to X^{B_i}$ of exponential objects is an equivalence in $\C$.
  \item If we let $A = \coprod_{i} 1$ and $B = \coprod_i B_i$ and $C = \coprod_i C_i$, with an induced map $F : B\to C$ over $A$, then the induced map ${(A^*X)}^C \to {(A^*X)}^B$ of local exponential objects in $\C_{/A}$ is an equivalence.
  \end{enumerate}
\end{lemma}
\begin{proof}
  By extensivity, ${(A^*X)}^C \to {(A^*X)}^B$ is an equivalence if and only if its pullback along each injection $i:1\to \coprod_i 1 = A$ is an equivalence.
  Since local exponentials are preserved by pullback, these pullbacks are the exponentials ${(i^*A^*X)}^{i^*C} \to {(i^*A^*X)}^{i^*B}$.
  But $i^*A^*X=X$, while by extensivity again we have $i^*C = C_i$ and $i^*B = B_i$.
\end{proof}

If $\C = h\M$, then we can represent the map $F:B\to C$ in $\C_{/A}$ above by a map between fibrations over a fibrant object $A$:
\[
\begin{tikzcd}
  B \ar[dr,->>] \ar[rr] && \ar[dl,->>] C\\ & A.
\end{tikzcd}
\]
We call this a \textbf{fibrant localizing representative} of $F$.

\begin{thm}\label{thm:expideal-rsf}
  Let $\M$ be an excellent model category such that $h\M$ is extensive, and let $\D$ be an accessible exponential ideal in $h\M$ generated by some family of maps $F$.
  Then the fiber over $1$ of the reflective subfibration generated by the higher inductive localization at a fibrant localizing representative of $F$ coincides with $\D$.
\end{thm}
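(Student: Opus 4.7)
The plan is to bridge two descriptions of ``$F$-locality'': the external hom-space condition defining $\D$, and the internal/fiberwise local-exponential condition coming from the higher inductive localization. The bridge is provided precisely by Lemma~\ref{thm:ext-exp} together with the exponential-ideal hypothesis.

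First I would unpack what it means for $X \in h\M$ to lie in the fiber over $1$ of the reflective subfibration constructed from the higher inductive localization of $\fibmf$ at $F : B \to C$ over $A$. By the generation axiom (Figure~\ref{fig:jd-gen}) together with the construction in Theorem~\ref{thm:ttmc-rsu}, an object is in the fiber iff the internal type $\prd{a:A}\isequiv(\lam{g : C(a)\to X}\,g\circ F(a))$ has a section over $1$. Translating via the comprehension-category semantics, this is the statement that the induced map on local exponentials $(A^*X)^{C} \to (A^*X)^{B}$ is a weak equivalence in $\M_{/A}$, since $\prd{a:A}$ is interpreted by right adjoint to pullback along $A\to 1$ (i.e.\ local exponentiation), and an inhabitant of an $\isequiv$-type exactly witnesses an equivalence. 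This is the point at which the ``internality'' of our localization shows up: we get a condition on exponential objects, not merely on hom-spaces.

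Next I would invoke Lemma~\ref{thm:ext-exp}, using extensivity of $h\M$, to convert this local-exponential condition over $A = \coprod_i 1$ into the external statement that for every $i\in I$, the exponential map $X^{C_i} \to X^{B_i}$ is an equivalence in $h\M$. So the fiber over $1$ is characterized as the subcategory of those $X$ with $X^{C_i} \xrightarrow{\sim} X^{B_i}$ for all $i$.

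Finally I would show this characterization agrees with $\D$. One direction is immediate: applying the global-sections functor $\C(1,-)$ to each equivalence $X^{C_i} \simeq X^{B_i}$ and using $\C(1, X^{Y}) \simeq \C(Y, X)$ yields the hom-space equivalences $\C(C_i, X) \xrightarrow{\sim} \C(B_i, X)$ generating $\D$, so $X\in\D$. For the converse, assume $X\in\D$; by Lemma~\ref{thm:exp-ideal}, $\D$ is an exponential ideal, so $X^Y \in \D$ for every $Y\in h\M$. Then
\[
\C(Y, X^{C_i}) \simeq \C(C_i, X^Y) \xrightarrow{\sim} \C(B_i, X^Y) \simeq \C(Y, X^{B_i}),
\]
and an application of the Yoneda lemma in $h\M$ gives that $X^{C_i} \to X^{B_i}$ is an equivalence, completing the identification.

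The main obstacle is the first step: justifying rigorously the passage from the syntactic condition $\prd{a:A}\isequiv(\cdots)$ in the contextual core of $\fibmf$ to a model-categorical statement about local exponentials in $\M_{/A}$. This requires combining the coherence theorem of~\cite{lw:localuniv} with the standard fact that the type-theoretic $\Pi$ and $\isequiv$ operations present local exponentials and equivalences in $h\M$ (respectively $h\M_{/A}$); once this bookkeeping is in place, the remaining steps are formal applications of Lemmas~\ref{thm:exp-ideal} and~\ref{thm:ext-exp} together with Yoneda.
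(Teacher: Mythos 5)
Your proposal is correct and follows essentially the same route as the paper's proof: translate internal $F$-locality into the local-exponential condition over $A$, apply Lemma~\ref{thm:ext-exp} via extensivity to reduce to the equivalences $X^{C_i}\to X^{B_i}$, and then identify this with membership in $\D$ using global sections in one direction and the exponential-ideal property plus Yoneda in the other. (Note only that the hypothesis already states $\D$ is an exponential ideal, so your appeal to Lemma~\ref{thm:exp-ideal} for that fact is unnecessary.)
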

\begin{proof}
  Let $F:B\to C$ be a map of fibrations over a fibrant object $A$ representing the map $\coprod_i f_i$ in $h\M$ as above.
  By definition, a fibrant object $X$ is internally $F$-local if $A_* \mathsf{isequiv}(- \circ F)$ has a global section, or equivalently if $\mathsf{isequiv}(-\circ F)$ has a section over $A$, or equivalently if the map of local exponentials
  \begin{equation}
    (-\circ F) : {(A^*X)}^{C} \to {(A^*X)}^{B}\label{eq:circfp}
  \end{equation}
  is an equivalence over $A$.
  Since these 1-categorical local exponentials in $\M_{/A}$ are between fibrations, they present $(\infty,1)$-categorical exponentials in $h\M_{/A}$.
  Thus, by \cref{thm:ext-exp}, this is equivalent to saying that each map $X^{C_i} \to X^{B_i}$ is an equivalence.

  This certainly implies that $X\in\D$, i.e.\ that it is $F$-local in the external sense, since $h\M(B_i,X) \simeq h\M(1,X^{B_i})$ and similarly.
  Conversely, if $X\in\D$, then to show that $X^{C_i} \to X^{B_i}$ is an equivalence, by the Yoneda lemma it suffices to show that the induced map on hom-spaces $h\M(Y,X^{C_i}) \to h\M(Y,X^{B_i})$ is an equivalence for any $Y\in h\M$.
  But this is equivalent to $h\M(C_i,X^Y) \to h\M(B_i,X^Y)$, which is an equivalence since $X^Y\in \D$, as $\D$ is an exponential ideal.
\end{proof}

In particular, any accessible exponential ideal in an $\infty$-topos $\C$ can be extended to a reflective subfibration, which in turn can be represented by a reflective subuniverse in a type theory that interprets into (a model category presenting) $\C$.
In the other direction, we observe that any internally accessible localization is externally accessible as well:

\begin{thm}
  Suppose $\C$ is a contextual category with an accessible reflective subuniverse $\D$, such that $h\C$ is a locally presentable $(\infty,1)$-category.
  Then the corresponding reflective subcategory $\D_1$ of $h\C$ is accessible.
\end{thm}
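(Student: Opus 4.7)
The plan is to extract from the internal generating family of $\D$ a small external set of maps in $h\C$ whose right-orthogonality class is precisely $\D_1$; once achieved, the accessibility of $\D_1$ follows immediately from the characterization of accessible reflective subcategories in locally presentable $(\infty,1)$-categories. Since $\C$ has $\Pi$-types, the result of~\cite{kapulkin:lccqcat-tt} ensures that $h\C$ is locally cartesian closed, and in particular that local exponentials in any slice $h\C_{/A}$ exist and are presented by the $\Pi$-types of $\C$.

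First, by assumption we have an internal generating family $F : \prd{a:A} B(a) \to C(a)$; this data corresponds in $\C$, hence in $h\C$, to a morphism $F : B \to C$ in the slice $h\C_{/A}$ between two fibrations over a fibrant object $A$. Unwinding the definition in \cref{fig:jd-gen}, an object $X \in h\C$ lies in $\D_1$ if and only if the internal proposition $\ismodal(X)$ admits a global section, which in turn unfolds to the assertion that the precomposition map of local exponentials $(A^*X)^C \to (A^*X)^B$ is an equivalence in $h\C_{/A}$, where $A^*X$ denotes the pullback of $X$ along $A \to 1$. This is exactly the style of translation carried out in the proofs of \cref{thm:cc-rsf} and \cref{thm:expideal-rsf}.

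Second, I translate this internal equivalence into an external right-orthogonality condition. Applying the Yoneda lemma inside $h\C_{/A}$, together with the local-exponential adjunction $(- \times_A C) \dashv (-)^C$ and the standard adjunction between the forgetful functor $h\C_{/A} \to h\C$ and $A^*$, the assertion that $(A^*X)^C \to (A^*X)^B$ is an equivalence becomes: for every $W \to A$ in $h\C$, the map
\[ \mathsf{Hom}_{h\C}(W \times_A C, X) \to \mathsf{Hom}_{h\C}(W \times_A B, X) \]
induced by $W \times_A F$ is an equivalence. In other words, $X \in \D_1$ if and only if $X$ is right-orthogonal in $h\C$ to the pulled-back morphism $W \times_A F$ for every object $W \to A$ of $h\C_{/A}$.

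Finally, to shrink this to a small family I use local presentability. Since $h\C$ is locally presentable, so is $h\C_{/A}$; choose a regular cardinal $\kappa$ such that every object of $h\C_{/A}$ is a $\kappa$-filtered colimit of $\kappa$-compact objects, and let $S$ be a small set of representatives of the $\kappa$-compact objects. The functors $- \times_A B$ and $- \times_A C$ preserve colimits (being left adjoints to the respective local exponentials), and the forgetful functor $h\C_{/A} \to h\C$ preserves colimits (being left adjoint to $A^*$), so for any $W' = \mathrm{colim}_i W_i$ with each $W_i \in S$ we have $W' \times_A B \simeq \mathrm{colim}_i (W_i \times_A B)$ and similarly for $C$, computed in $h\C$. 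Hence the relevant map of hom-spaces for $W'$ is a limit of the corresponding maps for the $W_i$, so right-orthogonality to $\{W \times_A F \mid W \in S\}$ implies right-orthogonality to all $W' \times_A F$. This exhibits $\D_1$ as the right-orthogonality class of a small set of maps, so $\D_1$ is accessible. The main obstacle is the first step: carefully matching the internal condition ``$X$ is $F$-local'' to the external statement that the map of local exponentials is an equivalence, which requires unpacking the interpretation of $\isequiv(- \circ F_a)$ and of $\prd{a:A}$ through the comprehension category structure. Once this bridge is in place, the remaining arguments are standard manipulations with adjunctions, Yoneda, and presentability.
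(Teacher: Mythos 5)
Your proof is correct, and it takes a genuinely different (more hands-on) route than the paper's. You both start from the same translation: $X\in\D_1$ iff the map of local exponentials $(A^*X)^C\to(A^*X)^B$ is an equivalence over $A$. From there the paper simply observes that $\D_1$ is the $(\infty,1)$-categorical pullback of $h\C\xrightarrow{(\blank)^F\circ A^*}(h\C_{/A})^{\mathbf{2}}$ against the inclusion of equivalences, and invokes the general fact that limits of accessible $(\infty,1)$-categories along accessible (left- or right-adjoint) functors are accessible; this yields accessibility of $\D_1$ as a category, with the passage to "localization at a small set of maps" left to the standard theory of accessible reflective subcategories of presentable $(\infty,1)$-categories. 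You instead unwind the equivalence of local exponentials via Yoneda and the adjunctions $(-\times_A C)\dashv(-)^C$ and $\Sigma_A\dashv A^*$ into the statement that $X$ is local against $W\times_A F$ for every $W\to A$, and then use local presentability of $h\C_{/A}$ to cut this down to a small set of $\kappa$-compact $W$'s, the key point being that $-\times_A B$, $-\times_A C$, and $\Sigma_A$ are colimit-preserving left adjoints, so locality against the generators propagates to all $W$ by taking limits of mapping spaces. Your argument is longer but buys something concrete: it produces an explicit small generating family of maps, which is exactly the paper's definition of an accessible subcategory, whereas the paper's pullback argument is shorter but leans on more machinery from the theory of accessible $(\infty,1)$-categories. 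Both are sound.
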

\begin{proof}
  By definition, $X\in h\C$ lies in $\D_1$ just when $\prd{a:A}\mathsf{isequiv}(\lam{g:C(a)\to X} g\circ F(a))$ has a global element, which is to say that $\mathsf{isequiv}(\lam{g:C(a)\to X} g\circ F(a))$ has a section over $A$, or equivalently that the induced map ${(\blank)}^F : {(A^* X)}^C \to {(A^*X)}^B$ is an equivalence over $A$.
  Thus, $\D_1$ is the $(\infty,1)$-categorical pullback
  \[
  \begin{tikzcd}
    \D_1 \ar[d] \ar[rr] && h\C_{/A} \ar[d] \\
    h\C \ar[r,"{A^*}"'] & h\C_{/A} \ar[r,"{(\blank)^F}"'] & (h\C_{/A})^{\mathbf{2}}.
  \end{tikzcd}
  \]
  Since all of the functors involved are left or right adjoints between accessible $(\infty,1)$-categories, they are also accessible; thus $\D_1$ is also accessible.
\end{proof}

In general, it seems that a reflective subfibration need not be uniquely determined by its fiber over $1$.
Different choices of generating families $F$ in the above theorem could produce different reflective subfibrations.

Of these, it might happen that some are composing and some are not.
However, we can give a necessary and sufficient condition for there to exist \emph{some} extension of an accessible exponential ideal to a composing reflective subfibration, i.e.\ a stable factorization system.

\begin{lemma}\label{thm:stable-units}
  Let $\C$ be a locally cartesian closed $(\infty,1)$-category and $\D$ a reflective subcategory of it, with reflection units $\modalunit[x]:x \to \modal x$.
  The following are equivalent; when they hold we say that $\D$ has \textbf{stable units}~\cite{chk:reflocfact}.
  \begin{enumerate}
  \item For every $x\in\D$, the reflective subcategory $\D_{/x}$ of $\C_{/x}$ is an exponential ideal.\label{item:su1}
  \item The reflector $\modal$ preserves all pullbacks over an object of $\D$.\label{item:su2}
  \item The reflector $\modal$ inverts any pullback of any $\modalunit[x]$.\label{item:su3}
  \end{enumerate}
\end{lemma}
\begin{proof}
  Since the reflection of $\C_{/x}$ into $\D_{/x}$ (when $x\in\D$) is essentially just $\modal$ itself, condition~\ref{item:su2} says that this reflector always preserves finite products.
  Thus,~\ref{item:su1}$\Leftrightarrow$\ref{item:su2} by \cref{thm:exp-ideal}.
  And~\ref{item:su2}$\Rightarrow$~\ref{item:su3} since $\modalunit[x]$ is inverted by $\modal$ and $\modal x \in \D$.
  Finally, if~\ref{item:su3} then we can factor any pullback over an $x\in \D$ as follows:
  \[
  \begin{tikzcd}
    w \ar[r] \ar[d] \ar[dr,phantom,"\lrcorner" near start] & y \ar[d] \\
    z \ar[r] & x
  \end{tikzcd}
  \qquad=\qquad
  \begin{tikzcd}
    w \ar[r] \ar[dd]  \ar[dr,phantom,"\lrcorner" near start] &
    \bullet \ar[r] \ar[d] \ar[dr,phantom,"\lrcorner" near start] &
    y \ar[d,"{\modalunit[y]}"] \\
    & \bullet \ar[d] \ar[r] \ar[dr,phantom,"\lrcorner" near start] & \modal y \ar[d]\\
    z \ar[r,"{\modalunit[z]}"'] & \modal z \ar[r] & x.
  \end{tikzcd}
  \]
  The lower-right square is a pullback of objects in $\D$, hence its vertex is also in $\D$ and it is preserved by $\modal$.
  The other two squares are pullbacks of some $\modalunit$, hence preserved by $\modal$ by~\ref{item:su3}.
  Thus, $\modal$ preserves the whole pullback, so we have~\ref{item:su2}.
\end{proof}

If a reflective subfibration is composing, then for any $x\in \D_1$ we have $\D_x = {(\D_1)}_{/x}$ as subcategories of $\C_{/x}$.
Since $\D_{x}$ is always an exponential ideal in $\C_{/x}$ (for the same reasons that $\D_1$ is an exponential ideal in $\C$), it follows that so is ${(\D_1)}_{/x}$, hence by \cref{thm:stable-units}\ref{item:su1} $\D_1$ has stable units.
In other words, having stable units is a necessary condition for a reflective subcategory to underlie some stable factorization system.

We now show that in good cases this is also sufficient.
The idea is similar to that of \cref{thm:expideal-rsf}, but using nullification at the fibers (\cref{sec:nullification}) instead of localization, as in \cref{thm:acc-modal}.
We start with an analogue of \cref{thm:ext-exp}.

\begin{lemma}\label{thm:ext-exp-lex}
  Let $X$ be an object and $F = {\{ f_i : B_i \to A_i \}}_{i\in I}$ a family of maps in an extensive and locally cartesian closed $(\infty,1)$-category $\C$.
  Then the following are equivalent:
  \begin{enumerate}
  \item For any $i\in I$, the induced ``constant functions'' map $A_i^*X \to {(A_x^*X)}^{B_i}$ into the local exponential in $\C_{/A_i}$ is an equivalence.\label{item:eel1}
  \item If we let $A=\coprod_i A_i$ and $B= \coprod_i B_i$, with an induced map $F: B\to A$, then the constant functions map $A^*X \to {(A^*X)}^B$ in $\C_{/A}$ is an equivalence.\label{item:eel2}
  \end{enumerate}
\end{lemma}
\begin{proof}
  As in \cref{thm:ext-exp}, $A^*X \to {(A^*X)}^B$ is an equivalence if and only if it becomes so upon pullback along each coproduct injection $A_i \to \coprod_i A_i = A$, and these pullbacks take it to $A_i^*X \to {(A_x^*X)}^{B_i}$.
\end{proof}

If $\C=h\M$ in \cref{thm:ext-exp-lex}, then the map $\coprod_i f_i$ can be further represented by a fibration $F : B \twoheadrightarrow A$ with fibrant codomain in $\M$.
We call this a \textbf{fibrant nullifying representative} of $F$.

\begin{thm}\label{thm:null-mod}
  Let $\M$ be an excellent model category such that $h\M$ is extensive, let $\D$ be an accessible reflective subcategory of $h\M$, and let $F$ a family of maps generating $\D$ with the property that any pullback of any map in $F$ is inverted by the reflector of $\D$.
  (Note that the existence of such a family is an extra condition on $\D$.)
  Then higher inductive nullification at a fibrant nullifying representative of $F$ is a stable factorization system whose fiber over $1$ coincides with $\D$.
\end{thm}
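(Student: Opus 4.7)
The plan is to get the factorization-system structure essentially for free from the internal theory, and then to identify the modal types with $\D$ by comparing two characterizations. First, the fibration $F : B \twoheadrightarrow A$ represents a type family $a:A \vdash B(a)$ in $\fibmf$, and higher inductive nullification at this family is a modality by \cref{thm:nullification_modality}; by \cref{thm:sofs_from_ssrs,thm:ttmc-mod} and the equivalence between composing reflective subfibrations and stable orthogonal factorization systems, this externalizes to a stable orthogonal factorization system on $h\M$, whose fiber over $1$ is the reflective subcategory of modal types.

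To identify this reflective subcategory with $\D$, I would first unfold the modality condition: a fibrant $X$ is modal iff the internal statement ``$X \to (B(a) \to X)$ is an equivalence for each $a:A$'' holds, which externalizes to the assertion that the constant-functions map $A^*X \to (A^*X)^B$ is an equivalence in $h\M_{/A}$ (with $B$ viewed in $h\M_{/A}$ via $F$). By \cref{thm:ext-exp-lex}, this is in turn equivalent to the family of conditions that $A_i^*X \to (A_i^*X)^{B_i}$ is an equivalence in $h\M_{/A_i}$ for each $i$. For the forward direction, apply the global-sections functor $\prod_{A_i} : h\M_{/A_i} \to h\M$, which preserves equivalences; using the identities $\prod_{A_i}(A_i^*X) = X^{A_i}$ and $\prod_{A_i}((A_i^*X)^{B_i}) = \prod_{B_i}(B_i^*X) = X^{B_i}$, the equivalence in $h\M_{/A_i}$ descends to the precomposition equivalence $X^{A_i} \to X^{B_i}$, witnessing $f_i$-locality of $X$. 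Since $F$ generates $\D$, this gives $X \in \D$.

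The backward direction is where the pullback hypothesis enters. For $X \in \D$, I would check that $A_i^*X \to (A_i^*X)^{B_i}$ is an equivalence in $h\M_{/A_i}$ by testing against each $D \to A_i$: the induced map on hom-spaces, after the standard adjunctions, is $h\M(D, X) \to h\M(D \times_{A_i} B_i, X)$, induced by precomposition with the projection $D \times_{A_i} B_i \to D$. Since $X \in \D$, the reflection adjunction rewrites both sides as $h\D(\modal(-), X)$, so it suffices that the $\modal$-image of $D \times_{A_i} B_i \to D$ is an equivalence; but that map is the pullback of $f_i$ along $D \to A_i$, and the standing hypothesis asserts exactly that such pullbacks are inverted by $\modal$. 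The main subtlety is the parametrized use of the pullback hypothesis --- we need to invert pullbacks of $f_i$ along arbitrary test objects $D \to A_i$, not merely along points of $A_i$ --- which is exactly what the full strength of the hypothesis provides, matched to the parametrized slice-category version of nullification supplied by \cref{thm:ext-exp-lex}.
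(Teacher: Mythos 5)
Your proof is correct and follows essentially the same route as the paper: both reduce via \cref{thm:ext-exp-lex} to the per-$i$ condition, identify it (by Yoneda/adjunction in the slice) with the precomposition map $h\M(Y,X)\to h\M(Y\times_{A_i}B_i,X)$ being an equivalence for all $Y\to A_i$, and use the pullback-inversion hypothesis for the converse. The only cosmetic difference is in the forward direction, where you apply $\prod_{A_i}$ to the slice equivalence rather than specializing the Yoneda test object to $Y=A_i$; these amount to the same thing.
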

\begin{proof}
  By definition, a fibrant object $X$ is internally $F$-null just when \cref{thm:ext-exp-lex}\ref{item:eel2} holds, hence when~\ref{item:eel1} holds.
  By the Yoneda lemma in $h\M_{/A_i}$, this is equivalent to saying that for any map $Y\to A_i$ in $h\M$, the induced map
  \[ h\M_{/A_i}(Y,A_i^*X) \to h\M_{/A_i}(Y,{(A_i^*X)}^{B_i}) \]
  is an equivalence.
  But this is equivalent to
  \[ h\M_{/A_i}(Y,A_i^*X) \to h\M_{/A_i}(Y\times_{A_i} B_i,A_i^*X) \]
  and thus to
  \begin{equation}
    h\M(Y,X) \to h\M(Y\times_{A_i} B_i,X).\label{eq:model-lex}
  \end{equation}
  If~\eqref{eq:model-lex} is an equivalence for all $i$ and $Y$, then taking $Y=A_i$ we see that $X$ is $f_i$-local for all $i$, hence $X\in\D$.
  Conversely, if $X\in \D$, then since the projection $Y\times_{A_i} B_i \to Y$ is a pullback of $f_i$, it is inverted by the reflector of $\D$ and hence is seen by $X$ as an equivalence; thus~\eqref{eq:model-lex} is an equivalence.
\end{proof}

\begin{corollary}\label{thm:stableunits-mod}
  Let $\M$ be an excellent model category such that $h\M$ is extensive, and let $\D$ be an accessible reflective subcategory of $h\M$ with stable units.
  Then there is a fibration $F:B\twoheadrightarrow A$ with fibrant codomain in $\M$ such that the fiber over $1$ of the stable factorization system generated by higher inductive nullification at $F$ coincides with $\D$.
\end{corollary}
\begin{proof}
  This is a categorical version of \cref{thm:acc-modal}.
  Let $G = {\{ g_i : C_i \to D_i \}}_{i\in I}$ be any set of maps in $h\M$ generating $\D$, and let $F$ be a fibrant nullifying representative of
  \begin{equation}
    \textstyle (\coprod_i\modalunit[C_i]) \sqcup (\coprod_i \modalunit[D_i]) : (\coprod_i C_i) \sqcup (\coprod_i D_i) \to (\coprod_i \modal C_i) \sqcup (\coprod_i \modal D_i).\label{eq:coprod-units}
  \end{equation}
  By extensivity, any pullback of~\eqref{eq:coprod-units} is a coproduct of pullbacks of the units $\modalunit[C_i]$ and $\modalunit[D_i]$.
  Since $\D$ has stable units, any pullback of these units is inverted by its reflector, and the class of maps inverted by any reflector is stable under coproducts.
  Thus, by \cref{thm:null-mod} it suffices to show that $\D$ is generated by the units $\modalunit[C_i]$ and $\modalunit[D_i]$ themselves.
  But these units are certainly inverted by the reflector of $\D$, so that every object of $\D$ is $\modalunit[C_i]$-local and $\modalunit[D_i]$-local; while if an object is $\modalunit[C_i]$-local and $\modalunit[D_i]$-local then by 2-out-of-3 it is also $g_i$-local and hence belongs to $\D$.
\end{proof}

In particular, any accessible reflective subcategory with stable units in an $\infty$-topos $\C$ can be extended to a stable orthogonal factorization system, which in turn can be represented by a modality in a type theory that interprets into (a model category presenting) $\C$.

Finally, in the left exact case, we already know from \cref{thm:ttmc-lex} that any lex reflective subcategory $\D$ of $h\M$ can be extended to a lex modality.
We can show that if the former is topological, then so is the latter.

\begin{thm}\label{thm:lex-lex}
  Let $\M$ be an excellent model category such that $h\M$ is an $\infty$-topos, and let $\D$ be a topological localization of $h\M$ in the sense of~\cite[Definition 6.2.1.4]{lurie2009higher}.
  Suppose also that there exist arbitrarily large inaccessible cardinals.
  Then there is a fibration $F:B\twoheadrightarrow A$ with fibrant codomain in $\M$ such that the higher inductive nullification at $F$ generates a topological lex modality whose fiber over $1$ is $\D$.
\end{thm}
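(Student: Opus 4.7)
The plan is to combine the strategy of Theorem~\ref{thm:null-mod} with the defining feature of topological localizations in Lurie's sense, namely that they are generated by a small set of monomorphisms. The fibers of a monomorphism are mere propositions, and so the resulting higher inductive nullification falls within the scope of Theorem~\ref{thm:prop-loc-lex}, which automatically yields a topological lex modality.

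First I would invoke~\cite[Proposition 6.2.1.5]{lurie2009higher}, which tells us that any topological localization of an $\infty$-topos $h\M$ is generated by a small set of monomorphisms $G = \{m_i : U_i \to V_i\}_{i \in I}$. The hypothesis of arbitrarily large inaccessible cardinals is used here to ensure that this small set can be packaged inside a single universe large enough to be realized by an object of the comprehension category $\fibmf$.

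Next I would form the coproduct $m \defeq \coprod_i m_i : \coprod_i U_i \to \coprod_i V_i$ in $h\M$. By extensivity of $h\M$, the map $m$ is again a monomorphism, and its fiber over any point of $\coprod_i V_i$ is either empty or is a fiber of some $m_i$, hence a mere proposition in either case. I would then represent $m$ by a fibration $F : B \twoheadrightarrow A$ with fibrant codomain in $\M$; internally this corresponds to a family of types $a:A \vdash B(a)$ each of which is a mere proposition. By Theorem~\ref{thm:prop-loc-lex}, the higher inductive nullification at $F$ is a topological lex modality.

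It remains to identify its fiber over $1$ with $\D$, for which I would appeal to Theorem~\ref{thm:null-mod}. The generation hypothesis of that theorem is exactly what $G$ provides. The ``pullback inverted'' hypothesis is automatic here: since $\D$ is in particular lex, its reflector preserves pullbacks, and any pullback of a morphism inverted by the reflector is therefore again inverted, so the $m_i$ (which are inverted as generators) have all pullbacks inverted as well. The argument of Theorem~\ref{thm:null-mod}, combined with Lemma~\ref{thm:ext-exp-lex} applied to $F$, then shows that a fibrant object $X$ is internally $F$-null if and only if it is externally $m_i$-local for every $i$, which by the choice of $G$ characterizes the objects of $\D$. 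The main obstacle I anticipate is the universe-size bookkeeping — verifying that Lurie's ``small set'' of monomorphisms can genuinely be coded as a fibration $F$ between fibrant objects of $\M$ to which the higher inductive nullification of Section~\ref{sec:nullification} can be applied, and that the resulting strictly stable modality on $\fibmf$ descends correctly to $h\M$; this is precisely where the inaccessible cardinal assumption does its work.
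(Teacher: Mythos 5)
Your overall architecture (coproduct of Lurie's generating monomorphisms, fibrant nullifying representative, \cref{thm:null-mod} plus \cref{thm:ext-exp-lex} to identify the fiber over $1$ with $\D$) matches the paper's, and your observation that the pullback-inverted hypothesis of \cref{thm:null-mod} follows from lexness of $\D$ is a fine alternative to the paper's direct citation of~\cite[Proposition 6.2.1.5]{lurie2009higher}, which already supplies a generating family with that property. But there is a genuine gap at the step where you conclude lexness: you cannot simply invoke \cref{thm:prop-loc-lex}. That theorem's proof goes through \cref{thm:acc-lex}, whose condition~\ref{item:al2} is verified using the univalence of a universe of modal types, and the whole appendix deliberately works in a type theory \emph{without} universes (this is flagged in \cref{sec:judgm-modal}: the only unavoidable uses of univalence in the paper are precisely \cref{thm:acc-lex,thm:prop-loc-lex}). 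So "the fibers are mere propositions, hence \cref{thm:prop-loc-lex} applies" is not available as stated; something must stand in for the univalent universe.

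This is also where you misplace the role of the inaccessible-cardinal hypothesis. It is not needed to package the small generating set as a fibration $F:B\twoheadrightarrow A$ — local presentability of $h\M$ already handles that bookkeeping. Rather, it is needed to produce an \emph{object classifier}: one chooses an inaccessible $\kappa$ such that $F$ is $\kappa$-compact and takes the object classifier $U$ for $\kappa$-compact morphisms in the $\infty$-topos $h\M$, which plays the role of the univalent universe. One then repeats the proofs of \cref{thm:acc-lex} and \cref{thm:prop-loc-lex} \emph{categorically} with $U$ in place of the internal universe, verifying \cref{thm:lex-modalities}\ref{item:mu2} for the $F$-nullification and hence lexness. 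Without this substitute argument your proof establishes only that the nullification is a (topological-looking) modality with the right fiber over $1$, not that it is lex.
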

\begin{proof}
  By~\cite[Proposition 6.2.1.5]{lurie2009higher}, there exists a family of \emph{monomorphisms} $F$ in $h\M$ generating $\D$ and such that any pullback of a morphism in $F$ is inverted by the reflector of $\D$.
  Thus, by \cref{thm:null-mod}, higher inductive nullification at a fibrant nullifying representative of $F$ is a modality whose fiber over $1$ coincides with $\D$.
  Moreover, since $F$ consists of monomorphisms, its coproduct is also a monomorphism, and thus any fibrant nullifying representative of it represents a family of mere propositions.

  We would like to conclude by applying \cref{thm:prop-loc-lex} internally to conclude that this topological modality is lex.
  However, the proof of \cref{thm:prop-loc-lex} used univalence unavoidably, whereas in this appendix we are not assuming that our type theory has any universes.
  But we have assumed in this theorem that $h\M$ is an $\infty$-topos and hence has object classifiers, the $\infty$-categorical analogue of univalent universes.

  Our map $F$ must be $\kappa$-compact for some inaccessible $\kappa$; let $U$ be an object classifier for $\kappa$-compact morphisms.
  Then repeating the proofs of \cref{thm:acc-lex,thm:prop-loc-lex} categorically, we can show that $F$-nullification satisfies \cref{thm:lex-modalities}\ref{item:mu2}, and hence is lex.
\end{proof}

The following converse result shows that our definition of ``topological'' is essentially the same as that of~\cite{lurie2009higher}.

\begin{thm}
  Let $\M$ be an excellent model category such that $h\M$ is an $\infty$-topos, and let $F$ be a family of monomorphisms in $h\M$.
  Then the fiber over $1$ of the topological lex modality generated by higher inductive nullification at a fibrant nullifying representative of $F$ is a topological localization of $h\M$.
\end{thm}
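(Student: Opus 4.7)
The plan is to identify the fiber over $1$ of this modality with the external $\infty$-categorical localization of $h\M$ at the pullback closure of the family $F$, and then apply \cite[Proposition 6.2.1.5]{lurie2009higher}, which characterizes topological localizations of an $\infty$-topos as precisely those generated by a class of monomorphisms.

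First, I would unpack what it means for an object $X \in h\M$ to lie in this fiber over $1$. By definition this holds iff the fibrant object $X$ is internally null for the fibrant nullifying representative $F_{\mathrm{fib}} : B \twoheadrightarrow A$. Applying \cref{thm:ext-exp-lex} together with the extensivity of $h\M$, this is equivalent to asking that for each $f_i : B_i \to C_i$ in $F$, the constant functions map $C_i^* X \to (C_i^* X)^{B_i}$ be an equivalence in $h\M_{/C_i}$. Running the Yoneda argument inside $h\M_{/C_i}$ exactly as in the proof of \cref{thm:null-mod}, this condition unpacks further into the following: for every $i$ and every map $Y \to C_i$ in $h\M$, the precomposition map $h\M(Y, X) \to h\M(Y \times_{C_i} B_i, X)$ is an equivalence. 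In other words, $X$ lies in the fiber over $1$ iff $X$ is externally $f$-local for every map $f$ that arises as a pullback of some $f_i$ in $F$.

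Second, since each $f_i$ is a monomorphism in $h\M$ and monomorphisms in any $\infty$-category are stable under pullback, the class of pullbacks of members of $F$ is a class of monomorphisms. Hence the fiber over $1$ is precisely the reflective subcategory of $h\M$ obtained by inverting a class of monomorphisms, and via the correspondence of \cref{sec:model-cats} it is a left-exact localization because the modality is assumed to be lex.

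Finally, \cite[Proposition 6.2.1.5]{lurie2009higher} asserts that a left-exact localization of an $\infty$-topos is topological iff it can be obtained by inverting a set of monomorphisms, so the fiber over $1$ is a topological localization of $h\M$. The main obstacle in the argument is the set-theoretic bookkeeping needed to pass between internal nullification at the single bundled fibration $F_{\mathrm{fib}}$ and the external class of pullbacks of the individual $f_i$'s; however, the combinatorics is already carried out in \cref{thm:null-mod,thm:ext-exp-lex}, and once one has the identification of the fiber over $1$ with an external mono-localization, invoking Lurie is immediate (and any size concerns for the pullback closure can be circumvented using the local presentability of $h\M$).
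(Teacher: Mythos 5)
Your proposal is correct and takes essentially the same route as the paper: identify the fiber over $1$ with the objects local with respect to the class $S$ of all pullbacks of morphisms in $F$ (via the Yoneda argument of \cref{thm:null-mod} and \cref{thm:ext-exp-lex}), note that $S$ consists of monomorphisms, and obtain pullback-stability of the inverted class from lexness before invoking Lurie. The only cosmetic difference is that the paper verifies \cite[Definition 6.2.1.4]{lurie2009higher} directly, using \cite[Proposition 5.5.4.15]{lurie2009higher} to see that the strongly saturated class generated by $S$ has exactly the $S$-local objects as its local objects, whereas you package the same facts through Proposition 6.2.1.5.
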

\begin{proof}
  Let $\D_1$ be the fiber in question.
  By~\cite[Definition 6.2.1.4]{lurie2009higher}, we must show that the class of all morphism in $h\M$ inverted by the reflector of $\D_1$ is ``generated as a strongly saturated class'' by a class of monomorphisms, and also stable under pullback.
  Stability under pullback follows from the fact that by \cref{thm:cc-lex}, $\D_1$ is a lex reflective subcategory of $h\M$.
  For the first, we claim that it is generated as a strongly saturated class by the class $S$ of all pullbacks of morphisms in $F$, which certainly consists of monomorphisms.
  By~\cite[Proposition 5.5.4.15]{lurie2009higher}, it suffices to show that $\D_1$ consists precisely of the $S$-local objects.
  But this is essentially what we showed in \cref{thm:null-mod}.
\end{proof}

Using~\cite{abfj:lexloc} and \cref{thm:nontop-lex}, we can extend \cref{thm:lex-lex} to arbitrary accessible lex localizations by a similar argument.

An accessible lex reflective subcategory of an $\infty$-topos is called a \emph{subtopos}.
\cref{thm:ttmc-lex} tells us that any subtopos of an $\infty$-topos $\C$ can be represented by a lex modality in a type theory that interprets into (a model category presenting) $\C$, while the generalization of \cref{thm:lex-lex} using~\cite{abfj:lexloc} tells us that this lex modality can be chosen to be accessible as we have defined it internally, and topological if the original subtopos was topological.

\begin{rmk}\label{rmk:universes}
  We end the appendix with a remark about universes.
  In general, the problem of modeling homotopy type theory in $\infty$-toposes with strict univalent universes (i.e.\ univalent universes that are strictly closed under the type-forming operations) is an open problem, although it is known to be possible in a few cases~\cite{kapulkin2012univalence,shulman:invdia,shulman:elreedy,cisinski:elegant,shulman:eiuniv}.
  In \autoref{thm:subtopos-model} we noted that if we have such a model in one $\infty$-topos, and moreover there are enough strict univalent universes closed under the reflector for some sub-$\infty$-topos of it, then we obtain such a model in the sub-$\infty$-topos.
  In particular, in this way we could in principle reduce the problem of modeling homotopy type theory in $\infty$-toposes to that of modeling it in presheaf $\infty$-toposes.

  However, even in the cases where strict univalent universes are known to exist, % (which are all presheaf $\infty$-toposes),
  it is not known how to make them closed under such reflectors.
  In particular, the construction of higher inductive types in~\cite{ls:hits} does not remain inside any universe, because it does not preserve fiberwise smallness of fibrations.
  At present we do not know any extension of the results of this appendix to type theory with universes.
\end{rmk}

\endgroup

\bibliographystyle{alpha}
% \phantomsection\addcontentsline{toc}{section}{References}
\bibliography{refs}

\newcommand{\etalchar}[1]{$^{#1}$}
\begin{thebibliography}{dPGM04}

\bibitem[AB04]{AwodeyBauer2004}
S.~Awodey and A.~Bauer.
\newblock Propositions as [types].
\newblock {\em J. Logic Comput.}, 14(4):447--471, 2004.

\bibitem[ABFJ17]{abfj:gen-blakers-massey}
Mathieu Anel, Georg Biedermann, Eric Finster, and Andr\'{e} Joyal.
\newblock A generalized {Blakers--Massey} theorem.
\newblock arXiv:1703.09050, 2017.

\bibitem[ABFJ19]{abfj:lexloc}
Mathieu Anel, George Biederman, Eric Finster, and Andr{\'e} Joyal.
\newblock New methods for left exact localisations of topoi.
\newblock In preparation; slides at
  \url{http://mathieu.anel.free.fr/mat/doc/Anel-LexLocalizations.pdf}, 2019.

\bibitem[ADK17]{Partiality}
Thorsten Altenkirch, Nils~Anders Danielsson, and Nicolai Kraus.
\newblock Partiality, revisited: The partiality monad as a quotient
  inductive-inductive type.
\newblock {\em FOSSACS 2017, LNCS 10203}, pages 534--549, 2017.

\bibitem[Bau11]{bauer:loc-hasse}
Tilman Bauer.
\newblock Bousfield localization and the {Hasse} square.
\newblock
  \\\url{http://math.mit.edu/conferences/talbot/2007/tmfproc/Chapter09/bauer.pdf},
  2011.

\bibitem[BGL{\etalchar{+}}17]{HoTT-CPP}
Andrej Bauer, Jason Gross, Peter~LeFanu Lumsdaine, Michael Shulman, Matthieu
  Sozeau, and Bas Spitters.
\newblock The {HoTT} library: A formalization of homotopy type theory in {C}oq.
\newblock In {\em Proceedings of the 6th ACM SIGPLAN Conference on Certified
  Programs and Proofs}, CPP 2017, pages 164--172. ACM, 2017.

\bibitem[Bou18]{bourke:alginj}
John Bourke.
\newblock Iterated algebraic injectivity and the faithfulness conjecture.
\newblock arXiv:1811.09532, 2018.

\bibitem[CHK85]{chk:reflocfact}
C.~Cassidy, M.~H{\'e}bert, and G.~M. Kelly.
\newblock Reflective subcategories, localizations and factorization systems.
\newblock {\em J. Austral. Math. Soc. Ser. A}, 38(3):287--329, 1985.

\bibitem[Cis14]{cisinski:elegant}
Denis-Charles Cisinski.
\newblock Univalent universes for elegant models of homotopy types.
\newblock arXiv:1406.0058, 2014.

\bibitem[CJKP97]{cjkp:locstab-fact}
A.~Carboni, G.~Janelidze, G.~M. Kelly, and R.~Par{\'e}.
\newblock On localization and stabilization for factorization systems.
\newblock {\em Appl. Categ. Structures}, 5(1):1--58, 1997.

\bibitem[CLW93]{clw:ext-dist}
Aurelio Carboni, Stephen Lack, and R.F.C. Walters.
\newblock Introduction to extensive and distributive categories.
\newblock {\em J. Pure Appl. Algebra}, 84(2):145--158, 1993.

\bibitem[CMR17]{Stacks}
Thierry Coquand, Bassel Mannaa, and Fabian Ruch.
\newblock Stack semantics of type theory.
\newblock {\em CoRR}, abs/1701.02571, 2017.

\bibitem[Coq17]{Coquand:stack}
Thierry Coquand.
\newblock Cubical stacks, 2017.
\newblock \url{http://www.cse.chalmers.se/~coquand/stack.pdf}.

\bibitem[CSS05]{css:large-cardinal}
Carles Casacuberta, Dirk Scevenels, and Jeffrey~H. Smith.
\newblock Implications of large-cardinal principles in homotopical
  localization.
\newblock {\em Adv. Math.}, 197(1):120--139, 2005.

\bibitem[dPGM04]{dpgm:modal-tt}
Valeria de~Paiva, Rajeev Gor\'{e}, and Michael Mendler.
\newblock Modalities in constructive logics and type theories.
\newblock {\em Journal of Logic and Computation}, 14(4):439--446, 2004.

\bibitem[dPR16]{pr:fib-modal-tt}
Valeria de~Paiva and Eike Ritter.
\newblock Fibrational modal type theory.
\newblock {\em Electronic Notes in Theoretical Computer Science}, 323:143 --
  161, 2016.
\newblock Proceedings of the Tenth Workshop on Logical and Semantic Frameworks,
  with Applications (LSFA 2015).

\bibitem[EK17]{ek:partial}
Mart{\'i}n~H. Escard{\'o} and Cory~M. Knapp.
\newblock {Partial Elements and Recursion via Dominances in Univalent Type
  Theory}.
\newblock In Valentin Goranko and Mads Dam, editors, {\em 26th EACSL Annual
  Conference on Computer Science Logic (CSL 2017)}, volume~82 of {\em Leibniz
  International Proceedings in Informatics (LIPIcs)}, pages 21:1--21:16,
  Dagstuhl, Germany, 2017. Schloss Dagstuhl--Leibniz-Zentrum fuer Informatik.

\bibitem[FFLL16]{ffll:blakers-massey}
Kuen-Bang~Hou (Favonia), Eric Finster, Daniel Licata, and Peter~LeFanu
  Lumsdaine.
\newblock A mechanization of the {Blakers--Massey} connectivity theorem in
  homotopy type theory.
\newblock {\em LICS}, 2016.
\newblock arXiv:1605.03227.

\bibitem[GK17]{GepnerKock}
David Gepner and Joachim Kock.
\newblock Univalence in locally cartesian closed $\infty$-categories.
\newblock {\em Forum Math.}, 29(3):617--652, 2017.
\newblock arXiv:1208.1749.

\bibitem[Gol10]{goldblatt2010cover}
Robert Goldblatt.
\newblock Cover semantics for quantified lax logic.
\newblock {\em Journal of Logic and Computation}, 2010.

\bibitem[Joh02]{johnstone:elephant}
P.~Johnstone.
\newblock {\em Sketches of an elephant: A topos theory compendium}.
\newblock Oxford, 2002.

\bibitem[Kap17]{kapulkin:lccqcat-tt}
Krzysztof Kapulkin.
\newblock Locally cartesian closed quasi-categories from type theory.
\newblock {\em Journal of Topology}, 10(4):1029--1049, 2017.
\newblock arXiv:1507.02648.

\bibitem[KL19]{kapulkin2012univalence}
Chris Kapulkin and Peter~LeFanu Lumsdaine.
\newblock The simplicial model of univalent foundations (after {V}oevodsky).
\newblock {\em Journal of the European Mathematical Society}, 2019.
\newblock To appear. arXiv:1211.2851.

\bibitem[KS15]{ks:u-not-ntype}
Nicolai Kraus and Christian Sattler.
\newblock Higher homotopies in a hierarchy of univalent universes.
\newblock {\em ACM Trans. Comput. Logic}, 16(2):18:1--18:12, April 2015.

\bibitem[LF14]{lf:emspaces}
Dan Licata and Eric Finster.
\newblock {Eilenberg--MacLane} spaces in homotopy type theory.
\newblock {\em LICS}, 2014.
\newblock \url{http://dlicata.web.wesleyan.edu/pubs/lf14em/lf14em.pdf}.

\bibitem[LS19]{ls:hits}
Peter~LeFanu Lumsdaine and Michael Shulman.
\newblock Semantics of higher inductive types.
\newblock {\em Mathematical Proceedings of the Cambridge Philosophical
  Society}, 2019.
\newblock arXiv:1705.07088.

\bibitem[Lur09]{lurie2009higher}
J.~Lurie.
\newblock {\em Higher topos theory}, volume 170.
\newblock Princeton University Press, 2009.

\bibitem[LW15]{lw:localuniv}
Peter~LeFanu Lumsdaine and Michael~A. Warren.
\newblock The local universes model: An overlooked coherence construction for
  dependent type theories.
\newblock {\em ACM Trans. Comput. Logic}, 16(3):23:1--23:31, July 2015.
\newblock arXiv:1411.1736.

\bibitem[Mog91]{moggi:monads}
Eugenio Moggi.
\newblock Notions of computation and monads.
\newblock {\em Information and Computation}, 93(1):55 -- 92, 1991.
\newblock Selections from 1989 IEEE Symposium on Logic in Computer Science.

\bibitem[MP12]{mp:more-concise}
J.~P. May and K.~Ponto.
\newblock {\em More concise algebraic topology}.
\newblock Chicago lectures in mathematics. University of Chicago Press, 2012.

\bibitem[NPP05]{npp:ctx-modal-tt}
Aleksandar Nanevski, Frank Pfenning, and Brigitte Pientka.
\newblock Contextual model type theory, 2005.

\bibitem[Rij17]{joinconstruction}
Egbert Rijke.
\newblock The join construction.
\newblock arXiv:1701.07538, 2017.

\bibitem[RS15]{RijkeSpitters:Sets}
Egbert Rijke and Bas Spitters.
\newblock Sets in homotopy type theory.
\newblock {\em Mathematical Structures in Computer Science}, 25:1172--1202, 6
  2015.
\newblock arXiv:1305:3835.

\bibitem[Shu14]{shulman:up-wo-fe}
Michael Shulman.
\newblock Universal properties without function extensionality.
\newblock
  \\\url{https://homotopytypetheory.org/2014/11/02/universal-properties}, 2014.

\bibitem[Shu15a]{shulman:elreedy}
Michael Shulman.
\newblock The univalence axiom for elegant {Reedy} presheaves.
\newblock {\em Homology, Homotopy, and Applications}, 17(2):81--106, 2015.
\newblock arXiv:1307.6248.

\bibitem[Shu15b]{shulman:invdia}
Michael Shulman.
\newblock Univalence for inverse diagrams and homotopy canonicity.
\newblock {\em Mathematical Structures in Computer Science}, 25:1203--1277, 6
  2015.
\newblock arXiv:1203.3253.

\bibitem[Shu17]{shulman:eiuniv}
Michael Shulman.
\newblock Univalence for inverse {EI} diagrams.
\newblock {\em Homology, Homotopy and Applications}, 19(2):219--249, 2017.
\newblock arXiv: 1507.03634.

\bibitem[Shu18]{shulman:bfp-realcohesion}
Michael Shulman.
\newblock Brouwer's fixed-point theorem in real-cohesive homotopy type theory.
\newblock {\em Mathematical Structures in Computer Science}, 28(6):856--941,
  2018.
\newblock arXiv:1509.07584.

\bibitem[SS12]{SchreiberShulman}
U.~Schreiber and M.~Shulman.
\newblock Quantum gauge field theory in cohesive homotopy type theory (extended
  abstract).
\newblock In {\em Quantum Physics and Logic 2012}, 2012.

\bibitem[{Uni}13]{TheBook}
The {Univalent Foundations Program}.
\newblock {\em Homotopy Type Theory: Univalent Foundations of Mathematics}.
\newblock \url{http://homotopytypetheory.org/book/}, first edition, 2013.

\bibitem[Wil94]{wilson:frames}
J.~T. Wilson.
\newblock {\em The assembly tower and some categorical and algebraic aspects of
  frame theory}.
\newblock PhD thesis, Carnegie Mellon University, 1994.

\end{thebibliography}

\end{document}